\newtheorem{thm}{Theorem}[section]
\newtheorem{prop}[thm]{Proposition}
\newtheorem{lemma}[thm]{Lemma}
\newtheorem{cor}[thm]{Corollary}
\theoremstyle{definition}
\newtheorem{dfn}[thm]{Definition}
\newtheorem{constr}[thm]{Construction}
\newtheorem{ax}[thm]{Axioms}
\newtheorem{ass}[thm]{Assumption}
\newcommand{\xs}{x_1, \dots, x_{n}}
\newcommand{\as}{a_1, \dots, a_{n}}
\newcommand{\rnk}{\mathrm{rank}}
\newcommand{\id}{\mathrm{id}}
\newcommand{\dom}{\mathrm{dom}}
\newcommand{\df}{\mathrm{df}}
\newcommand\blfootnote[1]{%
  \begingroup
  \renewcommand\thefootnote{}\footnote{#1}%
  \addtocounter{footnote}{-1}%
  \endgroup
}
\title{Rank-initial embeddings of non-standard models of set theory}
\author{
Paul K. Gorbow \\
\\
	\small University of Gothenburg \\
	\small Department of Philosophy, Linguistics, and Theory of Science \\
	\small Box 200, 405 30 G\"oteborg, Sweden \\
}
\date{}                                           %
\begin{document}

\maketitle

\begin{abstract}
A theoretical development is carried to establish fundamental results about rank-initial embeddings and automorphisms of countable non-standard models of set theory, with a keen eye for their sets of fixed points. These results are then combined into a ``geometric technique'' used to prove several results about countable non-standard models of set theory.

In particular, back-and-forth constructions are carried out to establish various generalizations and refinements of Friedman's theorem on the existence of rank-initial embeddings between countable non-standard models of the fragment $\mathrm{KP}^\mathcal{P}$ + $\Sigma_1^\mathcal{P}$-Separation of $\mathrm{ZF}$; and Gaifman's technique of iterated ultrapowers is employed to show that any countable model of $\mathrm{GBC} + \textnormal{``$\mathrm{Ord}$ is weakly compact''}$ can be elementarily rank-end-extended to models with well-behaved automorphisms whose sets of fixed points equal the original model. These theoretical developments are then utilized to prove various results relating self-embeddings, automorphisms, their sets of fixed points, strong rank-cuts, and set theories of different strengths. Two examples: The notion of ``strong rank-cut'' is characterized (i) in terms of the theory $\mathrm{GBC} + \textnormal{``$\mathrm{Ord}$ is weakly compact''}$, and (ii) in terms of fixed-point sets of self-embeddings. 
\end{abstract}

\section{Introduction}

\blfootnote{{\bf Acknowlegdement:} This paper presents a portion of the results in the author's Ph.D. Thesis \cite{Gor18}, which was carried out at The University of Gothenburg, under close supervision of Ali Enayat.}
\blfootnote{{\bf MSC 2010:} 03E30, 03C62, 03H99 (primary), 03C15, 03C20, 03E55 (secondary)}
\blfootnote{{\bf Keywords:} Set theory, KP, ZF, GBC, Weakly compact, Nonstandard model, Embedding, Self-embedding, Automorphism, Fixed point, Strong cut, Iterated ultrapower, Recursively saturated}

In \cite{Fri73} Friedman famously invented an ingenious back-and-forth construction to show that {\em every} non-standard countable model of a certain fragment of $\mathrm{ZF}$ (or $\mathrm{PA}$) has a proper self-embedding. He actually proved a more general result, and his technique was later used to prove sharper results on non-standard models of arithmetic under additional assumptions. On a parallel track, Ramsey's theorem was used in \cite{EM56} to show that any first-order theory with an infinite model has a model with a non-trivial automorphism. In particular, there are models of $\mathrm{PA}$, $\mathrm{ZFC}$, etc. with non-trivial automorphisms. Later on, Gaifman refined this technique considerably in the domain of models of arithmetic, showing that any countable model $\mathcal{M}$ of $\mathrm{PA}$ can be elementarily end-extended to a model $\mathcal{N}$ with an automorphism $j : \mathcal{N} \rightarrow \mathcal{N}$ whose set of fixed points is precisely $\mathcal{M}$ \cite{Gai76}. This was facilitated by the technical break-through of iterated ultrapowers, introduced by Gaifman and later adapted by Kunen to a set theoretical setting.

This paper generalizes and refines these two theorems in the domain of non-standard models of set theory, and combines them into a geometric machinery which is used to prove several new results. In the course of this project we also take the opportunity to generalize some more related results from arithmetic to set theory.

These results require a few definitions.

The {\em Takahashi hierarchy}, presented e.g. in \cite{Tak72} (and in Section \ref{Basic logic and model theory} of the present paper), is similar to the well-known L\'evy hierarchy, but any quantifiers of the forms $\exists x \in y, \exists x \subseteq y, \forall x \in y, \forall x \subseteq y$ are considered bounded. $\Delta_0^\mathcal{P}$ is the set of set-theoretic formulae with only bounded quantifiers in that sense, and $\Sigma_n^\mathcal{P}$ and $\Pi_n^\mathcal{P}$ are then defined recursively in the usual way for all $n \in \mathbb{N}$. $\mathrm{KP}^\mathcal{P}$ (presented in Section \ref{tour KP}) is the set theory axiomatized by Extensionality, Pair, Union, Powerset, Infinity, $\Delta_0^\mathcal{P} \textnormal{-Separation}$, $\Delta_0^\mathcal{P} \textnormal{-Collection}$ and $\Pi_1^\mathcal{P} \textnormal{-Foundation}$.

Let us now go through some notions of substructure relevant to set theory (these are explained in more detail in Section \ref{Models of set theory}). Let $\mathcal{M} \models \mathrm{KP}^\mathcal{P}$. A {\em rank-initial} substructure $\mathcal{S}$ of $\mathcal{M}$ is a submodel that is downwards closed in ranks (so if $s \in \mathcal{S}$ and $\mathcal{M} \models \mathrm{rank}(m) \leq \mathrm{rank}(s)$, then $m \in \mathcal{S}$). It is a {\em rank-cut} if, moreover, there is an infinite strictly descending downwards cofinal sequence of ordinals in $\mathcal{M} \setminus \mathcal{S}$. It is a {\em strong} rank-cut if, moreover, for every function $f : \mathrm{Ord}^\mathcal{S} \rightarrow \mathrm{Ord}^\mathcal{M}$ coded in $\mathcal{M}$ (in the sense that $\mathcal{M}$ believes there is a function $\hat f$ whose externalization restricted to $\mathrm{Ord}^\mathcal{S}$ equals $f$), there is an ordinal $\mu \in \mathcal{M} \setminus \mathcal{S}$ such that $f\hspace{2pt}(\xi) \not\in \mathcal{S} \Leftrightarrow f\hspace{2pt}(\xi) > \mu$. Note that these notions for substructures also make sense for embeddings. 

If (the interpretation of the element-relation in) $\mathcal{M}$ is well-founded, then we say that $\mathcal{M}$ is a {\em standard} model, and otherwise we say that it is non-standard. The largest well-founded rank-initial substructure of $\mathcal{M}$ exists. It is called the {\em well-founded part} of $\mathcal{M}$ and is denoted $\mathrm{WFP}(\mathcal{M})$. It turns out that $\mathrm{WFP}(\mathcal{M})$ is a rank-cut of $\mathcal{M}$. 

Suppose that $\mathcal{M}$ is a model of $\mathrm{KP}^\mathcal{P}$ with a proper rank-cut $\mathcal{S}$. If $A \subseteq \mathcal{S}$, then $A$ is {\em coded} in $\mathcal{M}$ if there is $a \in \mathcal{M}$ such that $\{x \in \mathcal{S} \mid \mathcal{M} \models x \in a\} = A$. The {\em standard system} of $\mathcal{M}$ over $\mathcal{S}$, denoted $\mathrm{SSy}_\mathcal{S}(\mathcal{M})$ is the second-order structure obtained by expanding $\mathcal{S}$ with all the subsets of $\mathcal{S}$ coded in $\mathcal{M}$. We define $\mathrm{SSy}(\mathcal{M}) = \mathrm{SSy}_{\mathrm{WFP}(\mathcal{M})}(\mathcal{M})$.

Section \ref{Existence of embeddings between models of set theory} is concerned with proving various existence results for embeddings between countable non-standard models of fragments of $\mathrm{ZFC}$. Friedman showed that for any countable non-standard models $\mathcal{M}$ and $\mathcal{N}$ of 
$$\mathrm{KP}^\mathcal{P} + \Sigma_1 \textnormal{-Separation} + \textnormal{Foundation}\footnote{Here Foundation is the axiom schema $\exists x . \phi(x) \rightarrow \exists y . (\phi(y) \wedge \forall v \in y . \neg \phi(v))$, where $\phi$ ranges over formulae in the language of set theory.},$$ 
and $\mathcal{S}$ such that 
$$\mathcal{S} = \mathrm{WFP}(\mathcal{M}) = \mathrm{WFP}(\mathcal{N}),$$ 
there is a proper rank-initial embedding of $\mathcal{M}$ into $\mathcal{N}$ iff the $\Sigma_1^\mathcal{P}$-theory of $\mathcal{M}$ with parameters in $\mathcal{S}$ is included in the corresponding theory of $\mathcal{N}$ and $\mathrm{SSy}_\mathcal{S}(\mathcal{M}) = \mathrm{SSy}_\mathcal{S}(\mathcal{N})$. 

Theorem \ref{Friedman thm} and Corollary \ref{Friedman cor}, refine Friedman's result in multiple ways. Firstly, we show that it holds for any common rank-cut $\mathcal{S}$ of $\mathcal{M}$ and $\mathcal{N}$ (not just for the standard cut); secondly, we show that it holds for all countable non-standard models of $\mathrm{KP}^\mathcal{P} + \Sigma_1 \textnormal{-Separation}$; thirdly we show that continuum many such embeddings can be obtained (generalizing a result from \cite{Wil73} for models of $\mathrm{PA}$); and fourthly, we show that the embedding can be constructed so as to yield a rank-cut of the co-domain. Moreover, Theorem \ref{Friedman selfembedding} establishes that for every model $\mathcal{M}$ of $\mathrm{KP}^\mathcal{P} + \Sigma_1 \textnormal{-Separation}$ and every rank-cut $\mathcal{S}$ of $\mathcal{M}$, there is a rank-initial topless self-embedding of $\mathcal{M}$ which fixes $\mathcal{S}$ pointwise but moves some element on every rank above $\mathcal{S}$.

Friedman's insight lead to further developments in this direction in the model-theory of arithmetic. In particular, it was established for countable non-standard models of $\mathrm{I}\Sigma_1$. Ressayre showed, conversely, that if $\mathcal{M} \models \mathrm{I}\Sigma_0 + \mathrm{exp}$, and for every $a \in \mathcal{M}$ there is a proper initial self-embedding of $\mathcal{M}$ which fixes every element $\mathcal{M}$-below $a$, then $\mathcal{M} \models \mathrm{I}\Sigma_1$ \cite{Res87b}. Theorem \ref{Ressayre characterization Sigma_1-Separation} is a set theoretic version of this optimality result, to the effect that if $\mathcal{M} \models \mathrm{KP}^\mathcal{P}$, and for every $a \in \mathcal{M}$ there is a proper rank-initial self-embedding of $\mathcal{M}$ which fixes every element that is an $\mathcal{M}$-member of $a$, then $\mathcal{M} \models \mathrm{KP}^\mathcal{P} + \Sigma_1 \textnormal{-Separation}$.

Wilkie showed that for every countable non-standard model $\mathcal{M} \models\mathrm{PA}$ and for every element $a$ of $\mathcal{M}$, there is a proper initial self-embedding whose image includes $a$ \cite{Wil77}. Theorem \ref{Wilkie theorem} and Corollary \ref{Wilkie selfembedding} generalize this result to set theory in refined form, e.g.: For every countable non-standard model $\mathcal{M} \models \mathrm{KP}^\mathcal{P} + \Sigma_2^\mathcal{P}\textnormal{-Separation} + \Pi_2^\mathcal{P}\textnormal{-Foundation}$ and for every element $a$ of $\mathcal{M}$, there is a proper initial self-embedding whose image includes $a$.

Yet another result in this vein is that the isomorphism types of countable recursively saturated models of $\mathrm{PA}$ only depends on the theory and standard system. A generalization of this result for $\mathrm{PA}$ (allowing common $\omega$-topless\footnote{$\omega$-toplessness is a strong notion of toplessness; see Section \ref{Models of set theory} for details.} initial segments to be fixed) was proved in \cite{KK88}. Theorem \ref{rec sat iso thm} is an analogous result for $\mathrm{ZF}$. 

Theorem \ref{Ressayre thm} states that every countable recursively saturated model $\mathcal{M}$ of $\mathrm{ZF}$, with a rank-cut $\mathcal{S}$, has an arbitrarily high rank-initial self-embedding $j$, fixing $\mathcal{S}$ pointwise, such that $j(\mathcal{M}) \prec \mathcal{M}$. This was originally proved in \cite{Res87a}, but we provide a new proof that is conceptually simpler.

The gist of our generalization of Gaifman's result is that any countable model $(\mathcal{M}, \mathcal{A})$ of the theory $\mathrm{GBC} + \text{``$\mathrm{Ord}$ is weakly compact''}$\footnote{Here $\mathrm{GBC}$ is the G\"odel-Bernays theory of classes with the axiom of choice, and ``$\mathrm{Ord}$ is weakly compact'' denotes a formal statement to the effect that every binary class-tree as high as the class of ordinals $\mathrm{Ord}$ has a branch going all the way to the top.} has an elementary rank-end-extension $\mathcal{N}$, such that $\mathrm{SSy}_\mathcal{M}(\mathcal{N}) = (\mathcal{M}, \mathcal{A})$. This result was actually obtained in \cite{Ena04}, but in this paper a more detailed result is proved, see Theorem \ref{Gaifman thm}. 

Once these refined and generalized Friedman- and Gaifman-style results have been established, we combine them in numerous ways to prove a number of new results about non-standard models of set theory. This is carried out in Section \ref{Characterizations}. 

Kirby and Paris essentially showed in \cite{KP77} that any cut $\mathcal{S}$ of a model $\mathcal{M} \models \mathrm{I}\Delta_0$ is strong iff $\mathrm{SSy}_\mathcal{S}(\mathcal{M}) \models \mathrm{ACA}_0$. Theorem \ref{Kirby Paris thm} generalizes this to set theory. It turns out that any rank-cut $\mathcal{S}$ including $\omega^\mathcal{M}$ of an ambient model $\mathcal{M} \models \mathrm{KP}^\mathcal{P} + \mathrm{Choice}$ is strong iff $\mathrm{SSy}_\mathcal{S}(\mathcal{M}) \models \mathrm{GBC} + \textnormal{``$\mathrm{Ord}$ is weakly compact''}$. This result is given a new proof relying on our refined and generalized versions of the Friedman and Gaifman theorems. A similar technique was used in \cite{Ena07} to reprove the result of Kirby and Paris in the context of arithmetic.

Using the above characterization of strong rank-cuts we show (Theorem \ref{characterize strongly topless substructure}) that for any countable model $\mathcal{M} \models \mathrm{KP}^\mathcal{P}$, and any rank-cut $\mathcal{S}$ of $\mathcal{M}$: there is a self-embedding of $\mathcal{M}$ whose set of fixed points is precisely $\mathcal{S}$ iff $\mathcal{S}$ is a $\Sigma_1^\mathcal{P}$-elementary  strong rank-cut of $\mathcal{M}$. In \cite{BE18} the analogous result is shown for models of the fragment $\mathrm{I}\Sigma_1$ of $\mathrm{PA}$. 

The latter result of Bahrami and Enayat was inspired by an analogous result in the context of countable recursively saturated models of $\mathrm{PA}$ \cite{KKK91}, namely that any cut of such a model is the fixed point set of an automorphism iff it is an elementary strong cut. Theorem \ref{characterize strongly topless substructure of rec sat} generalizes this result to set theory by means of a new proof, again relying on a combination of our Friedman- and Gaifman-style theorems. It is shown that for any rank-cut $\mathcal{S}$ of a countable recursively saturated model of $\mathrm{ZFC} + V = \mathrm{HOD}$:\footnote{$V = \mathrm{HOD}$ is the statement that every set is hereditarily ordinal definable; see Section \ref{ZFC and GBC} for details.} $\mathcal{S}$ is the fixed point set of an automorphism of $\mathcal{M}$ iff it is an elementary strong rank-cut.

Finally, we combine the Friedman- and Gaifman-style theorems to show (Theorem \ref{strongly topless self-embedding iff GBC weakly compact}) that for any countable non-standard $\mathcal{M} \models \mathrm{KP}^\mathcal{P} + \Sigma_1^\mathcal{P} \textnormal{-Separation} + \textnormal{Choice}$: $\mathcal{M}$ has a strong rank-cut isomorphic to $\mathcal{M}$ iff $\mathcal{M}$ expands to a model of $\mathrm{GBC} + \textnormal{``}\mathrm{Ord}$ is weakly compact''.

\section{Motivation}

It is a common theme throughout mathematics to study structures and how these structures relate to each other. Usually structures are related to each other by functions from one structure (the domain) to another (the co-domain), which preserve some of the structure involved. Since the study of such functions has turned out to be very fruitful in many branches of mathematics, it makes sense to apply this methodology to models of set theory as well.

When we consider models of such expressive theories as set theories, it is natural to compare structures by means of embeddings. Any embedding exhibits the domain as a substructure of the co-domain, but we can ask various questions about ``how nicely'' the domain can be embedded in the co-domain: Firstly, for any first-order structure we can ask if the embedding is elementary, i.e. whether the truth of every first-order sentence with parameters in the domain is preserved by the embedding. Secondly, for structures of set theory we can ask whether the domain is embedded ``initially'' in the co-domain. For set theory, the intuition of ``initiality'' may be captured by various different formal notions, of different strengths (see Section \ref{Models of set theory}). The weakest notion of this form is called {\em initiality} and requires simply that the image of the embedding is downwards closed under $\in$, i.e. if $b$ is in the image, and the co-domain satisfies that $c \in b$, then $c$ is also in the image. This paper is concerned with {\em rank-initial} embeddings, defined by the stronger property that for every value of the embedding, every element of the co-domain of rank less than or equal to the rank of that value is also a value of the embedding. As noted in Section \ref{tour KP}, $\mathrm{KP}^\mathcal{P}$ proves that the function $(\alpha \mapsto V_\alpha)$ is total on the ordinals, so it makes sense to consider a notion of embedding which preserves this structure $(\alpha \mapsto V_\alpha)$. And indeed, an embedding $i : \mathcal{M} \rightarrow \mathcal{N}$ is rank-initial iff $i$ is initial and $i(V_\alpha^{\hspace{2pt}\mathcal{M}}) = V_{i(\alpha)}^{\hspace{2pt}\mathcal{N}}$, for every ordinal $\alpha$ in $\mathcal{M}$ (see Corollary \ref{rank-init equivalences in KPP}), so the choice to study the notion of rank-initial embedding is quite a natural in the setting of $\mathrm{KP}^\mathcal{P}$.

Between well-founded structures, all initial embeddings are trivial: This follows from the Mostowski collapse theorem (see Theorem \ref{Mostowski collapse} and Proposition \ref{emb pres}(\ref{emb pres fixed})). In particular, if $i : \mathcal{M} \rightarrow \mathcal{N}$ is an initial embedding between well-founded extensional structures, then $\mathcal{M}$ and $\mathcal{N}$ are isomorphic to transitive sets (with the inherited $\in$-structure) $\mathcal{M}'$ and $\mathcal{N}'$, respectively, and $i$ is induced by the inclusion function of $\mathcal{M}'$ into $\mathcal{N}'$. So for a study of initial embeddings of models of set theory to yield any insight, we must turn our attention to non-standard models. As explained in the introduction, for non-standard models of arithmetic, several interesting results have been obtained that are either directly about initial embeddings between such models, or are proved by means of considering such embeddings. Thus a motivation for this work is to determine whether these generalize to the set theoretic setting, and if so, for which particular set theory. We give positive answers for suitable extensions of $\mathrm{KP}^\mathcal{P}$.

A deeper motivation lies in combining the techniques of Friedman and Gaifman into a geometric machinery. Several results and proofs of Section \ref{Characterizations} testify to the versatility of this approach, where relationships are established between the theory $\mathrm{GBC} + \textnormal{``$\mathrm{Ord}$ is weakly compact''}$, strong cuts, rank-initial embeddings, and fixed point sets of rank-initial embeddings.

\section{Basic logic and model theory}\label{Basic logic and model theory}

This section contains basic material on logic and model theory typically found in introductory textbooks such as \cite{CK90}. An expanded version of this section with proofs of the results is found as \S 4.1 of \cite{Gor18}.

We work with the usual first-order logic. A {\em signature} is a set of constant, function and relation symbols. The {\em language} of a signature is the set of well-formed formulas in the signature. The arity of function symbols, $f$, and relation symbols, $R$, are denoted $\mathrm{arity}(f\hspace{2pt})$ and $\mathrm{arity}(R)$, respectively. Models in a language are written as $\mathcal{M}$, $\mathcal{N}$, etc. They consist of interpretations of the symbols in the signature; for each symbol $S$ in the signature, its interpretation in $\mathcal{M}$ is denoted $S^\mathcal{M}$. If $X$ is a term, relation or function definable in the language over some theory under consideration, then $X^\mathcal{M}$ denotes its interpretation in $\mathcal{M}$. 

The domain of $\mathcal{M}$ is also denoted $\mathcal{M}$, so $a \in \mathcal{M}$ means that $a$ is an element of the domain of $\mathcal{M}$. Finite tuples are written as $\vec{a}$, and the tuple $\vec{a}$ considered as a set (forgetting the ordering of the coordinates) is also denoted $\vec{a}$. Moreover, $\vec{a} \in \mathcal{M}$ means that each coordinate of $\vec{a}$ is an element of the domain of $\mathcal{M}$. $\mathrm{length}(\vec{a})$ denotes the number of coordinates in $\vec{a}$. For each natural number $k \in \{ 1, \dots, \mathrm{length}(\vec{a}) \}$, $\pi_k(\vec{a})$ is the $k$-th coordinate of $\vec{a}$. When a function $f : A \rightarrow B$ is applied as $f\hspace{2pt}(\vec{a})$ to a tuple $\vec{a} \in A^n$, where $n \in \mathbb{N}$, then it is evaluated coordinate-wise, so $f\hspace{2pt}(\as) = (f\hspace{2pt}(a_1), \dots, f\hspace{2pt}(a_n))$.
If $\Gamma$ is a set of formulae in a language and $n \in \mathbb{N}$, then $\Gamma[x_1, \dots, x_n]$ denotes the subset of $\Gamma$ of formulae all of whose free variables are in $\{x_1, \dots, x_n\}$.

The {\em theory} of a model $\mathcal{M}$, denoted $\mathrm{Th}(\mathcal{M})$, is the set of formulae in the language satisfied by $\mathcal{M}$. If $\Gamma$ is a subset of the language and $S \subseteq \mathcal{M}$, then 
\[\mathrm{Th}_{\Gamma, S}(\mathcal{M}) =_\df \{ \phi(\vec{s}) \mid \phi \in \Gamma \wedge \vec{s} \in S \wedge (\mathcal{M}, \vec{s}) \models \phi(\vec{s}) \}.\]

The standard model of arithmetic is denoted $\mathbb{N}$.

$\mathcal{L}^0$ is the language of set theory,  i.e. the set of all well-formed formulae generated by $\{\in\}$.

$\mathcal{L}^1$ is defined as a two-sorted language in the single binary relation symbol $\{\in\}$; we have a sort $\mathsf{Class}$ of classes (which covers the whole domain and whose variables and parameters are written in uppercase $X, Y, Z, A, B, C,$ etc.) and a sort $\mathsf{Set}$ of sets (which is a subsort of $\mathsf{Class}$ and whose variables and parameters are written in lowercase $x, y, z, a, b, c,$ etc.). The relation $\in$ is a predicate on the derived sort $\mathsf{Set} \times \mathsf{Class}$. 

Models in $\mathcal{L}^1$ are usually written in the form $(\mathcal{M}, \mathcal{A})$, where $\mathcal{M}$ is an $\mathcal{L}^0$-structure on the domain of sets, and $\mathcal{A}$ is a set of classes. It is sometimes convenient to regard an $\mathcal{L}^1$-structure $(\mathcal{M}, \mathcal{A})$ simply as its reduct $\mathcal{M}$ to the language $\mathcal{L}^0$; for example, if $(\mathcal{M}, \mathcal{A})$ is an $\mathcal{L}^1$-structure, then (unless otherwise stated), by {\em an element of} $(\mathcal{M}, \mathcal{A})$, is meant an element of sort $\mathsf{Set}$.

The notions of substructure, embedding and isomorphism are defined in the usual way. An embedding is {\em proper} if it is not onto. We write $\mathcal{M} \cong \mathcal{N}$ if $\mathcal{M}$ and $\mathcal{N}$ are isomorphic. If $\mathcal{S}$ is a common subset of $\mathcal{M}$ and $\mathcal{N}$, and there is an isomorphism between $\mathcal{M}$ and $\mathcal{N}$ fixing $\mathcal{S}$ pointwise, then we write $\mathcal{M} \cong_\mathcal{S} \mathcal{N}$.

An embedding $f : \mathcal{M} \rightarrow \mathcal{N}$ of $\mathcal{L}$-structures is $\Gamma${\em -elementary}, for some $\Gamma \subseteq \mathcal{L}$, if for each formula $\phi(\vec{x})$ in $\Gamma$, and for each $\vec{m} \in \mathcal{M}$, 
\[
\mathcal{M} \models \phi(\vec{m}) \Leftrightarrow \mathcal{N} \models \phi(f\hspace{2pt}(\vec{m})) .
\]
If there is such an embedding we write $\mathcal{M} \preceq_\Gamma \mathcal{N}$. $f$ is {\em elementary} if it $\mathcal{L}$-elementary.

It is also of interest to consider partial embeddings. $\llbracket \mathcal{M} \preceq_{\Gamma, \mathcal{S}}^{< \omega} \mathcal{N} \rrbracket$ denotes the set of partial functions $f$ from $\mathcal{M}$ to $\mathcal{N}$, with finite domain, fixing $\mathcal{S}$ pointwise, and such that for all $\phi(\vec{x}) \in \Gamma$ and for all $\vec{m} \in \mathcal{M}$,
\[
\mathcal{M} \models \phi(\vec{m}) \Rightarrow \mathcal{N} \models \phi(f\hspace{2pt}(\vec{m})) .
\]
PLEASE NOTE!: This definition uses `$\Rightarrow$', as opposed to the `$\Leftrightarrow$' used in the definition of `$\preceq$'. 

If $\Gamma$ is omitted, then it is assumed to be $\mathcal{L}^0$, and if $\mathcal{S}$ is omitted, then it is assumed to be $\varnothing$. Let $\mathbb{P} = \llbracket \mathcal{M} \preceq_{\Gamma, \mathcal{S}}^{<\omega} \mathcal{N} \rrbracket$. We endow $\mathbb{P}$ with the following partial order. For any $f,g \in \mathbb{P}$,
\[
f \leq^\mathbb{P} g \Leftrightarrow f\restriction_{\dom(g)} = g.
\]

In Section \ref{Models of set theory} we will introduce definitions for more types of embeddings that are relevant to the study of models of set theory.

The {\em uniquely existential quantifier} $\exists! x . \phi(x)$ is defined as $\exists x . (\phi(x) \wedge (\forall y . \phi(y) \rightarrow y = x)).$ The {\em bounded quantifiers}, $\forall u \in y . \phi(u, y)$ and $\exists u \in y . \phi(u, y)$, are defined as $\forall u . (u \in y \rightarrow \phi(u, y))$ and $\exists u . (u \in y \wedge \phi(u, y))$, respectively. $\hat\Delta_0 \subseteq \mathcal{L}^0$ is the set of formulae all of whose quantifiers are bounded. $\hat\Sigma_0$ and $\hat\Pi_0$ are defined as equal to $\hat\Delta_0$. Recursively, for every $n \in \mathbb{N}$: $\hat\Sigma_{n+1} \subseteq \mathcal{L}^0$ is the set of formulae of the form $\exists x . \phi$, where $\phi$ is in $\hat\Pi_n$; and dually, $\hat\Pi_{n+1} \subseteq \mathcal{L}^0$ is the set of formulae of the form $\forall x . \phi$, where $\phi$ is in $\hat\Sigma_n$. 
For each $n \in \mathbb{N}$, $\hat{\mathrm{B}}_n$ is defined as the closure of $\hat \Sigma_n$ under Boolean connectives and bounded quantifiers.

Suppose a background $\mathcal{L}^0$-theory $T$ is given. For each $n \in \mathbb{N}$ and each symbol `$\Gamma$' $\in$ $\{$ `$\Sigma$', `$\Pi$', `$\mathrm{B}$'$\}$, $\Gamma_n$ is defined as the set of formulae provably equivalent (in $T$) to a formula in $\hat{\Gamma}_n$. Moreover, for each $n \in \mathbb{N}$, $\Delta_{n} =_\df \Sigma_{n} \cap \Pi_{n}$. These sets of formulae are collectively called {\em the L\'evy hierarchy}, and we say that they measure a formula's {\em L\'evy complexity}. This hierarchy is developed in \cite{Lev65}.

If $\phi$ is an $\mathcal{L}^0$-formula and $t$ is an element of a model or an $\mathcal{L}^0$-term, such that none of the variables of $t$ occur in $\phi$, then $\phi^t$ denotes the formula obtained from $\phi$ by replacing each quantifier of the form `$\boxminus x$' by `$\boxminus x \in t$', where $\boxminus \in \{\exists, \forall\}$.

The $\mathcal{P}$-{\em bounded quantifiers} $\forall x \subseteq y . \phi(x, y)$ and $\exists x \subseteq y . \phi(x, y)$ are defined as $\forall x . (x \subseteq y \rightarrow \phi(x, y))$ and $\exists x . (x \subseteq y \wedge \phi(x, y))$, respectively. For each $n \in \mathbb{N}$, we define sets $\hat\Sigma^\mathcal{P}_n$, $\hat\Pi^\mathcal{P}_n$, $\hat\Delta^\mathcal{P}_n$, $\hat{\mathrm{B}}^\mathcal{P}_n$, $\Sigma^\mathcal{P}_n$, $\Pi^\mathcal{P}_n$, $\Delta^\mathcal{P}_n$ and $\mathrm B^\mathcal{P}_n$ analogously as above, but replacing ``bounded'' by ``bounded or $\mathcal{P}$-bounded''. These sets of formulae are called {\em the Takahashi hierarchy}, and we say that they measure a formula's {\em Takahashi complexity}. Many facts about this hierarchy (in the context of $\mathrm{ZFC}$) are established in \cite{Tak72}. It appears like these results also hold in the context of $\mathrm{KP}^\mathcal{P}$ (apart from its Theorem 6, which might require $\mathrm{KP}^\mathcal{P} + \textnormal{Choice}$).

When a set of formulae is denoted with a name that includes free variables, for example $p(\vec{x})$, then it is assumed that each formula in the set has at most the free variables $\vec{x}$. Moreover, if $\vec{a}$ are terms or elements of a model, then $p(\vec{a}) = \{\phi(\vec{a}) \mid \phi(\vec{x}) \in p(\vec{x})\}$.  

A {\em type} $p(\vec{x})$ {\em over a theory} $T$ or {\em over a model} $\mathcal{M}$, are defined in the usual way. Given a tuple $\vec{a} \in \mathcal{M}$, a subset $\Gamma \subseteq \mathcal{L}$ and a subset $S \subseteq \mathcal{M}$, {\em the $\Gamma$-type of $\vec{a}$ over $\mathcal{M}$ with parameters in $S$} is the set $\{\phi(\vec{x}, \vec{b}) \mid \phi \in \Gamma \wedge \vec{b} \in S \wedge \mathcal{M} \models \phi(\vec{a}, \vec{b})\}$, denoted $\mathrm{tp}_{\Gamma, S}(\vec{a})$. 

We fix a G\"odel numbering of the syntactical objects in the languages $\mathcal{L}^0$ and $\mathcal{L}^1$, and assume that any syntactic object in these languages equals its G\"odel number. A type $p(\vec{x}, \vec{b})$ over $\mathcal{M}$ is {\em recursive} if $\{ \ulcorner \phi(\vec{x}, \vec{y}) \urcorner  \mid \phi(\vec{x}, \vec{b}) \in p(\vec{x}, \vec{b})\}$ is a recursive set, where $\ulcorner \phi(\vec{x}, \vec{y}) \urcorner$ denotes the G\"odel code of $\phi(\vec{x}, \vec{y})$ (henceforth formulae will be identified with their G\"odel codes). $\mathcal{M}$ is {\em recursively $\Gamma$-saturated} if it realizes every recursive $\Gamma$-type over $\mathcal{M}$. 

The notions of {\em poset} (or {\em partial order}) and {\em linear order} are defined in the usual way. 

Let $\mathbb{P}$ be a poset. We say that a formula $\phi(x)$ {\em holds for unboundedly many} $x \in \mathbb{P}$, if for any $a \in \mathbb{P}$, there is $b >^\mathbb{P} a$ such that $\phi(b)$. 

An {\em embedding $i : \mathbb{P} \rightarrow \mathbb{P}'$ of posets}, is just a special case of embeddings of structures, i.e. it is an embedding of $\{\leq\}$-structures. Let $i : \mathbb{P} \rightarrow \mathbb{P}'$ be an embedding of posets. $y \in \mathbb{P}'$ is an {\em upper bound} of $i$ if $\forall x \in \mathbb{P} . i(x) < y$. If such a $y$ exists then $i$ is {\em bounded above}. $i$ is {\em topless} if it is bounded above but does not have a $\mathbb{P}'$-least upper bound.

A self-embedding $i : \mathbb{P} \rightarrow \mathbb{P}$ is {\em contractive} if for all $x \in \mathbb{P}$, we have $i(x) <_\mathbb{P} x$. 

Let $\mathbb{P}$ be a poset. Given $x \in \mathbb{P}$, define $\mathbb{P}_{\leq x}$ as the substructure of $\mathbb{P}$ on $\{y \in \mathbb{P} \mid y \leq_\mathbb{P} x\}$; and similarly, if $X \in \mathbb{P}$, define $\mathbb{P}_{\leq X}$ as the substructure of $\mathbb{P}$ on $\{y \in \mathbb{P} \mid \exists x \in X . y \leq_\mathbb{P} x\}$. We have analogous definitions for when `$\leq$' is replaced by `$<$', `$\geq$' or `$>$'.

For any ordinal $\alpha$ and linearly ordered set $(\mathbb{L}, <^\mathbb{L})$, the lexicographic ordering on the set $\mathbb{L}^{<\alpha}$ is defined in the usual way and denoted $<^\mathrm{lex}$.

Let $\mathbb{P}$ be a poset. A subset $\mathcal{D} \subseteq \mathbb{P}$ is {\em dense} if for any $x \in \mathbb{P}$ there is $y \in \mathcal{D}$ such that $y \leq x$. A {\em filter} $\mathcal{F}$ on $\mathbb{P}$ is a non-empty subset of $\mathbb{P}$, such that $\forall x, y \in \mathbb{P} . ((x \in \mathcal{F} \wedge x \leq y) \rightarrow y \in \mathcal{F})$ ({\em upwards closed}) and $\forall x, y \in \mathcal{F} . \exists z \in \mathcal{F} . (z \leq x \wedge z \leq y)$ ({\em downwards directed}). A filter $\mathcal{F}$ is an {\em ultrafilter} if it is {\em maximal}, i.e. if there is no filter $\mathcal{F}'$ on $\mathbb{P}$ such that $\mathcal{F} \subsetneq \mathcal{F}'$. Let $\mathbf{D}$ be a set of dense subsets of $\mathbb{P}$. A filter $\mathcal{F}$ is $\mathbf{D}${\em -generic}, if $\forall \mathcal{D} \in \mathbf{D} . \mathcal{D} \cap \mathcal{F} \neq \varnothing$.

\begin{lemma}\label{generic filter existence}
	Let $\mathbb{P}$ be a poset with an element $p$. If $\mathbf{D}$ is a countable set of dense subsets of $\mathbb{P}$, then there is a $\mathbf{D}$-generic filter $\mathcal{F}$ on $\mathbb{P}$ containing $p$.
\end{lemma}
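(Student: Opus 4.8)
The plan is to use the standard Rasiowa--Sikorski-style construction: enumerate the dense sets and build a decreasing chain that meets each of them, then take the upward closure of the chain as the desired filter. Concretely, if $\mathbf{D} = \varnothing$ one may simply take $\mathcal{F} = \{q \in \mathbb{P} \mid p \leq q\}$, so assume $\mathbf{D} \neq \varnothing$ and fix an enumeration $\mathbf{D} = \{\mathcal{D}_n \mid n \in \mathbb{N}\}$ (allowing repetitions, which costs nothing and handles the case of a finite $\mathbf{D}$ uniformly).

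The key step is a recursion of length $\omega$ producing a descending sequence $p = p_0 \geq^\mathbb{P} p_1 \geq^\mathbb{P} p_2 \geq^\mathbb{P} \cdots$ with $p_{n+1} \in \mathcal{D}_n$ for every $n$. Set $p_0 = p$; given $p_n$, density of $\mathcal{D}_n$ supplies some $p_{n+1} \in \mathcal{D}_n$ with $p_{n+1} \leq^\mathbb{P} p_n$. Then define $\mathcal{F} = \{q \in \mathbb{P} \mid \exists n \in \mathbb{N} . p_n \leq^\mathbb{P} q\}$, the upward closure of the chain $\{p_n \mid n \in \mathbb{N}\}$.

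It then remains to verify the three requirements. $\mathcal{F}$ is nonempty since $p = p_0 \in \mathcal{F}$, and it contains $p$. It is upwards closed by construction (transitivity of $\leq^\mathbb{P}$). It is downwards directed: if $q, r \in \mathcal{F}$, pick $m, n$ with $p_m \leq^\mathbb{P} q$ and $p_n \leq^\mathbb{P} r$, and let $k = \max(m,n)$; since the $p_i$ form a chain, $p_k \leq^\mathbb{P} p_m \leq^\mathbb{P} q$ and $p_k \leq^\mathbb{P} p_n \leq^\mathbb{P} r$, and $p_k \in \mathcal{F}$. Hence $\mathcal{F}$ is a filter. Finally, for each $\mathcal{D} \in \mathbf{D}$, say $\mathcal{D} = \mathcal{D}_n$, we have $p_{n+1} \in \mathcal{D}_n \cap \mathcal{F}$, so $\mathcal{F}$ is $\mathbf{D}$-generic.

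There is no real obstacle here; the only point demanding a moment's care is the downward-directedness of $\mathcal{F}$, which crucially uses that the chosen witnesses form a $\leq^\mathbb{P}$-chain (so that their ``common lower bound'' is just the later of the two), together with the bookkeeping that lets a finite $\mathbf{D}$ be treated as a countable enumeration with repetitions.
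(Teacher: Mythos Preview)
Your proof is correct and is exactly the standard Rasiowa--Sikorski construction. The paper does not actually supply a proof of this lemma; it is stated as basic background material with proofs deferred to \cite{Gor18} and \cite{CK90}, so there is nothing to compare against beyond noting that your argument is the expected one.
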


\begin{lemma}\label{ultrafilter existence}
	Let $\mathbb{P}$ be a poset and let $\mathcal{F}$ be a filter on $\mathbb{P}$. There is an ultrafilter $\mathcal{U}$ such that $\mathcal{F} \subseteq \mathcal{U}$.
\end{lemma}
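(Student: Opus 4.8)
The plan is to prove this by a routine application of Zorn's Lemma to the collection of all filters extending $\mathcal{F}$, ordered by inclusion. First I would set $\mathbb{F}$ to be the set of all filters $\mathcal{G}$ on $\mathbb{P}$ with $\mathcal{F} \subseteq \mathcal{G}$, partially ordered by $\subseteq$. This set is non-empty, since $\mathcal{F} \in \mathbb{F}$. The goal is then to locate a maximal element of $(\mathbb{F}, \subseteq)$ and observe that it is an ultrafilter in the sense of the paper (a maximal filter on $\mathbb{P}$).

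The key step is to verify the hypothesis of Zorn's Lemma, i.e. that every chain in $\mathbb{F}$ has an upper bound in $\mathbb{F}$. For a non-empty chain $\mathcal{C} \subseteq \mathbb{F}$, the candidate is $\mathcal{G}^\ast = \bigcup \mathcal{C}$. Plainly $\mathcal{F} \subseteq \mathcal{G}^\ast$, and $\mathcal{G}^\ast$ is non-empty and upwards closed because every member of $\mathcal{C}$ is. For downward directedness, given $x, y \in \mathcal{G}^\ast$, I would pick $\mathcal{G}_1, \mathcal{G}_2 \in \mathcal{C}$ with $x \in \mathcal{G}_1$ and $y \in \mathcal{G}_2$; by linearity of $\mathcal{C}$ one of these, say $\mathcal{G}_2$, contains the other, so $x, y \in \mathcal{G}_2$, and then downward directedness of $\mathcal{G}_2$ supplies $z \in \mathcal{G}_2 \subseteq \mathcal{G}^\ast$ with $z \leq x$ and $z \leq y$. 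Hence $\mathcal{G}^\ast \in \mathbb{F}$ is an upper bound of $\mathcal{C}$; and the empty chain has the upper bound $\mathcal{F} \in \mathbb{F}$.

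Zorn's Lemma then yields a maximal element $\mathcal{U}$ of $(\mathbb{F}, \subseteq)$, and it remains to check that $\mathcal{U}$ is an ultrafilter, i.e. that no filter $\mathcal{U}'$ on $\mathbb{P}$ satisfies $\mathcal{U} \subsetneq \mathcal{U}'$. But such a $\mathcal{U}'$ would satisfy $\mathcal{F} \subseteq \mathcal{U} \subsetneq \mathcal{U}'$, hence belong to $\mathbb{F}$ and strictly extend $\mathcal{U}$, contradicting maximality of $\mathcal{U}$ within $\mathbb{F}$; and $\mathcal{F} \subseteq \mathcal{U}$ holds by construction. I do not expect a genuine obstacle here: the only points requiring a moment's care are that the union of a chain of filters is again downward directed (this is exactly where linearity of the chain is used) and that maximality within the sub-poset $\mathbb{F}$ already entails maximality among all filters on $\mathbb{P}$. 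I would also note that the lemma is a choice principle, proved here from Zorn's Lemma (equivalently, the Axiom of Choice) in the metatheory.
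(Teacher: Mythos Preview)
Your proof is correct and is the standard Zorn's Lemma argument for extending a filter to an ultrafilter. The paper itself does not give a proof of this lemma in the text; it merely states it, noting at the start of the section that proofs of the results there are found in \S 4.1 of \cite{Gor18}. There is every reason to expect that the deferred proof is exactly the Zorn's Lemma argument you give, as this is the canonical route.
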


\section{Power Kripke-Platek set theory}\label{tour KP}

The set theory $\mathrm{KP}^\mathcal{P}$ may be viewed as the natural extension of Kripke-Platek set theory $\mathrm{KP}$ ``generated'' by adding the Powerset axiom.

\begin{ax}[{\em Power Kripke-Platek set theory}, $\mathrm{KP}^\mathcal{P}$] $\mathrm{KP}^\mathcal{P}$ is the $\mathcal{L}^0$-theory given by these axioms and axiom schemata:
	\[
	\begin{array}{ll}
	\textnormal{Extensionality} & \forall x . \forall y . ((\forall u . u \in x \leftrightarrow u \in y) \rightarrow x = y) \\
\textnormal{Pair} & \forall u . \forall v . \exists x . \forall w . (w \in x \leftrightarrow (w = u \vee w = v)) \\
\textnormal{Union} & \forall x . \exists u . \forall r . (r \in u \leftrightarrow \exists v \in x . r \in v) \\
\textnormal{Powerset} & \forall u . \exists x . \forall v . (v \in x \leftrightarrow v \subseteq u) \\
\textnormal{Infinity} & \exists x . (\varnothing \in x \wedge \forall u \in x . \{u\} \in x) \\
\Delta_0^\mathcal{P} \textnormal{-Separation} & \forall x . \exists y . \forall u . (u \in y \leftrightarrow (u \in x \wedge  \phi(u))) \\
\Delta_0^\mathcal{P} \textnormal{-Collection} & \forall x . (\forall u \in x . \exists v . \phi(u, v) \rightarrow \exists y . \forall u \in x . \exists v \in y . \phi(u, v)) \\
\Pi_1^\mathcal{P} \textnormal{-Foundation} & \exists x . \phi(x) \rightarrow \exists y . (\phi(y) \wedge \forall v \in y . \neg \phi(v)) \\
	\end{array}
	\]
Above and in the following, if a schema named $\Gamma$-[name] is specified by a formula involving a meta-variable for a subformula (e.g. $\phi$ above), then this meta-variable ranges over $\Gamma$. If $\Gamma$ is omitted from such a name, then the formula ranges over $\mathcal{L}^0$.
\end{ax}

We also consider these axioms and schemata:
\[
\begin{array}{ll}
\textnormal{Strong }\Gamma\textnormal{-Collection} & \forall x . \exists y . \forall u \in x . (\exists v . \phi(u, v) \rightarrow \exists v' \in y . \phi(u, v'))\\
\Gamma \textnormal{-Replacement} & 
\forall x . (\forall u \in x . \exists! v . \phi(u, v) \rightarrow \exists y . \forall v . (v \in y \leftrightarrow \exists u \in x . \phi(u, v)))\\
\textnormal{Choice} &
\forall x . ((\forall u \in x . u \neq \varnothing) \rightarrow \exists f : x \rightarrow \bigcup x . \forall u \in x . f\hspace{2pt}(u) \in u).
\end{array}
\]

Apart from adding the Powerset axiom, $\mathrm{KP}^\mathcal{P}$ differs from $\mathrm{KP}$ in that the schemata of Separation, Collection and Foundation are extended to broader sets of formulae, using the Takahashi hierarchy instead of the L\'evy hierarchy. ($\mathrm{KP}$ has $\Delta_0 \textnormal{-Separation}$, $\Delta_0 \textnormal{-Collection}$ and $\Pi_1 \textnormal{-Foundation}$.) Since 
\[
\mathcal{P}(u) = y \Leftrightarrow (\forall v \subseteq u . v \in y) \wedge \forall v \in y . \forall r \in v . r \in u),
\]
the Takahashi hierarchy treats the powerset operation as a bounded operation. It is in this sense that $\mathrm{KP}^\mathcal{P}$ is ``generated'' from $\mathrm{KP}$ by adding powersets.

The theory $\mathrm{KP}$ has received a great deal of attention, because of its importance to G\"odel's $L$ (the hierarchy of constructible sets), definability theory, recursion theory and infinitary logic. The ``bible'' on this subject is \cite{Bar75}. The main sources on $\mathrm{KP}^\mathcal{P}$ seem to be the papers by Friedman and Mathias that are discussed later on in this section.

There is of course much to say about what can and cannot be proved in these theories. $\mathrm{KP}$ proves $\Delta_1$-Separation, $\Sigma_1$-Collection and $\Sigma_1$-Replacement. $\mathrm{KP}^\mathcal{P}$ proves $\Delta_1^\mathcal{P}$-Separation, $\Sigma_1^\mathcal{P}$-Collection and $\Sigma_1^\mathcal{P}$-Replacement. Both theories enjoy a decent recursion theorem. In $\mathrm{KP}$ we have $\Sigma_1$-Recursion and in $\mathrm{KP}^\mathcal{P}$ we have $\Sigma_1^\mathcal{P}$-Recursion. $\Gamma$-Recursion is the statement in the meta-theory (in relation to an object base theory) that for any $\psi(x, y) \in \Gamma$ provably defining a total function $G$ on the universe, there is $\phi(x, y) \in \Gamma$ provably defining a a total function $F$ on the universe, such that provably $\forall x . F(x) = G(F\restriction_x)$.

$\Sigma_1$-Recursion is quite important in that it enables development of first-order logic and model theory. Moreover, it enables $\mathrm{KP}$ to prove the totality of the rank-function, but (in the absence of Powerset) it is not sufficient to establish that the function $\alpha \mapsto V_\alpha$ is total on the ordinals. However, thanks to $\Sigma_1^\mathcal{P}$-Recursion, $\mathrm{KP}^\mathcal{P}$ does prove the latter claim, and this is needed for certain arguments in Section \ref{Existence of embeddings between models of set theory}, particularly in the proof of our Friedman-style embedding theorem. Also of interest, though not used in this paper, is that neither of the theories proves the existence of an uncountable ordinal. (This may be seen from the short discussion about the Church-Kleene ordinal later on in the section.) However, $\mathrm{KP}^\mathcal{P}$ augmented with the axiom of Choice proves the existence of an uncountable ordinal, essentially because Choice gives us that $\mathcal{P}(\omega)$ can be well-ordered.

There is also a philosophical reason for considering $\mathrm{KP}$ and $\mathrm{KP}^\mathcal{P}$, in that they encapsulate a more parsimonious ontology of sets than $\mathrm{ZF}$. If $a$ is an element of an $\mathcal{L}^0$-structure $\mathcal{M}$, then $a_\mathcal{M}$ denotes the set $\{x \in \mathcal{M} \mid \mathcal{M} \models x \in a\}$. A set $x$ is {\em transitive} if $\forall u \in x . u \subseteq x$. It is {\em supertransitive} if additionally $\forall u \in x . \forall v \subseteq u . v \in x$. The {\em transitive closure} $\mathrm{TC}(x)$ of a set $x$ is its closure under elements of elements, i.e. the least superset of $x$ such that $\forall u \in \mathrm{TC}(x) . \forall r \in u . r \in \mathrm{TC}(x)$. The {\em supertransitive closure} $\mathrm{STC}(x)$ of a set $x$ is its closure under elements of elements and subsets of elements, i.e. the least superset of $x$ such that $\forall u \in \mathrm{STC}(x) . \forall r \in u . r \in \mathrm{STC}(x)$ and $\forall u \in \mathrm{STC}(x) . \forall v \subseteq u . v \in \mathrm{STC}(x)$. $\mathrm{KP}$ proves that $\mathrm{TC}(x)$ exists for all $x$, and $\mathrm{KP}^\mathcal{P}$ proves that $\mathrm{STC}(x)$ exists for all $x$.

If $\mathcal{M}$ is a model of $\mathrm{KP}$, $a$ is an element of  $\mathcal{M}$, and $\phi(x)$ is a $\Delta_0$-formula of set theory, then it is straightforward to show that 
\[
\mathcal{M} \models \phi(a) \Leftrightarrow (\mathrm{TC}^\mathcal{M}(\{a\}))_\mathcal{M} \models \phi(a).
\]
The reason for this equivalence is that a quantifier in a $\Delta_0$-formula can only range over a subset of the transitive closure of $\{a\}$. 

Similarly, if $\mathcal{M}$ is a model of $\mathrm{KP}^\mathcal{P}$, $a$ is an element of  $\mathcal{M}$, and $\phi(x)$ is a $\Delta_0^\mathcal{P}$-formula of set theory, then 
\[
\mathcal{M} \models \phi(a) \Leftrightarrow (\mathrm{STC}^\mathcal{M}(\{a\}))_\mathcal{M} \models \phi(a).
\]

Thus, the Separation and Collection schemata of $\mathrm{KP}$ and $\mathrm{KP}^\mathcal{P}$ only apply to formulae whose truth depends exclusively on the part of the model which is ``below'' the parameters and free variables appearing in the formula (in the respective senses specified above). 

Friedman's groundbreaking paper \cite{Fri73}, established several important results in the model theory of $\mathrm{KP}^\mathcal{P}$. Section \ref{Existence of embeddings between models of set theory} is concerned with generalizing and refining one of these results (Theorem 4.1 of that paper), as well as related results. In its simplest form, this result is that every countable non-standard model of $\mathrm{KP}^\mathcal{P} + \Sigma_1^\mathcal{P} \textnormal{-Separation}$ has a proper self-embedding. 

A second important result of Friedman's paper (its Theorem 2.3) is that every countable standard model of $\mathrm{KP}^\mathcal{P}$ is the well-founded part of a non-standard model of $\mathrm{KP}^\mathcal{P}$.

Thirdly, let us also consider Theorem 2.6 of Friedman's paper. This theorem says that any countable model of $\mathrm{KP}$ can be extended to a model of $\mathrm{KP}^\mathcal{P}$ with the same ordinals. The {\em ordinal height} of a standard model of $\mathrm{KP}$ is the ordinal representing the order type of the ordinals of the model. An ordinal is said to be {\em admissible} if it is the ordinal height of some model of $\mathrm{KP}$. This notion turns out to be closely connected with recursion theory. For example, the first admissible ordinal is the {\em Church-Kleene ordinal} $\omega_1^{\mathrm{CK}}$, which may also be characterized as the least ordinal which is not order-isomorphic to a recursive well-ordering of the natural numbers. So in particular, Friedman's theorem shows that every countable admissible ordinal is also the ordinal height of some model of $\mathrm{KP}^\mathcal{P}$. 

Another important paper on $\mathrm{KP}^\mathcal{P}$ is Mathias's \cite{Mat01}, which contains a large body of results on weak set theories. See its Section 6 for results on $\mathrm{KP}^\mathcal{P}$. One of many results established there is its Theorem 6.47, which shows that $\mathrm{KP}^\mathcal{P} + V = L$ proves the consistency of $\mathrm{KP}^\mathcal{P}$, where $V = L$ is the statement that every set is G\"odel constructible.

From the perspective of this paper, the main $\mathrm{KP}^\mathcal{P}$-style set theory of interest is $\mathrm{KP}^\mathcal{P} + \Sigma_1^\mathcal{P} \textnormal{-Separation}$, because it is for non-standard countable models of this theory that Friedman's embedding theorem holds universally. Returning to the philosophical discussion on parsimony above, this theory of affirms the existence of more sets, as arising from applying Separation to $\Sigma_1^\mathcal{P}$-formulae. The truth of such formulae can be given a similar characterization as we gave for $\Delta_0^\mathcal{P}$-formulae: If $\mathcal{M} \models \mathrm{KP}^\mathcal{P}$, $p \in \mathcal{M}$ and $\delta(x, y) \in \Delta_0^\mathcal{P}$, then 
\[
\mathcal{M} \models \exists x . \delta(x, p) \Leftrightarrow \exists a \in \mathcal{M} . \big[ (\mathrm{STC}^\mathcal{M}(\{a, p\}))_\mathcal{M} \models \delta(a, p) \big].
\]

\begin{prop} Useful relationships between axioms to be considered:
\begin{enumerate}[{\rm (a)}]
\item\label{KPP-strong} $\mathrm{KP}^\mathcal{P} \vdash \Delta_1^\mathcal{P}\textnormal{-Separation}, \Sigma_1^\mathcal{P}\textnormal{-Collection}, \Sigma_1^\mathcal{P}\textnormal{-Replacement}.$
\item $\mathrm{KP}^\mathcal{P} \vdash $ ``the function $\alpha \mapsto V_\alpha$ is total on the ordinals''.
\item $\mathrm{KP}^\mathcal{P} + \Sigma_1^\mathcal{P} \textnormal{-Separation} \vdash \textnormal{Strong }\Sigma_1^\mathcal{P}\textnormal{-Collection}$.
\item For each $n \in \mathbb{N}$, $\mathrm{KP}^\mathcal{P} \vdash \Sigma_n^\mathcal{P} \textnormal{-Separation} \leftrightarrow \mathrm B_n^\mathcal{P} \textnormal{-Separation}.$
\end{enumerate}
\end{prop}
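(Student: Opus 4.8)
The plan is to obtain all four items by transferring the familiar $\mathrm{KP}$-proofs of the corresponding Lévy-hierarchy facts to the Takahashi setting; the main new ingredient is that $\mathcal{P}$-bounded quantifiers count as bounded and that pairing, the coordinate projections, ``$u\subseteq v$'' and ``$w=\mathcal{P}(v)$'' are all $\hat\Delta_0^\mathcal{P}$. One proves (a) first, since (b) and (c) rest on it. For $\Sigma_1^\mathcal{P}$-Collection: given $\forall u\in x\,\exists v\,\exists w\,\psi(u,v,w)$ with $\psi\in\hat\Delta_0^\mathcal{P}$, apply $\Delta_0^\mathcal{P}$-Collection to $\forall u\in x\,\exists p\,\psi(u,(p)_0,(p)_1)$ to get a bound $y$, and note that $\bigcup\bigcup y$ then contains a witness $v$ for each $u\in x$. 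For $\Delta_1^\mathcal{P}$-Separation: if $\phi$ and $\neg\phi$ are both $\Sigma_1^\mathcal{P}$, write $\phi(x)\equiv\exists u\,\theta_0(x,u)$ and $\neg\phi(x)\equiv\exists u\,\theta_1(x,u)$ with $\theta_i\in\hat\Delta_0^\mathcal{P}$; then $\forall x\in a\,\exists u\,(\theta_0(x,u)\vee\theta_1(x,u))$, so $\Delta_0^\mathcal{P}$-Collection yields $b$ with $\forall x\in a\,\exists u\in b\,(\theta_0\vee\theta_1)$, and $\{x\in a:\exists u\in b\,\theta_0(x,u)\}$ (a $\Delta_0^\mathcal{P}$-Separation instance) equals $\{x\in a:\phi(x)\}$. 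For $\Sigma_1^\mathcal{P}$-Replacement: given a total single-valued $\phi\in\Sigma_1^\mathcal{P}$ on $x$, pick $b$ bounding the range by $\Sigma_1^\mathcal{P}$-Collection; then ``$v$ is a value'' is $\Sigma_1^\mathcal{P}$, but for $v\in b$ it is also equivalent to $\neg\,\forall u\in x\,\exists v'\in b\,(v'\neq v\wedge\phi(u,v'))$, whose matrix is again $\Sigma_1^\mathcal{P}$ by $\Sigma_1^\mathcal{P}$-Collection, so the range predicate is $\Delta_1^\mathcal{P}$ over $x,b$ and $\Delta_1^\mathcal{P}$-Separation carves out the range.

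For (b), let $G$ be the class operation $G(f)=\bigcup_{\beta\in\dom f}\mathcal{P}(f(\beta))$; the graph ``$w=G(f)$'' unfolds as $\forall z\in w\,\exists\beta\in\dom f\,(z\subseteq f(\beta))$ conjoined with $\forall\beta\in\dom f\,\forall z\subseteq f(\beta)\,(z\in w)$, hence is $\hat\Delta_0^\mathcal{P}$, and $G$ is total (the relevant $w$ exists by Powerset, $\Sigma_1^\mathcal{P}$-Replacement from (a), and Union). Feeding $G$ into $\Sigma_1^\mathcal{P}$-Recursion, which holds in $\mathrm{KP}^\mathcal{P}$, produces a total class function $F$ with $F(\alpha)=G(F\restriction_\alpha)$; reading this on ordinals gives $F(\alpha)=\bigcup_{\beta<\alpha}\mathcal{P}(V_\beta)=V_\alpha$, so $\alpha\mapsto V_\alpha$ is total on $\mathrm{Ord}$. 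For (c): given $x$ and $\phi\in\Sigma_1^\mathcal{P}$, the set $x_0=\{u\in x:\exists v\,\phi(u,v)\}$ exists by $\Sigma_1^\mathcal{P}$-Separation, since its defining formula is $\Sigma_1^\mathcal{P}$; applying $\Sigma_1^\mathcal{P}$-Collection from (a) to $x_0$ yields $y$ with $\forall u\in x_0\,\exists v\in y\,\phi(u,v)$, and this $y$ witnesses the Strong $\Sigma_1^\mathcal{P}$-Collection instance over $x$.

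For (d), the direction $\mathrm{B}_n^\mathcal{P}$-Separation $\Rightarrow$ $\Sigma_n^\mathcal{P}$-Separation is trivial since $\Sigma_n^\mathcal{P}\subseteq\mathrm{B}_n^\mathcal{P}$. For the converse I would argue by induction on the construction of $\phi$ as a member of the closure of $\hat\Sigma_n^\mathcal{P}$ under Boolean connectives and (possibly $\mathcal{P}$-)bounded quantifiers, proving the uniform statement that for all sets $b_1,\dots,b_m,a$ and all parameters $\vec p$ the set $\{\langle\vec y,x\rangle\in (b_1\times\cdots\times b_m)\times a:\phi(x,\vec y,\vec p)\}$ exists (recall $\mathrm{KP}^\mathcal{P}$ proves that finite Cartesian products, unions, and set differences exist, via $\Delta_0^\mathcal{P}$-Separation together with Pair, Union and Powerset). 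The base case is $\Sigma_n^\mathcal{P}$-Separation, using that substituting the $\hat\Delta_0^\mathcal{P}$-definable coordinate projections of a tuple into a $\hat\Sigma_n^\mathcal{P}$ formula again yields a $\mathrm{KP}^\mathcal{P}$-equivalent of a $\hat\Sigma_n^\mathcal{P}$ formula. The $\neg$, $\wedge$ and $\vee$ steps are complementation inside, and finite intersections and unions of, the sets provided by the induction hypothesis. In the bounded-quantifier step, say $\phi=\forall u\in w\,\psi$ with $w$ among $x,\vec y,\vec p$: applying the induction hypothesis to $\psi$ with one extra parameter-slot for $u$, ranging over a fixed set large enough to contain every possible value of $w$ (e.g. $\bigcup a\cup\bigcup b_1\cup\cdots\cup b_m$, or its powerset in the $\mathcal{P}$-bounded case), yields a set $S$, and then $\{\langle\vec y,x\rangle:\forall u\in w\,\langle\vec y,u,x\rangle\in S\}$ is carved out by a single application of $\Delta_0^\mathcal{P}$-Separation.

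The one step I expect to require care is (d): a naive induction on total formula complexity fails, because re-encoding a formula over a Cartesian product in order to invoke $\Delta_0^\mathcal{P}$-Separation raises its syntactic complexity, so one must carry the strengthened ``uniform in a tuple of extra parameters'' hypothesis and only ever recurse into immediate subformulas, so that each step strictly simplifies the construction being analysed. In (a) the mildly delicate point is verifying that the range predicate in the proof of $\Sigma_1^\mathcal{P}$-Replacement is $\Pi_1^\mathcal{P}$ — and not merely $\Sigma_1^\mathcal{P}$ — relative to the bounding set $b$, which is exactly where $\Sigma_1^\mathcal{P}$-Collection is invoked a second time; everything else is routine bookkeeping with the Takahashi hierarchy, parallel to the classical $\mathrm{KP}$ arguments.
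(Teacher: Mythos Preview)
Your arguments are correct and are precisely the standard ones. The paper itself gives essentially no proof: it refers to \cite{Mat01} for (a) and declares (b)--(d) ``routine'', so there is nothing to compare against beyond noting that your sketches unpack exactly what that reference and that word would cover.

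A few minor remarks. In your proof of $\Sigma_1^\mathcal{P}$-Replacement, the $\Pi_1^\mathcal{P}$ side can be obtained more directly: since $\phi$ is functional, ``$v$ is the value of some $u\in x$'' is equivalent to $\exists u\in x\,\forall v'\,(\phi(u,v')\to v'=v)$, which is already $\Pi_1^\mathcal{P}$ without a second appeal to Collection. In (d), your strengthened induction hypothesis (carrying a tuple of extra bounded variables ranging over fixed sets $b_1,\dots,b_m$) is exactly the right device; one small point worth making explicit is that when the bounding variable $w$ in $\forall u\in w$ (or $\forall u\subseteq w$) is a parameter $p_j$ rather than one of $x,\vec{y}$, you should take $c=p_j$ (respectively $c=\mathcal{P}(p_j)$) directly, rather than lumping it into the union $\bigcup a\cup\bigcup b_1\cup\cdots$. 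Otherwise the argument is clean.
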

\begin{proof}
For (\ref{KPP-strong}) we refer to \cite{Mat01}. The others are routine.
\end{proof}

We shall now establish some basic closure properties in the Takahashi hierarchy. These will often be used without explicit reference to this proposition.

\begin{prop}\label{tak_clo} 
Take $\mathrm{KP}^\mathcal{P}$ as base theory. 
\begin{enumerate}[{\rm (a)}]
\item\label{tak_clo_D1} $\Delta_1^\mathcal{P}$ is closed under boolean connectives. Moreover, if $f$ is a $\Sigma_1^\mathcal{P}$-function and $\phi \in \Delta_1^\mathcal{P}$, then the formulae $\forall x \in f(y) . \phi$, $\exists x \in f(y) . \phi$, $\forall x \subseteq f(y) . \phi$ and $\exists x \subseteq f(y) . \phi$ are all in $\Delta_1^\mathcal{P}$.
\item\label{tak_clo_S1} $\Sigma_1^\mathcal{P}$ is closed under disjunction and conjunction. Moreover, if $f$ is a $\Sigma_1^\mathcal{P}$-function and $\phi \in \Sigma_1^\mathcal{P}$, then the formulae $\forall x \in f(y) . \phi$, $\exists x \in f(y) . \phi$, $\forall x \subseteq f(y) . \phi$ and $\exists x \subseteq f(y) . \phi$ are all in $\Sigma_1^\mathcal{P}$.
\item If $f$ is a $\Sigma_1^\mathcal{P}$-function, then it is a $\Delta_1^\mathcal{P}$-function.
\end{enumerate}

\end{prop}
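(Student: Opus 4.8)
The plan is to verify the three items by the standard syntactic manipulations that establish the analogous closure properties of the L\'evy hierarchy over $\mathrm{KP}$ (as in \cite{Bar75}); the one genuinely new ingredient is the treatment of $\mathcal{P}$-bounded quantifiers. For this, the key point is that, by the equivalence displayed in Section~\ref{tour KP}, the graph ``$y = \mathcal{P}(u)$'' is $\Delta_0^\mathcal{P}$ and, over $\mathrm{KP}^\mathcal{P}$, defines a total function; hence every occurrence of ``$\forall x \subseteq a \,.\, \phi$'' may be rewritten as ``$\forall x \in \mathcal{P}(a) \,.\, \phi$'' (and dually for $\exists$), which reduces each $\mathcal{P}$-bounded case to an $\in$-bounded case with $a$ replaced by $\mathcal{P}(a)$ --- or, in the ``moreover'' clauses, with $f$ replaced by $\mathcal{P} \circ f$, using that the composition of two $\Sigma_1^\mathcal{P}$-functions is again a $\Sigma_1^\mathcal{P}$-function (its graph being $\exists z\,(\gamma_f(y,z) \wedge \gamma_g(z,w))$).

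Two elementary observations are used throughout: (i) a block of like quantifiers in front of a $\hat\Delta_0^\mathcal{P}$ matrix contracts to a single one by the usual coding of tuples, so $\hat\Sigma_1^\mathcal{P}$ is closed under prefixing an $\exists$-block and $\hat\Pi_1^\mathcal{P}$ under prefixing a $\forall$-block; and (ii) after renaming bound variables apart, a quantifier prefix may be pulled out through $\wedge$ and $\vee$, so $\hat\Sigma_1^\mathcal{P}$ and $\hat\Pi_1^\mathcal{P}$ are each closed under $\wedge$ and $\vee$. Observation (ii) immediately gives the first sentence of the claim about $\Sigma_1^\mathcal{P}$, and --- together with the fact that the negation of a $\hat\Sigma_1^\mathcal{P}$ formula is $\hat\Pi_1^\mathcal{P}$ and vice versa, so that $\Delta_1^\mathcal{P}$ is closed under $\neg$ --- the closure of $\Delta_1^\mathcal{P}$ under all Boolean connectives.

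The core of the argument is the lemma that bounded quantification preserves both $\Sigma_1^\mathcal{P}$ and $\Pi_1^\mathcal{P}$. For $\phi \in \Sigma_1^\mathcal{P}$, written as $\exists u\,.\,\delta$ with $\delta \in \hat\Delta_0^\mathcal{P}$: ``$\exists x \in a\,.\,\phi$'' is $\Sigma_1^\mathcal{P}$ by commuting quantifiers and contracting, and ``$\forall x \in a\,.\,\phi$'' is $\Sigma_1^\mathcal{P}$ by $\Delta_0^\mathcal{P}$-Collection (an axiom of $\mathrm{KP}^\mathcal{P}$), which gives $\forall x \in a\,.\,\exists u\,.\,\delta \leftrightarrow \exists b\,.\,\forall x \in a\,.\,\exists u \in b\,.\,\delta$, with $\hat\Delta_0^\mathcal{P}$ matrix on the right. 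The $\Pi_1^\mathcal{P}$ cases follow by dualization: $\neg(\exists x \in a\,.\,\forall u\,.\,\delta) \equiv \forall x \in a\,.\,\exists u\,.\,\neg\delta$ is $\Sigma_1^\mathcal{P}$ by the case just treated, so ``$\exists x \in a\,.\,\forall u\,.\,\delta$'' is $\Pi_1^\mathcal{P}$; and ``$\forall x \in a\,.\,\forall u\,.\,\delta$'' is $\hat\Pi_1^\mathcal{P}$ by commuting and contracting. To replace the bound $a$ by $f(y)$ for a $\Sigma_1^\mathcal{P}$-function $f$ with graph $\gamma_f \in \Sigma_1^\mathcal{P}$, rewrite the quantified formula as $\exists z\,(\gamma_f(y,z) \wedge \psi)$, equivalently $\forall z\,(\gamma_f(y,z) \to \psi)$, where $\psi$ is the corresponding $z$-bounded formula: the first form, with closure under $\wedge$ and $\exists$-contraction, handles $\Sigma_1^\mathcal{P}$ conclusions, and the second, rewritten as $\forall z\,(\neg\gamma_f(y,z) \vee \psi)$ with $\neg\gamma_f \in \Pi_1^\mathcal{P}$, handles $\Pi_1^\mathcal{P}$ conclusions. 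Feeding the $\Sigma_1^\mathcal{P}$-presentation of $\phi$ through this yields the ``moreover'' clause about $\Sigma_1^\mathcal{P}$; feeding through both the $\Sigma_1^\mathcal{P}$- and the $\Pi_1^\mathcal{P}$-presentation of a $\Delta_1^\mathcal{P}$ formula yields the ``moreover'' clause about $\Delta_1^\mathcal{P}$.

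For the last item, that every $\Sigma_1^\mathcal{P}$-function is a $\Delta_1^\mathcal{P}$-function: if $\gamma_f(y,z) \in \Sigma_1^\mathcal{P}$ is its graph, then functionality and totality give $\mathrm{KP}^\mathcal{P} \vdash \gamma_f(y,z) \leftrightarrow \forall z'\,(\neg\gamma_f(y,z') \vee z' = z)$, whose right-hand side is a single $\forall$ over a disjunction of a $\hat\Pi_1^\mathcal{P}$- and a $\hat\Delta_0^\mathcal{P}$-formula, hence $\hat\Pi_1^\mathcal{P}$ by (i) and (ii); so $\gamma_f$ is $\Pi_1^\mathcal{P}$ as well as $\Sigma_1^\mathcal{P}$, i.e.\ $\Delta_1^\mathcal{P}$. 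I expect the only step that is not pure bookkeeping to be the $\Pi_1^\mathcal{P}$-half of the core lemma --- that bounded \emph{existential} quantification preserves $\Pi_1^\mathcal{P}$ --- which is obtained not directly but by dualizing the Collection argument above; everything else is manipulation of quantifier prefixes via Pairing and Powerset, paralleling \cite{Tak72} and the treatment of $\mathrm{KP}$.
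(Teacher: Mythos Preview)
Your proposal is correct and follows essentially the same route as the paper's proof: both use Collection to push a bounded universal through an existential, both obtain the $\Pi_1^\mathcal{P}$ cases by dualizing, and both handle function-bounded quantifiers via the pair of equivalences $\exists z\,(\gamma_f(y,z)\wedge\psi)\leftrightarrow\forall z\,(\gamma_f(y,z)\to\psi)$, together with the observation that a $\Sigma_1^\mathcal{P}$ functional graph is automatically $\Delta_1^\mathcal{P}$. The only cosmetic difference is that you reduce the $\subseteq$-bounded cases to $\in$-bounded ones by composing with the $\Delta_0^\mathcal{P}$ powerset function, whereas the paper treats $\forall u\subseteq x\,.\,\exists y\,.\,\phi$ directly (Powerset followed by Collection); mathematically these are the same step.
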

\begin{proof}
We follow the proof of Theorem 1 in \cite{Tak72}. Let $\phi$, $\psi$ be $\mathcal{L}^0$-formulae. By mere logic,
\begin{align*}
\neg \exists x . \phi(x) &\leftrightarrow \forall x . \neg \phi(x), \\
\exists x . \phi(x) \wedge \exists y . \psi(y) &\leftrightarrow \exists x . \exists y . (\phi(x) \wedge \phi(y)), \\
\exists x . \phi(x) \vee \exists y . \psi(y) &\leftrightarrow \exists z . (\phi(z) \vee \psi(z)). 
\end{align*}

Assume that $\phi$ and $\psi$ are in $\Sigma_1^\mathcal{P}$. By set-theory,
\begin{align*}
\exists x . \exists y . \phi(x, y) &\leftrightarrow \exists z . \exists x \in z . \exists y \in z . \phi(x, y), \\
\forall u \in x . \exists y . \phi(u, x, y) &\leftrightarrow \exists z . \forall u \in x . \exists y \in z . \phi(u, x, y), \\
\forall u \subseteq x . \exists y . \phi(u, x, y) &\leftrightarrow \exists z . \forall u \subseteq x . \exists y \in z . \phi(u, x, y). 
\end{align*}
In particular, the first equivalence follows from Pair, the second follows from $\Sigma_1^\mathcal{P}$-Collection, and the third follows from Powerset and $\Sigma_1^\mathcal{P}$-Collection.

Suppose that $f$ is defined by a $\Sigma_1^\mathcal{P}$-formula $F(x, y)$, such that $f(x) = y \leftrightarrow F(x, y)$. By considering the formula
\[
F'(x, y) \equiv \forall y\hspace{1pt}' . (F(x, y\hspace{1pt}') \rightarrow y = y\hspace{1pt}'),
\]
we see that $f$ is actually a $\Delta_1^\mathcal{P}$-function. Now note that by logic, for any $\phi \in \mathcal{L}^0$,
\begin{align*}
\exists x \in f(y) . \phi(x, y) &\leftrightarrow \exists z . (f(y) = z \wedge \exists x \in z . \phi(x, y)) \\
&\leftrightarrow \forall z . (f(y) = z \rightarrow \exists x \in z . \phi(x, y)).
\end{align*}
With all these equivalences at hand, the proposition is easily verified.
\end{proof}

The fact that $\mathrm{KP}^\mathcal{P}$ proves the existence of the $V$-hierarchy is very useful. For example, it enables the following result.

\begin{prop}\label{KPP found}
	For each $1 \leq k < \omega$, $\mathrm{KP}^\mathcal{P} + \Sigma_k^\mathcal{P} \textnormal{-Separation} \vdash \mathrm{B}_k^\mathcal{P} \textnormal{-Foundation}$.
\end{prop}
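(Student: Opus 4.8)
The plan is to reduce $\mathrm{B}_k^\mathcal{P}$-Foundation to the single set-foundation axiom together with $\mathrm{B}_k^\mathcal{P}$-Separation and the totality of the cumulative hierarchy. Note first that $\mathrm{KP}^\mathcal{P}$ already proves set-foundation, i.e. $\forall x . (\exists v . v \in x \rightarrow \exists y \in x . \forall v \in y . v \notin x)$: this is the instance of $\Pi_1^\mathcal{P}$-Foundation obtained by taking the parameterised $\Delta_0$-formula $\phi(v) \equiv v \in x$. Note also that, as established above, $\mathrm{KP}^\mathcal{P} \vdash \Sigma_k^\mathcal{P}\textnormal{-Separation} \leftrightarrow \mathrm{B}_k^\mathcal{P}\textnormal{-Separation}$, so under the hypothesis we have $\mathrm{B}_k^\mathcal{P}$-Separation at our disposal.

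First I would fix $\phi \in \mathrm{B}_k^\mathcal{P}$ (with suppressed parameters) and assume $\phi(a)$ for some $a$. Since $\mathrm{KP}^\mathcal{P}$ proves that the rank function is total and that $\alpha \mapsto V_\alpha$ is total on the ordinals, I can set $\alpha = \mathrm{rank}(a) + 1$ and form the supertransitive set $V_\alpha$, which contains $a$. Applying $\mathrm{B}_k^\mathcal{P}$-Separation then yields a set $S = \{x \in V_\alpha \mid \phi(x)\}$, and $S$ is non-empty because $a \in S$.

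Next I would apply set-foundation to $S$, obtaining $y \in S$ with $y \cap S = \varnothing$. It remains to check that this $y$ witnesses $\mathrm{B}_k^\mathcal{P}$-Foundation for $\phi$: we have $\phi(y)$ since $y \in S$, and if $v \in y$ then $\mathrm{rank}(v) < \mathrm{rank}(y) < \alpha$, so $v \in V_\alpha$; hence $v \notin S$ (as $y \cap S = \varnothing$) forces $\neg\phi(v)$. Thus $\phi(y) \wedge \forall v \in y . \neg\phi(v)$, which is exactly the conclusion required.

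I do not anticipate a real obstacle; the only points needing care are that $V_\alpha$ genuinely exists as a set — which is where $\mathrm{KP}^\mathcal{P}$ rather than $\mathrm{KP}$ is used, via Powerset and $\Sigma_1^\mathcal{P}$-Recursion giving totality of $\alpha \mapsto V_\alpha$ — and that the class defined by $\phi$ is one for which Separation applies, which is handled by the $\Sigma_k^\mathcal{P} \leftrightarrow \mathrm{B}_k^\mathcal{P}$ translation. (One could equally separate inside $\mathrm{TC}(\{a\})$ or $\mathrm{STC}(\{a\})$ instead of $V_\alpha$, using only transitivity of the closure, but the $V_\alpha$ version is the most transparent.)
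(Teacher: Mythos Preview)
Your proof is correct and follows essentially the same approach as the paper: both form $S = \{x \in V_{\rnk(a)+1} \mid \phi(x)\}$ via $\mathrm{B}_k^\mathcal{P}$-Separation and then extract a minimal witness. The only cosmetic difference is that you apply set-foundation directly to $S$, whereas the paper first forms the set $R$ of ranks of elements of $S$ and picks an element of least rank; your route is slightly more direct but the content is the same.
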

\begin{proof}
	Recall that $\Sigma_k^\mathcal{P} \textnormal{-Separation}$ implies $\mathrm{B}_k^\mathcal{P} \textnormal{-Separation}$. 
	Let $\phi(x) \in \mathrm{B}_k^\mathcal{P}[x]$. Suppose there is $a$ such that $\phi(a)$. By $\mathrm{B}_k^\mathcal{P}$-Separation, let 
	$$A = \{ x \in V_{\rnk(a) + 1} \mid \phi(x) \},$$
	and note that $a \in A.$
	By $\Sigma_1$-Separation, let 
	\[R = \{ \xi < \rnk(a) + 1 \mid \exists x \in A . \rnk(x) = \xi \} . \]
	Since $R$ is a non-empty set of ordinals, it has a least element $\rho$. Let $a' \in A$ such that $\rnk(a') = \rho$. Then we have $\forall x \in a . \neg \phi(x)$, as desired.
\end{proof}

\section{ZFC, GBC and ``Ord is weakly compact''}\label{ZFC and GBC}

A set $s$ is {\em ordinal definable} if there is $n \in \mathbb{N}$, ordinals $\alpha, \beta_1, \dots, \beta_n$, and a formula $\phi$, for which $s$ is the unique set such that $(V_\alpha, \in\restriction_{V_\alpha}) \models \phi(s, \beta_1, \dots, \beta_n)$. $\mathrm{OD}$ is defined as the class of ordinal definable sets. A set $t$ is {\em hereditarily ordinal definable} if every element in the transitive closure of $\{t\}$ is ordinal definable. $\mathrm{HOD}$ is defined as the class of hereditarily ordinal definable sets.

$\mathrm{ZFC}$ plus the axiom
\[
\begin{array}{ll}
V = \mathrm{HOD} & \textnormal{``every set is hereditarily ordinal definable'',}
\end{array}
\]
has definable Skolem functions.

If $\mathcal{L}$ is an expansion of $\mathcal{L}^0$ with more symbols, then $\mathrm{ZF}(\mathcal{L})$ denotes the theory 
$$\mathrm{ZF} + \mathcal{L} \textnormal{-Separation} + \mathcal{L} \textnormal{-Replacement},$$ 
by which is meant that the schemata of Separation and Replacement are extended to all formulae in $\mathcal{L}$. $\mathrm{ZFC}(\mathcal{L})$ is defined analogously.

We will also consider the axiom ``$\mathrm{Ord}$ is weakly compact'', in the context of $\mathrm{GBC}$. It is defined as ``Every binary tree of height $\mathrm{Ord}$ has a branch.'' The new notions used in the definiens are now to be defined. Let $\alpha$ be an ordinal. A {\em binary tree} is a (possibly class) structure $\mathcal{T}$ with a binary relation $<_\mathcal{T}$, such that:
\begin{enumerate}[(i)]
	\item Every element of $\mathcal{T}$ (called a {\em node}) is a function from an ordinal to $2$;
	\item For every $f \in \mathcal{T}$ and every ordinal $\xi < \dom(f)$, we have $f\restriction_\xi \in \mathcal{T}$;
	\item For all $f, g \in \mathcal{T}$, 
	\[
	f <_\mathcal{T} g \Leftrightarrow \dom(f) < \dom(g) \wedge g\restriction_{\dom(f)} = f;
	\]
\end{enumerate}
Suppose that $\mathcal{T}$ is a binary tree. The {\em height} of $\mathcal{T}$, denoted $\mathrm{height}(\mathcal{T})$, is $\{ \xi \mid \exists f \in \mathcal{T} . \dom(f) = \xi \}$ (which is either an ordinal or the class $\mathrm{Ord}$). A {\em branch} in $\mathcal{T}$ is a (possibly class) function $F : \mathrm{Ord} \rightarrow \mathcal{T}$, such that for all ordinals $\xi \in \mathrm{height}(\mathcal{T})$, $\dom(F(\alpha)) = \alpha$, and for all ordinals $\alpha < \beta \in \mathrm{height}(\mathcal{T})$, $F(\alpha) <_\mathcal{T} F(\beta)$. Moreover, for each ordinal $\alpha \in \mathrm{height}(\mathcal{T})$, we define 
\[
\mathcal{T}_\alpha =_\df \mathcal{T}\restriction_{\{ f \in \mathcal{T} \mid \dom(f) < \alpha \}}.
\]

\begin{prop}\label{GBC weakly compact global choice}
$\mathrm{GBC} + \textnormal{``$\mathrm{Ord}$ is weakly compact''} \vdash \textnormal{Global Choice}$, where
\[
\begin{array}{ll}
	\textnormal{Global Choice} & \exists F . ( (F : V \setminus \{\varnothing\} \rightarrow V) \wedge \forall x \in V \setminus \{\varnothing\} . F(x) \in x ). \\
\end{array}
\]
\end{prop}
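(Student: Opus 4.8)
The plan is to derive Global Choice from the existence of a \emph{class} well-ordering of $V$: this reduction is immediate, since if $<$ well-orders $V$ then $x \mapsto (<\textnormal{-least element of }x)$ is a global choice function, definable from $<$. To build such a well-ordering, call a well-ordering $<^*$ of $V_\alpha$ \emph{rank-monotone} if $\rnk(x) < \rnk(y)$ implies $x <^* y$. The axiom of Choice for sets (part of $\mathrm{GBC}$) gives, for each ordinal $\alpha$, a rank-monotone well-ordering of $V_\alpha$: well-order $V_\alpha$ and re-sort its elements by rank. The point of rank-monotonicity is a coherence property: if $\alpha \leq \beta$ and $<^*$ is a rank-monotone well-ordering of $V_\beta$, then $V_\alpha$ is a $<^*$-initial segment of $V_\beta$ (everything in $V_\beta \setminus V_\alpha$ has rank $\geq \alpha$, hence lies $<^*$-above all of $V_\alpha$) and $<^*\restriction_{V_\alpha}$ is rank-monotone on $V_\alpha$; conversely every rank-monotone well-ordering of $V_\alpha$ arises this way. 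Hence the class $T$ of all rank-monotone well-orderings of the various $V_\alpha$, ordered by end-extension, is a tree; its $\alpha$-th level (the rank-monotone well-orderings of $V_\alpha$) is a set and is non-empty, so $T$ has height $\mathrm{Ord}$.

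Next I would extract a branch through $T$ using ``$\mathrm{Ord}$ is weakly compact''. Since that axiom is stated for \emph{binary} class trees, the plan is first to recast $T$ as a binary tree of height $\mathrm{Ord}$: fixing a $\mathrm{GB}$-definable class bijection $\mathrm{Ord} \cong \mathrm{Ord} \times \mathrm{Ord}$ (the G\"odel pairing) and, for each $\alpha$, a well-ordering of $V_\alpha$ from Choice, one codes each rank-monotone well-ordering of $V_\alpha$ by a $\{0,1\}$-valued function on an ordinal in a way that respects end-extension, and closes the resulting class of codes under restrictions, obtaining a binary tree $\widehat{T}$ of height $\mathrm{Ord}$ whose branches decode to branches of $T$. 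Applying the axiom to $\widehat{T}$ and decoding yields a branch $\langle <^*_\alpha : \alpha \in \mathrm{Ord}\rangle$ of $T$, i.e.\ a chain of rank-monotone well-orderings with $<^*_\alpha = {<^*_\beta}\restriction_{V_\alpha}$ for $\alpha \leq \beta$. Then $<^* := \bigcup_{\alpha \in \mathrm{Ord}} <^*_\alpha$ well-orders $V = \bigcup_{\alpha \in \mathrm{Ord}} V_\alpha$, each $V_\alpha$ is a $<^*$-initial segment, so every proper initial segment of $(V,<^*)$ is a set; thus $<^*$ is a class well-ordering of $V$, and Global Choice follows.

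The main obstacle is the recasting of $T$ as a genuine binary class tree of height $\mathrm{Ord}$ — equivalently, deriving from the binary formulation of weak compactness the tree property for arbitrary class trees with set-sized levels — together with the check that the branch produced reassembles into a well-ordering of \emph{all} of $V$, i.e.\ that the levels of $T$ at which the branch is ``witnessed'' by the binary coding are cofinal in $\mathrm{Ord}$ rather than merely unbounded below some fixed ordinal. The remaining ingredients — existence of rank-monotone well-orderings, their coherence under restriction, and the fact that a directed union along a branch of $T$ is a class well-ordering of $V$ — are routine.
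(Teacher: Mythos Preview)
Your proposal is correct and follows essentially the same approach as the paper's proof-sketch: form a class tree of well-orderings of the $V_\alpha$, extract a branch via ``$\mathrm{Ord}$ is weakly compact'', and read off Global Choice. The paper's sketch uses arbitrary well-orderings of $V_\alpha$ ordered by $P \leq Q \Leftrightarrow P = Q\restriction_{\dom(P)}$ and goes directly from a branch to a choice function (for nonempty $x$, take the $<_{\rnk(x)+1}$-least element of $x$), so it does not need rank-monotonicity; your variant restricts to rank-monotone well-orderings in order to pass through a global well-ordering of $V$, which is a harmless detour yielding an equivalent conclusion. Your explicit treatment of the binary-tree encoding fills in a step the paper's sketch leaves implicit.
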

\begin{proof}[Proof-sketch]
Consider the class tree of well-orderings of $V_\alpha$, for all ordinals $\alpha$: The well-orderings are ordered by $P \leq Q \Leftrightarrow_\df P = Q\restriction_{\dom(P)}$. Using $\textnormal{``$\mathrm{Ord}$ is weakly compact''}$, one can show that this tree has a branch. From this branch a class-function witnessing global choice can be constructed.
\end{proof}

In \cite{Ena04} it is shown that the $\mathcal{L}^0$-consequences of $\mathrm{GBC} + \textnormal{``$\mathrm{Ord}$ is weakly compact''}$ are the same as for $\mathrm{ZFC} + \Phi$, where 
$$\Phi = \{ \exists \kappa . \textnormal{``$\kappa$ is $n$-Mahlo and $V_\kappa \prec_{\Sigma_n} V$''} \mid n \in \mathbb{N}\}.$$ 
In particular, they are equiconsistent. Let us therefore define $n$-Mahlo and explain why $V_\kappa \prec_{\Sigma_n} V$ can be expressed as a sentence. 

$V_\kappa \prec_{\Sigma_n} V$ is expressed by a sentence saying that for all $\Sigma_n$-formulae $\phi(\vec{x})$ of set theory and for all $\vec{a} \in V_\kappa$ matching the length of $\vec{x}$, we have $((V_\kappa, \in\restriction_{V_\kappa}) \models \phi(\vec{a})) \leftrightarrow \mathrm{Sat}_{\Sigma_n}(\phi, \vec{a})$. Here we utilize the partial satisfaction relations $\mathrm{Sat}_{\Sigma_n}$, introduced to set theory in \cite{Lev65}.

In contrast to the result above, Enayat has communicated to the author that there are countable models of $\mathrm{ZFC} + \Phi$ that do not expand to models of $\mathrm{GBC} + \textnormal{``$\mathrm{Ord}$ is weakly compact''}$. In particular, such is the fate of {\em Paris models}, i.e. models of $\mathrm{ZFC}$ each of whose ordinals is definable in the model. An outline of a proof: Let $\mathcal{M}$ be a Paris model. There is no ordinal $\alpha$ in $\mathcal{M}$, such that $\mathcal{M}_\alpha \prec \mathcal{M}$, where $\mathcal{M}_\alpha =_\df ((V_\alpha^\mathcal{M})_\mathcal{M}, \in^\mathcal{M}\restriction_{(V_\alpha^\mathcal{M})_\mathcal{M}})$, because that would entail that a correct satisfaction relation is definable in $\mathcal{M}$ contradicting Tarski's well-known theorem on the undefinability of truth. Suppose that $\mathcal{M}$ expands to a model $(\mathcal{M}, \mathcal{A}) \models \mathrm{GBC} + \textnormal{``$\mathrm{Ord}$ is weakly compact''}$. Then by the proof of Theorem 4.5(i) in \cite{Ena01}, $\mathcal{M}$ has a safe satisfaction relation $\mathrm{Sat} \in \mathcal{A}$ (see the definition preceding Lemma \ref{rec sat reflection}) below. But then there is, by Lemma \ref{rec sat reflection}, unboundedly many ordinals $\alpha$ in $\mathcal{M}$ such that $\mathcal{M}_\alpha \prec \mathcal{M}$.

Moreover, it is shown in \cite{EH17} that for every model $\mathcal{M} \models \mathrm{ZFC}$ and the collection $\mathcal{A}$ of definable subsets of $\mathcal{M}$, we have $(\mathcal{M}, \mathcal{A}) \models \mathrm{GBC} + \neg \textnormal{``$\mathrm{Ord}$ is weakly compact''}$.

\section{Non-standard models of set theory}\label{Models of set theory}

This section contains basic material on non-standard models of set theory. An expanded version of this section with more proofs is found as \S 4.6 of \cite{Gor18}.

If $\mathcal{M}$ is an $\mathcal{L}^0$-structure, and $a$ is a set or class in $\mathcal{M}$, then the {\em externalization} of $a$, denoted $a_\mathcal{M}$, is defined as $\{x \in \mathcal{M} \mid x \in^\mathcal{M} a\}$. If $E = \in^\mathcal{M}$, then the notation $a_E$ is also used for $a_\mathcal{M}$. If $f \in \mathcal{M}$ and $\mathcal{M} \models \text{``$f$ is a function''}$, then we say that $f$ is a {\em function internal} to $\mathcal{M}$. If $f \in \mathcal{M}$ is a function internal to $\mathcal{M}$, then $f_\mathcal{M}$ also denotes the {\em externalization} of this function: $\forall x, y \in \mathcal{M} . (f_\mathcal{M}(x) = y \leftrightarrow \mathcal{M} \models f\hspace{2pt}(x) = y)$. Moreover, if $a \in \mathcal{M}$ codes a structure internal to $\mathcal{M}$, then $a_\mathcal{M}$ also denotes the {\em externalization} of this structure; in particular, if $R$ codes a relation in $\mathcal{M}$, then $\forall x, y \in \mathcal{M} . ( x R_\mathcal{M} y \leftrightarrow \mathcal{M} \models \langle x, y \rangle \in R)$. For example, recall that $\mathbb{N}^\mathcal{M}$ denotes the interpretation of $\mathbb{N}$ in $\mathcal{M}$ (assuming that $\mathcal{M}$ satisfies that the standard model of arithmetic exists); then $\mathbb{N}^\mathcal{M}_\mathcal{M}$ denotes the externalization of this model (which might be non-standard). 

Let $\mathcal{M} \models \mathrm{KP}$. Then every element of $\mathcal{M}$ has a rank; so for any $\alpha \in \mathrm{Ord}^\mathcal{M}$, we can define 
\[
\mathcal{M}_\alpha =_\df \{m \in \mathcal{M} \mid \mathcal{M} \models \rnk(m) < \alpha\}.
\]
We say that an embedding $i : \mathcal{S} \rightarrow \mathcal{M}$ of an $\mathcal{L}^0$-structure $\mathcal{S}$ into $\mathcal{M}$ is {\em bounded} ({\em by} $\alpha \in \mathrm{Ord}^\mathcal{M}$) if $i(\mathcal{S}) \subseteq \mathcal{M}_\alpha$.

\begin{dfn}
Let $\mathcal{M} = (M, E)$ be a model in $\mathcal{L}^0$. It is {\em standard} if $E$ is well-founded. Assume that $\mathcal{M}$ is a model of $\mathrm{KP}$. Then the usual rank-function $\rnk : M \rightarrow \mathrm{Ord}^\mathcal{M}$ is definable in $\mathcal{M}$. Therefore $\mathcal{M}$ is non-standard iff $E \restriction_{\mathrm{Ord}^\mathcal{M}}$ is not well-founded. $m \in \mathcal{M}$ is {\em standard in} $\mathcal{M}$ if $\mathcal{M}\restriction_{m_E}$ is standard. 
\begin{itemize}
\item The {\em ordinal standard part} of $\mathcal{M}$, denoted $\mathrm{OSP}(\mathcal{M})$, is defined:
\[
\mathrm{OSP}(\mathcal{M}) =_\df \{\alpha \in \mathcal{M} \mid \text{``}\alpha \text{ is a standard ordinal of $\mathcal{M}$''}\}. 
\]
We say that $\mathcal{M}$ is {\em $\omega$-non-standard} if $\mathrm{OSP}(\mathcal{M}) = \omega$.
\item The {\em well-founded part} of $\mathcal{M}$, denoted $\mathrm{WFP}(\mathcal{M})$, is the substructure of $\mathcal{M}$ on the elements of standard rank:
\[
\mathrm{WFP}(\mathcal{M}) =_\df \mathcal{M}\restriction_{\{x \in \mathcal{M} \mid \text{``$x$ is standard in $\mathcal{M}$''}\}}.
\]
\item A set of the form $c_E \cap A$, where $c \in M$ and $A \subseteq M$, is said to be a subset of $A$ {\em coded in} $\mathcal{M}$. This notion is extended in the natural way to arbitrary injections into $M$. We define:
\[
\mathrm{Cod}_A(\mathcal{M}) =_\df \{c_E \cap A \mid c \in M\}.
\]
\item The {\em standard system of $\mathcal{M}$ over $A \subseteq \mathcal{M}$}, denoted $\mathrm{SSy}_A(\mathcal{M})$, is obtained by expanding $\mathcal{M}\restriction_A$ to an $\mathcal{L}^1$-structure, adding $\mathrm{Cod}_A(\mathcal{M})$ as classes:
\begin{align*}
\mathrm{SSy}_A(\mathcal{M}) &=_\df (\mathcal{M}\restriction_A, \mathrm{Cod}_A(\mathcal{M})), \\
x \in^{\mathrm{SSy}_A(\mathcal{M})} C &\Leftrightarrow_\df x \in^\mathcal{M} c, 
\end{align*}
for any $x \in A$, $c \in \mathcal{M}$ and $C \in \mathrm{Cod}_A(\mathcal{M})$, such that $C = c_E \cap A$.

Moreover, we define 
\[
\mathrm{SSy}(\mathcal{M}) =_\df \mathrm{SSy}_{\mathrm{WFP}(\mathcal{M})}(\mathcal{M}).
\]
\end{itemize}
\end{dfn}

\begin{dfn}\label{special embeddings}
Let $i : \mathcal{M} \rightarrow \mathcal{N}$ be an embedding of models of $\mathrm{KP}$.
\begin{itemize}
\item $i$ is {\em initial}, if
\[
\forall m \in \mathcal{M} . \forall n \in \mathcal{N} . (n \in^\mathcal{N} i(m) \rightarrow n \in i(\mathcal{M})).
\]
This is equivalent to:
\[
\forall m \in \mathcal{M} . i(m_\mathcal{M}) = i(m)_\mathcal{N}.
\]
\item $i$ is {\em $\mathcal{P}$-initial} (or {\em power-initial}), if it is initial and {\em powerset preserving} in the sense:
\[
\forall m \in \mathcal{M} . \forall n \in \mathcal{N} . (n \subseteq^\mathcal{N} m \rightarrow n \in i(\mathcal{M})).
\]
\item $i$ is {\em rank-initial}, if 
\[
\forall m \in \mathcal{M} . \forall n \in \mathcal{N} . (\rnk^\mathcal{N}(n) \leq^\mathcal{N} \rnk^\mathcal{N}(i(m)) \rightarrow n \in i(\mathcal{M})).
\]
\item $i$ is {\em topless}, if it is bounded and
\[
\forall \beta \in \mathrm{Ord}^\mathcal{N} \setminus i(\mathcal{M}) . \exists \beta' \in \mathrm{Ord}^\mathcal{N} \setminus i(\mathcal{M}) . \beta' <^\mathcal{N} \beta. 
\]
\item $i$ is {\em strongly topless}, if it is bounded and for each $f \in \mathcal{N}$ with $\alpha, \beta \in \mathrm{Ord}^\mathcal{N}$ satisfying
\[
(\mathcal{N} \models f : \alpha \rightarrow \beta) \wedge \alpha_\mathcal{N} \supseteq i(\mathrm{Ord}^\mathcal{M}),
\]
there is $\nu \in \mathrm{Ord}^\mathcal{N} \setminus i(\mathcal{M})$ such that for all $\xi \in i(\mathrm{Ord}^\mathcal{M})$,
\[
f_\mathcal{N}(\xi) \not\in i(\mathcal{M}) \Leftrightarrow \mathcal{N} \models f\hspace{2pt}(\xi) > \nu. 
\]
\item $i$ is $\omega${\em -topless}, if it is bounded and not $\omega${\em -coded from above}, meaning that for each $f \in \mathcal{N}$ with $\alpha, \beta \in \mathrm{Ord}^\mathcal{N}$ satisfying
\[
(\mathcal{N} \models f : \alpha \rightarrow \beta) \wedge \alpha_\mathcal{N} \supseteq \omega \wedge \forall k < \omega . f_\mathcal{N}(k) \in \mathrm{Ord}^\mathcal{N} \setminus i(\mathcal{M}),
\]
there is $\nu \in \mathrm{Ord}^\mathcal{N} \setminus i(\mathcal{M})$ such that $\nu <^\mathcal{N} f_\mathcal{N}(k)$, for all $k < \omega$.
\end{itemize}

The notions of initiality are often combined with some notion of toplessness, yielding notions of {\em cut}. In particular, an embedding $i$ is a {\em rank-cut} if it is topless and rank-initial, and $i$ is a {\em strong rank-cut} if it is strongly topless and rank-initial.
\end{dfn}

It is easily seen that if $i$ is rank-initial and proper, then it is bounded, so the first condition of toplessness is satisfied. Moreover, we immediately obtain the following implications:
\[
\text{$i$ is initial } \Leftarrow \text{$i$ is $\mathcal{P}$-initial $\Leftarrow$ $i$ is rank-initial},
\]
\[
\text{$i$ is topless} \Leftarrow \text{$i$ is $\omega$-topless} \Leftarrow \text{$i$ is strongly topless}.
\]

Proofs of the following results are found in \S 4.6 of \cite{Gor18}. 

\begin{lemma}[$\Sigma_n^\mathcal{P}$-Overspill]\label{overspill}
Let $n \in \mathbb{N}$ and suppose that $\mathcal{S}$ is a rank-cut of $\mathcal{M} \models \mathrm{KP}^\mathcal{P} + \Pi_n^\mathcal{P} \textnormal{-Foundation}$, that $m \in \mathcal{M}$, and that $\sigma(x, y) \in \Sigma_n^\mathcal{P}[x, y]$. If for every ordinal $\xi \in \mathcal{S}$, there is an ordinal $\xi <^\mathcal{M} \zeta \in \mathcal{S}$ such that $\mathcal{M} \models \sigma(\zeta, m)$, then  for every ordinal $\nu \in \mathcal{M} \setminus \mathcal{S}$ there is an ordinal $\nu >^\mathcal{M} \mu \in \mathcal{M} \setminus \mathcal{S}$, such that $\mathcal{M} \models \sigma(\mu, m)$.
\end{lemma}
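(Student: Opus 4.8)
The plan is a standard overspill argument by contradiction, turning on a single application of $\Pi_n^\mathcal{P}$-Foundation inside $\mathcal{M}$. First I would assume the conclusion fails and fix a witness: an ordinal $\nu \in \mathcal{M}\setminus\mathcal{S}$ such that $\mathcal{M}\models\neg\sigma(\mu,m)$ for every ordinal $\mu <^\mathcal{M}\nu$ with $\mu\notin\mathcal{S}$. Two facts about the cut will be used repeatedly: since $\mathcal{S}$ is a rank-cut, the ordinals of $\mathcal{M}\setminus\mathcal{S}$ have no least element, so in particular there is an ordinal $\nu_1 \in\mathcal{M}\setminus\mathcal{S}$ with $\nu_1 <^\mathcal{M}\nu$; and since $\mathcal{S}$ is rank-initial, every ordinal of $\mathcal{S}$ is $<^\mathcal{M}$ every ordinal of $\mathcal{M}\setminus\mathcal{S}$.

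The core of the proof is to apply Foundation to the formula
\[
\chi(\xi) \;\equiv\; \xi\in\mathrm{Ord} \;\wedge\; \xi <^\mathcal{M}\nu \;\wedge\; \forall\eta\in\nu . (\xi\leq\eta \rightarrow \neg\sigma(\eta,m)),
\]
with parameters $\nu$ and $m$. Because $\neg\sigma$ is $\Pi_n^\mathcal{P}$ and $\forall\eta\in\nu$ is a genuine bounded quantifier, the closure properties of the Takahashi hierarchy (the $\Pi_n^\mathcal{P}$-counterparts of Proposition \ref{tak_clo}) show that $\chi\in\Pi_n^\mathcal{P}$ relative to $\mathrm{KP}^\mathcal{P}$ --- and for $n=0$ it is outright $\Delta_0^\mathcal{P}$, so the $\Pi_1^\mathcal{P}$-Foundation built into $\mathrm{KP}^\mathcal{P}$ already suffices. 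The failure assumption makes $\chi(\xi)$ true for every ordinal $\xi$ strictly between $\mathcal{S}$ and $\nu$, in particular for $\nu_1$; hence $\mathcal{M}\models\exists\xi.\chi(\xi)$, and $\Pi_n^\mathcal{P}$-Foundation yields a least ordinal $\xi_0$ with $\mathcal{M}\models\chi(\xi_0)$.

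The remaining step locates $\xi_0$. If $\xi_0$ lay outside $\mathcal{S}$, the rank-cut property would produce a strictly smaller ordinal $\xi_1\in\mathcal{M}\setminus\mathcal{S}$, still $<^\mathcal{M}\nu$, for which $\chi(\xi_1)$ again holds --- contradicting the minimality of $\xi_0$. So $\xi_0\in\mathcal{S}$. Then I would apply the hypothesis to this ordinal $\xi_0\in\mathcal{S}$, obtaining $\zeta\in\mathcal{S}$ with $\xi_0 <^\mathcal{M}\zeta$ and $\mathcal{M}\models\sigma(\zeta,m)$; since $\zeta\in\mathcal{S}$ and $\nu\notin\mathcal{S}$, rank-initiality gives $\zeta <^\mathcal{M}\nu$, i.e.\ $\zeta\in\nu$, so instantiating $\chi(\xi_0)$ at $\eta=\zeta$ yields $\mathcal{M}\models\neg\sigma(\zeta,m)$ --- the desired contradiction.

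I expect the only genuinely delicate point to be the complexity bookkeeping for $\chi$: one must be sure that neither the $\Delta_0^\mathcal{P}$-antecedent $\xi\leq\eta$ nor the bounded quantifier $\forall\eta\in\nu$ pushes $\neg\sigma$ out of $\Pi_n^\mathcal{P}$, which is exactly what the Takahashi closure lemmas deliver. Everything else --- the selection of $\nu_1$, the claim $\xi_0\in\mathcal{S}$, and the final instantiation --- is routine unwinding of the definitions of rank-cut and rank-initial substructure.
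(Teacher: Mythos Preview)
Your argument is correct. The paper does not prove this lemma in the text; it defers to \S 4.6 of the author's thesis \cite{Gor18}. The argument you give is the standard one and is almost certainly what appears there: apply $\Pi_n^\mathcal{P}$-Foundation to the $\Pi_n^\mathcal{P}$-formula asserting that $\xi$ is an ordinal below $\nu$ above which $\sigma$ fails throughout $[\xi,\nu)$, and derive a contradiction from the rank-cut property. Your complexity bookkeeping is right --- the only nontrivial point is that $\Pi_n^\mathcal{P}$ is closed under bounded universal quantification, which follows (for $n\geq 1$) by pushing the bounded $\forall$ past the leading unbounded $\forall$ and invoking the closure of $\Sigma_{n-1}^\mathcal{P}$ under bounded universals from Proposition~\ref{tak_clo}; the $n=0$ case is immediate.
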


\begin{prop}\label{elem initial is rank-initial}
	Let $\mathcal{M} \models \mathrm{KP}^\mathcal{P}$ and suppose that $i : \mathcal{M} \rightarrow \mathcal{N}$ is an elementary embedding. Then $i$ is initial if, and only if, it is rank-initial.
\end{prop}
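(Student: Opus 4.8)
\medskip

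\noindent\textbf{Proof proposal.} The implication ``rank-initial $\Rightarrow$ initial'' is already recorded in the chain of implications stated immediately after Definition \ref{special embeddings}, so the whole task is the converse: assuming $i$ is initial (and elementary), deduce that $i$ is rank-initial. The plan is to exploit the fact, noted above, that $\mathrm{KP}^\mathcal{P}$ proves the function $\alpha \mapsto V_\alpha$ to be total on the ordinals, so that inside a model of $\mathrm{KP}^\mathcal{P}$ ``having rank $\leq \rnk(m)$'' is the same as ``being a member of the single set $V_{\rnk(m)+1}$''. This reduces catching all low-rank elements of $\mathcal{N}$ to catching the members of one value of $i$, which is exactly what initiality delivers.

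First I would observe that, since $i$ is elementary and $\mathcal{M} \models \mathrm{KP}^\mathcal{P}$, also $\mathcal{N} \models \mathrm{KP}^\mathcal{P}$, and that $i$ commutes with every function that is provably total and definable over $\mathrm{KP}^\mathcal{P}$; in particular it commutes with $x \mapsto \rnk(x)$, with $\xi \mapsto \xi + 1$, and with $\alpha \mapsto V_\alpha$, hence with the composite $x \mapsto V_{\rnk(x)+1}$. Indeed, if $\psi(x,y)$ is the defining formula of such a function $t$, then $\mathcal{M} \models \psi(m, t^\mathcal{M}(m))$ gives $\mathcal{N} \models \psi(i(m), i(t^\mathcal{M}(m)))$ by elementarity, and uniqueness of the value in $\mathcal{N}$ (which models $\mathrm{KP}^\mathcal{P}$) forces $i(t^\mathcal{M}(m)) = t^\mathcal{N}(i(m))$. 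This is the only step demanding any care, and it is entirely routine.

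Now for the main step: fix $m \in \mathcal{M}$ and $n \in \mathcal{N}$ with $\rnk^\mathcal{N}(n) \leq^\mathcal{N} \rnk^\mathcal{N}(i(m))$; I must show $n \in i(\mathcal{M})$. Set $v := V^\mathcal{M}_{\rnk^\mathcal{M}(m)+1}$, a well-defined element of $\mathcal{M}$ by totality of $\alpha \mapsto V_\alpha$ in $\mathcal{M}$. By the commutation just discussed, $i(v) = V^\mathcal{N}_{\rnk^\mathcal{N}(i(m))+1}$. Since $\mathrm{KP}^\mathcal{P}$ proves $\forall z\,\forall \alpha\,(z \in V_{\alpha+1} \leftrightarrow \rnk(z) \leq \alpha)$ and $\mathcal{N} \models \mathrm{KP}^\mathcal{P}$, the hypothesis on ranks gives $n \in^\mathcal{N} i(v)$. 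Finally, applying the defining property of initiality of $i$ to the element $v \in \mathcal{M}$, from $n \in^\mathcal{N} i(v)$ we conclude $n \in i(\mathcal{M})$, which is precisely what rank-initiality requires.

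I do not anticipate any genuine obstacle in this argument: its entire weight rests on the availability of the $V$-hierarchy in $\mathrm{KP}^\mathcal{P}$ (as opposed to $\mathrm{KP}$, where $\alpha \mapsto V_\alpha$ need not be total), together with the standard fact that elementary embeddings respect provably total definable operations. The one place to be slightly careful is precisely that commutation of $i$ with $\rnk$ and with $\alpha \mapsto V_\alpha$.
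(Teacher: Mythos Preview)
Your argument is correct and is essentially the approach the paper takes: although the paper defers a direct proof of this proposition to \cite{Gor18}, the same idea appears explicitly in the proof of Corollary~\ref{rank-init equivalences in KPP} (\textnormal{(c)} $\Rightarrow$ \textnormal{(a)}), where one uses that $i$ preserves $\alpha \mapsto V_\alpha$ to trap any $n$ of rank at most $\rnk^\mathcal{N}(i(m))$ inside $i(V^\mathcal{M}_{\rnk(m)+1})$ and then invokes initiality. Your observation that elementarity forces $i$ to commute with the $\mathrm{KP}^\mathcal{P}$-definable total operations $\rnk$ and $\alpha \mapsto V_\alpha$ is exactly what supplies condition \textnormal{(c)} of that corollary.
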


\begin{prop}\label{WFP rank-initial topless}
Let $\mathcal{M} \models \mathrm{KP}$. An element $m_0 \in \mathcal{M}$ is standard iff $\rnk^\mathcal{M}(m_0)$ is standard. If $\mathcal{M}$ is non-standard, then $\mathrm{WFP}(\mathcal{M})$ is a rank-cut of $\mathcal{M}$.
\end{prop}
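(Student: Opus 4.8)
The plan is to prove the two claims in turn. For the first claim, I would show that the rank of a set "detects" non-standardness faithfully: given $m_0 \in \mathcal{M} \models \mathrm{KP}$, if $\rnk^\mathcal{M}(m_0)$ is standard, then there is a genuine (external) ordinal $\alpha$ and an isomorphism between $\mathcal{M}\restriction_{(\rnk^\mathcal{M}(m_0))_E}$ (as an $\in$-structure with ordinal-indexed rank stratification) and a transitive set, and this isomorphism restricts to exhibit $\mathcal{M}\restriction_{m_{0,E}}$ as well-founded, since every element of $m_{0,E}$ has rank $<^\mathcal{M} \rnk^\mathcal{M}(m_0)$. Concretely: $\mathrm{KP}$ proves that every set injects into some $V_\xi$ with $\xi = \rnk(x)+1$ — wait, without Powerset we cannot form $V_\xi$, so instead I would argue directly by $\in$-induction along the standard ordinal $\rnk^\mathcal{M}(m_0)$. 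Since $\rnk^\mathcal{M}(m_0)$ is standard, $E$ restricted to the ordinals $\leq^\mathcal{M} \rnk^\mathcal{M}(m_0)$ is well-founded; then $E$ restricted to $m_{0,E}$ is well-founded because any infinite descending $E$-chain $\cdots \in^\mathcal{M} a_2 \in^\mathcal{M} a_1 \in^\mathcal{M} m_0$ would, via the $\mathcal{M}$-provable monotonicity of $\rnk$ under $\in$, produce an infinite descending chain of $\mathcal{M}$-ordinals all $\leq^\mathcal{M} \rnk^\mathcal{M}(m_0)$, contradicting standardness of that ordinal. The converse direction — if $m_0$ is standard then $\rnk^\mathcal{M}(m_0)$ is standard — follows because $\mathcal{M}$ proves $\rnk(m_0)$ is the supremum of $\{\rnk(x)+1 : x \in m_0\}$, and $\rnk$ is an $\in$-monotone function definable in $\mathcal{M}$, so from a hypothetical infinite descending chain of ordinals below $\rnk^\mathcal{M}(m_0)$ one constructs (using $\Delta_0$-Collection inside $\mathcal{M}$ to pick witnesses) an infinite descending $\in$-chain inside $m_{0,E}$ — or more cleanly, note that the $\in$-well-foundedness of $m_{0,E}$ together with definability of $\rnk$ in $\mathcal{M}$ forces $\rnk^\mathcal{M}$ restricted to the transitive closure of $m_0$ to have well-founded range.

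For the second claim, assume $\mathcal{M}$ is non-standard; I must verify that $\mathrm{WFP}(\mathcal{M})$ is (i) a rank-initial substructure, (ii) topless, and (iii) a genuine rank-cut in the sense demanded (an infinite strictly descending downwards-cofinal sequence of ordinals in $\mathcal{M} \setminus \mathrm{WFP}(\mathcal{M})$). Rank-initiality is immediate from the first claim: if $x \in \mathrm{WFP}(\mathcal{M})$ and $\rnk^\mathcal{M}(y) \leq^\mathcal{M} \rnk^\mathcal{M}(x)$, then $\rnk^\mathcal{M}(x)$ is standard (by the first claim), hence $\rnk^\mathcal{M}(y)$ is standard (an $\mathcal{M}$-ordinal below a standard $\mathcal{M}$-ordinal is standard, again by well-foundedness of $E$ on the standard ordinals), so $y \in \mathrm{WFP}(\mathcal{M})$. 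For toplessness and the cofinal-sequence requirement: since $\mathcal{M}$ is non-standard, $E\restriction_{\mathrm{Ord}^\mathcal{M}}$ is ill-founded, so there is an infinite $\mathcal{M}$-descending chain $\mu_0 >^\mathcal{M} \mu_1 >^\mathcal{M} \cdots$ of $\mathcal{M}$-ordinals; each $\mu_i$ lies in $\mathcal{M} \setminus \mathrm{WFP}(\mathcal{M})$ (a standard ordinal cannot sit above an ill-founded chain). To get this chain \emph{downwards cofinal} in the non-standard ordinals, I would do a back-and-forth/interleaving argument: for any non-standard ordinal $\nu$, I must find some $\mu_i <^\mathcal{M} \nu$ still non-standard; this follows because the set (externally speaking) of non-standard $\mathcal{M}$-ordinals below $\nu$ is nonempty and has no $\mathcal{M}$-least element — any candidate least element $\lambda$ would have $\lambda$ standard or there would be a smaller non-standard one, since if every $\mathcal{M}$-ordinal below $\lambda$ were standard then $\lambda = \mathrm{OSP}(\mathcal{M})$-like supremum would itself be standard, using $\Pi_1$-Foundation in $\mathcal{M}$ to rule out the $\mathcal{M}$-least counterexample configuration. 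Hence I can recursively thin/extract a single infinite strictly $\mathcal{M}$-descending sequence of non-standard ordinals that is downwards cofinal in $\mathrm{Ord}^\mathcal{M} \setminus \mathrm{OSP}(\mathcal{M})$, which simultaneously witnesses both that $\mathrm{WFP}(\mathcal{M})$ is a rank-cut and that it is topless (no least ordinal above it).

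The main obstacle I anticipate is the \textbf{downwards-cofinality/toplessness step}: an arbitrary ill-founded descending chain in $\mathrm{Ord}^\mathcal{M}$ need not be cofinal from below in the non-standard ordinals, and one must genuinely use that $\mathcal{M} \models \mathrm{KP}$ (in particular $\Pi_1^\mathcal{P}$- or at least $\Pi_1$-Foundation and $\Delta_0$-Collection) to argue that there is no $\mathcal{M}$-definable-from-below "first" non-standard ordinal — the subtlety being that "standard" is an \emph{external} predicate, so I cannot apply Foundation to it directly inside $\mathcal{M}$. The resolution is the standard trick: if $\nu$ is non-standard, pick any external ill-founded chain; its terms below $\nu$ must be non-empty infinitely often, and closing under "take a smaller term of the chain" yields cofinality; then a single diagonal subsequence does the job. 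I would also need the small lemma that the externalization of $\mathrm{WFP}(\mathcal{M})$ is closed under $\in^\mathcal{M}$ (so it really is a substructure), which is immediate since $\rnk$ drops strictly along $\in^\mathcal{M}$. All the remaining verifications are routine manipulations of the definability of $\rnk$ in $\mathrm{KP}$.
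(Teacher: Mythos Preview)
The paper does not actually give a proof of this proposition in the text; it defers all proofs in this section to \S 4.6 of the author's thesis \cite{Gor18}. Your overall strategy is the standard one and is correct in outline: use $\in$-monotonicity of $\rnk$ to translate descending $\in^\mathcal{M}$-chains into descending ordinal chains and vice versa for the first claim, and then read off rank-initiality and toplessness from the first claim for the second.

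Two places where you overcomplicate matters by reaching for internal axioms when the argument is purely external. First, in the forward direction of the first claim you mention ``using $\Delta_0$-Collection inside $\mathcal{M}$ to pick witnesses''. No internal axiom is needed: given an external descending chain $\beta_0 >^\mathcal{M} \beta_1 >^\mathcal{M} \cdots$ below $\rnk^\mathcal{M}(m_0)$, you externally (in the metatheory, by countable choice or just by recursion) pick $x_0 \in^\mathcal{M} m_0$ with $\rnk^\mathcal{M}(x_0) \geq^\mathcal{M} \beta_0$, then $x_1 \in^\mathcal{M} x_0$ with $\rnk^\mathcal{M}(x_1) \geq^\mathcal{M} \beta_1$, and so on; such witnesses exist by the internal recursive definition of rank. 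This yields an external descending $\in^\mathcal{M}$-chain below $m_0$.

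Second, for toplessness you invoke $\Pi_1$-Foundation in $\mathcal{M}$ ``to rule out the $\mathcal{M}$-least counterexample configuration''. This cannot work as stated, since ``standard'' is not expressible inside $\mathcal{M}$, and it is also unnecessary. The argument that there is no least non-standard ordinal is entirely external: if every $\mathcal{M}$-ordinal below $\lambda$ is standard, then $\lambda_E$ is well-founded (given nonempty $X \subseteq \lambda_E$, pick any $\alpha \in X$; since $\alpha$ is standard, $X \cap (\alpha_E \cup \{\alpha\})$ has a least element, which is least in $X$), so $\lambda$ itself is standard. Once you have this, toplessness and the existence of a downwards-cofinal descending sequence in the non-standard ordinals are immediate by external recursion, and your ``main obstacle'' dissolves.
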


\begin{prop}\label{comp emb}
	Let $i : \mathcal{M} \rightarrow \mathcal{N}$ and $j : \mathcal{N} \rightarrow \mathcal{O}$ be embeddings of models of $\mathrm{KP}$. 
\begin{enumerate}[{\normalfont (a)}]
	\item\label{comp emb topless} If $i$ is topless and $j$ is initial, then $j \circ i : \mathcal{M} \rightarrow \mathcal{O}$ is topless.
	\item\label{comp emb strongly topless} If $i$ is strongly topless and $j$ is rank-initial, then $j \circ i : \mathcal{M} \rightarrow \mathcal{O}$ is strongly topless.
\end{enumerate}		
\end{prop}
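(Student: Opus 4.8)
The plan is to reduce both parts to two standard facts about an arbitrary embedding $k$ of models of $\mathrm{KP}$, each either already recorded or immediate from $\Sigma_1$-absoluteness: (i) $k$ preserves rank, i.e.\ $\mathrm{rank}^{\mathrm{cod}}(k(x)) = k(\mathrm{rank}^{\mathrm{dom}}(x))$, since ``$\mathrm{rank}(x)=r$'' is $\Sigma_1$ and hence preserved upward while both structures model $\mathrm{KP}$; and (ii) if $k$ is initial then $k(\mathrm{Ord}^{\mathrm{dom}})$ is an initial segment of $(\mathrm{Ord}^{\mathrm{cod}},<^{\mathrm{cod}})$, so any codomain ordinal outside it lies $\mathrm{cod}$-above all of it. I would also recall that rank-initial $\Rightarrow$ initial and that a proper rank-initial (in particular, a strongly topless) embedding is bounded. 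For part (a): if $i(\mathcal{M})\subseteq\mathcal{N}_\alpha$ then $(j\circ i)(\mathcal{M})\subseteq\mathcal{O}_{j(\alpha)}$ by (i), so $j\circ i$ is bounded; for the descending-gap condition, fix $\gamma\in\mathrm{Ord}^\mathcal{O}\setminus(j\circ i)(\mathcal{M})$. If $\gamma=j(\beta)$ (necessarily with $\beta\in\mathrm{Ord}^\mathcal{N}$, as $j$ reflects ordinalhood), then $\beta\notin i(\mathcal{M})$ by injectivity of $j$, toplessness of $i$ gives $\beta'<^\mathcal{N}\beta$ outside $i(\mathcal{M})$, and $j(\beta')$ is the witness; if instead $\gamma\notin j(\mathrm{Ord}^\mathcal{N})$, then by (ii) $\gamma$ lies $\mathcal{O}$-above all of $j(\mathrm{Ord}^\mathcal{N})$, and $j$ of any member of the nonempty set $\mathrm{Ord}^\mathcal{N}\setminus i(\mathcal{M})$ (nonempty since $i$ is bounded, hence proper) is a witness. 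The same observations give boundedness of $j\circ i$ in part (b) as well.

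For the strong toplessness required in part (b), the obstruction is that a function $g\in\mathcal{O}$ for which a threshold must be produced need not lie in $j(\mathcal{N})$, so it cannot be fed directly to the strong-toplessness property of $i$; the plan is to \emph{clip} it. Fix $\alpha\in\mathrm{Ord}^\mathcal{N}$ with $\alpha_\mathcal{N}\supseteq i(\mathrm{Ord}^\mathcal{M})$ (so $\alpha\notin i(\mathcal{M})$); given $g\colon\alpha'\to\beta'$ in $\mathcal{O}$ with $\alpha'_\mathcal{O}\supseteq(j\circ i)(\mathrm{Ord}^\mathcal{M})$, set $\gamma^*=\min^\mathcal{O}(\alpha',j(\alpha))$ and, by $\Delta_0$-Separation inside $\mathcal{O}$, form $h\colon\gamma^*\to j(\alpha+1)$ with $h(\xi)=g(\xi)$ if $g(\xi)<^\mathcal{O}j(\alpha)$ and $h(\xi)=j(\alpha)$ otherwise. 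Since $h$ is a function between ordinals $\leq^\mathcal{O}j(\alpha+1)$, one gets $\mathrm{rank}^\mathcal{O}(h)\leq^\mathcal{O}\mathrm{rank}^\mathcal{O}(j(\alpha+3))$, so rank-initiality of $j$ forces $h\in j(\mathcal{N})$; write $h=j(f)$. Reflecting the relevant $\Delta_0$ facts through $j$, and using (ii) to write $\gamma^*=j(\alpha^\flat)$, one obtains $f\colon\alpha^\flat\to\alpha+1$ in $\mathcal{N}$ with $\alpha^\flat_\mathcal{N}\supseteq i(\mathrm{Ord}^\mathcal{M})$ (because $\gamma^*_\mathcal{O}\supseteq(j\circ i)(\mathrm{Ord}^\mathcal{M})$).

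Now apply strong toplessness of $i$ to $f$, obtaining $\nu\in\mathrm{Ord}^\mathcal{N}\setminus i(\mathcal{M})$, and put $\nu'=j(\nu)$, which lies outside $(j\circ i)(\mathcal{M})$ by injectivity of $j$. To verify the defining biconditional for $g$ and $\nu'$, take $\xi'=j(\xi)\in(j\circ i)(\mathrm{Ord}^\mathcal{M})$ with $\xi=i(\mu)$; then $h_\mathcal{O}(\xi')=j(f_\mathcal{N}(\xi))$ and $f_\mathcal{N}(\xi)\leq^\mathcal{N}\alpha$. If $f_\mathcal{N}(\xi)<^\mathcal{N}\alpha$, clipping left $g$ unchanged at $\xi'$, so $g_\mathcal{O}(\xi')=j(f_\mathcal{N}(\xi))$ and ``$g_\mathcal{O}(\xi')\notin(j\circ i)(\mathcal{M})\Leftrightarrow g_\mathcal{O}(\xi')>^\mathcal{O}\nu'$'' is the $j$-image of the equivalence given for $f$ and $\nu$. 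If $f_\mathcal{N}(\xi)=\alpha$ (hence $\alpha\notin i(\mathcal{M})$, so $\nu<^\mathcal{N}\alpha$ by strong toplessness), clipping gives $g_\mathcal{O}(\xi')\geq^\mathcal{O}j(\alpha)$; this places $g_\mathcal{O}(\xi')$ among the $\mathcal{O}$-ordinals above $(j\circ i)(\mathrm{Ord}^\mathcal{M})$, hence outside $(j\circ i)(\mathcal{M})$, while also $g_\mathcal{O}(\xi')\geq^\mathcal{O}j(\alpha)>^\mathcal{O}\nu'$, so both sides of the biconditional hold. I expect this last, boundary case --- where $g$ overshoots $j(\alpha)$ and the clipped value $j(\alpha)$ must be certified as outside the image on both sides of the equivalence --- to be the only delicate point; everything else is $\Delta_0$-absoluteness together with injectivity and ordinal-monotonicity of $j$, plus the preservation facts (i) and (ii).
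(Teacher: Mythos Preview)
Your proof is correct and follows essentially the same approach as the paper's: both arguments clip the given function $g\in\mathcal{O}$ so that its rank drops below an ordinal in $j(\mathcal{N})$, pull it back through $j$ via rank-initiality, apply strong toplessness of $i$ to the preimage, and then push the threshold $\nu$ forward through $j$. The only cosmetic difference is that the paper sends overshoot values to $0$ rather than to the top value $j(\alpha)$; this lets the paper avoid your explicit ``boundary case'' analysis, though the same observation (that the clipped-away values satisfy both sides of the biconditional trivially) is implicitly needed there too.
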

\begin{proof}
	(\ref{comp emb topless}) Let $\gamma \in \mathrm{Ord}(\mathcal{O}) \setminus j\circ i(\mathcal{M})$. By toplessness, there are $\beta' <^\mathcal{N} \beta \in \mathrm{Ord}(\mathcal{N}) \setminus i(\mathcal{M})$. If $j(\beta') <^\mathcal{O} \gamma$, then we are done. Otherwise, $\gamma <^\mathcal{O} j(\beta)$, so that by initiality $\gamma \in j(\mathcal{N})$. By toplessness, this yields $\beta'' \in \mathrm{Ord}^\mathcal{N} \setminus i(\mathcal{M})$ such that $j(\beta'') < \gamma$.
	
	(\ref{comp emb strongly topless}) Let $f \in \mathcal{O}$ with $\alpha, \beta \in \mathrm{Ord}^\mathcal{O}$ satisfying
\[
(\mathcal{O} \models f : \alpha \rightarrow \beta) \wedge \alpha_\mathcal{O} \supseteq (j \circ i)(\mathrm{Ord}^\mathcal{M}).
\]
By toplessness of $i$ and initiality of $j$, there is $\gamma \in \mathrm{Ord}^\mathcal{N}$, such that $\gamma_\mathcal{N} \supseteq i(\mathrm{Ord}^\mathcal{M})$ and $j(\gamma) \leq^\mathcal{O} \alpha$. In $\mathcal{O}$, let $f' : j(\gamma) \rightarrow j(\gamma)$ be the ``truncation'' of $f$ defined by $f'(\xi) = f(\xi)$, if $f(\xi) < j(\gamma)$, and by $f'(\xi) = 0$, if $f(\xi) \geq j(\gamma)$. Note that $\mathcal{O} \models \rnk(f') \leq j(\gamma) + 2$. So by rank-initiality, there is $g' \in \mathcal{N}$, such that $j(g') = f'$. Consequently, $\mathcal{N} \models g' : \gamma \rightarrow \gamma$, and by initiality of $j$, $(j(g'))_\mathcal{O} = f'_\mathcal{O}$. By strong toplessness of $i$, there is $\nu \in \mathrm{Ord}^\mathcal{N} \setminus i(\mathcal{M})$ such that for all $\xi \in i(\mathrm{Ord}^\mathcal{M})$,
\[
g'_\mathcal{N}(\xi) \not\in i(\mathcal{M}) \Leftrightarrow \mathcal{N} \models g'(\xi) > \nu. 
\]
It follows that for all $\xi \in (j \circ i)(\mathrm{Ord}^\mathcal{M})$,
\[
f_\mathcal{O}(\xi) \not\in (j \circ i)(\mathcal{M}) \Leftrightarrow \mathcal{O} \models f(\xi) > j(\nu). 
\]
So $j \circ i$ is strongly topless.
\end{proof}

\begin{lemma}\label{omega-topless existence}
	Let $\mathcal{M} \models \mathrm{KP}^\mathcal{P}$ be $\omega$-non-standard and let $\alpha_0 \in \mathrm{Ord}^\mathcal{M}$. For each $k < \omega$, let $\alpha_k \in \mathrm{Ord}^\mathcal{M}$ such that $\mathcal{M} \models \alpha_k = \alpha_0 + k^\mathcal{M}$. Then $\bigcup_{k < \omega} \mathcal{M}_{\alpha_k}$ is an $\omega$-topless rank-initial substructure of $\mathcal{M}$. 
\end{lemma}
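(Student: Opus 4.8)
The plan is to verify three things about $\mathcal{S}:=\bigcup_{k<\omega}\mathcal{M}_{\alpha_k}$: that it is rank-initial in $\mathcal{M}$, that the inclusion $\mathcal{S}\hookrightarrow\mathcal{M}$ is bounded, and that this inclusion is not $\omega$-coded from above; only the last point requires real work. Rank-initiality is immediate: if $s\in\mathcal{M}_{\alpha_k}$ and $\mathcal{M}\models\rnk(m)\leq\rnk(s)$, then $\mathcal{M}\models\rnk(m)<\alpha_k$, so $m\in\mathcal{M}_{\alpha_k}\subseteq\mathcal{S}$; in particular $\mathcal{S}$ is closed under $\in^\mathcal{M}$ and hence a genuine substructure. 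Boundedness is equally easy: for each standard $k$, $\mathcal{M}\models\alpha_k=\alpha_0+k<\alpha_0+\omega$, so $\mathcal{M}_{\alpha_k}\subseteq\mathcal{M}_{\alpha_0+\omega^\mathcal{M}}$, whence $\mathcal{S}\subseteq\mathcal{M}_{\alpha_0+\omega^\mathcal{M}}$ (the ordinal $\alpha_0+\omega^\mathcal{M}$ exists in $\mathcal{M}$, as $\mathrm{KP}^\mathcal{P}$ proves ordinal addition total).

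For the main point, suppose $f\in\mathcal{M}$ and $\alpha,\beta\in\mathrm{Ord}^\mathcal{M}$ satisfy $\mathcal{M}\models f:\alpha\to\beta$, $\alpha_\mathcal{M}\supseteq\omega$, and $f_\mathcal{M}(k)\in\mathrm{Ord}^\mathcal{M}\setminus\mathcal{S}$ for all $k<\omega$; I must produce $\nu\in\mathrm{Ord}^\mathcal{M}\setminus\mathcal{S}$ with $\mathcal{M}\models f_\mathcal{M}(k)>\nu$ for all $k<\omega$. Write $f(k)$ for $f_\mathcal{M}(k)$. Since the rank of an ordinal is itself, $f(k)\notin\mathcal{S}$ means precisely $\mathcal{M}\models f(k)\geq\alpha_0+j$ for every standard $j$. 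Let $\gamma:=\min(\alpha,\omega^\mathcal{M})$; then $\gamma_\mathcal{M}\supseteq\omega$, and since $\gamma$ exceeds every standard natural it is non-standard, so $\gamma\notin\mathrm{WFP}(\mathcal{M})$. The key move is to pass from $f$ to its running minimum: inside $\mathcal{M}$ define $h$ on $\gamma$ by $h(n)=\min\{f(m)\mid m\leq n\}$. This is a legitimate $\mathrm{KP}^\mathcal{P}$-definition — images of functions along sets are sets, nonempty sets of ordinals have least elements, and so $h$ itself is a set (by $\Sigma_1^\mathcal{P}$-Replacement) — and $h$ is non-increasing ($m\leq n\Rightarrow h(m)\geq h(n)$). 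For a standard $k$ the set $\{m\mid m\leq k\}=\{0,\dots,k\}$ consists of standard naturals, so $h(k)=\min\{f(0),\dots,f(k)\}=f(j)$ for some standard $j\leq k$; hence $h(k)\notin\mathcal{S}$, and in particular $\mathcal{M}\models h(k)>\alpha_0+k$.

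Now I would invoke overspill. The formula $\sigma(n)\equiv\bigl(n<\gamma\wedge h(n)>\alpha_0+n\bigr)$ is $\Sigma_1^\mathcal{P}$ in the parameters $h,\alpha_0,\gamma$ and holds for every standard $n$; moreover $\mathrm{WFP}(\mathcal{M})$ is a rank-cut of $\mathcal{M}$ (Proposition \ref{WFP rank-initial topless}) whose ordinals are $\mathrm{OSP}(\mathcal{M})=\omega$. Applying Lemma \ref{overspill} to $\sigma$ with the ordinal $\gamma\in\mathrm{Ord}^\mathcal{M}\setminus\mathrm{WFP}(\mathcal{M})$ above the cut yields a non-standard natural $\mu<\gamma$ with $\mathcal{M}\models h(\mu)>\alpha_0+\mu$. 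Put $\nu:=\alpha_0+\mu$. Then $\nu\notin\mathcal{S}$: since $\mu>k$ for every standard $k$, $\mathcal{M}\models\nu>\alpha_k$ for all such $k$, so $\nu\notin\mathcal{M}_{\alpha_k}$. And for every standard $k$ one has $k<\mu<\gamma\leq\alpha$, so by monotonicity of $h$ together with $h(k)\leq f(k)$ we get $\mathcal{M}\models f(k)\geq h(k)\geq h(\mu)>\alpha_0+\mu=\nu$. Thus $\nu$ is the required witness; the inclusion is not $\omega$-coded from above, and together with the first two points $\mathcal{S}$ is an $\omega$-topless rank-initial substructure of $\mathcal{M}$.

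I expect the only genuine difficulty to be this last step. One needs a single ordinal lying below all the values $f(k)$ yet outside $\mathcal{S}$, and the obvious candidates fail: $\min\{f(k)\mid k<\omega\}$ is external (and an internal minimum taken over too large a range may fall into $\mathcal{S}$), while $\alpha_0+\omega^\mathcal{M}$ is too large, because each $f(k)$ may lie only a non-standard-natural distance above $\alpha_0$. Replacing $f$ by the non-increasing $h$ and then overspilling the property $h(n)>\alpha_0+n$ (with its extra summand $n$) is what makes the overspilled witness $\mu$ small enough that $\nu=\alpha_0+\mu$ stays below every standard $f(k)$ — via monotonicity — while remaining non-standard enough to keep $\nu$ out of $\mathcal{S}$; confining the overspill to $n<\gamma$, so that $\mu<\alpha$ and $\mu<\omega^\mathcal{M}$, is the small precaution that makes everything fit.
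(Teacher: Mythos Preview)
Your proof is correct and uses the same overspill idea as the paper. The only difference is that you take a small detour through the running minimum $h$: the paper instead overspills the single $\Delta_0^\mathcal{P}$ formula $\forall\xi\le k\,.\,f(\xi)>\alpha_0+k$ (which holds for every standard $k$ since each $f(\xi)$ with $\xi\le k$ lies outside $\mathcal{S}$), and at a non-standard $\mathring k$ this immediately yields $f(\xi)>\alpha_0+\mathring k$ for all standard $\xi$, so $\nu=\alpha_0+\mathring k$ works without ever introducing $h$. Your version is a harmless repackaging of the same argument; the monotonicity you extract from $h$ is exactly what the bounded universal quantifier in the paper's formula encodes.
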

\begin{proof}
	$\bigcup_{k < \omega} \mathcal{M}_{\alpha_k}$ is obviously rank-initial in $\mathcal{M}$. Let $m \in^\mathcal{M} \omega^\mathcal{M}$ be non-standard. Since $\alpha_0 +^\mathcal{M} m \in \mathcal{M}$, we have that $\bigcup_{k < \omega} \mathcal{M}_{\alpha_k}$ is bounded in $\mathcal{M}$. Moreover, $\bigcup_{k < \omega} \mathcal{M}_{\alpha_k}$ is topless, because otherwise $\gamma =_\df \sup\{\alpha_k \mid k < \omega \}$ exists in $\mathcal{M}$ and $\mathcal{M} \models \gamma - \alpha_0 = \omega$, which contradicts that $\mathcal{M}$ is $\omega$-non-standard. 
	
	Let $f : \alpha \rightarrow \beta$ be a function in $\mathcal{M}$, where $\alpha, \beta \in \mathrm{Ord}^\mathcal{M}$ and 
	\[
	\alpha_\mathcal{M} \supseteq \omega \wedge \forall k < \omega . f_\mathcal{M}(k) \in \mathrm{Ord}^\mathcal{M} \setminus \bigcup_{k < \omega} \mathcal{M}_{\alpha_k}.
	\] 
	Note that for all $k < \omega$,
	\[
	\mathcal{M} \models \forall \xi \leq k . f\hspace{2pt}(\xi) > \alpha_0 + \xi.
	\]
	So by $\Delta_0^\mathcal{P}$-Overspill, there is a non-standard $\mathring{k} \in^\mathcal{M} \omega^\mathcal{M}$, such that 
	\[
	\mathcal{M} \models \forall \xi \leq \mathring{k} . f\hspace{2pt}(\xi) > \alpha_0 + \xi.
	\]
	Hence, $\alpha_0 +^\mathcal{M} \mathring{k}$ witnesses $\omega$-toplessness of $\bigcup_{k < \omega} \mathcal{M}_{\alpha_k}$.
\end{proof}

The following classic result is proved as Theorem 6.15 in \cite{Jeh02}:

\begin{thm}[Mostowski's Collapse]\label{Mostowski collapse}
If $\mathcal{M}$ is a well-founded model of $\textnormal{Extensionality}$, then there is a unique isomorphism $\mathrm{Mos} : \mathcal{M} \rightarrow (T, \in\restriction_T)$, such that $T$ is transitive. Moreover, 
$$\forall x \in \mathcal{M} . \mathrm{Mos}(x) = \{\mathrm{Mos}(u) \mid u \in^\mathcal{M} x\}.$$
\end{thm}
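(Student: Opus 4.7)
The plan is to define $\mathrm{Mos}$ externally by well-founded recursion on the relation $\in^\mathcal{M}$ (which is well-founded by assumption), and then verify that it has all the desired properties. Since well-founded recursion is available in the metatheory, I can define a (class) function $\mathrm{Mos} : \mathcal{M} \to V$ by the recipe
\[
\mathrm{Mos}(x) = \{ \mathrm{Mos}(u) \mid u \in^\mathcal{M} x \},
\]
and then set $T = \mathrm{Mos}(\mathcal{M})$. Transitivity of $T$ is immediate: any member of an element of $T$ is of the form $\mathrm{Mos}(u)$ for some $u \in \mathcal{M}$, hence lies in $T$.

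Next I would establish injectivity of $\mathrm{Mos}$, which is the one place where Extensionality of $\mathcal{M}$ is used. I prove by $\in^\mathcal{M}$-induction on $x$ the statement ``for all $y \in \mathcal{M}$, $\mathrm{Mos}(x) = \mathrm{Mos}(y) \Rightarrow x = y$.'' Fix $x$ and assume the inductive hypothesis for every $u \in^\mathcal{M} x$. Given $\mathrm{Mos}(x) = \mathrm{Mos}(y)$, I verify by a two-sided check that $x$ and $y$ have the same $\in^\mathcal{M}$-members: if $u \in^\mathcal{M} x$, then $\mathrm{Mos}(u) \in \mathrm{Mos}(x) = \mathrm{Mos}(y)$, so by the definition of $\mathrm{Mos}(y)$ there is $v \in^\mathcal{M} y$ with $\mathrm{Mos}(u) = \mathrm{Mos}(v)$; the inductive hypothesis applied to $u$ gives $u = v$, so $u \in^\mathcal{M} y$. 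The symmetric direction works exactly the same way. Extensionality in $\mathcal{M}$ then yields $x = y$. Once injectivity is in hand, the homomorphism property $u \in^\mathcal{M} x \Leftrightarrow \mathrm{Mos}(u) \in \mathrm{Mos}(x)$ follows: the forward direction is immediate from the defining equation, and the backward direction uses injectivity to identify a witness $v \in^\mathcal{M} x$ with $\mathrm{Mos}(v) = \mathrm{Mos}(u)$.

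For uniqueness, suppose $\mathrm{Mos}' : \mathcal{M} \to (T', \in\restriction_{T'})$ is another isomorphism onto a transitive set. By $\in^\mathcal{M}$-induction on $x$, assume $\mathrm{Mos}'(u) = \mathrm{Mos}(u)$ for every $u \in^\mathcal{M} x$. Because both $\mathrm{Mos}'$ and the inclusion relation on $T'$ respect $\in$, and because $T'$ is transitive, one shows $\mathrm{Mos}'(x) = \{\mathrm{Mos}'(u) \mid u \in^\mathcal{M} x\}$; together with the inductive hypothesis this yields $\mathrm{Mos}'(x) = \mathrm{Mos}(x)$.

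The main subtlety — really the only place Extensionality is needed — is the injectivity step. Everywhere else the argument is a routine application of well-founded recursion in the metatheory, so no technical obstacle beyond verifying that the recursive definition is legitimate on the external well-founded relation $(\mathcal{M}, \in^\mathcal{M})$, which is guaranteed by hypothesis.
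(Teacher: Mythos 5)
Your proof is correct. Note that the paper does not actually prove this theorem; it cites Theorem 6.15 of Jech's \emph{Set Theory}, and your argument is precisely the standard one found there: define $\mathrm{Mos}$ by well-founded recursion on $\in^\mathcal{M}$, observe transitivity of the image, establish injectivity by $\in^\mathcal{M}$-induction (the one place Extensionality in $\mathcal{M}$ is used), deduce the bi-implication $u \in^\mathcal{M} x \Leftrightarrow \mathrm{Mos}(u) \in \mathrm{Mos}(x)$, and prove uniqueness by a further $\in^\mathcal{M}$-induction using that any isomorphism onto a transitive set must itself satisfy the defining recursion. All steps are sound and appropriately ordered (injectivity before the backward direction of the homomorphism property).
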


This theorem motivates the following simplifying assumption:

\begin{ass}\label{ass trans}
Every well-founded $\mathcal{L}^0$-model $\mathcal{M}$ of $\textnormal{Extensionality}$ is a transitive set, or more precisely, is of the form $(T, \in\restriction_T)$ where $T$ is transitive and unique. Every embedding between well-founded $\mathcal{L}^0$-models of $\textnormal{Extensionality}$ is an inclusion function. 
\end{ass}

In particular, for any model $\mathcal{M}$ of $\mathrm{KP}$, $\mathrm{WFP}(\mathcal{M})$ is a transitive set and $\mathrm{OSP}(\mathcal{M})$ is an ordinal.

\begin{prop}\label{emb pres}
If $i : \mathcal{M} \rightarrow \mathcal{N}$ is an initial embedding between models of $\mathrm{KP}$, then:
\begin{enumerate}[{\normalfont (a)}]
\item\label{emb pres fixed} $i(x) = x$, for all $x \in \mathrm{WFP}(\mathcal{M})$.
\item\label{emb pres Delta_0} $i$ is $\Delta_0$-elementary. More sharply, for every $\sigma(\vec{x}) \in \Sigma_1[\vec{x}]$, and for every $\vec{a} \in \mathcal{M}$, 
\[
\mathcal{M} \models \sigma(\vec{a}) \Rightarrow \mathcal{N} \models \sigma(i(\vec{a})).
\]
\item\label{emb pres Delta_0^P} If $i$ is $\mathcal{P}$-initial, then it is $\Delta_0^\mathcal{P}$-elementary. More sharply, for every $\sigma(\vec{x}) \in \Sigma_1^\mathcal{P}[\vec{x}]$, and for every $\vec{a} \in \mathcal{M}$, 
\[
\mathcal{M} \models \sigma(\vec{a}) \Rightarrow \mathcal{N} \models \sigma(i(\vec{a})).
\]
\item\label{emb pres SSy_S} If $i$ is rank-initial and $\mathcal{S}$ is a common bounded substructure of $\mathcal{M}$ and $\mathcal{N}$, pointwise fixed by $i$, then $\mathrm{SSy}_\mathcal{S}(\mathcal{M}) = \mathrm{SSy}_{\mathcal{S}}(\mathcal{N})$.

\end{enumerate}
\end{prop}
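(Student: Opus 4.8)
The plan is to treat the four parts in order; only the last needs real work beyond routine inductions. For part (a) I would use external $\in$-induction on $x \in \mathrm{WFP}(\mathcal{M})$, which is legitimate since by Assumption \ref{ass trans} $\mathrm{WFP}(\mathcal{M})$ is a genuine transitive set. If $i(u) = u$ for every $u \in x$, then initiality gives $i(x)_\mathcal{N} = i(x_\mathcal{M}) = \{\, i(u) \mid u \in x \,\} = \{\, u \mid u \in x \,\} = x$; as each such $u = i(u)$ is well-founded, both $i(x)$ and $x$ lie in the transitive set $\mathrm{WFP}(\mathcal{N})$, so having equal externalizations forces $i(x) = x$. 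For part (b) I would prove $\Delta_0$-elementarity by induction on formula complexity: atomic formulas are preserved and reflected because $i$ is an injective $\mathcal{L}^0$-embedding, and Boolean connectives are immediate; for a bounded existential $\exists u \in y_0.\,\phi$ the forward direction just transports the witness through $i$, while for the converse a witness $n_0 \in^\mathcal{N} i(y_0)$ lies in $i(\mathcal{M})$ by initiality, hence $n_0 = i(u_0)$ with $u_0 \in^\mathcal{M} y_0$, and the induction hypothesis applies (the bounded universal case being dual). The sharper claim follows since a witness in $\mathcal{M}$ for the $\Delta_0$ matrix of a $\Sigma_1$-formula maps, by $\Delta_0$-elementarity, to one in $\mathcal{N}$.

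Part (c) is the same induction carried out over $\Delta_0^\mathcal{P}$; the one new case is a $\mathcal{P}$-bounded quantifier, where the converse direction uses powerset-preservation ($\mathcal{P}$-initiality) in place of plain initiality to pull a witness $n_0 \subseteq^\mathcal{N} i(y_0)$ back into $i(\mathcal{M})$, while preservation and reflection of ``$\subseteq$'' itself (a $\Delta_0$ relation) is already supplied by part (b). The $\Sigma_1^\mathcal{P}$ refinement then follows exactly as in part (b).

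For part (d), since $\mathcal{S}$ is a common substructure we have $\mathcal{M}\restriction_\mathcal{S} = \mathcal{N}\restriction_\mathcal{S}$, so it suffices to prove $\mathrm{Cod}_\mathcal{S}(\mathcal{M}) = \mathrm{Cod}_\mathcal{S}(\mathcal{N})$ (the $\mathcal{L}^1$-membership relations of the two standard systems then coincide automatically). The inclusion $\subseteq$ is immediate: $i(c)$ codes the same trace on $\mathcal{S}$ as $c$, because $x = i(x) \in^\mathcal{N} i(c) \Leftrightarrow x \in^\mathcal{M} c$ for $x \in \mathcal{S}$, using that $i$ fixes $\mathcal{S}$ pointwise. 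The reverse inclusion is the crux and is where rank-initiality and boundedness enter. Given $d \in \mathcal{N}$ with $D = d_\mathcal{N} \cap \mathcal{S}$, pick $\beta \in \mathrm{Ord}^\mathcal{M}$ bounding the $\mathcal{M}$-ranks of elements of $\mathcal{S}$ (possible since $\mathcal{S}$ is bounded in $\mathcal{M}$); by the $\Sigma_1$-form of part (b) applied to ``$\mathrm{rank}(x) < \beta$'', every $x \in \mathcal{S}$ has $\mathcal{N}$-rank below $i(\beta)$. Inside $\mathcal{N}$ use $\Delta_1$-Separation (which $\mathrm{KP}$ proves) to form $d' = \{\, x \in d \mid \mathrm{rank}(x) < i(\beta) \,\}$; then $\mathrm{rank}^\mathcal{N}(d') \leq^\mathcal{N} i(\beta) <^\mathcal{N} \mathrm{rank}^\mathcal{N}(i(\{\beta\}))$, so $d' \in i(\mathcal{M})$ by rank-initiality, say $d' = i(c)$. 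Since elements of $\mathcal{S}$ already have $\mathcal{N}$-rank $< i(\beta)$, the truncation does not disturb the trace, so $d'_\mathcal{N} \cap \mathcal{S} = D$; and then $c_\mathcal{M} \cap \mathcal{S} = D$, again because $i$ fixes $\mathcal{S}$ pointwise. Hence $D \in \mathrm{Cod}_\mathcal{S}(\mathcal{M})$.

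I expect the main obstacle to be exactly this reverse inclusion in part (d): manufacturing, from an arbitrary code $d$ in $\mathcal{N}$, a code that lies in the image of $i$ yet keeps the correct trace on $\mathcal{S}$. The rank-truncation step — trimming $d$ below the $i$-image of a bound on $\mathcal{S}$ and then invoking rank-initiality — is what makes it go through, and it is the only place in the proposition where rank-initiality (rather than mere initiality or $\mathcal{P}$-initiality) and the boundedness of $\mathcal{S}$ are genuinely needed; everything else is a standard induction or a one-line computation.
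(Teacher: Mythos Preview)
Your proposal is correct and fleshes out exactly the kind of argument the paper has in mind: the paper records only ``The proof is routine'' for this proposition, and your inductions for (a)--(c) together with the rank-truncation trick for the reverse inclusion in (d) are precisely the routine verifications one would expect. One minor simplification in (d): since $\rnk^\mathcal{N}(d') \leq^\mathcal{N} i(\beta) = \rnk^\mathcal{N}(i(\beta))$, you can invoke rank-initiality directly with $m = \beta$ rather than passing through $i(\{\beta\})$.
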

The proof is routine.

\begin{cor}\label{rank-init equivalences in KPP}
If $i: \mathcal{M} \rightarrow \mathcal{N}$ is an embedding between models of $\mathrm{KP}^\mathcal{P}$, then the following are equivalent:
\begin{enumerate}[{\normalfont (a)}]
\item $i$ is rank-initial.
\item $i$ is $\mathcal{P}$-initial.
\item $i$ is initial, and for each ordinal $\xi$ in $\mathcal{M}$, $i(V_\xi^\mathcal{M}) = V_{i(\xi)}^\mathcal{M}$.
\end{enumerate}
\end{cor}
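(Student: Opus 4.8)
The plan is to establish the cycle of implications $(a) \Rightarrow (b) \Rightarrow (c) \Rightarrow (a)$. The first implication is immediate and requires no work: it is part of the chain of implications recorded just after Definition~\ref{special embeddings} that rank-initiality implies $\mathcal{P}$-initiality.

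For $(b) \Rightarrow (c)$ I would argue as follows. Assume $i$ is $\mathcal{P}$-initial; then in particular $i$ is initial, which is the first half of $(c)$. For the second half, recall that $\mathrm{KP}^\mathcal{P}$ proves the function $\alpha \mapsto V_\alpha$ total on the ordinals, and — by $\Sigma_1^\mathcal{P}$-Recursion — that its graph ``$y = V_\alpha$'' is definable by a $\Sigma_1^\mathcal{P}$-formula $\theta(y,\alpha)$ (in fact a $\Delta_1^\mathcal{P}$ one, by Proposition~\ref{tak_clo}, but only $\Sigma_1^\mathcal{P}$ is needed here). Fix an ordinal $\xi$ of $\mathcal{M}$ and set $v = V_\xi^\mathcal{M}$, so $\mathcal{M} \models \theta(v,\xi)$. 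By Proposition~\ref{emb pres}(\ref{emb pres Delta_0^P}) (upward preservation of $\Sigma_1^\mathcal{P}$-formulae along $\mathcal{P}$-initial embeddings), $\mathcal{N} \models \theta(i(v), i(\xi))$; since $i$ is initial it is $\Delta_0$-elementary by Proposition~\ref{emb pres}(\ref{emb pres Delta_0}), so $i(\xi) \in \mathrm{Ord}^\mathcal{N}$, and hence $i(V_\xi^\mathcal{M}) = i(v) = V_{i(\xi)}^\mathcal{N}$. As $\xi$ was arbitrary, $(c)$ follows.

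For $(c) \Rightarrow (a)$: assume $i$ is initial and $i(V_\xi^\mathcal{M}) = V_{i(\xi)}^\mathcal{N}$ for every ordinal $\xi$ of $\mathcal{M}$. Let $m \in \mathcal{M}$ and $n \in \mathcal{N}$ with $\rnk^\mathcal{N}(n) \leq^\mathcal{N} \rnk^\mathcal{N}(i(m))$; the goal is to show $n \in i(\mathcal{M})$. Since $\mathcal{M} \models \mathrm{KP}^\mathcal{P}$ proves totality of the $V$-hierarchy, pick an ordinal $\rho$ of $\mathcal{M}$ with $m \in^\mathcal{M} V_\rho^\mathcal{M}$ (say $\rho = \rnk^\mathcal{M}(m) + 1$). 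Applying $i$ and invoking $(c)$ gives $i(m) \in^\mathcal{N} i(V_\rho^\mathcal{M}) = V_{i(\rho)}^\mathcal{N}$, whence $\rnk^\mathcal{N}(i(m)) <^\mathcal{N} i(\rho)$, and therefore $\rnk^\mathcal{N}(n) <^\mathcal{N} i(\rho)$, i.e.\ $n \in^\mathcal{N} V_{i(\rho)}^\mathcal{N} = i(V_\rho^\mathcal{M})$. Finally, initiality of $i$ yields $\big(i(V_\rho^\mathcal{M})\big)_\mathcal{N} = i\big((V_\rho^\mathcal{M})_\mathcal{M}\big)$, so $n = i(x)$ for some $x \in \mathcal{M}$, that is, $n \in i(\mathcal{M})$.

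There is no real obstacle; the argument is essentially bookkeeping on top of Proposition~\ref{emb pres}. The single point that must be got right is the Takahashi complexity of ``$y = V_\alpha$'': it needs to be $\Sigma_1^\mathcal{P}$ for the one-directional preservation clause of Proposition~\ref{emb pres}(\ref{emb pres Delta_0^P}) to apply in $(b)\Rightarrow(c)$, and this is precisely where the hypothesis $\mathrm{KP}^\mathcal{P}$ (rather than merely $\mathrm{KP}$) enters — it is what makes $\alpha \mapsto V_\alpha$ a total $\Sigma_1^\mathcal{P}$-function. In $(c)\Rightarrow(a)$ one should take care to route the argument through the equivalence ``$x \in V_\alpha \Leftrightarrow \rnk(x) < \alpha$'' in order to avoid a separate verification that $i$ commutes with the rank function.
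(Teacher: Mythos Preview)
Your proof is correct and follows essentially the same route as the paper's: the cycle $(a)\Rightarrow(b)\Rightarrow(c)\Rightarrow(a)$, with $(b)\Rightarrow(c)$ via the $\Sigma_1^\mathcal{P}$-definability of $y=V_\alpha$ and Proposition~\ref{emb pres}(\ref{emb pres Delta_0^P}), and $(c)\Rightarrow(a)$ by passing through $V_{\rnk(m)+1}$ and invoking initiality. Your write-up is slightly more explicit (e.g.\ noting that $i(\xi)$ is an ordinal, and spelling out the use of ``$x\in V_\alpha \Leftrightarrow \rnk(x)<\alpha$''), but there is no substantive difference in approach.
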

\begin{proof}
(a) $\Rightarrow$ (b) is immediate from the definition.

(b) $\Rightarrow$ (c): The formula $x = V_\alpha$ is $\Sigma_1^\mathcal{P}$, so by Proposition \ref{emb pres} (\ref{emb pres Delta_0^P}), $i(V_\alpha^\mathcal{M}) = V_{i(\alpha)}^\mathcal{N}$, for all ordinals $\alpha$ in $\mathcal{M}$.

(c) $\Rightarrow$ (a): Let $a \in \mathcal{M}$ with $\mathcal{M}$-rank $\alpha$, let $b = i(a)$ with $\mathcal{N}$-rank $\beta$, and let $c \in \mathcal{N}$ be of $\mathcal{N}$-rank $\gamma \leq^\mathcal{N} \beta$. Since $i$ is an embedding preserving $(\xi \mapsto V_\xi)$, we have that $b \in^\mathcal{N} i(V_{\alpha + 1})$ and $i(V_{\alpha + 1}) = V_{\beta'}$, for some $\beta' >^\mathcal{N} \beta$. Therefore $c \in i(V_{\alpha+1})$, so since $i$ is initial, we get that $c \in i(\mathcal{M})$.
\end{proof}

We shall now introduce partial satisfaction relations for the Takahashi hierarchy. The details of this are worked out in \S 4.4 of \cite{Gor18}. Recall that we have assumed that each $\mathcal{L}^0$-formula is identical to its G\"odel code. Moreover, recall that for each $n \in \mathbb{N}$, $\hat\Sigma^\mathcal{P}_n$ and $\hat\Pi^\mathcal{P}_n$ are defined recursively by: $\hat\Delta^\mathcal{P}_0 = \hat\Sigma^\mathcal{P}_0 = \hat\Pi^\mathcal{P}_n$ is the set of $\mathcal{L}^0$-formulae such that every quantifier is of the form `$\exists x \in y$', `$\exists x \subseteq y$', `$\forall x \in y$' or `$\forall x \subseteq y$'; if $\phi$ is $\hat\Sigma^\mathcal{P}_n$, then $\exists x . \phi$ is $\hat\Sigma^\mathcal{P}_n$ and $\forall x . \phi$ is $\hat\Pi^\mathcal{P}_{n+1}$; and dually, if $\phi$ is $\hat\Pi^\mathcal{P}_n$, then $\exists x . \phi$ is $\hat\Sigma^\mathcal{P}_{n+1}$ and $\forall x . \phi$ is $\hat\Pi^\mathcal{P}_{n}$. 

\begin{thm}\label{sat_thm}
For each $n \in \mathbb{N}$, there are formulae $\mathrm{Sat}_{\Sigma_n^\mathcal{P}}(\sigma, \vec{x}) \in \hat \Sigma_{\mathrm{max}(1, n)}^\mathcal{P}$ and $\mathrm{Sat}_{\Pi_n^\mathcal{P}}(\pi, \vec{x}) \in \hat \Pi_{\mathrm{max}(1, n)}^\mathcal{P}$, such that for any model $\mathcal{M} \models \mathrm{KP}^\mathcal{P}$, any $\sigma \in \hat\Sigma_n^\mathcal{P}[\vec{u}]$ and any 
$\pi \in \hat\Pi_n^\mathcal{P}[\vec{u}]$,
\begin{align*}
\mathcal{M} &\models \sigma(\vec{x}) \leftrightarrow \mathrm{Sat}_{\Sigma_n^\mathcal{P}}(\sigma, \vec{x}) \\
\mathcal{M} &\models \pi(\vec{x}) \leftrightarrow \mathrm{Sat}_{\Pi_n^\mathcal{P}}(\pi, \vec{x}).
\end{align*}
\end{thm}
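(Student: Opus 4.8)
The plan is to construct the formulas $\mathrm{Sat}_{\Sigma_n^\mathcal{P}}$ and $\mathrm{Sat}_{\Pi_n^\mathcal{P}}$ simultaneously by recursion on $n$, mirroring the recursive definition of the hats-hierarchy itself, and to verify the required equivalences by induction on the build-up of $\sigma$ (resp.\ $\pi$) within each level. The base case is the genuinely substantive one: I need a single $\hat\Sigma_1^\mathcal{P}$-formula $\mathrm{Sat}_{\Delta_0^\mathcal{P}}(\delta,\vec x)$ that correctly decides the truth of every $\hat\Delta_0^\mathcal{P}$-formula $\delta$ at every tuple $\vec x$. First I would observe, as the excerpt already notes, that the truth of a $\hat\Delta_0^\mathcal{P}$-formula $\delta(\vec x)$ depends only on $\mathrm{STC}^\mathcal{M}(\{\vec x\})$, which exists by $\mathrm{KP}^\mathcal{P}$. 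So the idea is to define $\mathrm{Sat}_{\Delta_0^\mathcal{P}}(\delta,\vec x)$ to say: ``there exists a finite function $h$ (a partial evaluation table) whose domain is the set of subformula-instances of $\delta$ relative to a sufficiently large $V_\alpha \supseteq \mathrm{STC}(\{\vec x\})$, which obeys the Tarski recursion clauses (atomic, $\neg$, $\wedge$, and the bounded/$\mathcal{P}$-bounded quantifier clauses, with quantifiers ranging over $V_\alpha$ or $\mathcal{P}(y)\cap V_\alpha$ as appropriate), and which assigns value ``true'' to $(\delta,\vec x)$''. Because $\mathcal{P}(u)\in V_\alpha$ whenever $u\in V_\alpha$, the $\mathcal{P}$-bounded quantifiers are handled inside the table exactly like the ordinary bounded ones; this is precisely the point at which the Takahashi hierarchy, rather than the Lévy hierarchy, makes the whole thing go through at complexity $\hat\Sigma_1^\mathcal{P}$. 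The leading existential quantifier over $(\alpha, h)$ is unbounded, giving $\hat\Sigma_1^\mathcal{P}$; one then checks, using $\Sigma_1^\mathcal{P}$-Recursion (available in $\mathrm{KP}^\mathcal{P}$ by the discussion in Section~\ref{tour KP}) to prove existence and uniqueness of the evaluation table over a given $V_\alpha$, that $\mathcal{M}\models\delta(\vec x)\leftrightarrow\mathrm{Sat}_{\Delta_0^\mathcal{P}}(\delta,\vec x)$ for all $\delta\in\hat\Delta_0^\mathcal{P}[\vec u]$.

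For the recursion step, given $\mathrm{Sat}_{\Pi_n^\mathcal{P}}$ of complexity $\hat\Pi_{\max(1,n)}^\mathcal{P}$, I would set $\mathrm{Sat}_{\Sigma_{n+1}^\mathcal{P}}(\sigma,\vec x)$ to be, roughly, ``$\sigma$ is of the form $\exists z\,\pi(z,\vec x)$ with $\pi\in\hat\Pi_n^\mathcal{P}$, and $\exists z\,\mathrm{Sat}_{\Pi_n^\mathcal{P}}(\pi, z, \vec x)$'' — except that, since a $\hat\Sigma_{n+1}^\mathcal{P}$-formula may be a Boolean-and-$\mathcal{P}$-bounded-quantifier combination of formulas of the shape $\exists z\,\pi$, the honest definition proceeds by an internal recursion on the syntax of $\sigma$, with the atomic clauses at the bottom now being calls to $\mathrm{Sat}_{\Pi_n^\mathcal{P}}$ on the immediate $\hat\Pi_n^\mathcal{P}$-subformulas, and the bounded/$\mathcal{P}$-bounded quantifiers handled via an evaluation-table trick as before. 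Closure of $\hat\Sigma_{n+1}^\mathcal{P}$ (for $n\geq 1$) under bounded and $\mathcal{P}$-bounded quantification, disjunction and conjunction — which keeps the complexity from climbing past $\hat\Sigma_{n+1}^\mathcal{P}$ — is exactly what Proposition~\ref{tak_clo} (with its analogue for higher levels, following Takahashi's Theorem~1) guarantees. Dually for $\mathrm{Sat}_{\Pi_{n+1}^\mathcal{P}}$. One checks the complexity bookkeeping: an unbounded $\exists$ in front of a $\hat\Pi_{\max(1,n)}^\mathcal{P}$ matrix lands in $\hat\Sigma_{\max(1,n+1)}^\mathcal{P}$, as claimed.

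Finally, the equivalence $\mathcal{M}\models\sigma(\vec x)\leftrightarrow\mathrm{Sat}_{\Sigma_n^\mathcal{P}}(\sigma,\vec x)$ for all $\sigma\in\hat\Sigma_n^\mathcal{P}[\vec u]$ is proved, for fixed $n$, by induction on the syntactic complexity of $\sigma$: atomic and negated-atomic and the $\hat\Delta_0^\mathcal{P}$ case reduce to correctness of $\mathrm{Sat}_{\Delta_0^\mathcal{P}}$; Boolean cases are immediate; the bounded and $\mathcal{P}$-bounded quantifier cases use that the relevant witnesses live in a $V_\alpha$ internally available to $\mathcal{M}$; and the unbounded $\exists z$ case uses the inductive hypothesis for $\mathrm{Sat}_{\Pi_{n-1}^\mathcal{P}}$ applied inside $\mathcal{M}$. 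I expect the main obstacle to be the base case — getting a uniform $\hat\Sigma_1^\mathcal{P}$ satisfaction predicate for all of $\hat\Delta_0^\mathcal{P}$ right, in particular arranging the evaluation table so that its existence and uniqueness are provable in $\mathrm{KP}^\mathcal{P}$ via $\Sigma_1^\mathcal{P}$-Recursion, and double-checking that treating $\mathcal{P}$-bounded quantifiers as bounded genuinely costs nothing beyond one application of Powerset inside the table. The rest is careful but routine syntax-manipulation, and the full details are deferred to \S 4.4 of \cite{Gor18} as the statement indicates.
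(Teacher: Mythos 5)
Your base case is the right idea and is the real content: $\mathrm{Sat}_{\Delta_0^\mathcal{P}}$ via an evaluation table over a supertransitive set, with the $\mathcal{P}$-bounded quantifier clauses of the table expressed using the fact that $v$ is closed under subsets of its elements, and with existence of the table giving the $\hat\Sigma_1^\mathcal{P}$ form and uniqueness giving the $\hat\Pi_1^\mathcal{P}$ form. The paper itself defers the details to \S4.4 of \cite{Gor18}, so there is no in-paper proof to compare against, but what you describe is the natural adaptation of L\'evy's construction to the Takahashi setting and is almost certainly what the cited source does.

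There is, however, a concrete misstep in your recursion step that, if carried out as written, would produce a formula of the wrong syntactic class. You claim that a $\hat\Sigma_{n+1}^\mathcal{P}$-formula ``may be a Boolean-and-$\mathcal{P}$-bounded-quantifier combination of formulas of the shape $\exists z\,\pi$'' and therefore propose an internal recursion on syntax, citing Proposition~\ref{tak_clo} for closure of $\hat\Sigma_{n+1}^\mathcal{P}$ under connectives and bounded quantifiers. But that closure holds only for the un-hatted classes $\Sigma_n^\mathcal{P}$ (closed under $T$-provable equivalence) and for $\hat{\mathrm B}_n^\mathcal{P}$; the hatted classes in this paper are strictly prenex: $\hat\Sigma_{n+1}^\mathcal{P}$ consists \emph{only} of formulas of the form $\exists x\,.\,\phi$ with $\phi\in\hat\Pi_n^\mathcal{P}$ (together with further leading existentials). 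So $(\exists x\,.\,\phi)\wedge(\exists y\,.\,\psi)$ is \emph{not} in $\hat\Sigma_{n+1}^\mathcal{P}$, and a ``Tarski-recursion'' definition of $\mathrm{Sat}_{\Sigma_{n+1}^\mathcal{P}}$ would not itself be a $\hat\Sigma_{n+1}^\mathcal{P}$-formula, which is what the theorem asserts. The fix is that no internal syntax recursion is needed at the step: since $\sigma\in\hat\Sigma_{n+1}^\mathcal{P}$ is literally a block of existentials followed by a $\hat\Pi_n^\mathcal{P}$ matrix $\pi$, one may simply set $\mathrm{Sat}_{\Sigma_{n+1}^\mathcal{P}}(\sigma,\vec x)\equiv\exists z\,.\,\mathrm{Sat}_{\Pi_n^\mathcal{P}}(\mathrm{matrix}(\sigma),(z)_1,\dots,(z)_k,\vec x)$, with the decoding of $\sigma$ absorbed into the $\hat\Delta_0^\mathcal{P}$ part.

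One further piece of bookkeeping you elided: your rule ``$\exists$ in front of a $\hat\Pi_{\max(1,n)}^\mathcal{P}$ matrix lands in $\hat\Sigma_{\max(1,n+1)}^\mathcal{P}$'' fails at $n=0$, where it gives $\hat\Sigma_2^\mathcal{P}$ rather than the required $\hat\Sigma_1^\mathcal{P}$. For the step from level $0$ to level $1$ you must prefix the $\hat\Sigma_1^\mathcal{P}$ form of $\mathrm{Sat}_{\Delta_0^\mathcal{P}}$ (not the $\hat\Pi_1^\mathcal{P}$ form), so that the extra existential simply extends the prenex block and the result stays in $\hat\Sigma_1^\mathcal{P}$. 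This is exactly why the theorem asserts \emph{two} formulas at level $0$, one $\hat\Sigma_1^\mathcal{P}$ and one $\hat\Pi_1^\mathcal{P}$, both called $\mathrm{Sat}_{\Delta_0^\mathcal{P}}$ afterwards; the recursion must use whichever one matches the parity of the next step.
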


We write $\mathrm{Sat}_{\Delta_0^\mathcal{P}}$ for $\mathrm{Sat}_{\Sigma_0^\mathcal{P}}$ and $\mathrm{Sat}_{\Pi_0^\mathcal{P}}$, as they are equivalent. By the theorem, $\mathrm{Sat}_{\Delta_0^\mathcal{P}} \in \Delta_1^\mathcal{P}$.

\begin{lemma}\label{hierarchical type coded}
	Let $n \in \mathbb{N}$, suppose that $\mathcal{M} \models \mathrm{KP}^\mathcal{P} + \Sigma_n^\mathcal{P}\textnormal{-Separation}$ is non-standard, and let $\mathcal{S}$ be a bounded substructure of $\mathcal{M}$. For each $\vec{a} \in \mathcal{M}$, we have that $\mathrm{tp}_{\hat\Sigma_n^\mathcal{P}, \mathcal{S}}(\vec{a})$ and $\mathrm{tp}_{\hat\Pi_n^\mathcal{P}, \mathcal{S}}(\vec{a})$ are coded in $\mathcal{M}$.
\end{lemma}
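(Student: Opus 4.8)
The plan is to exploit the partial satisfaction relations from Theorem \ref{sat_thm} together with $\Sigma_n^\mathcal{P}$-Separation, mimicking the standard argument that recursively enumerable types over models with enough induction/separation are coded. Fix $\vec a \in \mathcal{M}$, and recall that since $\mathcal{S}$ is a bounded substructure of $\mathcal{M}$, there is an ordinal $\lambda \in \mathrm{Ord}^\mathcal{M}$ with $\mathcal{S} \subseteq \mathcal{M}_\lambda$; moreover, since $\mathcal{M}$ is non-standard, by $\Sigma_n^\mathcal{P}$-Overspill (or directly, since $\mathcal{M}$ contains non-standard Gödel codes) there is a non-standard $\mathring n \in \mathbb{N}^\mathcal{M}$ which $\mathcal{M}$ believes indexes all the genuine $\hat\Sigma_n^\mathcal{P}$-formulae in the free variables $\vec x$ together with an extra tuple of variables $\vec y$. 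The idea is to let $\mathcal{M}$ internally build the set of pairs $(\sigma, \vec s)$ with $\sigma$ a (possibly non-standard, index-$<\mathring n$) $\hat\Sigma_n^\mathcal{P}$-code and $\vec s \in V_\lambda$, such that $\mathrm{Sat}_{\Sigma_n^\mathcal{P}}(\sigma, \vec a, \vec s)$ holds, and then restrict externally to $\mathcal{S}$.

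Concretely, first I would apply Theorem \ref{sat_thm} to get $\mathrm{Sat}_{\Sigma_n^\mathcal{P}} \in \hat\Sigma_{\max(1,n)}^\mathcal{P}$. Then, working inside $\mathcal{M}$, I would use $\Sigma_n^\mathcal{P}$-Separation (which for $n \geq 1$ contains $\Sigma_{\max(1,n)}^\mathcal{P}$-Separation; for $n = 0$ one uses $\Delta_1^\mathcal{P}$-Separation from $\mathrm{KP}^\mathcal{P}$, since $\mathrm{Sat}_{\Delta_0^\mathcal{P}} \in \Delta_1^\mathcal{P}$) to separate out the set
\[
c =_\df \{\, \langle \sigma, \vec s\,\rangle \in \mathbb{N}^\mathcal{M} \times (V_\lambda^\mathcal{M})^{<\omega} \mid \sigma \text{ codes a } \hat\Sigma_n^\mathcal{P}\text{-formula in } \vec x, \vec y \ \wedge\ \mathrm{Sat}_{\Sigma_n^\mathcal{P}}(\sigma, \vec a, \vec s) \,\}.
\]
The bound "$\langle\sigma,\vec s\rangle \in \mathbb{N}^\mathcal{M} \times (V_\lambda^\mathcal{M})^{<\omega}$" makes the separating formula a genuine instance of $\Sigma_n^\mathcal{P}$-Separation with parameters $\vec a$ and $\lambda$ (the clause "$\sigma$ codes a $\hat\Sigma_n^\mathcal{P}$-formula" is $\Delta_0$ in $\sigma$). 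Now I claim $c$ codes $\mathrm{tp}_{\hat\Sigma_n^\mathcal{P}, \mathcal{S}}(\vec a)$ in the sense of the paper: for a genuine (standard-coded) formula $\phi(\vec x, \vec y) \in \hat\Sigma_n^\mathcal{P}$ and genuine $\vec s \in \mathcal{S} \subseteq V_\lambda^\mathcal{M}$, Theorem \ref{sat_thm} gives $\mathcal{M} \models \phi(\vec a, \vec s) \leftrightarrow \mathrm{Sat}_{\Sigma_n^\mathcal{P}}(\phi,\vec a,\vec s)$, so $\langle \phi, \vec s\rangle \in^\mathcal{M} c$ iff $\phi(\vec a,\vec s) \in \mathrm{tp}_{\hat\Sigma_n^\mathcal{P},\mathcal{S}}(\vec a)$; the externalization $c_\mathcal{M}$ restricted to the standard codes and to tuples from $\mathcal{S}$ is exactly the type. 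The $\hat\Pi_n^\mathcal{P}$ case is symmetric, using $\mathrm{Sat}_{\Pi_n^\mathcal{P}}$ and the observation that $\Sigma_n^\mathcal{P}$-Separation proves $\Pi_n^\mathcal{P}$-Separation (via $\mathrm B_n^\mathcal{P}$-Separation, by the proposition on axiom relationships, or directly since $\mathrm{Sat}_{\Pi_n^\mathcal{P}} \in \hat\Pi_n^\mathcal{P} \subseteq \mathrm B_n^\mathcal{P}$ which is handled by $\Sigma_n^\mathcal{P}$-Separation).

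The main obstacle I expect is bookkeeping about codes rather than anything conceptual: one must be careful that the definition of "coded in $\mathcal{M}$" (via an injection $\mathcal{S} \cup \hat\Sigma_n^\mathcal{P} \hookrightarrow \mathcal{M}$, or rather via the pairing of Gödel codes with parameter tuples) matches the set $c$ produced, i.e. that the external decoding of $c$ along the standard Gödel numbering gives precisely $\mathrm{tp}_{\hat\Sigma_n^\mathcal{P},\mathcal{S}}(\vec a)$ and nothing spurious from non-standard $\sigma$ or parameters outside $\mathcal{S}$ — but since "coded" only requires the trace on the relevant index set to be correct, the possible presence of junk pairs in $c$ with non-standard $\sigma$ is harmless. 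A minor point to check is that the complexity of the separating formula genuinely falls under the available separation schema in the edge case $n=0$, where one invokes $\Delta_1^\mathcal{P}$-Separation (available in $\mathrm{KP}^\mathcal{P}$ by the proposition) using $\mathrm{Sat}_{\Delta_0^\mathcal{P}} \in \Delta_1^\mathcal{P}$.
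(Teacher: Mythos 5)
Your proposal is correct and takes essentially the same approach as the paper: apply Theorem \ref{sat_thm} and $\Sigma_n^\mathcal{P}$/$\Pi_n^\mathcal{P}$-Separation (or $\Delta_1^\mathcal{P}$-Separation when $n=0$) to form, inside $\mathcal{M}$, a set of pairs $\langle\phi,\vec v\rangle$ with $\vec v$ of rank below a bound on $\mathcal{S}$ and $\mathrm{Sat}_{\Sigma_n^\mathcal{P}}(\phi,\vec a,\vec v)$ (resp.\ $\mathrm{Sat}_{\Pi_n^\mathcal{P}}$), whose externalization codes the type. The detour through a non-standard $\mathring n$ bounding Gödel codes is superfluous, since $\mathbb{N}^\mathcal{M}$ already gives an adequate bound for the separation instance and non-standardness of $\mathcal{M}$ plays no real role.
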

\begin{proof}
	Let $\alpha$ be an ordinal in $\mathcal{M}$, such that $\mathcal{S} \subseteq \mathcal{M}_\alpha$. Note that if $n = 0$, then $\mathcal{M} \models \Delta_1^\mathcal{P}\textnormal{-Separation}$, and for any $n \in \mathbb{N}$ we have both $\Sigma_n^\mathcal{P}$-Separation and $\Pi_n^\mathcal{P}$-Separation in $\mathcal{M}$. So by Theorem \ref{sat_thm}, we have these sets in $\mathcal{M}$:
	\begin{align*}
	s &= \{ \phi(\vec{x}, \vec{v}) \mid \rnk(\vec{v}) < \alpha \wedge \mathrm{Sat}_{\Sigma_n^\mathcal{P}}( \phi(\vec{x}, \vec{y}), \vec{a}, \vec{v})\}^\mathcal{M}, \\
	p &= \{ \phi(\vec{x}, \vec{v}) \mid \rnk(\vec{v}) < \alpha \wedge \mathrm{Sat}_{\Pi_n^\mathcal{P}}( \phi(\vec{x}, \vec{y}), \vec{a}, \vec{v})\}^\mathcal{M}.
	\end{align*}
	By Theorem \ref{sat_thm}, $s$ codes $\mathrm{tp}_{\hat\Sigma_n^\mathcal{P}, \mathcal{S}}(\vec{a})$, and $p$ codes $\mathrm{tp}_{\hat\Pi_n^\mathcal{P}, \mathcal{S}}(\vec{a})$.
\end{proof}

Theorem \ref{rec sat char} below characterizing recursively saturated models of $\mathrm{ZF}$ is useful. To state it we introduce this definition: Let $\mathcal{L}^0_\mathrm{Sat}$ be the language obtained by adding a new binary predicate $\mathrm{Sat}$ to $\mathcal{L}^0$. We say that $\mathcal{M}$ {\em admits a safe satisfaction relation} if $\mathcal{M}$ expands to an $\mathcal{L}^0_\mathrm{Sat}$-structure $(\mathcal{M}, \mathrm{Sat}^\mathcal{M})$, such that
\begin{enumerate}[{\normalfont (i)}]
	\item $(\mathcal{M}, \mathrm{Sat}^\mathcal{M}) \models \mathrm{ZF}(\mathcal{L}^0_\mathrm{Sat})$,
	\item For each $n \in \mathbb{N}$:
	\begin{align*}
	(\mathcal{M}, \mathrm{Sat}^\mathcal{M}) &\models \forall \sigma \in \hat\Sigma_n . (\mathrm{Sat}(\sigma, x) \leftrightarrow \mathrm{Sat}_{\Sigma_n}(\sigma, x)) \\
	(\mathcal{M}, \mathrm{Sat}^\mathcal{M}) &\models \forall \pi \in \hat\Pi_n . (\mathrm{Sat}(\pi, x) \leftrightarrow \mathrm{Sat}_{\Pi_n}(\pi, x))
	\end{align*}
\end{enumerate}
We say that $\mathrm{Sat}^\mathcal{M}$ is a {\em safe satisfaction relation} on $\mathcal{M}$. For the second condition considered alone, we say that $\mathrm{Sat}^\mathcal{M}$ is {\em correct for standard formulae}.

The following result is found as Theorem 3.2 (and the remark preceding it) in \cite{Sch78}.

\begin{thm}\label{rec sat reflection}
	Let $\mathcal{M}$ be a model of $\mathrm{ZF}$ that admits a safe satisfaction relation. There are unboundedly many $\alpha \in \mathrm{Ord}^\mathcal{M}$, such that $\mathcal{M}_\alpha \preceq \mathcal{M}.$ 
\end{thm}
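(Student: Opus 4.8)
The plan is to relativize the classical Lévy--Montague reflection theorem to the given satisfaction predicate. Fix an expansion $(\mathcal{M}, \mathrm{Sat}^\mathcal{M}) \models \mathrm{ZF}(\mathcal{L}^0_\mathrm{Sat})$ witnessing that $\mathcal{M}$ admits a safe satisfaction relation, and let $\beta$ be an arbitrary ordinal of $\mathcal{M}$; I will produce a limit ordinal $\alpha >^\mathcal{M} \beta$ with $\mathcal{M}_\alpha \preceq \mathcal{M}$, which gives cofinality. First I record two correctness facts. Since $\mathrm{Sat}^\mathcal{M}$ is correct for standard formulae, and since the Lévy partial satisfaction relations $\mathrm{Sat}_{\Sigma_n}$, $\mathrm{Sat}_{\Pi_n}$ of \cite{Lev65} (the $\mathrm{ZF}$-analogue of Theorem \ref{sat_thm}) compute truth correctly in $\mathcal{M}$, we have that for every standard $\mathcal{L}^0$-formula $\phi(\vec{x})$ and every $\vec{a} \in \mathcal{M}$, $\mathcal{M} \models \phi(\vec{a})$ iff $\mathrm{Sat}^\mathcal{M}(\phi, \vec{a})$. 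On the other side, for any ordinal $\alpha$ of $\mathcal{M}$ the internal full satisfaction relation $\mathrm{Sat}^{V_\alpha}$ of the set structure $\mathcal{M}_\alpha$ exists as a set in $\mathcal{M}$ (defined by the Tarski recursion on the coded complexity of formulae), and, by external induction on $\phi$, it is correct for standard formulae: $\mathcal{M}_\alpha \models \phi(\vec{a})$ iff $\mathrm{Sat}^{V_\alpha}(\phi, \vec{a})$, for $\vec{a} \in \mathcal{M}_\alpha$.

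Next I carry out the reflection argument inside $(\mathcal{M}, \mathrm{Sat}^\mathcal{M})$. For each coded formula $\exists v_0 . \psi(v_0, \vec{v})$ and each tuple $\vec{x}$, let $G(\psi, \vec{x})$ be the least ordinal $\gamma$ such that $V_\gamma$ contains some $v_0$ with $\mathrm{Sat}(\psi, v_0, \vec{x})$ if such a $v_0$ exists, and $G(\psi,\vec x) = 0$ otherwise; this is a class function defined by an $\mathcal{L}^0_\mathrm{Sat}$-formula. Starting from $\beta_0 = \beta + 1$, use Replacement for $\mathcal{L}^0_\mathrm{Sat}$ to define recursively $\beta_{n+1}$ as an ordinal exceeding $\beta_n$ and bounding the set $\{ G(\psi, \vec{x}) \mid \psi \text{ a coded formula}, \ \vec{x} \in V_{\beta_n} \}$; then $\alpha := \sup_{n < \omega} \beta_n$ is a limit ordinal $>^\mathcal{M} \beta$ with the closure property that whenever $\vec{x} \in V_\alpha$ and $\mathrm{Sat}(\exists v_0 . \psi, \vec{x})$ holds, some witness lies in $V_\alpha$. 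A Tarski--Vaught induction on the coded complexity of $\phi$ --- carried out internally in $(\mathcal{M}, \mathrm{Sat}^\mathcal{M})$ and using Separation for $\mathcal{L}^0_\mathrm{Sat}$ --- then yields $\mathrm{Sat}(\phi, \vec{x}) \leftrightarrow \mathrm{Sat}^{V_\alpha}(\phi, \vec{x})$ for all coded $\phi$ and all $\vec{x} \in V_\alpha$.

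Finally I assemble the pieces: for such an $\alpha$, any standard $\mathcal{L}^0$-formula $\phi$, and any $\vec{a} \in \mathcal{M}_\alpha$,
\[
\mathcal{M} \models \phi(\vec{a}) \Leftrightarrow \mathrm{Sat}^\mathcal{M}(\phi, \vec{a}) \Leftrightarrow \mathrm{Sat}^{V_\alpha}(\phi, \vec{a}) \Leftrightarrow \mathcal{M}_\alpha \models \phi(\vec{a}),
\]
where the outer two equivalences come from the first paragraph and the middle one from the Tarski--Vaught step (instantiated at a standard formula). Hence $\mathcal{M}_\alpha \preceq \mathcal{M}$, and since $\beta$ was arbitrary, such $\alpha$ are cofinal in $\mathrm{Ord}^\mathcal{M}$.

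The main obstacle I anticipate is bookkeeping around the various satisfaction relations: one must be scrupulous about which relation is in play at each step --- $\mathrm{Sat}^\mathcal{M}$ for the whole universe, $\mathrm{Sat}^{V_\alpha}$ for $\mathcal{M}_\alpha$, the metatheoretic $\models$, and the Lévy relations $\mathrm{Sat}_{\Sigma_n}$ --- and about the point that the internal reflection and Tarski--Vaught inductions are legitimate precisely because Replacement and Separation are available for the full expanded language $\mathcal{L}^0_\mathrm{Sat}$; this is exactly what condition (i) of admitting a safe satisfaction relation provides. A secondary but necessary detail is arranging the reflecting ordinals to be limits, so that $\mathcal{M}_\alpha$ has the closure needed for the quantifier clause of the induction; the $\omega$-length iteration above handles this automatically.
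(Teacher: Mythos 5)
Note first that the paper does not prove this theorem --- it is cited from Schlipf --- so your argument is an independent reconstruction, and the idea of relativizing L\'evy--Montague reflection to the given $\mathrm{Sat}$ is indeed the right one. The two correctness facts in your first paragraph, the definition of the witnessing function $G$ via $\mathcal{L}^0_\mathrm{Sat}$-Replacement, and the $\omega$-iterated closure producing a limit ordinal $\alpha$ are all sound. The gap is in the claim that a Tarski--Vaught induction \emph{carried out internally in $(\mathcal{M}, \mathrm{Sat}^\mathcal{M})$} yields $\mathrm{Sat}(\phi,\vec{x}) \leftrightarrow \mathrm{Sat}^{V_\alpha}(\phi,\vec{x})$ for \emph{all coded} $\phi$. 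Such an internal induction would need, at the connective and quantifier steps, the Tarski truth clauses for $\mathrm{Sat}^\mathcal{M}$ (e.g.\ $\mathrm{Sat}(\exists v.\psi,\vec{x}) \leftrightarrow \exists v.\mathrm{Sat}(\psi,v,\vec{x})$, $\mathrm{Sat}(\neg\psi,\vec{x}) \leftrightarrow \neg\mathrm{Sat}(\psi,\vec{x})$) to hold for arbitrary $\mathcal{M}$-coded formulae. But the definition of a safe satisfaction relation does not supply this: condition (ii) is a \emph{metatheoretic scheme} that, for each \emph{standard} $n$, forces agreement with $\mathrm{Sat}_{\Sigma_n}$ and $\mathrm{Sat}_{\Pi_n}$ on codes the model deems to be in $\hat\Sigma_n$ or $\hat\Pi_n$; it leaves $\mathrm{Sat}^\mathcal{M}$ entirely unconstrained on codes of non-standard internal complexity (which exist whenever $\mathcal{M}$ is $\omega$-non-standard) and on codes not in prenex shape. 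So the model has no reason to believe the Tarski clauses for $\mathrm{Sat}^\mathcal{M}$ uniformly, and the internal induction cannot pass non-standard levels.

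The repair is small: keep your $\alpha$ exactly as constructed, but run the Tarski--Vaught verification \emph{externally}, by metatheoretic induction on the standard integer $n$, showing $\mathcal{M}_\alpha \preceq_{\hat\Sigma_n \cup \hat\Pi_n} \mathcal{M}$. The base case is transitivity of $\mathcal{M}_\alpha$ in $\mathcal{M}$. For the step, given a standard $\pi \in \hat\Pi_n$ and $\vec{a} \in \mathcal{M}_\alpha$ with $\mathcal{M} \models \exists v.\pi(v,\vec{a})$: by the L\'evy relations and condition (ii) at the standard level $n$, $\mathcal{M} \models \mathrm{Sat}(\pi,b,\vec{a})$ for some $b$; the closure property of $\alpha$ (applied to $\pi$ and $\vec{a}$, which lie in some $V_{\beta_k}$) produces a witness $b' \in \mathcal{M}_\alpha$ with $\mathcal{M} \models \mathrm{Sat}(\pi,b',\vec{a})$, hence $\mathcal{M} \models \pi(b',\vec{a})$, and the induction hypothesis gives $\mathcal{M}_\alpha \models \pi(b',\vec{a})$; the converse and the $\Pi_{n+1}$ case are routine. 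Here condition (ii) is only ever invoked at a standard level, and $\mathrm{Sat}^\mathcal{M}$ is used solely to make $V_\alpha$ closed under least-rank witnesses --- exactly the two things the hypothesis actually guarantees. Since every standard $\mathcal{L}^0$-formula is logically equivalent to one in $\bigcup_n (\hat\Sigma_n \cup \hat\Pi_n)$, this yields $\mathcal{M}_\alpha \preceq \mathcal{M}$, and cofinality follows since $\beta$ was arbitrary.
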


The forward direction of the following result follows from Theorems 1.3 and 3.4 in \cite{Sch78}. The converse is easier, it follows by overspill from the observation that a model of  $\mathrm{ZF}$ with a safe satisfaction relation codes any recursive type.

\begin{thm}\label{rec sat char}
Let $\mathcal{M} \models \mathrm{ZF}$ be countable. The following conditions are equivalent:
\begin{enumerate}[{\normalfont (a)}]
\item\label{rec sat char rec} $\mathcal{M}$ is recursively saturated.
\item\label{rec sat char truth} $\mathcal{M}$ is $\omega$-non-standard and admits a safe satisfaction relation.
\end{enumerate}
Moreover, even if $\mathcal{M}$ is not assumed countable, we have  (\ref{rec sat char truth}) $\Rightarrow$ (\ref{rec sat char rec}).
\end{thm}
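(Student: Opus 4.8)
The plan is to establish the two implications separately, beginning with $(\ref{rec sat char truth}) \Rightarrow (\ref{rec sat char rec})$, since it is self-contained and uses no countability hypothesis, and then $(\ref{rec sat char rec}) \Rightarrow (\ref{rec sat char truth})$, whose nontrivial half I would import from \cite{Sch78}.

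For $(\ref{rec sat char truth}) \Rightarrow (\ref{rec sat char rec})$, suppose $\mathcal{M}$ is $\omega$-non-standard and expands to $(\mathcal{M}, \mathrm{Sat}^\mathcal{M}) \models \mathrm{ZF}(\mathcal{L}^0_\mathrm{Sat})$ with $\mathrm{Sat}^\mathcal{M}$ correct for standard formulae; I want to realize an arbitrary finitely satisfiable recursive type $p(x)$ over $\mathcal{M}$ (parameters, if present, are carried along unchanged). The point is that $\mathcal{M} \models \mathrm{ZF}$ can carry out elementary recursion theory, so the recursive set of codes $\{\ulcorner \phi(x) \urcorner : \phi \in p\}$ is internalized by some $c \in \mathcal{M}$ that is correct on standard codes. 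Using finite satisfiability and correctness of $\mathrm{Sat}^\mathcal{M}$ for standard formulae (so that $\mathrm{Sat}^\mathcal{M}$ agrees with truth in $\mathcal{M}$ on each standard $\phi \in p$), one checks that the $\mathcal{L}^0_\mathrm{Sat}$-formula $\theta(m) \equiv \exists x\, \forall \sigma \leq m\, (\sigma \in c \rightarrow \mathrm{Sat}(\sigma, x))$ holds in $(\mathcal{M}, \mathrm{Sat}^\mathcal{M})$ for every standard $m \in \omega^\mathcal{M}$. Since $(\mathcal{M}, \mathrm{Sat}^\mathcal{M})$ satisfies the $\mathcal{L}^0_\mathrm{Sat}$-Separation schema (part of $\mathrm{ZF}(\mathcal{L}^0_\mathrm{Sat})$), hence also $\mathcal{L}^0_\mathrm{Sat}$-Foundation, and $\mathcal{M}$ is $\omega$-non-standard, overspill furnishes a non-standard $\mathring m \in \omega^\mathcal{M}$ with $(\mathcal{M}, \mathrm{Sat}^\mathcal{M}) \models \theta(\mathring m)$. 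Any witness $x_0$ then satisfies $\mathrm{Sat}^\mathcal{M}(\sigma, x_0)$ for every standard $\sigma \in c_\mathcal{M}$, i.e.\ $\mathcal{M} \models \phi(x_0)$ for every $\phi \in p$ by correctness for standard formulae, so $x_0$ realizes $p$. Countability is not used anywhere, which is precisely why the final clause of the theorem goes through.

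For $(\ref{rec sat char rec}) \Rightarrow (\ref{rec sat char truth})$ with $\mathcal{M}$ countable, I would first note $\omega$-non-standardness: the parameter-free recursive type asserting ``$x \in \omega$'' together with ``$x$ has at least $n$ predecessors in $\omega$'' ($n \in \mathbb{N}$) is finitely satisfiable because $\mathrm{ZF} \vdash$ ``$\omega$ is infinite'', so recursive saturation yields a non-standard element of $\omega^\mathcal{M}$. For the safe satisfaction relation I would invoke \cite{Sch78}: a countable recursively saturated model is resplendent, and by Schlipf's Theorems 1.3 and 3.4 the theory (recursively axiomatized over $\mathrm{Th}(\mathcal{M})$) expressing ``$\mathrm{Sat}$ is a Tarskian satisfaction class with $(\mathcal{M}, \mathrm{Sat}) \models \mathrm{ZF}(\mathcal{L}^0_\mathrm{Sat})$ and $\mathrm{Sat}$ correct for standard formulae'' is consistent; resplendence then realizes it as an expansion of $\mathcal{M}$, which is exactly a safe satisfaction relation in the sense required.

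I expect the main obstacle to be this last step: producing the safe satisfaction relation is the $\mathrm{ZF}$-analogue of the Kotlarski--Krajewski--Lachlan theorem, and its proof — the consistency argument that approximates a full, ``safe'' satisfaction class, together with the use of resplendence to actually adjoin it — is substantial, which is why I would cite \cite{Sch78} rather than reprove it; it is also the sole place where countability enters. Everything else is routine: the $\omega$-non-standardness type is immediate, and the overspill realization of recursive types uses only standard facts about models of $\mathrm{ZF}(\mathcal{L}^0_\mathrm{Sat})$.
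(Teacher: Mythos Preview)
Your proposal is correct and matches the paper's approach exactly: the paper also cites Theorems~1.3 and~3.4 of \cite{Sch78} for $(\ref{rec sat char rec}) \Rightarrow (\ref{rec sat char truth})$, and for the converse it gives precisely the one-line sketch you have fleshed out, namely that a safe satisfaction relation lets $\mathcal{M}$ code any recursive type and overspill then realizes it. Your added detail on $\omega$-non-standardness via a recursive type and on the overspill argument in $\mathcal{L}^0_\mathrm{Sat}$ is accurate and does not deviate from the paper's intended proof.
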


\begin{dfn}
	Let $i : \mathcal{M} \rightarrow \mathcal{M}$ be a self-embedding of a model $\mathcal{M}$ of $\mathrm{KP}$.
	\begin{itemize}
		\item $x \in \mathcal{M}$ is a {\em fixed point} of $i$, if $i(x) = x$. The substructure of $\mathcal{M}$ of fixed points of $i$ is denoted $\mathrm{Fix}(i)$. 
		\item $X \subseteq \mathcal{M}$ is {\em pointwise fixed} by $i$, if every $x \in X$ is fixed by $i$. $x \in \mathcal{M}$ is {\em pointwise fixed} by $i$ (or an {\em initial fixed point} of $i$), if $x_\mathcal{M}$ is pointwise fixed by $i$. The substructure of $\mathcal{M}$ of elements pointwise fixed by $i$, is denoted $\mathrm{Fix}^\mathrm{initial}(i)$. 
		\item $x \in \mathcal{M}$ is a {\em rank-initial  fixed point} of $\mathcal{M}$, if $\{y \in \mathcal{M} \mid \rnk(y) \leq \rnk(x)\}$ is pointwise fixed by $i$. The substructure of $\mathcal{M}$ of rank-initial fixed points of $i$ is denoted $\mathrm{Fix}^\rnk(i)$.
	\end{itemize}
	
	We say that $i$ is {\em contractive on $A \subseteq \mathcal{M}$} if for all $x \in A$, we have $\mathcal{M} \models \rnk(i(x)) < \rnk(x)$.
\end{dfn}

Assume that $\mathcal{M}$ is extensional and $i : \mathcal{M} \rightarrow \mathcal{M}$ is initial. Then $x \in \mathcal{M}$ is a fixed point of $i$ if it is pointwise fixed by $i$. It follows that 
$$\mathrm{Fix}(i) \supseteq \mathrm{Fix}^\mathrm{initial}(i) \supseteq \mathrm{Fix}^\rnk(i).$$

\begin{lemma}\label{rank-initial Fix is Sigma_1}
	Suppose that $\mathcal{M} \models \mathrm{KP}^\mathcal{P}$ and that $i$ is a rank-initial self-embedding of $\mathcal{M}$ such that $\mathcal{S} = \mathcal{M}\restriction_{\mathrm{Fix}(i)}$ is a rank-initial substructure of $\mathcal{M}$. Then $\mathcal{S} \preceq_{\Sigma_1^\mathcal{P}} \mathcal{M}$.
\end{lemma}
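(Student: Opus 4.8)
Since $\mathcal{S}$ is a rank-initial substructure of $\mathcal{M}$, the inclusion $\iota : \mathcal{S} \hookrightarrow \mathcal{M}$ is in particular power-initial, so every bounded and $\mathcal{P}$-bounded quantifier is absolute between $\mathcal{S}$ and $\mathcal{M}$ (an $\mathcal{M}$-element, or $\mathcal{M}$-subset, of an element of $\mathcal{S}$ lies in $\mathcal{S}$ by rank-initiality). Hence $\iota$ is $\Delta_0^\mathcal{P}$-elementary, and, exactly as in Proposition \ref{emb pres}(\ref{emb pres Delta_0^P}), every $\Sigma_1^\mathcal{P}$-formula is preserved upward along $\iota$: if $\sigma \in \Sigma_1^\mathcal{P}$ and $\vec a \in \mathcal{S}$ then $\mathcal{S} \models \sigma(\vec a) \Rightarrow \mathcal{M} \models \sigma(\vec a)$. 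So only the downward direction needs work. Fix $\sigma(\vec x) \in \Sigma_1^\mathcal{P}$; by the closure properties of the Takahashi hierarchy (Proposition \ref{tak_clo}) we may assume $\sigma(\vec x) = \exists y . \delta(y, \vec x)$ with $\delta \in \Delta_0^\mathcal{P}$. Given $\vec a \in \mathcal{S}$ with $\mathcal{M} \models \exists y . \delta(y, \vec a)$, the task is to exhibit a witness lying in $\mathcal{S}$.

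The plan is to pin down the \emph{least rank carrying a witness} and show it is fixed by $i$. Working inside $\mathcal{M}$: since $\mathrm{KP}^\mathcal{P}$ proves (via $\Sigma_1^\mathcal{P}$-Recursion) that $\xi \mapsto V_\xi$ is a total $\Sigma_1^\mathcal{P}$-function on the ordinals, Proposition \ref{tak_clo} shows that
\[
\psi(\xi, \vec x) \;:\equiv\; (\xi \in \mathrm{Ord}) \wedge \exists b \subseteq V_\xi . \delta(b, \vec x)
\]
is $\Delta_1^\mathcal{P}$, and hence so is $\theta(\xi, \vec x) :\equiv \psi(\xi, \vec x) \wedge \forall \eta \in \xi . \neg\psi(\eta, \vec x)$. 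Choose $b_0$ with $\mathcal{M} \models \delta(b_0, \vec a)$; then $\mathcal{M} \models \psi(\rnk(b_0), \vec a)$, so by $\Pi_1^\mathcal{P}$-Foundation (an axiom of $\mathrm{KP}^\mathcal{P}$, applied to the $\Delta_1^\mathcal{P} \subseteq \Pi_1^\mathcal{P}$ property $\psi(\cdot, \vec a)$) there is a least ordinal $\rho$ with $\mathcal{M} \models \psi(\rho, \vec a)$, i.e.\ $\mathcal{M} \models \theta(\rho, \vec a)$; moreover $\rho$ is the \emph{unique} ordinal of $\mathcal{M}$ satisfying $\theta(\cdot, \vec a)$.

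Now $\theta \in \Delta_1^\mathcal{P} \subseteq \Sigma_1^\mathcal{P}$, and $i$ is rank-initial, hence $\mathcal{P}$-initial, so Proposition \ref{emb pres}(\ref{emb pres Delta_0^P}) gives $\mathcal{M} \models \theta(i(\rho), i(\vec a))$. Since $\vec a \in \mathcal{S} = \mathrm{Fix}(i)$ we have $i(\vec a) = \vec a$, so $\mathcal{M} \models \theta(i(\rho), \vec a)$, and by uniqueness $i(\rho) = \rho$, that is, $\rho \in \mathrm{Fix}(i) = \mathcal{S}$. From $\mathcal{M} \models \psi(\rho, \vec a)$ obtain $b^* \in \mathcal{M}$ with $b^* \subseteq^\mathcal{M} V_\rho^\mathcal{M}$ and $\mathcal{M} \models \delta(b^*, \vec a)$; then $\rnk^\mathcal{M}(b^*) \leq \rho = \rnk^\mathcal{M}(\rho)$, and since $\mathcal{S}$ is rank-initial and $\rho \in \mathcal{S}$, also $b^* \in \mathcal{S}$. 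Finally, by $\Delta_0^\mathcal{P}$-elementarity of $\iota$ we get $\mathcal{S} \models \delta(b^*, \vec a)$, hence $\mathcal{S} \models \sigma(\vec a)$.

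The main obstacle is the middle step: recognizing that ``the least ordinal $\xi$ such that some $b \subseteq V_\xi$ satisfies $\delta(b, \vec a)$'' is expressible by a single $\Sigma_1^\mathcal{P}$ (indeed $\Delta_1^\mathcal{P}$) formula. This is exactly where the extra strength of $\mathrm{KP}^\mathcal{P}$ over $\mathrm{KP}$ enters: Powerset makes $\xi \mapsto V_\xi$ total and supplies the bound $b \subseteq V_\xi$ on the witness, $\Sigma_1^\mathcal{P}$-Recursion furnishes the $\Sigma_1^\mathcal{P}$-definition of that function, and Proposition \ref{tak_clo} keeps the whole formula inside $\Delta_1^\mathcal{P}$. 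After that, the ``a uniquely-defined ordinal is transported by $i$, hence fixed'' move is routine, as is the transfer back into $\mathcal{S}$ via rank-initiality and $\Delta_0^\mathcal{P}$-absoluteness. (One minor point: the reduction of a general $\Sigma_1^\mathcal{P}$-formula to the shape $\exists y . \delta$ uses $\mathrm{KP}^\mathcal{P}$-provable equivalences, which is harmless on the $\mathcal{M}$-side; on the $\mathcal{S}$-side one may simply read the conclusion at the level of $\hat\Sigma_1^\mathcal{P}$-formulae, which is what the subsequent applications require.)
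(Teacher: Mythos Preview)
Your argument is correct and follows essentially the same strategy as the paper: locate the least ordinal $\rho$ at which a witness to $\exists y.\delta(y,\vec a)$ appears, show $\rho\in\mathrm{Fix}(i)=\mathcal S$, and then use rank-initiality of $\mathcal S$ to pull a witness down into $\mathcal S$. The only notable differences are cosmetic: you show $i(\rho)=\rho$ by packaging ``least rank with a witness'' as a $\Delta_1^\mathcal{P}$-property and invoking $\Sigma_1^\mathcal{P}$-preservation along the $\mathcal P$-initial map $i$, whereas the paper argues directly by trichotomy (ruling out $i(\xi)<\xi$ and $i(\xi)>\xi$ using rank-initiality of $i$); and the paper inserts an intermediate set $D=\{x\in V_{\xi+1}\mid\delta(x,\vec s)\}$ and shows $D\in\mathrm{Fix}(i)$ before extracting the witness, a step you bypass by going straight from $\rho\in\mathcal S$ and $\rnk(b^*)\le\rho$ to $b^*\in\mathcal S$.
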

\begin{proof}
	We verify $\mathcal{S} \preceq_{\Sigma_1^\mathcal{P} } \mathcal{M}$ using the Tarski-Vaught Test (it applies since $\Sigma_1^\mathcal{P}$ is closed under subformulae). Let $\delta(x, \vec{y}) \in \Delta_0^\mathcal{P}[x, \vec{y}]$, let $\vec{s} \in \mathcal{S}$, and assume that $\mathcal{M} \models \exists x . \delta(x, \vec{s})$. We shall now work in $\mathcal{M}$: Let $\xi$ be the least ordinal such that $\exists x \in V_{\xi + 1} . \delta(x, \vec{s})$. We shall show that $i(\xi) = \xi$. Suppose not, then either $i(\xi) < \xi$ or $i(\xi) > \xi$. If $i(\xi) < \xi$, then $\exists x \in V_{i(\xi) + 1} . \delta(x, \vec{s})$, contradicting that $\xi$ is the least ordinal with this property. If $i(\xi) > \xi$, then by rank-initiality there is an ordinal $\zeta < \xi$ such that $i(\zeta) = \xi$. But then $\exists x \in V_{\zeta + 1} . \delta(x, \vec{s})$, again contradicting that $\xi$ is the least ordinal with this property.
	
	By $\Delta_0^\mathcal{P}$-Separation in $\mathcal{M}$, let $D = \{ x \in V_{\xi + 1} \mid \delta(x, \vec{s}) \}^\mathcal{M}$. Since $i$ is $\Delta_0^\mathcal{P}$-elementary and $\xi, \vec{s} \in \mathrm{Fix}(i)$, we have 
	\[
	\mathcal{M} \models \big( (\rnk(x) = \xi \wedge \delta(x, \vec{s})) \leftrightarrow (\rnk(i(x)) = \xi \wedge \delta(i(x), \vec{s})) \big).
	\]
	It immediately follows that $i(D) \subseteq D$. But by rank-initiality, every $x$ of rank $\xi$ in $\mathcal{M}$ is a value of $i$, so we even get that $i(D) = D$. Let $d \in D$. By initiality and $D \in \mathrm{Fix}(i)$, we have $d \in \mathrm{Fix}(i)$; and by construction of $D$, $\mathcal{M} \models \delta(d, \vec{s})$, as desired.
\end{proof}

\begin{lemma}\label{rank-initial Fix is elementary}
	Suppose that $\mathcal{M} \models \mathrm{KP}^\mathcal{P}$ has definable Skolem functions and that $i$ is an automorphism of $\mathcal{M}$ such that $\mathcal{S} = \mathcal{M}\restriction_{\mathrm{Fix}(i)}$. Then $\mathcal{S} \preceq \mathcal{M}$.
\end{lemma}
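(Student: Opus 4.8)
The plan is to verify $\mathcal{S} \preceq \mathcal{M}$ by the Tarski--Vaught test, just as in the proof of Lemma~\ref{rank-initial Fix is Sigma_1}, the new ingredient being the hypothesis that $\mathcal{M}$ has definable Skolem functions. The key observation is that definable Skolem functions are parameter-free, and an automorphism maps a parameter-free definable element to itself; hence $\mathrm{Fix}(i)$ is closed under all definable Skolem functions of $\mathcal{M}$, and this forces $\mathcal{S} = \mathcal{M}\restriction_{\mathrm{Fix}(i)}$ to be an elementary submodel.

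In detail, I would first isolate the following closure property of $\mathrm{Fix}(i)$: if $\psi(\vec{y}, z) \in \mathcal{L}^0$ is a parameter-free formula such that $\mathcal{M} \models \forall \vec{y}\,\forall z\,\forall z'\,\big((\psi(\vec{y},z) \wedge \psi(\vec{y}, z')) \rightarrow z = z'\big)$, and if $\vec{a} \in \mathrm{Fix}(i)$ and $\mathcal{M} \models \psi(\vec{a}, b)$, then $b \in \mathrm{Fix}(i)$. Indeed, since $i$ is an automorphism and $i(\vec{a}) = \vec{a}$, we have $\mathcal{M} \models \psi(\vec{a}, b)$ iff $\mathcal{M} \models \psi(i(\vec{a}), i(b))$ iff $\mathcal{M} \models \psi(\vec{a}, i(b))$; the uniqueness clause then forces $i(b) = b$. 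In particular $\mathrm{Fix}(i)$ is closed under every parameter-free definable Skolem function of $\mathcal{M}$. Now apply the Tarski--Vaught test: $\mathcal{S} = \mathcal{M}\restriction_{\mathrm{Fix}(i)}$ is a substructure of $\mathcal{M}$ in the relational language $\mathcal{L}^0$, and it is nonempty since $\varnothing^\mathcal{M} \in \mathrm{Fix}(i)$ (being parameter-free definable). So it suffices to show that whenever $\phi(x, \vec{y}) \in \mathcal{L}^0$, $\vec{s} \in \mathrm{Fix}(i)$ and $\mathcal{M} \models \exists x\,\phi(x, \vec{s})$, there is a witness in $\mathrm{Fix}(i)$. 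Take the definable Skolem function $f_\phi$ for $\phi$, given by a parameter-free formula with $\mathcal{M} \models \forall \vec{y}\,(\exists x\,\phi(x,\vec{y}) \rightarrow \phi(f_\phi(\vec{y}), \vec{y}))$, and put $d = f_\phi^\mathcal{M}(\vec{s})$. Then $\mathcal{M} \models \phi(d, \vec{s})$, and $d \in \mathrm{Fix}(i) = \mathcal{S}$ by the closure property, so the test is passed and $\mathcal{S} \preceq \mathcal{M}$.

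There is no substantial obstacle here; the only point requiring care is the reading of ``has definable Skolem functions'', which I take (as is standard, and as holds for $\mathrm{ZFC} + V = \mathrm{HOD}$; cf.\ Section~\ref{ZFC and GBC}) to mean that each Skolem function is definable \emph{without parameters}. This is precisely what makes the automorphism computation $i(b) = b$ go through. If one only had Skolem functions definable with parameters, the argument would still work provided those parameters were taken from $\mathrm{Fix}(i)$, but that refinement is not needed for the statement as given.
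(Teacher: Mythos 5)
Your proof is correct and takes essentially the same route as the paper: both apply the Tarski--Vaught test, take the Skolem function's value as the witness, and use that an automorphism fixing the parameters must fix the unique value of a definable function. Your write-up is slightly more careful in spelling out the closure property of $\mathrm{Fix}(i)$ and the parameter-free reading of ``definable Skolem functions,'' which the paper's terser proof leaves implicit.
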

\begin{proof}
Again, we apply The Tarski Test. Let $\phi(x, \vec{y}) \in \mathcal{L}^0$, let $\vec{s} \in \mathcal{S}$, and assume that $\mathcal{M} \models \exists x . \phi(x, \vec{s})$. Let $m \in \mathcal{M}$ be a witness of this fact. Let $f$ be a Skolem function for $\phi(x, \vec{y})$, defined in $\mathcal{M}$ by a formula $\psi(x, \vec{y})$. Then $\mathcal{M} \models \psi(m, \vec{s})$, and since $i$ is an automorphism fixing $\mathcal{S}$ pointwise, $\mathcal{M} \models \psi(i(m), \vec{s})$. But $\psi$ defines a function, so $\mathcal{M} \models m = i(m)$, whence $m \in \mathcal{S}$ as desired.
\end{proof}

\section{Embeddings between models of set theory}\label{Existence of embeddings between models of set theory}

In \S 4 of \cite{Fri73}, a back-and-forth technique was pioneered that utilizes partial satisfaction relations and the ability of non-standard models to code types over themselves (as indicated in Lemma \ref{hierarchical type coded}). Here we will prove refinements of set theoretic results in \S 4 of \cite{Fri73}, as well as generalizations of arithmetic results in \cite{BE18} and \cite{Res87b} to set theory. We will do so by casting the results in the conceptual framework of forcing. We do so because: 
\begin{itemize}
	\item The conceptual framework of forcing allows a {\em modular} design of the proofs, clarifying which assumptions are needed for what, and whereby new pieces can be added to a proof without having to re-write the other parts. So it serves as an efficient bookkeeping device.
	\item It enables us to look at these results from a different angle, and potentially apply theory that has been developed for usage in forcing.
\end{itemize}

\begin{lemma}\label{Friedman lemma}
	Let $\mathcal{M} \models \mathrm{KP}^\mathcal{P} + \Sigma_1^\mathcal{P}\textnormal{-Separation}$ and $\mathcal{N} \models \mathrm{KP}^\mathcal{P}$ be countable and non-standard, and let $\mathcal{S}$ be a common rank-cut of $\mathcal{M}$ and $\mathcal{N}$. Moreover, let $\mathbb{P} = \llbracket \mathcal{M} \preceq_{\Sigma_1^\mathcal{P}, \mathcal{S}}^{<\omega} \mathcal{N}_\beta \rrbracket$ and let $\beta \in \mathrm{Ord}^\mathcal{N} \setminus \mathcal{S}$.
	\begin{enumerate}[{\normalfont (a)}]
		\item\label{Friedman lemma forwards} If $\mathrm{SSy}_\mathcal{S}(\mathcal{M}) \subseteq \mathrm{SSy}_{\mathcal{S}}(\mathcal{N})$, then
		\[
		\mathcal{C}_{m} =_\df \big\{ f \in \mathbb{P} \mid m \in \dom(f\hspace{2pt}) \big\}
		\]
		is dense in $\mathbb{P}$, for each $m \in \mathcal{M}$.
		\item\label{Friedman lemma backwards} If $\mathrm{SSy}_\mathcal{S}(\mathcal{M}) = \mathrm{SSy}_{\mathcal{S}}(\mathcal{N})$, then 
		\begin{align*}
		\mathcal{D}_{m, n} =_\df \big\{ f \in \mathbb{P} \mid & m \in \dom(f\hspace{2pt}) \wedge ((\mathcal{N} \models \rnk(n) \leq \rnk(f(m))) \rightarrow n \in \mathrm{image}(f\hspace{2pt})) \big\}
		\end{align*}		
		is dense in $\mathbb{P}$, for each $m \in \mathcal{M}$ and $n \in \mathcal{N}$.
		\item\label{Friedman lemma downwards} If $\mathcal{N} = \mathcal{M}$, then 
		\[
		\mathcal{E}_{\alpha} =_\df \big\{ f \in \mathbb{P} \mid \exists m \in \dom(f\hspace{2pt}) . (f\hspace{2pt}(m) \neq m \wedge \mathcal{M} \models \rnk(m) = \alpha) \big\}
		\]
		is dense in $\mathbb{P}$, for each $\alpha \in \mathrm{Ord}^\mathcal{M} \setminus \mathcal{S}$.
 	\end{enumerate}	
\end{lemma}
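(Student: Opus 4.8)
The plan is to treat all three parts uniformly as density claims: fix a condition $f \in \mathbb{P}$, write $\vec a = \dom(f)$ and $\vec b = f(\vec a)$, and in each case produce a refinement $g \leq^\mathbb{P} f$ lying in the relevant set. Two preliminary facts are used throughout. First, $\mathrm{Ord}^\mathcal{S}$ is a proper initial segment of $\mathrm{Ord}^\mathcal{M}$ (and of $\mathrm{Ord}^\mathcal{N}$) closed under successor, hence contains every standard ordinal as well as $\omega^\mathcal{M}$ and $\omega^\mathcal{N}$; so $\mathcal{S}$ has enough coding power for finite sequences and G\"odel codes, and the parameters from $\mathcal{S}$ implicit in a condition may be taken as freely available. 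Second, for any ordinal $\gamma$ the structure $\mathcal{M}_\gamma$ is supertransitive in $\mathcal{M}$, so $\hat\Delta_0^\mathcal{P}$-truth is absolute between them; consequently a $\hat\Sigma_1^\mathcal{P}$- or $\hat\Pi_1^\mathcal{P}$-statement about parameters in $\mathcal{N}_\beta$ holds in $\mathcal{N}_\beta$ iff its $V_\beta$-bounded relativisation holds in $\mathcal{N}$, and — since $\xi \mapsto V_\xi$ is a total $\Sigma_1^\mathcal{P}$-function — by Proposition \ref{tak_clo} this relativisation is $\Delta_1^\mathcal{P}$, so types computed in $\mathcal{N}_\beta$ can be coded inside $\mathcal{N}$ by $\Delta_1^\mathcal{P}$-Separation, a principle available in $\mathcal{N} \models \mathrm{KP}^\mathcal{P}$.

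For (a): bringing $m$ into the domain amounts to finding $n \in \mathcal{N}_\beta$ with $\mathrm{tp}_{\hat\Sigma_1^\mathcal{P},\mathcal{S}}(m,\vec a)$ — translated by $\vec a \mapsto \vec b$ — contained in the corresponding type of $n$ in $\mathcal{N}_\beta$. By Lemma \ref{hierarchical type coded} (using $\Sigma_1^\mathcal{P}$-Separation in $\mathcal{M}$ and Theorem \ref{sat_thm}) this first type is coded in $\mathcal{M}$; its $\mathcal{S}$-restriction is a subset of $\mathcal{S}$, hence lies in $\mathrm{SSy}_\mathcal{S}(\mathcal{M}) \subseteq \mathrm{SSy}_\mathcal{S}(\mathcal{N})$ and is coded by some $P \in \mathcal{N}$. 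For each standard $k$, ``there is $x$ jointly satisfying the formulas of this type of code $< k$'' is a $\hat\Sigma_1^\mathcal{P}$-statement in parameters among $\vec a$ and $\mathcal{S}$, true in $\mathcal{M}$ (witnessed by $m$), hence true in $\mathcal{N}_\beta$ since $f \in \mathbb{P}$; so $\mathcal{N} \models$ ``there is $x \in V_\beta$ realising the first $k$ formulas coded by $P$'', which by the preliminary remark and Proposition \ref{tak_clo} is $\Delta_1^\mathcal{P}$ in $k$. Overspill over $\omega^\mathcal{N}$ (licensed by $\Pi_1^\mathcal{P}$-Foundation in $\mathcal{N}$) now yields a non-standard $k^\ast$ and an $n \in V_\beta^\mathcal{N}$ realising the first $k^\ast$ formulas of $P$, hence all standard-coded ones; then $f \cup \{(m,n)\} \in \mathcal{C}_m$ refines $f$.

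For (b): apply (a) to get $f' \leq f$ with $m \in \dom(f')$. If $\mathcal{N} \models \rnk(n) > \rnk(f'(m))$ — in particular if $n \notin \mathcal{N}_\beta$ — then already $f' \in \mathcal{D}_{m,n}$. Otherwise $\rnk^\mathcal{N}(n) \leq \rnk^\mathcal{N}(f'(m)) < \beta$; code inside $\mathcal{N}$ the $\hat\Pi_1^\mathcal{P}$-type of $n$ over $(\vec b,\mathcal{S})$ computed in $\mathcal{N}_\beta$ and transport it via $\mathrm{SSy}_\mathcal{S}(\mathcal{N}) \subseteq \mathrm{SSy}_\mathcal{S}(\mathcal{M})$ to $Q \in \mathcal{M}$. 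We seek $m' \in \mathcal{M}$ with $\rnk^\mathcal{M}(m') \leq \rnk^\mathcal{M}(m)$ realising $Q$ (translated by $\vec b \mapsto \vec a$), for then $f' \cup \{(m',n)\} \in \mathbb{P}$ captures $n$. The crucial point — the step I expect to be most delicate — is that for each standard $k$, $\Xi(k) :=$ ``there is $x \in V_{\rnk(m)+1}$ satisfying the $\hat\Pi_1^\mathcal{P}$-formulas coded by $Q$ of code $< k$'' holds in $\mathcal{M}$: its negation ``for every $x \in V_{\rnk(m)+1}$, one of finitely many fixed $\hat\Sigma_1^\mathcal{P}$-formulas holds of $x$'' is, by Proposition \ref{tak_clo}(b) (bounded-universal quantification of $\hat\Sigma_1^\mathcal{P}$, via $\Sigma_1^\mathcal{P}$-Collection), again $\hat\Sigma_1^\mathcal{P}$ in parameters among $\rnk(m), \vec a, \mathcal{S}$; if it held in $\mathcal{M}$ then $f' \in \mathbb{P}$ would carry it into $\mathcal{N}_\beta$, forcing one of those formulas on $n$ (which has $\mathcal{N}$-rank $\leq \rnk(f'(m))$) and contradicting the choice of $Q$. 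Since $\Xi(k)$ is (equivalent to) a $\hat\Pi_1^\mathcal{P}$-formula and $\Xi$ is decreasing in $k$, $\mathrm{B}_1^\mathcal{P}$-Separation (from $\Sigma_1^\mathcal{P}$-Separation) and $\mathrm{B}_1^\mathcal{P}$-Foundation in $\mathcal{M}$ (Proposition \ref{KPP found}) supply a non-standard $k^\ast$ with $\mathcal{M} \models \Xi(k^\ast)$; its witness is the desired $m'$.

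For (c): if $\alpha \geq \beta$, any rank-$\alpha$ element adjoined to the domain by (a) receives an image of rank $< \beta \leq \alpha$ and is thus moved, so assume $\alpha < \beta$. By toplessness of $\mathcal{S}$ in $\mathcal{M}$ pick $\delta \in \mathrm{Ord}^\mathcal{M} \setminus \mathcal{S}$ with $\delta < \alpha$, so $\mathcal{S} \subseteq \mathcal{M}_\delta$; a cardinality count inside $\mathcal{M}$ (using Cantor's theorem and $\delta < \alpha$) then shows that the definable map sending a rank-$\alpha$ element to the code — via Lemma \ref{hierarchical type coded}, with bound $\delta$ — of its $\hat\Sigma_1^\mathcal{P}$-type over $(\vec a, \mathcal{S})$ has range inside a set $\mathcal{M}$ deems strictly smaller than the set of rank-$\alpha$ elements, hence is not injective. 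Fix distinct $m_0, m_1$ of rank $\alpha$ with the same such type. Call a rank-$\alpha$ element $m$ \emph{rigid over $f$} if the only element of $\mathcal{M}_\beta$ whose $\hat\Sigma_1^\mathcal{P}$-type over $(\vec b, \mathcal{S})$ computed in $\mathcal{M}_\beta$ contains the $\hat\Sigma_1^\mathcal{P}$-type of $m$ over $(\vec a, \mathcal{S})$ is $m$ itself. Since $m_0$ and $m_1$ share their type they have the same set of admissible images; were both rigid, this common set would be both $\{m_0\}$ and $\{m_1\}$ — absurd. So some $m_i$, say $m_0$, is not rigid, so (an admissible image existing at all by (a)) it has an admissible image $m' \neq m_0$, and $g := f \cup \{(m_0,m')\}$ refines $f$ and lies in $\mathcal{E}_\alpha$.
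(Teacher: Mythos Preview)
Your overall strategy matches the paper's, and your part (c) is correct (though you take a slightly different endgame than the paper, which extends $f$ by \emph{both} $m_0,m_1$ simultaneously and uses that their images must be distinct; your rigidity argument achieves the same thing). The problem is in (a) and (b): you overspill over $\omega^\mathcal{N}$, but the correct overspill is over the rank-cut $\mathcal{S}$.

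Concretely: the type you need your witness to realise consists of pairs $\langle \delta, s\rangle$ with $s$ ranging over all of $\mathcal{S}$, and $\mathcal{S}$ may contain elements of rank far above $\omega^\mathcal{N}$ (indeed your own preliminary says $\omega^\mathcal{M}\in\mathcal{S}$, so $\mathcal{S}$ certainly has elements of rank $\geq \omega$). If you overspill only to some non-standard $k^\ast \in \omega^\mathcal{N}$, then ``the formulas of code $<k^\ast$'' --- under any reasonable reading --- captures at most parameters of rank $< k^\ast < \omega^\mathcal{N}$, so your witness $n$ need not satisfy $\exists x.\delta(x,\vec b, n, s)$ for $s\in\mathcal{S}$ of higher rank. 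Worse, if $\mathcal{N}$ is $\omega$-standard (nothing rules this out), $\omega^\mathcal{N}$ has no non-standard elements at all and your overspill is vacuous. The paper's fix is exactly the set-theoretic analogue of what you are trying to do: for each $\zeta\in\mathrm{Ord}^\mathcal{S}$, the set $c\cap V_\zeta$ is an element of $\mathcal{S}$ (by rank-initiality), so the statement ``$\exists y.\forall\langle\delta,t\rangle\in c\cap V_\zeta.\exists x.\mathrm{Sat}_{\Delta_0^\mathcal{P}}(\delta,x,\vec a,y,t)$'' is $\Sigma_1^\mathcal{P}$ with parameters in $\vec a\cup\mathcal{S}$ and transfers via $f$; one then overspills on $\zeta$ along the topless cut $\mathcal{S}$ to get $\nu\in\mathrm{Ord}^\mathcal{N}\setminus\mathcal{S}$, and since $\mathcal{S}\subseteq V_\nu^\mathcal{N}$ the witness realises the whole type. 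The same correction applies verbatim to your $\Xi(k)$ in (b): replace $k\in\omega$ by $\zeta\in\mathrm{Ord}^\mathcal{S}$ and invoke $\Pi_1^\mathcal{P}$-Overspill over $\mathcal{S}$ (available via Proposition~\ref{KPP found}, as you note). A minor side-point: your preliminary claim that $\omega^\mathcal{M}\in\mathcal{S}$ does not follow from closure under successor --- it fails, e.g., when $\mathcal{M}$ is $\omega$-non-standard and $\mathcal{S}=\mathrm{WFP}(\mathcal{M})$ --- but this is not needed once the overspill is corrected.
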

Note that $\mathcal{N}_\beta$ is rank-initial in $\mathcal{N}$, so by absoluteness of $\Delta_0^\mathcal{P}$-formulas over rank-initial substructures, we have for any $n \in \mathcal{N}_\beta$, for any $s \in \mathcal{S}$ and for any $\delta(x, y, z) \in \Delta_0^\mathcal{P}[x, y, z]$, that
\[
\mathcal{N} \models \exists x \in V_\beta . \delta(x, n, s) \Leftrightarrow \mathcal{N}_\beta \models \exists x . \delta(x, n, s). \tag{$\dagger$}
\]
\begin{proof}
	If $\mathbb{P} = \varnothing$, then the result is trivial, so assume that $\mathbb{P} \neq \varnothing$. This means in particular that $\mathrm{Th}_{\Sigma_1^\mathcal{P}, \mathcal{S}}(\mathcal{M}) \subseteq \mathrm{Th}_{\Sigma_1^\mathcal{P}, \mathcal{S}}(\mathcal{N}_\beta)$.
	
	(\ref{Friedman lemma forwards}) Let $g \in \mathbb{P}$. Unravel $g$ as a $\gamma$-sequence of ordered pairs $\langle m_\xi, n_\xi\rangle_{\xi < \gamma}$, where $\gamma < \omega$. Let $m_\gamma \in \mathcal{M}$ be arbitrary. We need to find $f$ in $\mathbb{P}$ extending $g$, such that $m_\gamma \in \dom(f\hspace{2pt})$.
	
	Using $\mathrm{Sat}_{\Sigma_1^\mathcal{P}}$, we have by Lemma \ref{hierarchical type coded} and $\Sigma_1^\mathcal{P}$-Separation that there is a code $c$ in $\mathcal{M}$ for 
	\begin{align*}
	\big\{\langle \delta, s\rangle \mid & \delta \in \Delta_0^\mathcal{P}[x, \langle y_\xi \rangle_{\xi < \gamma}, y, z] \wedge s \in \mathcal{S} \wedge \\
	& \mathcal{M} \models \exists x . \delta(x, \langle m_\xi \rangle_{\xi < \gamma}, m_\gamma, s)\big\}.
	\end{align*}
	By $\mathrm{SSy}(\mathcal{M}) \subseteq \mathrm{SSy}(\mathcal{N}_\beta)$, this set has a code $d$ in $\mathcal{N}_\beta$ as well.
	We define the formula
	\[
	\phi(\zeta, p, \langle y_\xi \rangle_{\xi < \gamma + 1}) \equiv \exists y . \forall  \langle \delta, t\rangle  \in p \cap V_\zeta . \exists x . \mathrm{Sat}_{\Delta_0^\mathcal{P}}( \delta, x, \langle y_\xi \rangle_{\xi < \gamma}, y, t).
	\]
	Since $\mathrm{Sat}_{\Delta_0^\mathcal{P}} \in \Delta_1^\mathcal{P}$, we have $\phi \in \Sigma_1^\mathcal{P}$. For every ordinal $\zeta \in \mathcal{S}$, we have $c \cap V_\zeta = d \cap V_\zeta \in \mathcal{S}$, and as witnessed by $m_{\gamma}$, we have $\mathcal{M} \models \phi(\zeta, c, \langle m_\xi \rangle_{\xi < \gamma})$. So by $g \in \mathbb{P}$, we have $\mathcal{N}_\beta \models \phi(\zeta, d, \langle n_\xi \rangle_{\xi < \gamma})$ for every ordinal $\zeta \in \mathcal{S}$. Since $\mathcal{S}$ is topless in $\mathcal{N}_\beta$, there is by $\Sigma_1^\mathcal{P}$-Overspill an ordinal $\nu$ in $\mathcal{N}_\beta \setminus \mathcal{S}$, such that $\mathcal{N}_\beta \models \phi(\nu, d, \langle n_\xi \rangle_{\xi < \gamma})$. Set $n_{\gamma}$ to some witness of this fact. Put $f = g \cup \{\langle m_\gamma, n_\gamma \rangle \}$. We proceed to verify that $f$ is $\Sigma_1^\mathcal{P}$-elementary. Let $s \in \mathcal{S}$ and let $\delta(x, \langle y_\xi \rangle_{\xi < \gamma+1}, z) \in \Delta_0^\mathcal{P}[x, \langle y_\xi \rangle_{\xi < \gamma+1}, z]$. Now, as desired,
	\begin{align*}
	\mathcal{M} \models \exists x . \delta(x, \langle m_\xi \rangle_{\xi < \gamma + 1}, s) & \Rightarrow \langle\delta, s\rangle \in d \\
	& \Rightarrow \mathcal{N}_\beta \models \exists x . \delta(x, \langle n_\xi \rangle_{\xi < \gamma + 1}, s).
	\end{align*}
	The second implication follows from the choice of $n_\gamma$ and Theorem \ref{sat_thm}.
	
	(\ref{Friedman lemma backwards}) Let $g \in \mathbb{P}$. Unravel $g$ as a $\gamma$-sequence of ordered pairs $\langle m_\xi, n_\xi\rangle_{\xi < \gamma}$, where $\gamma < \omega$. Let $\xi_0 < \gamma$, and suppose that $n_\gamma \in \mathcal{N}_\beta$, such that $\mathcal{N}_\beta \models \rnk(n_\gamma) \leq \rnk(g(m_{\xi_0}))$. By (\ref{Friedman lemma forwards}), it suffices to find $f$ in $\mathbb{P}$ extending $g$, such that $n_\gamma \in \mathrm{image}(f\hspace{2pt})$.
	
	By $\mathcal{N} \models \Delta_1\textnormal{-Separation}$, ($\dagger$) and Lemma \ref{hierarchical type coded}, there is a code $d'$ in $\mathcal{N}_\beta$ for 
	\begin{align*}
	\{\langle\delta, s\rangle \mid & \delta \in \Delta_0^\mathcal{P}[x, \langle y_\xi \rangle_{\xi < \gamma}, y, z] \wedge s \in \mathcal{S} \wedge \\
	& \mathcal{N}_\beta \models \forall x . \delta(x, \langle n_\xi \rangle_{\xi < \gamma}, n_\gamma, s)\}.
	\end{align*}
	By $\mathrm{SSy}_\mathcal{S}(\mathcal{M}) \supseteq \mathrm{SSy}_\mathcal{S}(\mathcal{N})$, let $c'$ be its code in $\mathcal{M}$. We define the formula
	\[
	\psi(\zeta, p, \langle y_\xi \rangle_{\xi < \gamma}) \equiv \phantom{.} \exists y \subseteq V_{\mathrm{rank}(y_{\xi_0})}. \forall \langle\delta, t\rangle  \in p \cap V_\zeta . \forall x . \mathrm{Sat}_{\Delta_0^\mathcal{P}}( \delta , x, \langle y_\xi \rangle_{\xi < \gamma}, y, t).
	\]
	Since $\mathrm{Sat}_{\Delta_0^\mathcal{P}}$ and the function  $(u \mapsto V_{\mathrm{rank}(u)})$ are $\Delta_1^\mathcal{P}$, we have that $\psi$ is $\Pi_1^\mathcal{P}$. Moreover, $d' \cap V_\zeta = c' \cap V_\zeta$. Note that for every ordinal $\zeta \in \mathcal{S}$, we have $\mathcal{N}_\beta \models \psi(\zeta, d', \langle n_\xi \rangle_{\xi < \gamma})$, as witnessed by $n_{\gamma}$. It follows from $g \in \mathbb{P}$ that $\psi(\zeta, c', \langle m_\xi \rangle_{\xi < \gamma})$ is satisfied in $\mathcal{M}$ for every ordinal $\zeta \in \mathcal{S}$, whence by $\Pi_1^\mathcal{P}$-Overspill we have $\mathcal{M} \models \psi(\mu, c', \langle m_\xi \rangle_{\xi < \gamma})$, for some ordinal $\mu \in \mathcal{M} \setminus \mathcal{S}$. (It follows from $\Sigma_1$-Separation, Proposition \ref{KPP found} and Lemma \ref{overspill} that $\mathcal{M}$ satisfies $\Pi_1^\mathcal{P}$-Overspill.) Let $m_\gamma$ be some witness of this fact, and put $f = g \cup \{\langle m_\gamma, n_\gamma \rangle \}$. We proceed to verify that $f \in \mathbb{P}$. Let $s \in \mathcal{S}$ and let $\delta(x, \langle y_\xi \rangle_{\xi < \gamma+1}, z) \in \Delta_0^\mathcal{P}[x, \langle y_\xi \rangle_{\xi < \gamma+1}, z]$. Now, as desired,
	\begin{align*}
	\mathcal{N}_\beta \models \forall x . \delta(x, \langle n_\xi \rangle_{\xi < \gamma + 1}, s) & \Rightarrow \langle\delta, s\rangle \in c' \\
	& \Rightarrow \mathcal{M} \models \forall x . \delta(x, \langle m_\xi \rangle_{\xi < \gamma + 1}, s).
	\end{align*}
	The second implication follows from the choice of $m_\gamma$ and Theorem \ref{sat_thm}.
	
	(\ref{Friedman lemma downwards}) Let $\alpha \in \mathrm{Ord}^\mathcal{M} \setminus \mathcal{S}$, and let $g \in \mathbb{P}$. Unravel $g$ as a $\gamma$-sequence of ordered pairs $\langle m_\xi, n_\xi\rangle_{\xi < \gamma}$, where $\gamma < \omega$. We need to find $m_\gamma \neq n_\gamma$, such that $\mathcal{M} \models \rnk(m_\gamma) = \alpha$ and $g \cup \{\langle m_\gamma, n_\gamma \rangle \} \in \mathbb{P} = \llbracket \mathcal{M} \preceq_{\Sigma_1^\mathcal{P}, \mathcal{S}}^{< \omega} \mathcal{N}_\beta \rrbracket$. Note that by rank-initiality and toplessness, there is $\alpha' \in \mathrm{Ord}^\mathcal{M} \setminus \mathcal{S}$, such that $(\alpha' + 3 \leq \alpha)^\mathcal{M}$ and $\mathcal{M}_{\alpha'} \supseteq \mathcal{S}$. 
	
	We proceed to work in $\mathcal{M}$: The set $V_{\alpha+1} \setminus V_\alpha$ of sets of rank $\alpha$ has cardinality $\beth_{\alpha+1}$, while the set $\mathcal{P}(V_{\alpha'} \times V_{\alpha'}) \subseteq V_{\alpha' + 3}$ is strictly smaller, of cardinality less than or equal to $\beth_{\alpha' + 3}$. (Here we used $\mathcal{M} \models \textnormal{Powerset}$, and the recursive definition $\beth_0 = 0$, $\beth_{\xi + 1} = 2^{\beth_\xi}$, $\beth_\xi = \sup\{\beth_\zeta \mid \zeta < \xi\}$ for limits $\xi$.) 
	We define a function $t : V_{\alpha+1} \setminus V_\alpha \rightarrow V_{\alpha'+3}$ by
	\begin{align*}
	t(v) = \{ \langle \delta, s \rangle \mid & \delta \in \Delta_0^\mathcal{P}[x, \langle y_\xi \rangle_{\xi < \gamma}, y_\gamma, z] \wedge s \in \mathcal{S} \wedge \\
	& \exists x . \mathrm{Sat}_{\Delta_0^\mathcal{P}}(\delta, x, \langle m_\xi \rangle_{\xi < \gamma}, v, s) \},
	\end{align*}
	for each $v \in V_{\alpha+1} \setminus V_\alpha$. $t$ exists by $\Sigma_1^\mathcal{P}$-Separation. Since $t$ has a domain of strictly larger cardinality than its co-domain, there are $m, m'$ of rank $\alpha$, such that $m \neq m'$ and $t(m) = t(m')$.
	
	We return to working in the meta-theory: $m$ and $m'$ have the same $\Sigma_1^\mathcal{P}$-type with parameters in $\mathcal{S} \cup \langle m_\xi \rangle_{\xi < \gamma}$. In other words, for every $s \in \mathcal{S}$ and every $\delta(x, \langle y_\xi \rangle_{\xi < \gamma}, y_\gamma, z) \in  \Delta_0^\mathcal{P}[x, \langle y_\xi \rangle_{\xi < \gamma}, y_\gamma, z]$, we have 
	\[
	\mathcal{M} \models \exists x . \delta(x, \langle m_\xi \rangle_{\xi < \gamma}, m, s) \leftrightarrow \exists x . \delta(x, \langle m_\xi \rangle_{\xi < \gamma}, m', s). \tag{$\dagger$}
	\]
	On the other hand, by (\ref{Friedman lemma forwards}) and by $g \in \mathbb{P}$, there are $n$, $n'$, such that for every $s \in \mathcal{S}$ and every $\delta(x, \langle y_\xi \rangle_{\xi < \gamma}, y_\gamma, y_{\gamma + 1}, z) \in  \Delta_0^\mathcal{P}[x, \langle y_\xi \rangle_{\xi < \gamma}, y_\gamma, y_{\gamma + 1}, z]$, we have
	\[
	\mathcal{M} \models \exists x . \delta(x, \langle m_\xi \rangle_{\xi < \gamma}, m, m', s) \rightarrow \exists x \in V_\beta . \delta(x, \langle n_\xi \rangle_{\xi < \gamma}, n, n', s). \tag{$\ddagger$}
	\]
	By ($\ddagger$), $n \neq n'$, whence $m \neq n$ or $m \neq n'$. If $m \neq n$, then $g \cup \{ \langle m, n \rangle \} \in \mathcal{E}_\alpha$ by ($\ddagger$). If $m \neq n'$, then by ($\dagger$) and ($\ddagger$),
	\begin{align*}
	\mathcal{M} \models \exists x . \delta(x, \langle m_\xi \rangle_{\xi < \gamma}, m, s) &\Leftrightarrow \mathcal{M} \models \exists x . \delta(x, \langle m_\xi \rangle_{\xi < \gamma}, m', s) \\
	&\Rightarrow \mathcal{M} \models \exists x \in V_\beta . \delta(x, \langle n_\xi \rangle_{\xi < \gamma}, n', s),
	\end{align*}
	so $g \cup \{ \langle m, n' \rangle \} \in \mathcal{E}_\alpha$. In either case we are done.
\end{proof}

Based on this Lemma, we can prove a theorem that refines results in \S 4 of \cite{Fri73}. If $i, j : \mathcal{M} \rightarrow \mathcal{N}$ are rank-initial embeddings between models of $\mathrm{KP}^\mathcal{P}$, then we write $i <^\mathrm{rank} j$ to indicate that $\mathcal{N}\restriction_{\mathrm{image}(i)}$ is a rank-initial substructure of $\mathcal{N}\restriction_{\mathrm{image}(j)}$.

\begin{thm}[Friedman-style]\label{Friedman thm}
Let $\mathcal{M} \models \mathrm{KP}^\mathcal{P} + \Sigma_1^\mathcal{P}\textnormal{-Separation}$ and $\mathcal{N} \models \mathrm{KP}^\mathcal{P}$ be countable and non-standard, and let $\mathcal{S}$ be a shared rank-cut of $\mathcal{M}$ and $\mathcal{N}$. Moreover, let $m_0 \in \mathcal{M}$, let $n_0 \in \mathcal{N}$, and let $\beta \in \mathrm{Ord}^\mathcal{N}$. Then the following are equivalent:
\begin{enumerate}[{\normalfont (a)}]
\item\label{Friedman thm emb} There is a rank-initial embedding $i : \mathcal{M} \rightarrow \mathcal{N}$, fixing $\mathcal{S}$ pointwise, such that $i(m_0) = n_0$ and $i(\mathcal{M}) \subseteq \mathcal{N}_\beta$.
\item\label{Friedman thm pres} $\mathrm{SSy}_\mathcal{S}(\mathcal{M}) = \mathrm{SSy}_{\mathcal{S}}(\mathcal{N})$, and for all $s \in \mathcal{S}$ and $\delta(x, y, z) \in \Delta_0^\mathcal{P}[x, y, z]$:
$$\mathcal{M} \models \exists x . \delta(x, m_0, s) \Rightarrow \mathcal{N} \models \exists x \in V_\beta . \delta(x, n_0, s).$$
\item\label{Friedman thm extra continuum} There is a map $g \mapsto i_g$, from sequences $g : \omega \rightarrow 2$, to embeddings $i_g : \mathcal{M} \rightarrow \mathcal{N}$ satisfying \textnormal{(\ref{Friedman thm emb})}, such that for any $g <^\mathrm{lex} g' : \omega \rightarrow 2$, we have $i_g <^\rnk i_{g'}$.
\item\label{Friedman thm extra topless} There is a topless embedding $i : \mathcal{M} \rightarrow \mathcal{N}$ satisfying \textnormal{(\ref{Friedman thm emb})}.
\end{enumerate}
\end{thm}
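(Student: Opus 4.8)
\emph{Strategy.} I would run everything through the forcing notion $\mathbb{P} = \llbracket \mathcal{M} \preceq_{\Sigma_1^\mathcal{P}, \mathcal{S}}^{<\omega} \mathcal{N}_\beta \rrbracket$ of Lemma~\ref{Friedman lemma}, proving $(\ref{Friedman thm emb}) \Rightarrow (\ref{Friedman thm pres})$, then $(\ref{Friedman thm pres}) \Rightarrow (\ref{Friedman thm extra continuum})$ and $(\ref{Friedman thm pres}) \Rightarrow (\ref{Friedman thm extra topless})$, and finally observing that $(\ref{Friedman thm extra continuum}) \Rightarrow (\ref{Friedman thm emb})$ and $(\ref{Friedman thm extra topless}) \Rightarrow (\ref{Friedman thm emb})$ are immediate (any single $i_g$, resp. the topless $i$, witnesses $(\ref{Friedman thm emb})$). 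The case $\beta \in \mathcal{S}$ forces a degenerate situation and is checked by hand; assume $\beta \notin \mathcal{S}$. The implication $(\ref{Friedman thm emb}) \Rightarrow (\ref{Friedman thm pres})$ is routine: a rank-initial $i$ is $\mathcal{P}$-initial (Corollary~\ref{rank-init equivalences in KPP}), hence $\Delta_0^\mathcal{P}$-elementary, $\Sigma_1^\mathcal{P}$-preserving, and $\mathrm{SSy}_\mathcal{S}$-preserving (Proposition~\ref{emb pres}); and if $w$ witnesses $\mathcal{M} \models \exists x . \delta(x, m_0, s)$ then $i(w) \in i(\mathcal{M}) \subseteq \mathcal{N}_\beta$ and $\mathcal{N} \models \delta(i(w), n_0, s)$.

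\emph{The generic embedding and $(\ref{Friedman thm extra topless})$.} First note that $(\ref{Friedman thm pres})$ makes $p_\varnothing := \{\langle m_0, n_0\rangle\}$ a condition of $\mathbb{P}$: applying the $\delta$-clause to formulae not mentioning $y$, and using that $\mathcal{S}$ is closed under pairing (the ordinals of a rank-cut are closed under successor), also yields $\mathrm{Th}_{\Sigma_1^\mathcal{P}, \mathcal{S}}(\mathcal{M}) \subseteq \mathrm{Th}_{\Sigma_1^\mathcal{P}, \mathcal{S}}(\mathcal{N}_\beta)$. A filter on $\mathbb{P}$ containing $p_\varnothing$ and meeting every $\mathcal{C}_m$ and every $\mathcal{D}_{m,n}$ — which exists by Lemma~\ref{generic filter existence}, these being a countable family of dense sets by Lemma~\ref{Friedman lemma}(\ref{Friedman lemma forwards},\ref{Friedman lemma backwards}) — has as its union a rank-initial embedding $\mathcal{M} \to \mathcal{N}_\beta$ fixing $\mathcal{S}$ pointwise with $m_0 \mapsto n_0$: totality from the $\mathcal{C}_m$, rank-initiality from the $\mathcal{D}_{m,n}$, $\Sigma_1^\mathcal{P}$-preservation from being a union of conditions, injectivity since preservation of the $\Delta_0^\mathcal{P}$ formula $x \neq y$ separates images. (This already gives $(\ref{Friedman thm pres}) \Rightarrow (\ref{Friedman thm emb})$ directly.) For $(\ref{Friedman thm extra topless})$ one additionally meets a countable family of dense sets forcing the ordinals of the image to form a topless rank-cut of $\mathcal{N}$ lying below $\beta$; their density is verified by $\Sigma_1^\mathcal{P}$-Overspill (Lemma~\ref{overspill}) and a counting argument, exploiting that $\mathcal{S}$ is already a topless rank-cut of $\mathcal{N}$.

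\emph{From $(\ref{Friedman thm pres})$ to $(\ref{Friedman thm extra continuum})$.} Instead of a single generic I would build a tree $(p_\sigma)_{\sigma \in 2^{<\omega}}$ of conditions, $\leq^\mathbb{P}$-decreasing along branches, with $p_\varnothing$ as above, such that for $|\sigma| = k$ the condition $p_\sigma$ already lies in $\mathcal{C}_{m^{(k)}} \cap \mathcal{D}_{m^{(k)}, n^{(k)}}$ for fixed enumerations $(m^{(k)})_k$ of $\mathcal{M}$ and $(n^{(k)})_k$ of $\mathcal{N}$, and such that the two children of each node disagree at some point of their common domain. Then $i_g := \bigcup_k p_{g\restriction k}$ is, for every $g \in 2^\omega$, a rank-initial embedding satisfying $(\ref{Friedman thm emb})$, exactly as in the previous paragraph. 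To obtain the splitting one invokes the counting argument from the proof of Lemma~\ref{Friedman lemma}(\ref{Friedman lemma downwards}): for a suitable non-standard rank $\alpha \in \mathrm{Ord}^\mathcal{M} \setminus \mathcal{S}$ there are $\beth_{\alpha+1}$-many elements of rank $\alpha$ in $\mathcal{M}$ but only boundedly many $\Sigma_1^\mathcal{P}$-types over $\mathcal{S} \cup \mathrm{image}(p_\sigma)$, which produces two extensions $\leq^\mathbb{P} p_\sigma$ disagreeing at such an element, each still in $\mathbb{P}$.

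\emph{Main obstacle.} The delicate point is the clause $g <^\mathrm{lex} g' \Rightarrow i_g <^\mathrm{rank} i_{g'}$. Since a rank-initial embedding into $\mathcal{N}$ has rank-downward-closed image, any two of the $i_g$ have $\subseteq$-comparable images, so $i_g <^\mathrm{rank} i_{g'}$ reduces to $\mathrm{image}(i_g) \subseteq \mathrm{image}(i_{g'})$; the tree must therefore be laid out so that passing to a lexicographically larger branch never shrinks the image. I would arrange this by choosing the splitting ranks $\alpha_k$ carefully and imposing a ``capping'' restriction along each left subtree — building it inside $\llbracket \mathcal{M} \preceq_{\Sigma_1^\mathcal{P}, \mathcal{S}}^{<\omega} \mathcal{N}_\rho \rrbracket$ for an appropriate $\rho <^\mathcal{N} \beta$ extracted from the splitting — and then checking that the restricted posets still carry all the needed dense sets. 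Making this bookkeeping close, i.e. keeping every node a genuine condition while forcing the lexicographic order to be coherent with rank-initial inclusion of images, is the crux of the proof; it is the set-theoretic counterpart of the combinatorics behind Wilkie's theorem on continuum-many self-embeddings \cite{Wil73}.
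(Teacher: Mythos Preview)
Your treatment of $(\ref{Friedman thm emb}) \Leftrightarrow (\ref{Friedman thm pres})$ matches the paper's. The differences lie in how $(\ref{Friedman thm extra continuum})$ and $(\ref{Friedman thm extra topless})$ are obtained.

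For $(\ref{Friedman thm extra topless})$, the paper does not force toplessness directly with dense sets; it derives $(\ref{Friedman thm extra continuum}) \Rightarrow (\ref{Friedman thm extra topless})$ by a one-line cardinality argument: $\mathcal{N}$ is countable, so only countably many ordinals can serve as the least upper bound of a rank-initial substructure, whereas $(\ref{Friedman thm extra continuum})$ supplies continuum-many pairwise $<^{\rnk}$-comparable images, hence some (in fact most) must be topless. Your direct approach is left vague, and it is not clear how to phrase ``$\nu$ is not the least upper bound of the generic image'' as a dense condition on finite partial maps without already having a cap below $\nu$ in hand.

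For $(\ref{Friedman thm extra continuum})$, the paper takes a genuinely different route: rather than a tree of \emph{conditions}, it builds a tree of \emph{full embeddings} $(i_f)_{f \in 2^{<\omega}}$, each already satisfying $(\ref{Friedman thm emb})$, together with an increasing chain of finite domains $D_\gamma \subseteq \mathcal{M}$ on which all embeddings indexed by extensions of $f$ agree. The splitting step is isolated as Lemma~\ref{Friedman thm extra 2}: given any $i_1$ satisfying $(\ref{Friedman thm emb})$, there is $i_0 <^{\rnk} i_1$ also satisfying $(\ref{Friedman thm emb})$ and agreeing with $i_1$ on any prescribed finite set. That lemma in turn rests on Lemma~\ref{Bound for sigma_1 lemma}: Strong $\Sigma_1^\mathcal{P}$-Collection in $\mathcal{M}$ produces $\alpha \in \mathrm{Ord}^\mathcal{M}$ bounding all $\Sigma_1^\mathcal{P}$-witnesses over $\mathcal{S}$ and the finite set, and then the cap you seek is simply $i_1(\alpha)$ --- one re-runs $(\ref{Friedman thm pres}) \Rightarrow (\ref{Friedman thm emb})$ with $\beta$ replaced by $i_1(\alpha)$ to obtain $i_0$. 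Your capping instinct is exactly right, but because your tree carries only finite conditions there is no ambient $i_1$ at the splitting stage from which to read off the cap; the paper's device of carrying full embeddings (while committing only to their restriction to $D_\gamma$ in the limit) is precisely what makes the lexicographic bookkeeping close. The counting argument from Lemma~\ref{Friedman lemma}(\ref{Friedman lemma downwards}) plays no role here.
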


\begin{proof}
Most of the work has already been done for (\ref{Friedman thm emb}) $\Leftrightarrow$ (\ref{Friedman thm pres}). The other equivalences are proved as Lemma \ref{Friedman thm extra} below.

(\ref{Friedman thm emb}) $\Rightarrow$ (\ref{Friedman thm pres}): The first conjunct follows from Proposition \ref{emb pres}(\ref{emb pres SSy_S}). The second conjunct follows from Proposition \ref{emb pres}(\ref{emb pres Delta_0^P}) and that $i(\mathcal{M})$ is rank-initial in $\mathcal{N}_\beta$.

(\ref{Friedman thm pres}) $\Rightarrow$ (\ref{Friedman thm emb}): Let $\mathbb{P} = \llbracket \mathcal{M} \preceq_{\Sigma_1^\mathcal{P}, \mathcal{S}}^{<\omega} \mathcal{N}_\beta \rrbracket$. By the second conjunct of (\ref{Friedman thm pres}), the function $f_0$ defined by $(m_0 \mapsto n_0)$, with domain $\{m_0\}$, is in $\mathbb{P}$. Using Lemma \ref{generic filter existence} and Lemma \ref{Friedman lemma} (\ref{Friedman lemma forwards}, \ref{Friedman lemma backwards}), we obtain a filter $\mathcal{I}$ on $\mathbb{P}$ which contains $f_0$ and is $\{\mathcal{D}_{m, n} \mid m \in \mathcal{M} \wedge n \in \mathcal{N}\}$-generic (and hence $\{\mathcal{C}_m \mid m \in \mathcal{M}\}$-generic as well). Let $i = \bigcup \mathcal{I}$. Since $\mathcal{I}$ is downwards directed, $i$ is a function. Clearly $\mathrm{image}(i) \subseteq \mathcal{N}_\beta$. Since $\mathcal{I}$ is $\{\mathcal{C}_m \mid m \in \mathcal{M}\}$-generic, $i$ has domain $\mathcal{M}$; and since $f_0 \in \mathcal{I}$, $i(m_0) = n_0$. To see that $i$ is rank-initial, let $m \in \mathcal{M}$, and let $n \in \mathcal{N}$ such that $\mathcal{N} \models \rnk(n) \leq \rnk(i(m))$. Since $\mathcal{I} \cap \mathcal{D}_{m, n} \neq \varnothing$, we have that $n$ is in the image of $i$.
\end{proof}

Friedman's theorem is especially powerful in conjunction with the following lemmata.

\begin{lemma}\label{Bound for sigma_1 lemma}
	Let $\mathcal{N} \models \mathrm{KP}^\mathcal{P} + \Sigma_1^\mathcal{P}\textnormal{-Separation}$, let $n \in \mathcal{N}$ and let $\mathcal{S}$ be a bounded substructure of $\mathcal{N}$. Then there is an ordinal $\beta \in \mathcal{N}$, such that for each $s \in \mathcal{S}$, and for each $\delta(x, y, z) \in \Delta_0^\mathcal{P}[x, y, z]$:
	$$(\mathcal{N}, n, s) \models (\exists x . \delta(x, n, s)) \leftrightarrow (\exists x \in V_\beta . \delta(x, n, s)).$$
\end{lemma}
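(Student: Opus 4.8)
The plan is to reflect, inside $\mathcal{N}$, a single ordinal bound that simultaneously captures witnesses for all the relevant existential $\Delta_0^\mathcal{P}$-statements, using Strong $\Sigma_1^\mathcal{P}$-Collection together with the partial satisfaction relation $\mathrm{Sat}_{\Delta_0^\mathcal{P}}$. Since $\mathcal{S}$ is bounded in $\mathcal{N}$, first fix $\alpha \in \mathrm{Ord}^\mathcal{N}$ with $\mathcal{S} \subseteq \mathcal{N}_\alpha$, so that every $s \in \mathcal{S}$ satisfies $s \in^\mathcal{N} V_\alpha^\mathcal{N}$. Standard $\Delta_0^\mathcal{P}$-formulas with free variables among $x,y,z$ have G\"odel codes forming a recursive subset of $\omega$; inside $\mathcal{N}$ this yields a set $\mathrm{Fml}$ (by $\Delta_1^\mathcal{P}$-Separation) containing all those standard codes, and then $A =_\df \mathrm{Fml} \times V_\alpha$ is a set of $\mathcal{N}$.

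Working in $\mathcal{N}$, consider the formula $\theta(w, v)$ expressing that $w = \langle \delta, s\rangle$ for some $\delta \in \mathrm{Fml}$ and $s \in V_\alpha$ and that $\mathrm{Sat}_{\Delta_0^\mathcal{P}}(\delta, v, n, s)$ holds, i.e.\ $v$ witnesses that the formula coded by $\delta$ is satisfied at $(v,n,s)$. Since $\mathrm{Sat}_{\Delta_0^\mathcal{P}} \in \Delta_1^\mathcal{P}$ by Theorem \ref{sat_thm}, $\theta$ is $\Sigma_1^\mathcal{P}$ with $n$ as a parameter. Apply Strong $\Sigma_1^\mathcal{P}$-Collection --- which holds in $\mathrm{KP}^\mathcal{P} + \Sigma_1^\mathcal{P}\textnormal{-Separation}$, as noted above --- to $\theta$ over the set $A$, obtaining $Y \in \mathcal{N}$ with $\mathcal{N} \models \forall w \in A . (\exists v . \theta(w, v) \rightarrow \exists v' \in Y . \theta(w, v'))$. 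Let $\beta \in \mathrm{Ord}^\mathcal{N}$ be an ordinal with $Y \subseteq^\mathcal{N} V_\beta^\mathcal{N}$ (for instance $\beta = \rnk^\mathcal{N}(Y)$, available since $\mathrm{KP}^\mathcal{P}$ proves the $V$-hierarchy total).

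It remains to verify the equivalence for this $\beta$. The right-to-left implication is immediate. For the converse, fix $s \in \mathcal{S}$ and $\delta(x,y,z) \in \Delta_0^\mathcal{P}[x,y,z]$ and assume $\mathcal{N} \models \exists x . \delta(x, n, s)$; choosing a witness $v$ we get $\mathcal{N} \models \delta(v, n, s)$, hence $\mathcal{N} \models \mathrm{Sat}_{\Delta_0^\mathcal{P}}(\delta, v, n, s)$ by Theorem \ref{sat_thm}, i.e.\ $\mathcal{N} \models \theta(\langle \delta, s\rangle, v)$. As $\delta$ is a standard code we have $\langle \delta, s\rangle \in^\mathcal{N} A$, so the choice of $Y$ yields $v' \in^\mathcal{N} Y$ with $\mathcal{N} \models \theta(\langle \delta, s\rangle, v')$; applying Theorem \ref{sat_thm} once more, $\mathcal{N} \models \delta(v', n, s)$ while $v' \in^\mathcal{N} V_\beta^\mathcal{N}$, so $\mathcal{N} \models \exists x \in V_\beta . \delta(x, n, s)$. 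The only delicate points are the complexity bookkeeping that makes $\theta$ genuinely $\Sigma_1^\mathcal{P}$ (so that Strong $\Sigma_1^\mathcal{P}$-Collection applies) and the routine observation that standard $\Delta_0^\mathcal{P}$-formula codes are recognized as such by $\mathcal{N}$; neither presents a real obstacle.
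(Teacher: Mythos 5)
Your proof is correct and follows essentially the same route as the paper: bound $\mathcal{S}$ by an ordinal, form the set of pairs of $\Delta_0^\mathcal{P}$-formula codes and parameters below that bound, apply Strong $\Sigma_1^\mathcal{P}$-Collection to the $\Sigma_1^\mathcal{P}$-formula built from $\mathrm{Sat}_{\Delta_0^\mathcal{P}}$, and take $\beta$ to be the rank of the resulting collection set. The paper states this more tersely but the argument is the same; your additional bookkeeping on the complexity of $\theta$ and the final verification is sound.
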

\begin{proof}
	Let $\nu$ be an infinite ordinal in $\mathcal{N}$ such that $\mathcal{S} \subseteq \mathcal{N}_\nu$. We work in $\mathcal{N}$: Let $A = \Delta_0^\mathcal{P}[x, y, z] \times V_\nu$. By Strong $\Sigma_1^\mathcal{P}$-Collection there is a set $B$, such that for all $\langle  \delta , t\rangle \in A$, if $\exists x . \mathrm{Sat}_{\Delta_0^\mathcal{P}}( \delta , x, n, t)$, then there is $b \in B$ such that $\mathrm{Sat}_{\Delta_0^\mathcal{P}}(\delta, b, n, t)$. Setting $\beta = \rnk(B)$, the claim of the lemma follows from the properties of $\mathrm{Sat}_{\Delta_0^\mathcal{P}}$.
\end{proof}

\begin{lemma}\label{Bound for pi_2 lemma}
Let $\mathcal{N} \models \mathrm{KP}^\mathcal{P} + \Sigma_2^\mathcal{P}\textnormal{-Separation}$, let $n \in \mathcal{N}$ and let $\mathcal{S}$ be a bounded substructure of $\mathcal{N}$. Then there are unboundedly many ordinals $\beta \in \mathcal{N}$, such that for each $s \in \mathcal{S}$, and for each $\delta(x, x', y, z) \in \Delta_0^\mathcal{P}[x, x', y, z]$:
	$$(\mathcal{N}, n, s) \models (\forall x . \exists x' . \delta(x, x', n, s)) \rightarrow (\forall x \in V_\beta . \exists x' \in V_\beta . \delta(x, x', n, s)).$$
\end{lemma}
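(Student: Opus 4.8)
The plan is to carry out, inside $\mathcal{N}$, an $\omega$-fold iteration of the construction in the proof of Lemma~\ref{Bound for sigma_1 lemma}, producing a rank $\beta$ that is ``closed'' under the operation of finding a $\Delta_0^\mathcal{P}$-witness for $x'$ from a given $x$. The extra hypothesis $\Sigma_2^\mathcal{P}$-Separation will be used exactly once: to turn ``truth of a $\Pi_2^\mathcal{P}$-instance'' into set-membership, so that the iteration step becomes a genuine $\Delta_1^\mathcal{P}$-definable ordinal function that can be iterated internally.

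First I would fix an arbitrary $\alpha_0 \in \mathrm{Ord}^\mathcal{N}$ and an ordinal $\nu > \alpha_0$ with $\mathcal{S} \subseteq \mathcal{N}_\nu$, and work inside $\mathcal{N}$. Since $\mathrm{Sat}_{\Delta_0^\mathcal{P}} \in \Delta_1^\mathcal{P}$ (Theorem~\ref{sat_thm}), the formula ``$\forall x.\exists x'.\mathrm{Sat}_{\Delta_0^\mathcal{P}}(\delta,\langle x,x',n,s\rangle)$'' is $\Pi_2^\mathcal{P}$ in the parameter $n$, so, using that $\Sigma_2^\mathcal{P}$-Separation is equivalent to $\mathrm{B}_2^\mathcal{P}$-Separation and hence yields $\Pi_2^\mathcal{P}$-Separation, the set
\[
T = \{\langle\delta,s\rangle \in \Delta_0^\mathcal{P}[x,x',y,z]\times V_\nu \mid \forall x.\exists x'.\mathrm{Sat}_{\Delta_0^\mathcal{P}}(\delta,\langle x,x',n,s\rangle)\}
\]
exists. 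For $\langle\delta,s\rangle\in T$ a witness $x'$ exists for \emph{every} $x$, so I can let $\rho_{\delta,s,x}$ be the least $\rho$ with $\exists x'\in V_\rho.\mathrm{Sat}_{\Delta_0^\mathcal{P}}(\delta,\langle x,x',n,s\rangle)$; by Proposition~\ref{tak_clo} (applied with the $\Sigma_1^\mathcal{P}$-function $\rho \mapsto V_\rho$) the bounded relation here is $\Delta_1^\mathcal{P}$, so the map $(\delta,s,x)\mapsto\rho_{\delta,s,x}$ (defined for $\langle\delta,s\rangle\in T$) is $\Delta_1^\mathcal{P}$, and $\Sigma_1^\mathcal{P}$-Replacement makes
\[
\mu(\gamma) = \textstyle\bigcup\{\rho_{\delta,s,x}+1 \mid \langle\delta,s\rangle\in T \wedge x\in V_\gamma\}
\]
a total $\Delta_1^\mathcal{P}$-function $\mathrm{Ord}\to\mathrm{Ord}$ with parameters $T,n$. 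Then I would invoke $\Sigma_1^\mathcal{P}$-Recursion to build the $\omega$-sequence $\beta_0 = \nu$, $\beta_{k+1} = \max(\beta_k+1,\mu(\beta_k))$, and set $\beta = \sup_{k<\omega}\beta_k$.

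Finally, returning to the meta-theory, I would verify that this $\beta$ witnesses the lemma for $\alpha_0$. Suppose $s\in\mathcal{S}$, $\delta(x,x',y,z)\in\Delta_0^\mathcal{P}[x,x',y,z]$ and $\mathcal{N}\models\forall x.\exists x'.\delta(x,x',n,s)$; then $\langle\delta,s\rangle\in T$ by Theorem~\ref{sat_thm}. Given $x\in^\mathcal{N} V_\beta^\mathcal{N}$ we have $x\in^\mathcal{N} V_{\beta_k}^\mathcal{N}$ for some $k<\omega$, hence $\rho_{\delta,s,x} < \mu(\beta_k)\le\beta_{k+1}<\beta$ (the last inequality because $\omega$ is a limit and the sequence is strictly increasing), so a witness $x'$ lies in $V_{\rho_{\delta,s,x}}^\mathcal{N}\subseteq V_\beta^\mathcal{N}$ with $\mathcal{N}\models\delta(x,x',n,s)$ (again by Theorem~\ref{sat_thm}). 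This yields $\mathcal{N}\models\forall x\in V_\beta.\exists x'\in V_\beta.\delta(x,x',n,s)$, and since $\alpha_0$ was arbitrary there are unboundedly many such $\beta$. The main obstacle is the one already flagged: making the witness-bound operation a legitimate $\Delta_1^\mathcal{P}$ ordinal function so the $\omega$-iteration can be carried out inside $\mathcal{N}$ — over $T$ the ``least witness rank'' is $\Delta_1^\mathcal{P}$, but without restricting to $T$ one is forced into a case split on a $\Pi_1^\mathcal{P}$ condition and lands outside $\Sigma_1^\mathcal{P}$; this is precisely what pushes the hypothesis from $\Sigma_1^\mathcal{P}$- up to $\Sigma_2^\mathcal{P}$-Separation. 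The remaining bookkeeping (that $\Delta_0^\mathcal{P}[x,x',y,z]$ is a set, and that $\mu$ is genuinely $\Delta_1^\mathcal{P}$ after applying $\Sigma_1^\mathcal{P}$-Replacement) is routine.
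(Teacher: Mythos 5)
Your proof is correct and follows the same strategy as the paper's: form the set $T$ (the paper calls it $D$) of true $\Pi_2^\mathcal{P}$-instances via $\Sigma_2^\mathcal{P}$-Separation (equivalently $\Pi_2^\mathcal{P}$-Separation), then use $\Sigma_1^\mathcal{P}$-Recursion to iterate a $\Sigma_1^\mathcal{P}$-definable witness-bounding ordinal step $\omega$ times and take the supremum. Your decomposition of the recursive step into a least-witness-rank function $\rho_{\delta,s,x}$ plus $\Sigma_1^\mathcal{P}$-Replacement is a slightly more explicit rendering of the same step the paper packages directly as a single ``least $\beta_{k+1}$ such that\ldots''.
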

\begin{proof}
Let $\alpha$ be an ordinal in $\mathcal{N}$. Let $\beta_0 >^\mathcal{N} \alpha$ be an infinite ordinal in $\mathcal{N}$ such that $\mathcal{S} \subseteq \mathcal{N}_{\beta_0}$. We work in $\mathcal{N}$: By $\Sigma_2^\mathcal{P}$-Separation (which is equivalent to $\Pi_2^\mathcal{P}$-Separation), let 
$$D = \{ \langle \delta, t \rangle \in \Delta_0^\mathcal{P}[x, x', y, z] \times V_{\beta_0} \mid \forall x . \exists x' . \mathrm{Sat}_{\Delta_0}^\mathcal{P}(\delta, x, x', n, t) \}.$$ 
Recursively, for each $k < \omega$, let $\beta_{k+1}$ be the least ordinal such that 
$$\forall \langle \delta, t \rangle \in D . \big( \forall x \in V_{\beta_k} . \exists x' \in V_{\beta_{k+1}} . \mathrm{Sat}_{\Delta_0^\mathcal{P}}(\delta, x, x', n, s) \big).$$ 
The existence of the set $\{ \beta_k \mid k < \omega \}$ follows from $\Sigma_1^\mathcal{P}$-Recursion, because the functional formula defining the recursive step is $\Sigma_1^\mathcal{P}$, as seen when written out as $\phi(\beta_k, \beta_{k+1}) \wedge \forall \gamma < \beta_{k+1} . \neg \phi(\beta_k, \gamma),$ where $\phi(\beta_k, \beta_{k+1})$ is the formula $\forall \langle \delta, t \rangle \in D . \big( \forall x \in V_{\beta_k} . \exists x' \in V_{\beta_{k+1}} . \mathrm{Sat}_{\Delta_0^\mathcal{P}}(\delta, x, x', n, s) \big)$. Put $\beta = \mathrm{sup}\{ \beta_k \mid k < \omega \}$.

Let $s \in \mathcal{S}$ and let $\delta(x, x', y, z) \in \Delta_0^\mathcal{P}[x, x', y, z]$. To verify that 
$$(\mathcal{N}, n, s) \models (\forall x . \exists x' . \delta(x, x', n, s)) \rightarrow (\forall x \in V_\beta . \exists x' \in V_\beta . \delta(x, x', n, s)),$$ 
we work in $(\mathcal{N}, n, s)$: Suppose that $\forall x . \exists x' . \delta(x, x', n, s)$, and let $x \in V_\beta$. Then $x \in V_{\beta_k}$ for some $k < \omega$. By construction, there is $x' \in V_{\beta_{k+1}}$, such that $\mathrm{Sat}_{\Delta_0^\mathcal{P}}(\delta, x, x', n, s)$. So by the properties of $\mathrm{Sat}_{\Delta_0^\mathcal{P}}$, we have $\delta(x, x', n, s)$, as desired.
\end{proof}

\begin{lemma}\label{Friedman thm extra 2}
	Under the assumptions of Theorem \ref{Friedman thm}, for each embedding $i_1 : \mathcal{M} \rightarrow \mathcal{N}$ satisfying \textnormal{(\ref{Friedman thm emb})} of Theorem \ref{Friedman thm}, there is an embedding $i_0 <^\rnk i_1$ satisfying \textnormal{(\ref{Friedman thm emb})}.
\end{lemma}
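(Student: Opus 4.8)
The plan is to apply the implication $(\ref{Friedman thm pres}) \Rightarrow (\ref{Friedman thm emb})$ of Theorem \ref{Friedman thm}, but with the codomain $\mathcal{N}$ replaced by the internal copy of $\mathcal{M}$ carved out by $i_1$. So first I would set $\mathcal{N}' =_\df \mathcal{N}\restriction_{i_1(\mathcal{M})}$, regarded as an $\mathcal{L}^0$-structure. Since $i_1$ is rank-initial, $\mathcal{N}'$ is a rank-initial substructure of $\mathcal{N}$ and $i_1$ becomes an isomorphism $\mathcal{M} \to \mathcal{N}'$; hence $\mathcal{N}'$ is a countable non-standard model of $\mathrm{KP}^\mathcal{P} + \Sigma_1^\mathcal{P}\textnormal{-Separation}$, $n_0 = i_1(m_0)$ lies in $\mathcal{N}'$, and --- because $i_1$ fixes $\mathcal{S}$ pointwise --- $\mathcal{S}$ is a shared rank-cut of $\mathcal{M}$ and $\mathcal{N}'$, in particular a bounded substructure of $\mathcal{N}'$. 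Viewing $i_1 : \mathcal{M} \to \mathcal{N}'$ as a rank-initial embedding (it is an isomorphism, hence trivially rank-initial), Proposition \ref{emb pres}(\ref{emb pres SSy_S}) yields $\mathrm{SSy}_\mathcal{S}(\mathcal{M}) = \mathrm{SSy}_\mathcal{S}(\mathcal{N}')$.

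Next I would invoke Lemma \ref{Bound for sigma_1 lemma} for $\mathcal{N}'$, the element $n_0$ and the bounded substructure $\mathcal{S}$, obtaining an ordinal $\beta' \in \mathrm{Ord}^{\mathcal{N}'}$ such that for all $s \in \mathcal{S}$ and all $\delta(x,y,z) \in \Delta_0^\mathcal{P}[x,y,z]$ one has $\mathcal{N}' \models (\exists x . \delta(x, n_0, s)) \rightarrow (\exists x \in V_{\beta'} . \delta(x, n_0, s))$. Transporting the hypothesis $\exists x . \delta(x, m_0, s)$ along the isomorphism $i_1$ (which sends $m_0$ to $n_0$ and fixes each $s \in \mathcal{S}$), this shows that the data $(m_0, n_0, \beta')$ verify clause $(\ref{Friedman thm pres})$ of Theorem \ref{Friedman thm} for the pair of models $\mathcal{M}$, $\mathcal{N}'$. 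So Theorem \ref{Friedman thm} provides a rank-initial embedding $i_0 : \mathcal{M} \to \mathcal{N}'$ that fixes $\mathcal{S}$ pointwise, satisfies $i_0(m_0) = n_0$, and has $i_0(\mathcal{M}) \subseteq \mathcal{N}'_{\beta'}$.

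Finally, since $i_1(\mathcal{M})$ is rank-initial in $\mathcal{N}$ and $i_0(\mathcal{M})$ is rank-initial in $\mathcal{N}' = i_1(\mathcal{M})$, by transitivity of rank-initiality the image $i_0(\mathcal{M})$ is rank-initial in $\mathcal{N}$; reading $i_0$ as a map into $\mathcal{N}$ it is therefore a rank-initial embedding $\mathcal{M} \to \mathcal{N}$ fixing $\mathcal{S}$ pointwise, with $i_0(m_0) = n_0$ and $i_0(\mathcal{M}) \subseteq \mathcal{N}'_{\beta'} \subseteq \mathcal{N}' = i_1(\mathcal{M}) \subseteq \mathcal{N}_\beta$, so $(\ref{Friedman thm emb})$ of Theorem \ref{Friedman thm} holds. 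And since $\beta' \in \mathrm{Ord}^{\mathcal{N}'}$, we have $i_0(\mathcal{M}) \subseteq \mathcal{N}'_{\beta'} \subsetneq \mathcal{N}' = i_1(\mathcal{M})$, whence $\mathcal{N}\restriction_{i_0(\mathcal{M})}$ is a \emph{proper} rank-initial substructure of $\mathcal{N}\restriction_{i_1(\mathcal{M})}$, i.e. $i_0 <^\rnk i_1$, as required. There is no deep obstacle here; the real content is the bookkeeping --- checking that $\mathcal{N}'$ genuinely inherits the hypotheses of Theorem \ref{Friedman thm} (all of which are isomorphism-invariant) and that the bound $\beta'$ supplied by Lemma \ref{Bound for sigma_1 lemma} lands inside $\mathrm{Ord}^{\mathcal{N}'}$, which is exactly what forces $i_0(\mathcal{M})$ to be a \emph{proper} rank-initial part of $i_1(\mathcal{M})$.
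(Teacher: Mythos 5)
Your proof is correct and follows essentially the same strategy as the paper's: apply Lemma \ref{Bound for sigma_1 lemma} to produce an ordinal bound strictly below $i_1(\mathcal{M})$ and then re-invoke $(\ref{Friedman thm pres}) \Rightarrow (\ref{Friedman thm emb})$ of Theorem \ref{Friedman thm} with that smaller bound. The only cosmetic difference is that the paper obtains the bound $\alpha$ in $\mathcal{M}$ and pushes it forward to $i_1(\alpha)$ using Proposition \ref{emb pres}(\ref{emb pres Delta_0^P}), whereas you apply Lemma \ref{Bound for sigma_1 lemma} directly inside $\mathcal{N}' = \mathcal{N}\restriction_{i_1(\mathcal{M})}$ after noting the hypotheses transfer along the isomorphism $i_1$.
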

\begin{proof}
	By Lemma \ref{Bound for sigma_1 lemma}, there is $\alpha \in \mathrm{Ord}^\mathcal{M}$, such that 
	for all $s \in \mathcal{S}$ and all $\delta(x, y, z) \in \Delta_0^\mathcal{P}[x, y, z]$:
	\[
	\mathcal{M} \models \exists x . \delta(x, m, s) \Leftrightarrow \mathcal{M} \models \exists x \in V_\alpha . \delta(x, m, s).
	\]
	By Proposition \ref{emb pres}(\ref{emb pres Delta_0^P}) applied to $i_1$, we have for all $s \in \mathcal{S}$ and all $\delta(x, y, z) \in \Delta_0^\mathcal{P}[x, y, z]$ that
	\[
	\mathcal{M} \models \exists x \in V_\alpha . \delta(x, m, s) \Rightarrow \mathcal{N} \models \exists x \in V_{i_1(\alpha)} . \delta(x, n, s),
	\]
	and consequently that 
	\[
	\mathcal{M} \models \exists x . \delta(x, m, s) \Rightarrow \mathcal{N} \models \exists x \in V_{i_1(\alpha)} . \delta(x, n, s).
	\]
	So by (\ref{Friedman thm pres}) $\Rightarrow$ (\ref{Friedman thm emb}) of Theorem \ref{Friedman thm}, there is a rank-initial embedding $i_0 : \mathcal{M} \rightarrow \mathcal{N}$, such that $i_0(m) = n$ and $i_0(\mathcal{M}) \subseteq V_{i_1(\alpha)}^\mathcal{N}$. Since $i_1(\alpha) \in i_1(\mathcal{M}) \setminus i_0(\mathcal{M})$, we are done.
\end{proof}

\begin{lemma}\label{Friedman thm extra}
	These statements are equivalent to (\ref{Friedman thm emb}) in Theorem \ref{Friedman thm}:
	\begin{enumerate}[{\normalfont (a)}]
		\setcounter{enumi}{2}
		\item There is a map $g \mapsto i_g$, from sequences $g : \omega \rightarrow 2$, to embeddings $i_g : \mathcal{M} \rightarrow \mathcal{N}$ satisfying \textnormal{(\ref{Friedman thm emb})}, such that for any $g <^\mathrm{lex} g' : \omega \rightarrow 2$, we have $i_g <^\rnk i_{g'}$.
		\item There is a topless embedding $i : \mathcal{M} \rightarrow \mathcal{N}$ satisfying \textnormal{(\ref{Friedman thm emb})}.
	\end{enumerate}
\end{lemma}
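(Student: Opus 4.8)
Throughout, note that $(\ref{Friedman thm emb}) \Rightarrow$ (c) and $(\ref{Friedman thm emb}) \Rightarrow$ (d) are trivial — for (c) take $i := i_g$ for any single $g : \omega \rightarrow 2$, and a topless embedding satisfying (\ref{Friedman thm emb}) is in particular an embedding satisfying (\ref{Friedman thm emb}). So the content lies in the converses, and from now on we assume (\ref{Friedman thm emb}); by Theorem~\ref{Friedman thm} this yields (\ref{Friedman thm pres}), in particular $\mathrm{SSy}_\mathcal{S}(\mathcal{M}) = \mathrm{SSy}_{\mathcal{S}}(\mathcal{N})$, so that Lemma~\ref{Friedman lemma}(\ref{Friedman lemma forwards},\ref{Friedman lemma backwards}) is available. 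The plan is to re-run the generic construction of Theorem~\ref{Friedman thm} with one new ingredient, a \emph{moving ambient bound}: at chosen stages we replace the current forcing poset $\llbracket \mathcal{M} \preceq_{\Sigma_1^\mathcal{P}, \mathcal{S}}^{<\omega} \mathcal{N}_{\beta'} \rrbracket$ by $\llbracket \mathcal{M} \preceq_{\Sigma_1^\mathcal{P}, \mathcal{S}}^{<\omega} \mathcal{N}_{\beta''} \rrbracket$ for a strictly smaller $\beta''$. This is legitimate because $\mathcal{S}$ remains a rank-cut of $\mathcal{N}_{\beta''}$ and $\mathrm{SSy}_\mathcal{S}(\mathcal{N}_{\beta''}) = \mathrm{SSy}_{\mathcal{S}}(\mathcal{N})$, so Lemma~\ref{Friedman lemma} keeps applying in each such poset. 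The supply of admissible smaller bounds comes from the mechanism in the proof of Lemma~\ref{Friedman thm extra 2}: given a condition $f$ with $\dom(f)$ enumerated by $\vec{m}$, Lemma~\ref{Bound for sigma_1 lemma} (in $\mathcal{M}$) yields $\alpha_f \in \mathrm{Ord}^\mathcal{M}$ with $\mathcal{M} \models \exists x. \delta(x, \vec{m}, s) \leftrightarrow \exists x \in V_{\alpha_f}. \delta(x, \vec{m}, s)$ for all $\delta \in \Delta_0^\mathcal{P}$ and $s \in \mathcal{S}$; extending $f$ by forward density to a condition with $\alpha_f$ in its domain, with value $\mu_f$, the $\Delta_0^\mathcal{P}$-elementarity of the extension forces $\mu_f$ to be itself a valid bound for $f$ (i.e.\ $f \in \llbracket \mathcal{M} \preceq_{\Sigma_1^\mathcal{P}, \mathcal{S}}^{<\omega} \mathcal{N}_{\mu_f} \rrbracket$), with $\mu_f <^\mathcal{N} \beta'$, with $\mu_f >^\mathcal{N} \rnk^\mathcal{N}(c)$ for every $c \in \mathrm{image}(f)$, and with $\mu_f >^\mathcal{N} \xi$ for every ordinal $\xi$ of $\mathcal{S}$.

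For $(\ref{Friedman thm emb}) \Rightarrow$ (c) I would build a $2^{<\omega}$-indexed tree of conditions $f_\sigma$ with bounds $\beta_\sigma \leq^\mathcal{N} \beta$ such that $f_\sigma \in \llbracket \mathcal{M} \preceq_{\Sigma_1^\mathcal{P}, \mathcal{S}}^{<\omega} \mathcal{N}_{\beta_\sigma} \rrbracket$, $\langle m_0, n_0 \rangle \in f_\sigma$, and $f_\tau$ extends $f_\sigma$ with $\beta_\tau \leq^\mathcal{N} \beta_\sigma$ whenever $\sigma$ is an initial segment of $\tau$, meeting along every branch the dense sets $\mathcal{C}_m$, $\mathcal{D}_{m,n}$ of Lemma~\ref{Friedman lemma} (dense in each poset arising). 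At a node $\sigma$: first extend $f_\sigma$ to meet the $\mathcal{C}$- and $\mathcal{D}$-requirements due at that level, obtaining $f'_\sigma$; then form $\alpha_{f'_\sigma}$ and its value $\mu_{f'_\sigma}$ as above, and put $f_{\sigma 0} := f'_\sigma$ with $\beta_{\sigma 0} := \mu_{f'_\sigma}$, and $f_{\sigma 1} := f'_\sigma \cup \{\langle \alpha_{f'_\sigma}, \mu_{f'_\sigma} \rangle\}$ with $\beta_{\sigma 1} := \beta_\sigma$. For $g : \omega \rightarrow 2$ the union $i_g := \bigcup_k f_{g \restriction k}$ is then a rank-initial embedding fixing $\mathcal{S}$ pointwise, with $i_g(m_0) = n_0$ and $\mathrm{image}(i_g) \subseteq \mathcal{N}_\beta$, so it satisfies (\ref{Friedman thm emb}). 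If $g <^\mathrm{lex} g'$ with splitting node $\sigma$, where $g$ takes the $0$-branch and $g'$ the $1$-branch, then $\mathrm{image}(i_g) \subseteq \mathcal{N}_{\mu_{f'_\sigma}}$ while $i_{g'}(\alpha_{f'_\sigma}) = \mu_{f'_\sigma}$, whence by rank-initiality $\mathcal{N}_{\mu_{f'_\sigma}} \subseteq \mathrm{image}(i_{g'})$ and $\mu_{f'_\sigma} \in \mathrm{image}(i_{g'}) \setminus \mathrm{image}(i_g)$; so $\mathrm{image}(i_g)$ is a proper rank-initial substructure of $\mathrm{image}(i_{g'})$, that is $i_g <^\rnk i_{g'}$.

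For $(\ref{Friedman thm emb}) \Rightarrow$ (d) I would instead run a single chain of conditions $f_0, f_1, \dots$ (each extending the previous) with \emph{strictly} decreasing bounds $\beta_0 >^\mathcal{N} \beta_1 >^\mathcal{N} \cdots$ (shrinking by the above mechanism at every stage), meeting all $\mathcal{C}_m$ and $\mathcal{D}_{m,n}$, and arranging in addition that $i := \bigcup_k f_k$ satisfies $\mathrm{image}(i) \neq \mathcal{N}_\delta$ for every $\delta \in \mathrm{Ord}^\mathcal{N}$. Since a rank-initial embedding possessing an $\mathcal{N}$-least upper bound $\delta$ has image exactly $\mathcal{N}_\delta$, this makes $i$ topless, and with $\mathrm{image}(i) \subseteq \mathcal{N}_\beta$ it then satisfies (d). At the stage processing a given $\delta \in \mathrm{Ord}^\mathcal{N}$, with current condition $f$ and bound $\beta'$: if $\delta \geq^\mathcal{N} \beta'$, then $\mathrm{image}(i) \subseteq \mathcal{N}_{\beta''} \subsetneq \mathcal{N}_{\beta'} \subseteq \mathcal{N}_\delta$ for the next bound $\beta'' <^\mathcal{N} \beta'$; if some $c \in \mathrm{image}(f)$ has $\rnk^\mathcal{N}(c) \geq^\mathcal{N} \delta$, then $\mathrm{image}(i) \not\subseteq \mathcal{N}_\delta$; otherwise $\delta$ lies strictly between every rank occurring in $\mathrm{image}(f)$ and $\beta'$, and, forming $\alpha_f$, either some admissible value $\mu_f$ lies below $\delta$ — take such a $\mu_f$ and lower the bound to it, so $\mathrm{image}(i) \subseteq \mathcal{N}_{\mu_f} \subsetneq \mathcal{N}_\delta$ — or every admissible value is $\geq^\mathcal{N} \delta$ — then retain the pair $\langle \alpha_f, \mu_f \rangle$ permanently, so $\mu_f \in \mathrm{image}(i)$ with $\rnk^\mathcal{N}(\mu_f) \geq^\mathcal{N} \delta$ and hence $\mathrm{image}(i) \not\subseteq \mathcal{N}_\delta$, and shrink the bound by one further application of the mechanism.

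The delicate part is this last construction: one must check that the competing demands on the moving bound can all be met at once — strictly decreasing it at every stage while keeping it above $\mathcal{S}$ and above every rank in the current image, and for each $\delta$ steering it to the correct side of $\delta$. The point that makes this work is precisely that every value $\mu_f$ furnished by Lemma~\ref{Bound for sigma_1 lemma} and forward density is simultaneously a valid bound for $f$, strictly below the old bound, and strictly above every rank in $\mathrm{image}(f)$ and every ordinal of $\mathcal{S}$; combined with a routine bookkeeping that processes $\mathcal{M}$, $\mathcal{N}$, and the $\mathcal{C}$- and $\mathcal{D}$-requirements in some fixed order, this is enough. I expect the verifications for (c) and (d) then to be straightforward, if somewhat tedious.
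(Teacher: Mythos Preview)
Your opening sentence has the directions reversed: the implications you justify as trivial --- ``take $i := i_g$ for any single $g$'' and ``a topless embedding is in particular an embedding'' --- are arguments for (c) $\Rightarrow$ (\ref{Friedman thm emb}) and (d) $\Rightarrow$ (\ref{Friedman thm emb}), not the other way round. The substantive directions are (\ref{Friedman thm emb}) $\Rightarrow$ (c) and (\ref{Friedman thm emb}) $\Rightarrow$ (d), and these are indeed what you then go on to prove; so the content is right, only the labelling is swapped.

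Your construction for (\ref{Friedman thm emb}) $\Rightarrow$ (c) is correct and rests on the same mechanism as the paper --- use Lemma~\ref{Bound for sigma_1 lemma} to produce a strictly smaller admissible bound and branch on whether to shrink to it --- but you implement it at the level of a tree of finite conditions with moving ambient bounds, whereas the paper builds a tree of \emph{full} embeddings $i_f$ (one per finite binary string $f$) by iterating Lemma~\ref{Friedman thm extra 2}, together with growing finite domains $D_\gamma$ on which successive embeddings are forced to agree, and defines each $i_g$ as the limit along its branch. Your version is somewhat more direct; the paper's modularises the bound-lowering step as a separate lemma and reuses the already-established (\ref{Friedman thm pres}) $\Rightarrow$ (\ref{Friedman thm emb}) as a black box.

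For (\ref{Friedman thm emb}) $\Rightarrow$ (d) you work much harder than needed. The paper simply observes (c) $\Rightarrow$ (d) in one line: $\mathcal{N}$ is countable, so only countably many ordinals can serve as the least upper bound of a rank-initial substructure, while (c) produces continuum many embeddings with pairwise $<^{\rnk}$-comparable, hence distinct, images; thus some (indeed all but countably many) are topless. Your direct diagonalisation against each potential top $\delta$ is plausible, but the bookkeeping is delicate --- in particular you cannot decide in advance whether ``some admissible value $\mu_f$ lies below $\delta$''; you should just run the mechanism once and act according to which side of $\delta$ the resulting $\mu_f$ lands --- and it buys nothing over the counting argument.
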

\begin{proof}
	It suffices to show that (\ref{Friedman thm emb}) $\Rightarrow$ (\ref{Friedman thm extra continuum}) $\Rightarrow$ (\ref{Friedman thm extra topless}).
	
	(\ref{Friedman thm emb}) $\Rightarrow$ (\ref{Friedman thm extra continuum}): 
	 Let $(a_\xi)_{\xi < \omega}$ and $(b_\xi)_{\xi < \omega}$ be enumerations of $\mathcal{M}$ and $\mathcal{N}$, respectively, with infinitely many repetitions of each element. For each $g : \omega \rightarrow 2$, we shall construct a distinct $i_g : \mathcal{M} \rightarrow \mathcal{N}$. To do so, we first construct approximations of the $i_g$.
	 
	 For any $\gamma < \omega$, we allow ourselves to denote any function $f : \gamma \rightarrow 2$ as an explicit sequence of values $f\hspace{2pt}(0),f\hspace{2pt}(1), \dots, f\hspace{2pt}(\gamma - 1)$. For each $\gamma < \omega$, we shall construct a finite subdomain $D_\gamma \subseteq \mathcal{M}$, and for each $f : \gamma \rightarrow 2$, we shall construct an embedding $i_f$. We do so by this recursive construction on $\gamma < \omega$:
	 \begin{enumerate}
	 	\item For $i_\varnothing : \mathcal{M} \rightarrow \mathcal{N}$, choose any embedding satisfying (\ref{Friedman thm emb}).
	 	\item Put $D_\varnothing = \{m\}$.
	 	\item Suppose that $i_f$ has been constructed for some $f : \gamma \rightarrow 2$, where $\gamma < \omega$. Put $i_{f, 1} = i_f$. Applying Lemma \ref{Friedman thm extra 2} to $i_{f,1}$, with $D_\gamma^\mathcal{M}$ in place of $m$ and with $i_f\hspace{2pt}(D_\gamma^\mathcal{M})$ in place of $n$, we choose an embedding $i_{f, 0} : \mathcal{M} \rightarrow \mathcal{N}$ such that :
	 	\begin{enumerate}[(i)]
	 		\item $i_{f, 0}, i_{f, 1}$ are rank-initial, with all values of rank below $\beta$ in $\mathcal{N}$;
	 		\item $i_{f, 0}\restriction_{D_\gamma} = i_{f, 1}\restriction_{D_\gamma} = i_f\restriction_{D_\gamma}$;
	 		\item $i_{f,0} <^\rnk i_{f,1}$.
	 	\end{enumerate}
	 	\item\label{D_constr} Put $D_{\gamma + 1}$ to be a finite subdomain of $\mathcal{M}$, such that:
	 	\begin{enumerate}[(i)]
	 		\item $D_{\gamma} \subseteq D_{\gamma + 1}$; 
	 		\item $a_\gamma \in D_{\gamma + 1}$;
	 		\item\label{D_constr_rank_init} For each $f : \gamma+1 \rightarrow 2$, if $\big( \rnk(b_\gamma) \leq \sup\{\rnk(i_f(a)) \mid a \in D_\gamma\} \big)^\mathcal{N}$, then $i_{f}^{-1}(b_\gamma) \in D_{\gamma + 1}$ (note that $i_{f}^{-1}(b_\gamma)$ exists by rank-initiality of $i_f$);
	 		\item\label{D_constr_homo} For each $f : \gamma \rightarrow 2$, we have that $i_{f, 1}^{-1}(\nu) \in D_{\gamma + 1}$, for some $\nu \in i_{f, 1}(\mathrm{Ord}^\mathcal{M}) \setminus i_{f, 0}(\mathrm{Ord}^\mathcal{M})$.
	 	\end{enumerate}
	 \end{enumerate}
	 Note that every $a \in \mathcal{M}$ is in $D_\gamma$ for some $\gamma < \omega$. Moreover, for every $\gamma < \omega$, if $f <^\mathrm{lex} f' : \gamma \rightarrow 2$, then $i_f <^\rnk i_{f'}$.
	 
	 Now, for each $g : \omega \rightarrow 2$, define $i_g : \mathcal{M} \rightarrow \mathcal{N}$ by
	 \[
	 	i_g(a) = i_{g\restriction_{\gamma}}(a), 
	 \]
	 for each $a \in \mathcal{M}$, where $\gamma < \omega$ is such that $a \in D_\gamma$.
	 Note that for each $\gamma < \omega$, $i_g\restriction_{D_\gamma} = i_{g\restriction_\gamma}$. We now verify that these $i_g$ have the desired properties. Let $g : \omega \rightarrow 2$.
	 \begin{itemize}
	 	\item $i_g$ is an embedding: Let $\phi(x)$ be a quantifier free formula and let $a \in \mathcal{M}$. Then $a \in D_\gamma$ for some $\gamma < \omega$, so since $i_{g\restriction_{\gamma}}$ is an embedding, $\mathcal{M} \models \phi(a) \Rightarrow \mathcal{N} \models \phi(i_g(a)).$
	 	\item $i_g(m) = n$: $m \in D_\varnothing$ and $i_g\restriction_{D_\varnothing} = i_\varnothing\restriction_{D_\varnothing}$.
	 	\item $i_g(\mathcal{M}) \subseteq \mathcal{N}_\beta$: Let $a \in \mathcal{M}$ and pick $\gamma < \omega$ such that $a \in D_\gamma$. Then $i_g(a) = i_{g\restriction_\gamma}(a) \in \mathcal{N}_\beta$.
	 	\item $i_g$ is rank-initial: Let $a \in \mathcal{M}$ and $b \in \mathcal{N}$, such that $\mathcal{N} \models \rnk(b) \leq \rnk(i_g(a))$. Pick $\gamma < \omega$ such that $a \in D_\gamma$. By (\ref{D_constr_rank_init}), $i_{g\restriction_{\gamma + 1}}^{-1}(b) \in D_{\gamma + 1}$. So $b \in i_g(\mathcal{M})$.
	 	\item If $g <^\mathrm{lex} g' : \omega \rightarrow 2$, then $i_g <^\rnk i_{g'}$: Let $\gamma < \omega$ be the least such that $g(\gamma) < g'(\gamma)$. By (\ref{D_constr_homo}), $i_{g'} >^\rnk i_{g\restriction_{\gamma + 1}} \geq^\rnk i_g$.
	 \end{itemize}
	 
	 (\ref{Friedman thm extra continuum}) $\Rightarrow$ (\ref{Friedman thm extra topless}): Since $\mathcal{N}$ is countable, there are only $\aleph_0$ many ordinals in $\mathcal{N}$ which top a substructure, so by (\ref{Friedman thm extra continuum}) we are done.
\end{proof}

The following two results sharpen results in \S 4 of \cite{Fri73}.

\begin{cor}\label{Friedman cor}
Let $\mathcal{M} \models \mathrm{KP}^\mathcal{P}  + \Sigma_1^\mathcal{P}\textnormal{-Separation}$ and $\mathcal{N} \models \mathrm{KP}^\mathcal{P} + \Sigma_1^\mathcal{P}\textnormal{-Separation}$ be countable and non-standard. Let $\mathcal{S}$ be a common rank-cut of $\mathcal{M}$ and $\mathcal{N}$. Then the following are equivalent:
\begin{enumerate}[{\normalfont (a)}]
\item There is a rank-initial $i : \mathcal{M} \rightarrow \mathcal{N}$, fixing $\mathcal{S}$ pointwise.
\item[{\normalfont (a')}] There is a rank-cut $i : \mathcal{M} \rightarrow \mathcal{N}$, fixing $\mathcal{S}$ pointwise.
\item $\mathrm{SSy}(\mathcal{M}) = \mathrm{SSy}(\mathcal{N})$, and $\mathrm{Th}_{\Sigma_1^\mathcal{P}, S}(\mathcal{M}) \subseteq \mathrm{Th}_{\Sigma_1^\mathcal{P}, S}(\mathcal{N})$.
\end{enumerate}
\end{cor}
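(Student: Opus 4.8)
The plan is to read off all the equivalences from the already-proved Theorem~\ref{Friedman thm}, instantiated with $m_0 = n_0 = \varnothing$ and a suitably chosen bounding ordinal $\beta$; concretely I would run the cycle (a$'$)~$\Rightarrow$~(a)~$\Rightarrow$~(b)~$\Rightarrow$~(a$'$). Throughout, I use that $\mathcal{S}$, being a common rank-cut, is a proper rank-initial substructure and hence bounded in both $\mathcal{M}$ and $\mathcal{N}$, and that any initial embedding fixes $\varnothing$ automatically (Proposition~\ref{emb pres}(\ref{emb pres fixed})), so stipulating $i(\varnothing)=\varnothing$ costs nothing. (I read the standard-system clause in (b) as the condition $\mathrm{SSy}_\mathcal{S}(\mathcal{M}) = \mathrm{SSy}_\mathcal{S}(\mathcal{N})$ over the common rank-cut, matching the hypothesis of Theorem~\ref{Friedman thm}; this should be confirmed against the intended reading of the statement.)

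The implication (a$'$)~$\Rightarrow$~(a) is immediate, since by definition a rank-cut embedding is rank-initial. For (a)~$\Rightarrow$~(b): given a rank-initial $i:\mathcal{M}\to\mathcal{N}$ fixing $\mathcal{S}$ pointwise, the standard-system equality is exactly Proposition~\ref{emb pres}(\ref{emb pres SSy_S}) (with $\mathcal{S}$ as the common bounded substructure pointwise fixed by $i$); and since a rank-initial embedding is $\mathcal{P}$-initial, Proposition~\ref{emb pres}(\ref{emb pres Delta_0^P}) shows it preserves $\Sigma_1^\mathcal{P}$-truth upward, which together with $i\!\restriction_\mathcal{S} = \mathrm{id}$ gives $\mathrm{Th}_{\Sigma_1^\mathcal{P},S}(\mathcal{M}) \subseteq \mathrm{Th}_{\Sigma_1^\mathcal{P},S}(\mathcal{N})$.

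The substantive direction is (b)~$\Rightarrow$~(a$'$). First apply Lemma~\ref{Bound for sigma_1 lemma} to $\mathcal{N}$ (which satisfies $\mathrm{KP}^\mathcal{P}+\Sigma_1^\mathcal{P}\textnormal{-Separation}$) with $n=\varnothing$ and the bounded substructure $\mathcal{S}$, obtaining $\beta\in\mathrm{Ord}^\mathcal{N}$ with $\mathcal{N}\models \exists x.\,\delta(x,\varnothing,s)\leftrightarrow \exists x\in V_\beta.\,\delta(x,\varnothing,s)$ for all $s\in\mathcal{S}$ and $\delta\in\Delta_0^\mathcal{P}$. I then check condition~(\ref{Friedman thm pres}) of Theorem~\ref{Friedman thm} with $m_0=n_0=\varnothing$ and this $\beta$: its first conjunct is the standard-system hypothesis of (b), and for the second, whenever $\mathcal{M}\models\exists x.\,\delta(x,\varnothing,s)$ this $\Sigma_1^\mathcal{P}$-sentence with parameter $s\in S$ lies in $\mathrm{Th}_{\Sigma_1^\mathcal{P},S}(\mathcal{M})\subseteq\mathrm{Th}_{\Sigma_1^\mathcal{P},S}(\mathcal{N})$, so $\mathcal{N}\models\exists x.\,\delta(x,\varnothing,s)$, and the choice of $\beta$ gives $\mathcal{N}\models\exists x\in V_\beta.\,\delta(x,\varnothing,s)$. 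Invoking the implication (\ref{Friedman thm pres})~$\Rightarrow$~(\ref{Friedman thm extra topless}) of Theorem~\ref{Friedman thm} then yields a topless rank-initial embedding $i:\mathcal{M}\to\mathcal{N}$ fixing $\mathcal{S}$ pointwise, i.e.\ a rank-cut embedding, which is (a$'$).

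I do not expect a genuine obstacle here: this is essentially a packaging of Theorem~\ref{Friedman thm} together with Lemma~\ref{Bound for sigma_1 lemma}. The only points that need care are (i)~selecting the right $\beta$ so that the $\Delta_0^\mathcal{P}$-transfer clause of Theorem~\ref{Friedman thm}(\ref{Friedman thm pres}) is met from the weaker hypothesis of merely a $\Sigma_1^\mathcal{P}$-theory \emph{inclusion} (this is precisely what Lemma~\ref{Bound for sigma_1 lemma}, applied on the side of $\mathcal{N}$, provides, and is why both models are assumed to satisfy $\Sigma_1^\mathcal{P}\textnormal{-Separation}$); and (ii)~verifying that $\mathcal{S}$ qualifies as a bounded substructure of $\mathcal{N}$ for the invocations of Lemma~\ref{Bound for sigma_1 lemma} and Proposition~\ref{emb pres}(\ref{emb pres SSy_S}), which follows immediately from $\mathcal{S}$ being a rank-cut.
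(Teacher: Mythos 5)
Your proposal is correct and follows essentially the same route as the paper: the (a)\,$\Rightarrow$\,(b) direction via Proposition~\ref{emb pres}, and (b)\,$\Rightarrow$\,(a$'$) via Lemma~\ref{Bound for sigma_1 lemma} to obtain $\beta$ and then Theorem~\ref{Friedman thm} with $m_0 = n_0 = \varnothing$. You also correctly flag that the standard-system clause in (b) should be read as $\mathrm{SSy}_\mathcal{S}(\mathcal{M}) = \mathrm{SSy}_\mathcal{S}(\mathcal{N})$ (over the common rank-cut, matching condition~(\ref{Friedman thm pres}) of Theorem~\ref{Friedman thm}) rather than the unsubscripted $\mathrm{SSy}$ over the well-founded part, which indeed appears to be the intended reading.
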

\begin{proof}
(\ref{Friedman thm emb}) $\Rightarrow$ (\ref{Friedman thm pres}) is proved just as for Theorem \ref{Friedman thm}.

(\ref{Friedman thm pres}) $\Rightarrow$ (\ref{Friedman thm emb}') follows from Theorem \ref{Friedman thm} by letting $\beta \in \mathcal{N}$ be as obtained from Lemma \ref{Bound for sigma_1 lemma}, and setting $m_0 = \varnothing^\mathcal{M}$ and $n_0 = \varnothing^\mathcal{N}$.
\end{proof}

\begin{thm}\label{Friedman selfembedding}
Let $\mathcal{M} \models \mathrm{KP}^\mathcal{P} + \Sigma_1^\mathcal{P}\textnormal{-Separation}$ be countable and non-standard. Let $\mathcal{S}$ be a rank-cut of $\mathcal{M}$. Then there is a rank-cut $i : \mathcal{M} \rightarrow \mathcal{M}$, fixing $\mathcal{S}$ pointwise, such that
\[
\forall \alpha \in \mathrm{Ord}^\mathcal{M} \setminus \mathcal{S} . \exists m \in \mathcal{M} . (\rnk^\mathcal{M}(m) = \alpha \wedge i(m) \neq m).
\]
\end{thm}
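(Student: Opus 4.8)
The plan is to combine the forcing construction behind Theorem \ref{Friedman thm} (with $\mathcal{N} = \mathcal{M}$) with the extra density sets $\mathcal{E}_\alpha$ of Lemma \ref{Friedman lemma}(\ref{Friedman lemma downwards}), and to secure toplessness by the continuum-branching argument in the proof of Lemma \ref{Friedman thm extra}. First I would fix, via Lemma \ref{Bound for sigma_1 lemma} applied with parameter $\varnothing$ and substructure $\mathcal{S}$, an ordinal $\beta$ — enlarging it if necessary so that $\beta \in \mathrm{Ord}^\mathcal{M} \setminus \mathcal{S}$ — such that every $\Sigma_1^\mathcal{P}$-fact over $\mathcal{M}$ with parameters in $\mathcal{S}$ is already witnessed inside $V_\beta^\mathcal{M}$, and set $\mathbb{P} = \llbracket \mathcal{M} \preceq_{\Sigma_1^\mathcal{P}, \mathcal{S}}^{<\omega} \mathcal{M}_\beta \rrbracket$, which contains the empty condition. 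Since $\mathcal{N} = \mathcal{M}$ and $\mathrm{SSy}_\mathcal{S}(\mathcal{M}) = \mathrm{SSy}_\mathcal{S}(\mathcal{M})$ trivially, all three clauses of Lemma \ref{Friedman lemma} apply, so $\mathcal{C}_m$ ($m \in \mathcal{M}$), $\mathcal{D}_{m,n}$ ($m, n \in \mathcal{M}$), and $\mathcal{E}_\alpha$ ($\alpha \in \mathrm{Ord}^\mathcal{M} \setminus \mathcal{S}$) are all dense in $\mathbb{P}$.

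Were toplessness not required, Lemma \ref{generic filter existence} would already finish the job: a filter meeting the countably many dense sets $\mathcal{C}_m$, $\mathcal{D}_{m,n}$, $\mathcal{E}_\alpha$ has as its union a self-embedding $i : \mathcal{M} \to \mathcal{M}$ with $\dom(i) = \mathcal{M}$ (by the $\mathcal{C}_m$), fixing $\mathcal{S}$ pointwise (membership in $\mathbb{P}$ forces this), rank-initial (by the $\mathcal{D}_{m,n}$, exactly as in the proof of Theorem \ref{Friedman thm}), with image in $\mathcal{M}_\beta$, and moving some element of rank $\alpha$ for each $\alpha \in \mathrm{Ord}^\mathcal{M} \setminus \mathcal{S}$ (by the $\mathcal{E}_\alpha$). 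A single generic, however, cannot be forced to be topless: no finite condition can commit an ordinal to remaining outside the image forever.

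To obtain toplessness I would instead run the tree-of-approximations construction from the proof of Lemma \ref{Friedman thm extra}(\ref{Friedman thm extra continuum}), yielding a family $(i_g)_{g : \omega \to 2}$ of rank-initial self-embeddings, each fixing $\mathcal{S}$ pointwise with image in $\mathcal{M}_\beta$, and with $i_g <^\rnk i_{g'}$ whenever $g <^\mathrm{lex} g'$. The only modification is that, fixing in advance an enumeration $(\alpha_\gamma)_{\gamma < \omega}$ of the countable set $\mathrm{Ord}^\mathcal{M} \setminus \mathcal{S}$, at stage $\gamma$ one additionally uses the density of $\mathcal{E}_{\alpha_\gamma}$ to extend each finite approximation $i_f\restriction_{D_\gamma}$ (for $f : \gamma+1 \to 2$) into $\mathcal{E}_{\alpha_\gamma}$, and then enlarges the next finite subdomain $D_{\gamma+1}$ to contain the witnessed element of rank $\alpha_\gamma$, so that this value becomes permanent. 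Each resulting $i_g$ then moves some element on every rank in $\mathrm{Ord}^\mathcal{M} \setminus \mathcal{S}$. Finally, since $\mathcal{M}$ is countable only $\aleph_0$ of its ordinals can top a substructure, while $\{i_g \mid g : \omega \to 2\}$ is a chain of cardinality $2^{\aleph_0}$ under $<^\rnk$; hence some $i_g$ is topless, and $i := i_g$ is the desired rank-cut self-embedding.

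The main obstacle is precisely the bookkeeping in this last step: one must interleave three kinds of finite-extension requirements in a single recursion — those forcing $\dom(i_g) = \mathcal{M}$ and rank-initiality (via $\mathcal{C}_m$ and $\mathcal{D}_{m,n}$), those yielding $i_{f,0} <^\rnk i_{f,1}$ and hence $2^{\aleph_0}$ distinct branches, and the new ones via $\mathcal{E}_{\alpha_\gamma}$ — while keeping each $D_\gamma$ finite and each intermediate $i_f$ a total embedding of the kind furnished by Theorem \ref{Friedman thm}(\ref{Friedman thm emb}). These requirements do not conflict, since at each finite stage any one of them is met by descending further in $\mathbb{P}$ through the relevant clause of Lemma \ref{Friedman lemma}; the proof of Lemma \ref{Friedman thm extra} already carries out the analogous interleaving without the $\mathcal{E}_{\alpha_\gamma}$, and adding them is routine.
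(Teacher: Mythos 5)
Your proposal achieves the theorem's conclusion, but it takes a more careful route to toplessness than the paper's own proof, which reads as follows: set $\mathbb{P} = \llbracket \mathcal{M} \preceq_{\Sigma_1^\mathcal{P}, \mathcal{S}}^{<\omega} \mathcal{M}_\beta \rrbracket$ with $\beta$ from Lemma \ref{Bound for sigma_1 lemma}, take a filter $\mathcal{I}$ generic for $\{\mathcal{C}_m\} \cup \{\mathcal{D}_{m,n}\} \cup \{\mathcal{E}_\alpha\}$, set $i = \bigcup\mathcal{I}$, and then assert that ``it only remains to verify'' the moving property. That is, the paper uses a single generic and does not separately argue toplessness, even though the stated conclusion is that $i$ is a \emph{rank-cut} (topless \emph{and} rank-initial, per Definition \ref{special embeddings}), and toplessness is genuinely used downstream (in Theorem \ref{characterize topless substructure} and Corollary \ref{Wilkie selfembedding}). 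You are right to be suspicious here: the dense sets $\mathcal{C}_m$, $\mathcal{D}_{m,n}$, $\mathcal{E}_\alpha$ force totality, rank-initiality, image inside $\mathcal{M}_\beta$, and the moving property, but none of them excludes that $i(\mathcal{M}) = \mathcal{M}_\lambda$ for some $\lambda$; as you say, no finite condition can commit an ordinal to remaining outside the image forever. Your remedy — threading the $\mathcal{E}_{\alpha_\gamma}$ requirements into the tree-of-approximations construction of Lemma \ref{Friedman thm extra} and then invoking the fact that a countable $\mathcal{M}$ has only $\aleph_0$ potential topping ordinals against a $<^\rnk$-chain of $2^{\aleph_0}$ embeddings — is exactly the mechanism the paper's own Lemma \ref{Friedman thm extra} was built to supply, so it is in the intended spirit, just spelled out where the paper's proof is silent.

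One piece of bookkeeping in your interleaving should be stated more carefully. You propose to ``extend each finite approximation $i_f\restriction_{D_\gamma}$ into $\mathcal{E}_{\alpha_\gamma}$'' and then add the witness to $D_{\gamma+1}$ so the value becomes permanent. But at the point this happens, the total embeddings $i_f$ (for $f : \gamma+1 \to 2$) have already been fixed in step 3 of the construction, and the condition $h \in \mathcal{E}_{\alpha_\gamma}$ obtained by extending $i_f\restriction_{D_\gamma}$ need not be a restriction of $i_f$; merely enlarging $D_{\gamma+1}$ does not force $i_f$ to agree with $h$ on the new witness. The clean fix is to move the $\mathcal{E}_{\alpha_\gamma}$ step \emph{into} the choice of $i_{f\frown 0}$ and $i_{f\frown 1}$: first extend $i_f\restriction_{D_\gamma}$ to $h \in \mathcal{E}_{\alpha_\gamma}$ in $\mathbb{P}$, then (via Theorem \ref{Friedman thm} and Lemma \ref{Friedman thm extra 2}) choose total rank-initial embeddings $i_{f\frown 0} <^\rnk i_{f\frown 1}$ both extending $h$, with images inside $\mathcal{M}_\beta$, and finally put $\dom(h) \subseteq D_{\gamma + 1}$. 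Since properties (i)--(iii) in step 3 of Lemma \ref{Friedman thm extra} only require agreement with $i_f$ on $D_\gamma$ (and $h\restriction_{D_\gamma} = i_f\restriction_{D_\gamma}$), the lexicographic $<^\rnk$-chain property still propagates, and each branch $i_g$ now has the moving property by construction. With that adjustment, the cardinality argument gives a topless $i_g$, which is the desired rank-cut self-embedding.
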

\begin{proof}
Let $\beta \in \mathcal{M}$ be be the ordinal bound obtained from Lemma \ref{Bound for sigma_1 lemma}, and let $\mathbb{P} = \llbracket \mathcal{M} \preceq_{\Sigma_1^\mathcal{P}, \mathcal{S}}^{<\omega} \mathcal{M}_\beta \rrbracket$. Put $m_0 = n_0 = \varnothing^\mathcal{M}$. We adjust the proof of (\ref{Friedman thm pres}) $\Rightarrow$ (\ref{Friedman thm emb}) in Theorem \ref{Friedman thm}, by setting $\mathcal{I}$ to be a $\{\mathcal{C}_m \mid m \in \mathcal{M}\} \cup \{\mathcal{D}_{m, n} \mid m \in \mathcal{M} \wedge n \in \mathcal{N}\} \cup \{\mathcal{E}_{\alpha} \mid \alpha \in \mathrm{Ord}^\mathcal{M} \setminus \mathcal{S}\}$-generic filter on $\mathbb{P}$ (utilizing Lemma \ref{Friedman lemma} (\ref{Friedman lemma downwards})). Put $i = \bigcup \mathcal{I}$. It only remains to verify that
\[
\forall \alpha \in \mathrm{Ord}^\mathcal{M} \setminus \mathcal{S} . \exists m \in \mathcal{M} . (\rnk^\mathcal{M}(m) = \alpha \wedge i(m) \neq m).
\]
But this follows from that $\mathcal{I}$ intersects $\mathcal{E}_\alpha$, for each $\alpha \in \mathrm{Ord}^\mathcal{M} \setminus \mathcal{S}$.
\end{proof}

The result above says in particular that every countable non-standard model of $\mathrm{KP}^\mathcal{P} + \Sigma_1^\mathcal{P}\textnormal{-Separation}$ has a proper rank-initial self-embedding. As a remark, there is a related theorem by Hamkins, where no initiality is required from the embedding. In particular, it is shown in \cite{Ham13} that every countable model $\mathcal{M}$ of KP has a self-embedding $i : \mathcal{M} \rightarrow L^\mathcal{M}$, where $L^\mathcal{M} \subseteq \mathcal{M}$ is the G\"odel constructible universe of $\mathcal{M}$. 

The extra clause in the above Corollary, ensuring that something is moved on every rank above the cut, enables the following characterization.

\begin{thm}[Bahrami-Enayat-style]\label{characterize topless substructure}
	Let $\mathcal{M} \models \mathrm{KP}^\mathcal{P}  + \Sigma_1^\mathcal{P}\textnormal{-Separation}$ be countable and non-standard, and let $\mathcal{S}$ be a topless substructure of $\mathcal{M}$. The following are equivalent:
	\begin{enumerate}[{\normalfont (a)}]
		\item\label{characterize topless substructure emb} $\mathcal{S} = \mathrm{Fix}^\rnk(i)$, for some rank-initial self-embedding $i : \mathcal{M} \rightarrow \mathcal{M}$.
		\item[{\normalfont (\ref{characterize topless substructure emb}')}] $\mathcal{S} = \mathrm{Fix}^\rnk(i)$, for some proper topless rank-initial self-embedding $i : \mathcal{M} \rightarrow \mathcal{M}$.
		\item\label{characterize topless substructure substr} $\mathcal{S}$ is rank-initial in $\mathcal{M}$.
	\end{enumerate}
\end{thm}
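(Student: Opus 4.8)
\emph{Proof plan.} The plan is to prove (\ref{characterize topless substructure emb}') $\Rightarrow$ (\ref{characterize topless substructure emb}) $\Rightarrow$ (\ref{characterize topless substructure substr}) $\Rightarrow$ (\ref{characterize topless substructure emb}'). The first implication is immediate, since a proper topless rank-initial self-embedding is in particular a rank-initial self-embedding. For (\ref{characterize topless substructure emb}) $\Rightarrow$ (\ref{characterize topless substructure substr}), I would just unwind the definition of $\mathrm{Fix}^\rnk(i)$: if $s \in \mathrm{Fix}^\rnk(i)$ and $\rnk^\mathcal{M}(m) \leq \rnk^\mathcal{M}(s)$, then the rank-initial segment of $\mathcal{M}$ below $m$ is contained in that below $s$, which is pointwise fixed by $i$; hence $m \in \mathrm{Fix}^\rnk(i)$. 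So $\mathcal{S} = \mathrm{Fix}^\rnk(i)$ is rank-initial in $\mathcal{M}$.

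The substantial implication is (\ref{characterize topless substructure substr}) $\Rightarrow$ (\ref{characterize topless substructure emb}'), and here the back-and-forth work is already carried by Theorem \ref{Friedman selfembedding}. Since $\mathcal{S}$ is topless and, by (\ref{characterize topless substructure substr}), rank-initial in the countable non-standard model $\mathcal{M}$, it is a rank-cut of $\mathcal{M}$, so Theorem \ref{Friedman selfembedding} supplies a rank-cut self-embedding $i : \mathcal{M} \rightarrow \mathcal{M}$ fixing $\mathcal{S}$ pointwise and such that some element of every rank in $\mathrm{Ord}^\mathcal{M} \setminus \mathcal{S}$ is moved by $i$. Such an $i$ is automatically proper, topless and rank-initial, so it remains to check $\mathrm{Fix}^\rnk(i) = \mathcal{S}$. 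For $\mathcal{S} \subseteq \mathrm{Fix}^\rnk(i)$: if $x \in \mathcal{S}$ and $\rnk^\mathcal{M}(y) \leq \rnk^\mathcal{M}(x)$, then $y \in \mathcal{S}$ by rank-initiality of $\mathcal{S}$, hence $i(y) = y$; so the whole rank-initial segment below $x$ is pointwise fixed, i.e.\ $x \in \mathrm{Fix}^\rnk(i)$. For the reverse inclusion, let $x \notin \mathcal{S}$; since $\rnk^\mathcal{M}(\rnk^\mathcal{M}(x)) = \rnk^\mathcal{M}(x)$ and $\mathcal{S}$ is rank-initial, the ordinal $\alpha = \rnk^\mathcal{M}(x)$ lies in $\mathrm{Ord}^\mathcal{M} \setminus \mathcal{S}$, so by the displayed clause of Theorem \ref{Friedman selfembedding} there is $m \in \mathcal{M}$ with $\rnk^\mathcal{M}(m) = \alpha \leq \rnk^\mathcal{M}(x)$ and $i(m) \neq m$, witnessing $x \notin \mathrm{Fix}^\rnk(i)$.

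The only point that needs a moment of care is the claim that a topless rank-initial substructure of a \emph{countable} model is a rank-cut in the sense needed to invoke Theorem \ref{Friedman selfembedding}: since $\mathrm{Ord}^\mathcal{M} \setminus \mathrm{Ord}^\mathcal{S}$ is non-empty (by boundedness) and has no least element (by toplessness), a routine greedy construction using countability of $\mathcal{M}$ produces in it a strictly descending $\omega$-sequence that is cofinal from below. Beyond this bookkeeping I do not expect any genuine obstacle; the essential content is Theorem \ref{Friedman selfembedding}, and in particular its ``moves something on every rank above $\mathcal{S}$'' refinement, which is precisely what forces $\mathrm{Fix}^\rnk(i)$ down to $\mathcal{S}$.
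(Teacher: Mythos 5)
Your proposal is correct and follows essentially the same route as the paper: the (a)$\Rightarrow$(c) direction is unwound from the definition of $\mathrm{Fix}^\rnk$, and (c)$\Rightarrow$(a$'$) is obtained by invoking Theorem \ref{Friedman selfembedding} on the rank-cut $\mathcal{S}$. Your extra paragraph about producing a descending $\omega$-sequence is superfluous given Definition \ref{special embeddings}, where a rank-cut is \emph{defined} as topless and rank-initial, but this does not affect the correctness; and you correctly spell out why the ``moves something on every rank above $\mathcal{S}$'' clause pins $\mathrm{Fix}^\rnk(i)$ down to exactly $\mathcal{S}$, which the paper leaves implicit.
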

\begin{proof}
	(\ref{characterize topless substructure emb}) $\Rightarrow$ (\ref{characterize topless substructure substr}) is immediate from the definition of $\mathrm{Fix}^\rnk$.
	
	(\ref{characterize topless substructure substr}) $\Rightarrow$ (\ref{characterize topless substructure emb}') follows from Corollary \ref{Friedman selfembedding}.
\end{proof}

\begin{thm}[Wilkie-style]\label{Wilkie theorem}
	Suppose that $\mathcal{M} \models \mathrm{KP}^\mathcal{P}  + \Sigma_1^\mathcal{P}\textnormal{-Separation}$ $+ \Pi_2^\mathcal{P}\textnormal{-Foundation}$ and $\mathcal{N} \models \mathrm{KP}^\mathcal{P}$ are countable and non-standard. Let $\mathcal{S}$ be a common rank-cut of $\mathcal{M}$ and $\mathcal{N}$, and let $\beta \in \mathrm{Ord}^{\mathcal{N}}$. Then the following are equivalent:
	\begin{enumerate}[{\normalfont (a)}]
		\item\label{Wilkie emb} For any ordinal $\alpha <^\mathcal{N} \beta$, there is a rank-initial embedding $i : \mathcal{M} \rightarrow \mathcal{N}$, fixing $\mathcal{S}$ pointwise, such that  $\mathcal{N}_\alpha \subseteq i(\mathcal{M}) \subseteq \mathcal{N}_\beta$.
		\item[{\normalfont (a')}] For any ordinal $\alpha <^\mathcal{N} \beta$, there is a rank-cut $i : \mathcal{M} \rightarrow \mathcal{N}$, fixing $\mathcal{S}$ pointwise, such that  $\mathcal{N}_\alpha \subseteq i(\mathcal{M}) \subseteq \mathcal{N}_\beta$.
		\item\label{Wilkie pres} $\mathrm{SSy}_\mathcal{S}(\mathcal{M}) = \mathrm{SSy}_\mathcal{S}(\mathcal{N})$, and for all $s \in \mathcal{S}$ and $\delta(x, y, z) \in \Delta_0^\mathcal{P}[x, y, z]$:
$$(\mathcal{M}, s) \models \forall x . \exists y . \delta(x, y, s) \Rightarrow (\mathcal{N}, s) \models \forall x \in V_\beta . \exists y \in V_\beta . \delta(x, y, s).$$
	\end{enumerate}
\end{thm}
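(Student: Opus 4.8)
The plan is to prove the cycle (a$'$) $\Rightarrow$ (\ref{Wilkie emb}) $\Rightarrow$ (\ref{Wilkie pres}) $\Rightarrow$ (a$'$). The first implication is immediate, as a rank-cut is in particular rank-initial. For (\ref{Wilkie emb}) $\Rightarrow$ (\ref{Wilkie pres}): the equality $\mathrm{SSy}_\mathcal{S}(\mathcal{M}) = \mathrm{SSy}_\mathcal{S}(\mathcal{N})$ is read off from Proposition~\ref{emb pres}(\ref{emb pres SSy_S}) applied to any one of the embeddings furnished by (\ref{Wilkie emb}) (a common rank-cut is a common bounded substructure pointwise fixed by $i$). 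For the $\Pi_2^\mathcal{P}$-preservation clause, suppose $(\mathcal{M},s) \models \forall x.\exists y.\delta(x,y,s)$ with $s \in \mathcal{S}$ and $\delta \in \Delta_0^\mathcal{P}$, and let $x_0 \in \mathcal{N}$ have rank $\rho <^\mathcal{N}\beta$. One first notes that (\ref{Wilkie emb}), together with $\mathcal{N} \models \mathrm{KP}^\mathcal{P}$ and rank-initiality (which forces the image of $\mathcal M$, a model of $\mathrm{KP}^\mathcal P$, to be closed under $x \mapsto \mathcal P(x) = V_{\mathrm{rank}(x)+1}$), makes $\beta$ a limit ordinal; then applying (\ref{Wilkie emb}) with $\alpha = \rho+1$ yields a rank-initial $i : \mathcal{M} \to \mathcal{N}$ fixing $\mathcal{S}$ with $x_0 \in \mathcal{N}_{\rho+1}\subseteq i(\mathcal{M}) \subseteq \mathcal{N}_\beta$, say $x_0 = i(m_0)$. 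A witness $y_* \in \mathcal M$ of $\exists y.\delta(m_0,y,s)$ is carried, via the $\Delta_0^\mathcal{P}$-elementarity of the $\mathcal P$-initial $i$ (Proposition~\ref{emb pres}(\ref{emb pres Delta_0^P})), to $i(y_*)\in i(\mathcal M)\subseteq\mathcal N_\beta$, a witness of $\delta(x_0,y,s)$ of $\mathcal N$-rank $<\beta$; hence $(\mathcal{N},s) \models \forall x \in V_\beta.\exists y \in V_\beta.\delta(x,y,s)$.

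The substantive direction is (\ref{Wilkie pres}) $\Rightarrow$ (a$'$), and here is how I would carry it out. Fix $\alpha <^\mathcal{N}\beta$. Applying (\ref{Wilkie pres}) to the $\mathrm{KP}^\mathcal{P}$-theorem $\forall x.\exists y.(y=\mathcal{P}(x))$ shows $V_\beta^\mathcal{N}$ is closed under powerset, so $\beta$ is a limit and $\bar n := V_\alpha^\mathcal{N}$ lies in $\mathcal{N}_\beta$, with $\mathrm{rank}^\mathcal{N}(\bar n)=\alpha$. Let $q := \{\langle\pi,s\rangle : \pi\in\hat\Pi_1^\mathcal{P}[x,z],\ s\in\mathcal{S},\ \mathcal{N}_\beta\models\pi(\bar n,s)\}$ be the $\hat\Pi_1^\mathcal{P}$-type of $\bar n$ over $\mathcal{N}_\beta$ with parameters in $\mathcal{S}$. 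By Theorem~\ref{sat_thm} and the fact that a $\hat\Pi_1^\mathcal{P}$-formula relativised to the set $V_\beta^\mathcal{N}$ is $\Delta_1^\mathcal{P}$ (in the parameter $V_\beta^\mathcal{N}$), $q$ is a subset of $\mathcal{S}$ coded in $\mathcal{N}$ by $\Delta_1^\mathcal{P}$-Separation; so by the first conjunct of (\ref{Wilkie pres}), $q$ is coded in $\mathcal{M}$, say by $c\in\mathcal{M}$. I then consider, for an ordinal variable $\zeta$, the $\Sigma_2^\mathcal{P}$-formula
\[
\Phi(\zeta) \;\equiv\; \exists w.\,\forall \langle\pi,s\rangle\in c\cap V_\zeta.\ \mathrm{Sat}_{\Pi_1^\mathcal{P}}(\pi,w,s),
\]
and aim to show $\mathcal{M}\models\Phi(\zeta)$ for every ordinal $\zeta\in\mathcal{S}$.

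For this, note that when $\zeta\in\mathcal{S}$ one has $c\cap V_\zeta = q\cap V_\zeta\in\mathcal{S}$ (as $\mathcal{S}$ is rank-initial in both $\mathcal{M}$ and $\mathcal{N}$, the $\mathcal{M}$- and $\mathcal{N}$-computations of $c\cap V_\zeta$ agree with $q\cap V_\zeta$). Since $\neg\mathrm{Sat}_{\Pi_1^\mathcal{P}}$ is $\hat\Sigma_1^\mathcal{P}$, $\neg\Phi(\zeta)$ is a $\Pi_2^\mathcal{P}$-sentence, hence (over $\mathrm{KP}^\mathcal{P}$) equivalent to one of the form $\forall w.\exists v.\delta_0(w,v,q\cap V_\zeta)$ with $\delta_0\in\Delta_0^\mathcal{P}$ and sole parameter $q\cap V_\zeta\in\mathcal{S}$; were $\mathcal{M}\models\neg\Phi(\zeta)$, then by (\ref{Wilkie pres}) $\mathcal{N}\models\forall w\in V_\beta.\exists v\in V_\beta.\delta_0(w,v,q\cap V_\zeta)$, which (using absoluteness of $\Delta_0^\mathcal{P}$ over the rank-initial substructure $\mathcal{N}_\beta$) unravels to say that no element of $V_\beta^\mathcal{N}$ satisfies all of $q\cap V_\zeta$ in $\mathcal{N}_\beta$ — contradicting that $\bar n$ does. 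This establishes the claim. Now, since $\mathcal{M}\models\mathrm{KP}^\mathcal{P}+\Pi_2^\mathcal{P}\textnormal{-Foundation}$ it satisfies $\Sigma_2^\mathcal{P}$-Overspill (Lemma~\ref{overspill}), so there is an ordinal $\mu\in\mathcal{M}\setminus\mathcal{S}$ with $\mathcal{M}\models\Phi(\mu)$; a witness $\bar m$ realises $c\cap V_\mu\supseteq q$, so realises $q$ in $\mathcal{M}$. Unwinding the definition of $q$ via Theorem~\ref{sat_thm}, this says exactly that $\mathcal{M}\models\sigma(\bar m,s)\Rightarrow\mathcal{N}_\beta\models\sigma(\bar n,s)$ for all $\sigma(x,z)\in\Sigma_1^\mathcal{P}$ and $s\in\mathcal{S}$ — i.e.\ the second conjunct of condition (\ref{Friedman thm pres}) of Theorem~\ref{Friedman thm} holds for $(m_0,n_0):=(\bar m,\bar n)$, the first conjunct being the first conjunct of (\ref{Wilkie pres}). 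Theorem~\ref{Friedman thm} (via (\ref{Friedman thm pres})$\Rightarrow$(\ref{Friedman thm emb}) together with the toplessness clause (\ref{Friedman thm extra topless})) then yields a topless rank-initial, hence rank-cut, embedding $i:\mathcal{M}\to\mathcal{N}$ fixing $\mathcal{S}$ pointwise with $i(\bar m)=\bar n$ and $i(\mathcal{M})\subseteq\mathcal{N}_\beta$; and since $\mathrm{rank}^\mathcal{N}(i(\bar m))=\alpha$, rank-initiality of $i$ forces $\mathcal{N}_\alpha\subseteq\mathcal{N}_{\alpha+1}\subseteq i(\mathcal{M})\subseteq\mathcal{N}_\beta$, which is (a$'$) for this $\alpha$.

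The hard part is the claim that $\mathcal{M}$ realises $q$, and the plan meets it with three ingredients: taking $\bar n=V_\alpha^\mathcal{N}$ so that realisability of each restriction of $q$ inside $\mathcal{N}_\beta$ is automatic (witnessed by $\bar n$); using the $\Pi_2^\mathcal{P}$-strength of (\ref{Wilkie pres}) to reflect the $\Pi_2^\mathcal{P}$-statement ``$q\cap V_\zeta$ is not realised'' from $\mathcal{M}$ into $\mathcal{N}_\beta$ (the sole use of the $\Pi_2^\mathcal{P}$-preservation hypothesis); and $\Sigma_2^\mathcal{P}$-Overspill in $\mathcal{M}$ — the only use of $\Pi_2^\mathcal{P}$-Foundation — to pass from all $\mathcal{S}$-level restrictions of $q$ to $q$ itself. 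A secondary point that needs care is that this reflection argument only works over the empty partial embedding (parameters coming from a non-trivial partial embedding cannot be transferred at the $\Pi_2^\mathcal{P}$-level), which is why the requirement ``$\mathcal{N}_\alpha\subseteq i(\mathcal{M})$'' is arranged by folding it into the single starting condition $\langle\bar m,\bar n\rangle$ and then handing the rest of the back-and-forth construction to Theorem~\ref{Friedman thm}.
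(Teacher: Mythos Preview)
Your proof is correct and follows essentially the same approach as the paper: in the substantive direction (\ref{Wilkie pres})$\Rightarrow$(a$'$), both you and the paper take $\bar n = V_\alpha^\mathcal{N}$, code its $\Pi_1^\mathcal{P}$-type over $\mathcal{N}_\beta$ with $\mathcal{S}$-parameters, transfer the code to $\mathcal{M}$ via $\mathrm{SSy}_\mathcal{S}$, use the contrapositive of the $\Pi_2^\mathcal{P}$-preservation clause together with $\Sigma_2^\mathcal{P}$-Overspill to find a realiser $\bar m$ in $\mathcal{M}$, and then invoke Theorem~\ref{Friedman thm}. The only cosmetic difference is that the paper phrases everything in terms of $\mathrm{Sat}_{\Delta_0^\mathcal{P}}$ with an explicit outer universal quantifier (so that the defining condition for the code and the matrix of $\Phi$ are literally the same $\Delta_0^\mathcal{P}$-formula, making the contradiction step syntactically immediate), whereas you package this as $\mathrm{Sat}_{\Pi_1^\mathcal{P}}$; for the ``unravels'' step to go through cleanly you should make sure the code $d$ in $\mathcal{N}$ is defined via the relativisation $[\mathrm{Sat}_{\Pi_1^\mathcal{P}}]^{V_\beta}$ of the \emph{same} $\hat\Pi_1^\mathcal{P}$-formula that appears in $\Phi$, so that membership in $d$ directly contradicts the bounded existential you derive.
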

\begin{proof}
(\ref{Wilkie emb}) $\Rightarrow$ (\ref{Wilkie pres}): It is easy to see that (\ref{Wilkie emb}) $\Rightarrow$ $\beta$ is a limit ordinal in $\mathcal{N}$. Let $s \in \mathcal{S}$. Let $\delta(x, y, z)$ be $\Delta_0^\mathcal{P}[x, y, z]$ and assume that $\mathcal{M} \models \forall x . \exists y . \delta(x, y, s)$. Given Theorem \ref{Friedman thm}, it only remains to show that $\mathcal{N} \models \forall x \in V_\beta . \exists y \in V_\beta . \delta(x, y, s)$. Let $a \in \mathcal{N}_\beta$ be arbitrary and set $\alpha = (\rnk(a)+1)^\mathcal{N}$. Since $\beta$ is a limit, $\alpha < \beta$. By (\ref{Wilkie emb}), there is a rank-initial embedding $i : \mathcal{M} \rightarrow \mathcal{N}$, such that $a \in i(\mathcal{M}) \subseteq \mathcal{N}_\beta$. Pick $m \in \mathcal{M}$ such that $\mathcal{M} \models \delta(i^{-1}(a), m, s)$. Then $i(m) \in \mathcal{N}_\beta$, and by Proposition \ref{emb pres}, $\mathcal{N} \models \delta(a, i(m), s)$, as desired.

(\ref{Wilkie pres}) $\Rightarrow$ (\ref{Wilkie emb}'): Let $\alpha <^\mathcal{N} \beta$ and let $n = V_\alpha^\mathcal{N}$. Using Lemma \ref{hierarchical type coded} and $\Delta_1^\mathcal{P}$-Collection, let $d$ be a code in $\mathcal{N}$ for the following set:
\[
D = \{\delta(x, y, s) \mid \delta \in \Delta_0^\mathcal{P} \wedge s \in \mathcal{S} \wedge \mathcal{N} \models \forall x \in V_\beta . \delta(x, n, s)\}.
\]

Using $\mathrm{SSy}(\mathcal{M}) = \mathrm{SSy}(\mathcal{N})$ let $c$ be a code for this set in $\mathcal{M}$. Define the formulae
\begin{align*}
\phi_{< \beta}(\zeta) &\equiv \exists y \in V_\beta . \forall \delta \in d \cap V_\zeta . \forall x \in V_\beta . \mathrm{Sat}_{\Delta_0^\mathcal{P}}(\delta, x, y, s) \\
\phi(\zeta) &\equiv \exists y . \forall \delta \in c \cap V_\zeta . \forall x . \mathrm{Sat}_{\Delta_0^\mathcal{P}}(\delta, x, y, s)
\end{align*}
Note that $\phi$ is $\Sigma_2^\mathcal{P}$. Moreover, $c \cap V_\zeta = d \cap V_\zeta$ and $n$ witnesses $\phi_{<\beta}(\zeta)$, for all ordinals $\zeta \in \mathcal{S}$. So by the second conjunct of (\ref{Wilkie pres}), $\mathcal{M} \models \phi(\zeta)$ for all ordinals $\zeta \in \mathcal{S}$, whence by $\Sigma_2^\mathcal{P}$-Overspill, $\mathcal{M} \models \phi(\mu)$ for some ordinal $\mu \in \mathcal{M} \setminus \mathcal{S}$. Letting $m \in \mathcal{M}$ be a witness of this fact, we have $\mathcal{M} \models \forall x . \delta(x, m, s)$, for all $\delta(x, y, s) \in D$. Now (\ref{Wilkie emb}) is obtained by plugging $m$ and $n$ into Theorem $\ref{Friedman thm}$.
\end{proof}

\begin{cor}
Let $\mathcal{M} \models \mathrm{KP}^\mathcal{P}  + \Sigma_1^\mathcal{P}\textnormal{-Separation} + \Pi_2^\mathcal{P}\textnormal{-Foundation}$ and $\mathcal{N} \models \mathrm{KP}^\mathcal{P} + \Sigma_2^\mathcal{P}\textnormal{-Separation}$ be countable and non-standard. Let $\mathcal{S}$ be a common rank-cut of $\mathcal{M}$ and $\mathcal{N}$. Then the following are equivalent:
\begin{enumerate}[{\normalfont (a)}]
\item\label{Wilkie emb 2} For any ordinal $\alpha \in \mathcal{N}$, there is a rank-initial embedding $i : \mathcal{M} \rightarrow \mathcal{N}$, fixing $\mathcal{S}$ pointwise, such that $\mathcal{N}_\alpha \subseteq i(\mathcal{M})$.
\item[{\normalfont (a')}] For any ordinal $\alpha \in \mathcal{N}$, there is a rank-cut $i : \mathcal{M} \rightarrow \mathcal{N}$, fixing $\mathcal{S}$ pointwise, such that $\mathcal{N}_\alpha \subseteq i(\mathcal{M})$.
\item\label{Wilkie pres 2} $\mathrm{SSy}(\mathcal{M}) = \mathrm{SSy}(\mathcal{N})$, and $\mathrm{Th}_{\Pi_2^\mathcal{P}, S}(\mathcal{M}) \subseteq \mathrm{Th}_{\Pi_2^\mathcal{P}, S}(\mathcal{N})$.
\end{enumerate}
\end{cor}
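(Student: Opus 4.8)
The plan is to deduce the Corollary from Theorem~\ref{Wilkie theorem}, using $\Sigma_2^\mathcal{P}$-Separation in $\mathcal{N}$ to get rid of the ceiling ordinal $\beta$ that occurs there. The guiding point is that, by Lemma~\ref{Bound for pi_2 lemma} applied to $\mathcal{N}$, there are \emph{unboundedly many} ordinals $\beta\in\mathcal{N}$ past which every $\Pi_2^\mathcal{P}$-truth of $\mathcal{N}$ with parameters in $\mathcal{S}$ reflects into $V_\beta^\mathcal{N}$; feeding such a $\beta$ into Theorem~\ref{Wilkie theorem} turns its condition~(\ref{Wilkie pres}) into its condition~(\ref{Wilkie emb}) for arbitrarily large $\beta$, which is exactly what clauses (\ref{Wilkie emb 2}) and (a$'$) of the present Corollary require once the upper bound $\mathcal{N}_\beta$ is forgotten. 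Since a rank-cut is in particular rank-initial, (a$'$)$\,\Rightarrow\,$(\ref{Wilkie emb 2}) is immediate, so I would establish the cycle (\ref{Wilkie emb 2})$\,\Rightarrow\,$(\ref{Wilkie pres 2})$\,\Rightarrow\,$(a$'$).

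For (\ref{Wilkie emb 2})$\,\Rightarrow\,$(\ref{Wilkie pres 2}): apply (\ref{Wilkie emb 2}) with, say, $\alpha=\omega^\mathcal{N}$ to obtain one rank-initial $i:\mathcal{M}\to\mathcal{N}$ fixing $\mathcal{S}$ pointwise; since $\mathcal{S}$ is a common rank-cut, hence a common bounded substructure pointwise fixed by $i$, Proposition~\ref{emb pres}(\ref{emb pres SSy_S}) gives $\mathrm{SSy}_\mathcal{S}(\mathcal{M})=\mathrm{SSy}_\mathcal{S}(\mathcal{N})$ (the standard-system half of (\ref{Wilkie pres 2})). For the $\Pi_2^\mathcal{P}$-theory inclusion, let $\forall x . \exists y . \delta(x,y,s)$ be a $\Pi_2^\mathcal{P}$-sentence with $\delta\in\Delta_0^\mathcal{P}$, $s\in\mathcal{S}$, true in $\mathcal{M}$, and let $a\in\mathcal{N}$ be arbitrary; applying (\ref{Wilkie emb 2}) with $\alpha=(\rnk(a)+1)^\mathcal{N}$ yields a rank-initial $i:\mathcal{M}\to\mathcal{N}$ fixing $\mathcal{S}$ pointwise with $a\in\mathcal{N}_\alpha\subseteq i(\mathcal{M})$, so $a=i(m_a)$; choosing $m$ with $\mathcal{M}\models\delta(m_a,m,s)$ and invoking the $\Delta_0^\mathcal{P}$-elementarity of $i$ (Proposition~\ref{emb pres}(\ref{emb pres Delta_0^P})) gives $\mathcal{N}\models\delta(a,i(m),s)$. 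As $a$ was arbitrary, $\mathcal{N}\models\forall x . \exists y . \delta(x,y,s)$. This is just the step (\ref{Wilkie emb})$\,\Rightarrow\,$(\ref{Wilkie pres}) of Theorem~\ref{Wilkie theorem} with the $V_\beta$-bounds deleted.

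For (\ref{Wilkie pres 2})$\,\Rightarrow\,$(a$'$): fix $\alpha\in\mathrm{Ord}^\mathcal{N}$. Using $\Sigma_2^\mathcal{P}$-Separation in $\mathcal{N}$, Lemma~\ref{Bound for pi_2 lemma} supplies an ordinal $\beta\in\mathcal{N}$ with $\beta>^\mathcal{N}\alpha$ such that for all $s\in\mathcal{S}$ and $\delta\in\Delta_0^\mathcal{P}$,
\[
\mathcal{N}\models(\forall x . \exists y . \delta(x,y,s))\rightarrow(\forall x\in V_\beta . \exists y\in V_\beta . \delta(x,y,s)).
\]
Then condition~(\ref{Wilkie pres}) of Theorem~\ref{Wilkie theorem} holds for this $\beta$: its standard-system identity is the first half of (\ref{Wilkie pres 2}), and if $\mathcal{M}\models\forall x . \exists y . \delta(x,y,s)$ then by the inclusion in (\ref{Wilkie pres 2}) we have $\mathcal{N}\models\forall x . \exists y . \delta(x,y,s)$, whence by the choice of $\beta$ also $\mathcal{N}\models\forall x\in V_\beta . \exists y\in V_\beta . \delta(x,y,s)$. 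Theorem~\ref{Wilkie theorem} then delivers its clause~(a$'$) for this $\beta$; applying it to the ordinal $\alpha<^\mathcal{N}\beta$ produces a rank-cut $i:\mathcal{M}\to\mathcal{N}$ fixing $\mathcal{S}$ pointwise with $\mathcal{N}_\alpha\subseteq i(\mathcal{M})$. As $\alpha\in\mathrm{Ord}^\mathcal{N}$ was arbitrary, (a$'$) holds.

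I do not expect a genuine obstacle once Theorem~\ref{Wilkie theorem} and Lemma~\ref{Bound for pi_2 lemma} are in hand --- it is really just a matter of bundling them correctly. The only points needing care are bookkeeping: reconciling the auxiliary parameter appearing in Lemma~\ref{Bound for pi_2 lemma} with, and coding finitely many $\mathcal{S}$-parameters into, the single parameter $s$ used above --- which is legitimate because the ordinals of a rank-cut form an initial segment with no greatest element, so $\mathcal{S}$ is closed under pairing --- and confirming the hypothesis tally: $\Pi_2^\mathcal{P}$-Foundation on $\mathcal{M}$ is precisely what Theorem~\ref{Wilkie theorem} uses (it provides $\Sigma_2^\mathcal{P}$-Overspill on $\mathcal{M}$ through Lemma~\ref{overspill}), while $\Sigma_2^\mathcal{P}$-Separation on $\mathcal{N}$ is exactly the hypothesis of Lemma~\ref{Bound for pi_2 lemma}.
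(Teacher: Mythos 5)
Your proposal is correct and matches the paper's own (very terse) proof step for step: the paper also derives (\ref{Wilkie emb 2})\ $\Rightarrow$\ (\ref{Wilkie pres 2}) by re-running the forward direction of Theorem~\ref{Wilkie theorem} with the $V_\beta$-bounds dropped, and derives (\ref{Wilkie pres 2})\ $\Rightarrow$\ (a$'$) by invoking Lemma~\ref{Bound for pi_2 lemma} to furnish a $\beta >^\mathcal{N} \alpha$ for which condition~(\ref{Wilkie pres}) of Theorem~\ref{Wilkie theorem} holds. Your closing remarks about absorbing the auxiliary parameter of Lemma~\ref{Bound for pi_2 lemma} and about which hypotheses (Pi$_2^\mathcal{P}$-Foundation on $\mathcal{M}$ via Overspill, $\Sigma_2^\mathcal{P}$-Separation on $\mathcal{N}$) are consumed where are exactly the right bookkeeping points.
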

\begin{proof}
(\ref{Wilkie emb 2}) $\Rightarrow$ (\ref{Wilkie pres 2}) is proved just as for Theorem \ref{Wilkie theorem}.

(\ref{Wilkie pres 2}) $\Rightarrow$ (\ref{Wilkie emb 2}') follows from Theorem \ref{Wilkie theorem} by letting $\beta >^\mathcal{N} \alpha$ be as obtained from Lemma \ref{Bound for pi_2 lemma}.
\end{proof}

\begin{cor}\label{Wilkie selfembedding}
Let $\mathcal{M} \models \mathrm{KP}^\mathcal{P} + \Sigma_2^\mathcal{P}\textnormal{-Separation} + \Pi_2^\mathcal{P}\textnormal{-Foundation}$ be countable and non-standard. Let $\mathcal{S}$ be a rank-cut of $\mathcal{M}$. For any $\alpha \in \mathrm{Ord}^\mathcal{M}$, there is a rank-cut $i : \mathcal{M} \rightarrow \mathcal{M}$, fixing $\mathcal{S}$ pointwise, such that $\mathcal{M}_\alpha \subseteq i(\mathcal{M})$ and
\[
\forall \alpha \in \mathrm{Ord}^\mathcal{M} \setminus \mathcal{S} . \exists m \in \mathcal{M} . (\rnk^\mathcal{M}(m) = \alpha \wedge i(m) \neq m).
\]
\end{cor}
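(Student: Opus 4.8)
The plan is to re-run, with $\mathcal{N}=\mathcal{M}$, the forcing construction behind Theorem~\ref{Wilkie theorem}, while additionally forcing with the dense sets $\mathcal{E}_\gamma$ of Lemma~\ref{Friedman lemma}(\ref{Friedman lemma downwards}) to secure the last clause. Setting $\mathcal{N}=\mathcal{M}$ is the key simplification: the two conditions in hypothesis~(\ref{Wilkie pres}) of Theorem~\ref{Wilkie theorem} --- $\mathrm{SSy}_\mathcal{S}(\mathcal{M})=\mathrm{SSy}_\mathcal{S}(\mathcal{N})$ and the $\Pi_2^\mathcal{P}$-transfer --- degenerate to $X=X$ and $X\subseteq X$ once the ordinal bound $\beta$ is chosen to reflect $\Pi_2^\mathcal{P}$-statements, so nothing is left to verify there. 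The hypothesis $\Sigma_2^\mathcal{P}\textnormal{-Separation}$ (which over $\mathrm{KP}^\mathcal{P}$ also yields $\Pi_2^\mathcal{P}\textnormal{-Foundation}$ via Proposition~\ref{KPP found}, and of course $\Sigma_1^\mathcal{P}\textnormal{-Separation}$) is exactly what makes Lemma~\ref{Bound for pi_2 lemma} and $\Sigma_2^\mathcal{P}$-Overspill (Lemma~\ref{overspill}) applicable.

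First I would use Lemma~\ref{Bound for pi_2 lemma}, applied to $\mathcal{M}$ and its rank-cut $\mathcal{S}$ (which is bounded, as every rank-cut is), to fix an ordinal $\beta\in\mathrm{Ord}^\mathcal{M}$ with $\beta>^\mathcal{M}\alpha$ and $\mathcal{S}\subseteq\mathcal{M}_\beta$ such that, for all $s\in\mathcal{S}$ and $\delta\in\Delta_0^\mathcal{P}$, $\mathcal{M}\models(\forall x.\exists y.\delta(x,y,s))\rightarrow(\forall x\in V_\beta.\exists y\in V_\beta.\delta(x,y,s))$; this is possible since Lemma~\ref{Bound for pi_2 lemma} supplies unboundedly many such $\beta$. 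Put $\mathbb{P}=\llbracket\mathcal{M}\preceq_{\Sigma_1^\mathcal{P},\mathcal{S}}^{<\omega}\mathcal{M}_\beta\rrbracket$. Then, exactly as in the proof of the implication (\ref{Wilkie pres}) $\Rightarrow$ (\ref{Wilkie emb}') of Theorem~\ref{Wilkie theorem}, I would manufacture a ``seed'' condition: with $n_0=V_\alpha^\mathcal{M}$, code in $\mathcal{M}$ (using Lemma~\ref{hierarchical type coded} and $\Delta_1^\mathcal{P}$-Collection) the set $D$ of $\Delta_0^\mathcal{P}$-formulae $\delta(x,y,s)$ with $s\in\mathcal{S}$ and $\mathcal{M}\models\forall x\in V_\beta.\delta(x,n_0,s)$, and then apply $\Sigma_2^\mathcal{P}$-Overspill to the associated $\Sigma_2^\mathcal{P}$-formula to obtain $m_0\in\mathcal{M}$ with $\mathcal{M}\models\forall x.\delta(x,m_0,s)$ for every $\delta(x,y,s)\in D$. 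As in that proof, $f_0=\{\langle m_0,n_0\rangle\}$ is then a condition of $\mathbb{P}$: the required $\Sigma_1^\mathcal{P}$-preservation into $\mathcal{M}_\beta$ is, by Theorem~\ref{sat_thm}, the contrapositive of the defining property of $m_0$ (and should $m_0$ fall inside $\mathcal{S}$ one replaces it using $\Delta_0^\mathcal{P}$-absoluteness over the rank-cut $\mathcal{S}$).

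Next I would invoke Lemma~\ref{generic filter existence} to obtain a filter $\mathcal{I}$ on $\mathbb{P}$ containing $f_0$ and generic for the countable family $\{\mathcal{C}_m\mid m\in\mathcal{M}\}\cup\{\mathcal{D}_{m,n}\mid m,n\in\mathcal{M}\}\cup\{\mathcal{E}_\gamma\mid\gamma\in\mathrm{Ord}^\mathcal{M}\setminus\mathcal{S}\}$, whose members are dense by Lemma~\ref{Friedman lemma}: parts~(\ref{Friedman lemma forwards}) and~(\ref{Friedman lemma backwards}) apply since $\mathrm{SSy}_\mathcal{S}(\mathcal{M})=\mathrm{SSy}_\mathcal{S}(\mathcal{M})$ holds trivially, and part~(\ref{Friedman lemma downwards}) applies precisely because $\mathcal{N}=\mathcal{M}$. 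Putting $i=\bigcup\mathcal{I}$ and arguing as for Theorem~\ref{Friedman thm} and Theorem~\ref{Friedman selfembedding}: the $\mathcal{C}_m$ give $\dom(i)=\mathcal{M}$, the $\mathcal{D}_{m,n}$ give rank-initiality of $i$, and $\mathrm{image}(i)\subseteq\mathcal{M}_\beta$, so $i$ is a proper rank-initial self-embedding, hence --- as in Theorem~\ref{Friedman selfembedding} --- a rank-cut. Since $f_0\in\mathcal{I}$, we have $i(m_0)=V_\alpha^\mathcal{M}$, so $V_\alpha^\mathcal{M}\in i(\mathcal{M})$ and rank-initiality gives $\mathcal{M}_\alpha\subseteq\mathcal{M}_{\alpha+1}\subseteq i(\mathcal{M})$; finally, meeting each $\mathcal{E}_\gamma$ yields that some set of rank $\gamma$ is moved by $i$, for every $\gamma\in\mathrm{Ord}^\mathcal{M}\setminus\mathcal{S}$.

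I expect the main obstacle to be the seed construction in the second paragraph: showing that the ``$\forall x\in V_\beta$, $\Delta_0^\mathcal{P}$-theory of $V_\alpha^\mathcal{M}$ with parameters in $\mathcal{S}$'' is realised, without the bound $V_\beta$, by a single element of $\mathcal{M}$. This is precisely the Wilkie-style phenomenon of Theorem~\ref{Wilkie theorem}, hence already available, but it is the one genuinely non-trivial ingredient and the reason $\Sigma_2^\mathcal{P}\textnormal{-Separation}$ --- needed both to select the reflecting $\beta$ and to run $\Sigma_2^\mathcal{P}$-Overspill --- is required rather than merely $\Sigma_1^\mathcal{P}\textnormal{-Separation}$; the remaining work is routine assembly of dense sets already licensed by Lemma~\ref{Friedman lemma}.
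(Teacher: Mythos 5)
Your proof is correct and follows the paper's own route exactly: set $\mathcal{N}=\mathcal{M}$, obtain $\beta>^\mathcal{M}\alpha$ from Lemma~\ref{Bound for pi_2 lemma}, run the overspill seed construction from the proof of Theorem~\ref{Wilkie theorem}((\ref{Wilkie pres})\,$\Rightarrow$\,(\ref{Wilkie emb}')), and then take a generic filter for $\{\mathcal{C}_m\}\cup\{\mathcal{D}_{m,n}\}\cup\{\mathcal{E}_\gamma\}$ as in Theorem~\ref{Friedman selfembedding}. If anything you are a bit more careful than the paper's one-line instruction to ``apply Theorem~\ref{Friedman selfembedding} instead of Theorem~\ref{Friedman thm}'': that theorem as literally stated fixes $m_0=n_0=\varnothing^\mathcal{M}$, so one must, as you do, retain the Wilkie-style seed $(m_0\mapsto V_\alpha^\mathcal{M})$ to secure $\mathcal{M}_\alpha\subseteq i(\mathcal{M})$ while also forcing with the $\mathcal{E}_\gamma$.
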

\begin{proof}
Let $\mathcal{N} = \mathcal{M}$ and let $\beta >^\mathcal{M} \alpha$ be as obtained from Lemma \ref{Bound for pi_2 lemma}. Then condition (\ref{Wilkie pres}) of Theorem \ref{Wilkie theorem} is satisfied. Repeat the proof of Theorem \ref{Wilkie theorem} (\ref{Wilkie pres}) $\Rightarrow$ (\ref{Wilkie emb}') with $\mathcal{N} = \mathcal{M}$, except that at the last step: apply Theorem \ref{Friedman selfembedding} instead of Theorem \ref{Friedman thm}.
\end{proof}

Now that we have explored necessary and sufficient conditions for constructing embeddings between models, we turn to the question of constructing isomorphisms between models. For this purpose we shall restrict ourselves to recursively saturated models of $\mathrm{ZF}$.

\begin{lemma}\label{rec sat iso lemma}
Let $\mathcal{M}$ and $\mathcal{N}$ be countable recursively saturated models of $\mathrm{ZF}$, and let $\mathcal{S}$ be a common rank-initial $\omega$-topless substructure of $\mathcal{M}$ and $\mathcal{N}$. Moreover, let $\mathbb{P} = \llbracket \mathcal{M} \preceq_\mathcal{S}^{< \omega} \mathcal{N} \rrbracket$.

If $\mathrm{SSy}_\mathcal{S}(\mathcal{M}) \subseteq \mathrm{SSy}_\mathcal{S}(\mathcal{N})$, then 
\[
\mathcal{C}'_m =_\df \{ f \in \mathbb{P} \mid m \in \dom(f\hspace{2pt}) \}
\]
is dense in $\mathbb{P}$, for each $m \in \mathcal{M}$.
\end{lemma}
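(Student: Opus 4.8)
The proof is a one-point extension lemma for the partial order $\mathbb{P}$. Given a condition $g \in \mathbb{P}$, unravel it as a finite sequence of pairs $\langle m_\xi, n_\xi \rangle_{\xi < \gamma}$ (so $g$ sends $m_\xi$ to $n_\xi$), and let $m_\gamma \in \mathcal{M}$; if $m_\gamma \in \mathcal{S}$ one extends $g$ by $\langle m_\gamma, m_\gamma \rangle$, so assume $m_\gamma \notin \mathcal{S}$. Since membership in $\mathbb{P}$ means precisely that $g$ together with $\mathrm{id}_\mathcal{S}$ is a partial $\mathcal{L}^0$-elementary map, and $\mathcal{L}^0$ is closed under negation, what is needed is an $n_\gamma \in \mathcal{N}$ realizing over $\mathcal{N}$ the type
\[
p(x) \;=\; \{\, \phi(x, \vec n_\xi, \vec s) : \phi \in \mathcal{L}^0,\ \vec s \in \mathcal{S},\ \mathcal{M} \models \phi(m_\gamma, \vec m_\xi, \vec s) \,\} .
\]
By $\mathcal{L}^0$-elementarity of $g$ this type is finitely satisfiable in $\mathcal{N}$; the difficulty is that it has unboundedly many parameters (all of $\mathcal{S}$), so recursive saturation of $\mathcal{N}$ does not apply to it directly. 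Note also that, $\mathcal{S}$ being a rank-cut of $\mathcal{M}$, we have $\mathcal{S} \supseteq \mathrm{WFP}(\mathcal{M})$ and $\mathcal{S}$ is closed under pairing, so all standard formula-codes and all finite tuples over $\mathcal{S}$ belong to $\mathcal{S}$.

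The plan is to replace $p(x)$ by a \emph{recursive} type over $\mathcal{N}$ carrying a single, carefully chosen ordinal parameter. By Theorem~\ref{rec sat char}, $\mathcal{M}$ and $\mathcal{N}$ are $\omega$-non-standard and admit safe satisfaction relations $\mathrm{Sat}^\mathcal{M}$ and $\mathrm{Sat}^\mathcal{N}$. Fix $\alpha \in \mathrm{Ord}^\mathcal{M}$ with $\mathcal{S} \subseteq \mathcal{M}_\alpha$ and, using Separation in $(\mathcal{M}, \mathrm{Sat}^\mathcal{M}) \models \mathrm{ZF}(\mathcal{L}^0_\mathrm{Sat})$, form
\[
T^* \;=\; \{\, \langle \phi, \vec v \rangle : \phi \in \mathrm{Form}^\mathcal{M},\ \vec v \in (V_\alpha^\mathcal{M})^{<\omega},\ \mathrm{Sat}^\mathcal{M}(\phi, m_\gamma, \vec m_\xi, \vec v) \,\} \in \mathcal{M} .
\]
Correctness of $\mathrm{Sat}^\mathcal{M}$ for standard formulae gives $\langle \phi, \vec s \rangle \in (T^*)_\mathcal{M} \Leftrightarrow \mathcal{M} \models \phi(m_\gamma, \vec m_\xi, \vec s)$ for standard $\phi$ and $\vec s \in \mathcal{S}$, and all such pairs lie in $\mathcal{S}$; hence $(T^*)_\mathcal{M} \cap \mathcal{S} \in \mathrm{SSy}_\mathcal{S}(\mathcal{M})$, and by hypothesis there is $d \in \mathcal{N}$ with $d_\mathcal{N} \cap \mathcal{S} = (T^*)_\mathcal{M} \cap \mathcal{S}$. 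Now, for $k \in \mathbb{N}$ let $\mathrm{Sat}_{\leq k}$ be the $\mathcal{L}^0$-formula, obtained uniformly recursively in $k$ from the partial satisfaction predicates of Theorem~\ref{sat_thm}, computing satisfaction of formulae of code $\leq k$, and for a parameter $\mathring\beta \in \mathrm{Ord}^\mathcal{N}$ put
\[
\theta_k(x) \;\equiv\; \forall \phi \leq k\ \forall \vec v \in V_{\mathring\beta}\ \big(\, \langle \phi, \vec v \rangle \in d \rightarrow \mathrm{Sat}_{\leq k}(\phi, x, \vec n_\xi, \vec v) \,\big) .
\]
Then $q(x) = \{\theta_k(x) : k \in \mathbb{N}\}$ is a recursive type over $\mathcal{N}$ with parameters $d, \vec n_\xi, \mathring\beta$, and — using the agreement of $d$ and $T^*$ on $\mathcal{S}$ and correctness of the $\mathrm{Sat}_{\leq k}$ — any realization of $q(x)$ realizes $p(x)$, \emph{provided} $\mathring\beta$ is chosen above $\mathcal{S}$ so that $\mathcal{S} \subseteq V_{\mathring\beta}^\mathcal{N}$.

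The heart of the proof is the choice of such a $\mathring\beta \in \mathrm{Ord}^\mathcal{N} \setminus \mathcal{S}$ making $q(x)$ finitely satisfiable in $\mathcal{N}$, i.e.\ with $\mathcal{N} \models \exists x\, \theta_k(x)$ for every standard $k$; this is where $\omega$-toplessness of $\mathcal{S}$ in $\mathcal{N}$ is used. For an ordinal $\zeta$ of $\mathcal{N}$, write $\Xi(k, \zeta)$ for the $\mathcal{L}^0_\mathrm{Sat}$-formula $\exists x\ \forall \phi \leq k\ \forall \vec v \in V_\zeta\,(\langle \phi, \vec v\rangle \in d \rightarrow \mathrm{Sat}(\phi, x, \vec n_\xi, \vec v))$. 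A transfer across $g$ — legitimate because $g$ fixes $\mathcal{S}$ pointwise and, for $\zeta \in \mathcal{S}$ and standard $k$, the code ($T^*$ on the $\mathcal{M}$-side, $d$ on the $\mathcal{N}$-side) is consulted only on pairs lying in $\mathcal{S}$, where $T^*$ and $d$ agree (formally one first restricts the code to a level $V_{\zeta + n}^\mathcal{M} \subseteq \mathcal{S}$), with $m_\gamma$ as witness on the $\mathcal{M}$-side — shows that $\mathcal{N} \models \Xi(k, \zeta)$ for every standard $k$ and every ordinal $\zeta \in \mathcal{S}$. Since $\mathcal{S}$ is a rank-cut of $\mathcal{N}$ and $(\mathcal{N}, \mathrm{Sat}^\mathcal{N}) \models \mathrm{ZF}(\mathcal{L}^0_\mathrm{Sat})$, overspill (Lemma~\ref{overspill}) shows the downward-closed definable class $\{\zeta : \mathcal{N} \models \Xi(k, \zeta)\}$ properly contains the ordinals of $\mathcal{S}$; hence $\gamma_k :=$ ``the least $\mu$ with $\neg\,\Xi(k, \mu)$'' is an ordinal above $\mathcal{S}$, and — as $\Xi$ is a single $\mathcal{L}^0_\mathrm{Sat}$-formula — $k \mapsto \gamma_k$ is a function internal to $(\mathcal{N}, \mathrm{Sat}^\mathcal{N})$ on $\omega^\mathcal{N}$ (a harmless truncation handles the finitely many standard $k$ for which $\Xi(k, \cdot)$ holds on all of $\mathrm{Ord}^\mathcal{N}$, which impose no constraint on $\mathring\beta$). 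Applying $\omega$-toplessness of $\mathcal{S}$ in $\mathcal{N}$ to this internal function yields $\mathring\beta \in \mathrm{Ord}^\mathcal{N} \setminus \mathcal{S}$ with $\mathring\beta < \gamma_k$ for all standard $k$; such a $\mathring\beta$ lies above $\mathcal{S}$ and satisfies $\mathcal{N} \models \exists x\, \theta_k(x)$ for every standard $k$. Thus $q(x)$ is finitely satisfiable over $\mathcal{N}$, hence — by recursive saturation of $\mathcal{N}$ — realized by some $n_\gamma$, and $g \cup \{\langle m_\gamma, n_\gamma \rangle\}$ is the desired extension of $g$.

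I expect the last paragraph to be the crux. In the Friedman-style Lemma~\ref{Friedman lemma} the relevant type has bounded ($\Sigma_1^\mathcal{P}$) complexity and a single overspill delivers the witness; here $p(x)$ has unbounded complexity, so two ingredients are required: recursive saturation of $\mathcal{N}$, to realize a recursive type at all, and $\omega$-toplessness of $\mathcal{S}$ in $\mathcal{N}$, to manufacture one ordinal $\mathring\beta$ above $\mathcal{S}$ that serves all the (standardly) infinitely many conditions $\theta_k$ at once. Plain toplessness, which only lets one descend past $\mathcal{S}$ one ordinal at a time, would not suffice for that step.
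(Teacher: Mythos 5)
Your proof follows the paper's argument in all essentials: code the type of $m_\gamma$ over $\mathcal{S}$ via the safe satisfaction relation in $\mathcal{M}$, transfer the code to $\mathcal{N}$ through the standard system, show that a suitable bounded ``sufficiency'' statement holds below every ordinal of $\mathcal{S}$ for every standard complexity level, internalize a function $k \mapsto \gamma_k$ picking out ordinals above $\mathcal{S}$, apply $\omega$-toplessness to land a single $\mathring\beta$ above $\mathcal{S}$ and below all $\gamma_k$, and extract the witness. The paper extracts the witness by overspilling the complexity parameter $k$ to a non-standard value inside $(\mathcal{N}, \mathrm{Sat}^\mathcal{N})$ rather than by invoking recursive saturation of $\mathcal{N}$ for a recursive type, but this is a cosmetic difference: both steps amount to the same use of non-standardness.

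The one genuine imprecision is in your internalization of $k \mapsto \gamma_k$. You define $\gamma_k$ as ``the least $\mu$ with $\neg\,\Xi(k,\mu)$'' and wave at ``a harmless truncation'' for ``the finitely many standard $k$'' for which no such $\mu$ exists. Two problems: (i) the set of standard $k$ for which $\Xi(k,\cdot)$ is total on $\mathrm{Ord}^\mathcal{N}$ is downward closed but need not be finite — it could a priori be all of $\omega$; and (ii) whatever the truncation is, you must ensure the resulting internal function is total on $\omega^\mathcal{N}$ and takes all its \emph{standard} values in $\mathrm{Ord}^\mathcal{N}\setminus\mathcal{S}$, since that is a hypothesis of $\omega$-toplessness, and setting the undefined cases to $0$ would violate it. The paper avoids this by fixing an ordinal $\nu \in \mathrm{Ord}^\mathcal{N}\setminus\mathcal{S}$ at the outset and taking suprema bounded by $\nu$; the analogous fix here is to set $\gamma_k := \min\bigl(\nu,\ \text{least $\mu$ with } \neg\Xi(k,\mu)\bigr)$ with $\gamma_k := \nu$ when no such $\mu$ exists, which makes $\gamma$ total and internal with $\gamma_k \notin \mathcal{S}$ for every standard $k$, while $\mathring\beta < \gamma_k$ still yields $\Xi(k,\mathring\beta)$ by downward closure. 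With this repair your argument is correct.
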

\begin{proof}
By Theorem \ref{rec sat char}, $\mathcal{M}$ and $\mathcal{N}$ are $\omega$-non-standard and there are expansions $(\mathcal{M}, \mathrm{Sat}^\mathcal{M})$ and $(\mathcal{N}, \mathrm{Sat}^\mathcal{N})$ satisfying condition (\ref{rec sat char truth}) of that theorem. Recall that this condition says that these are satisfaction classes that are correct for all formulae in $\mathcal{L}^0$ of standard complexity, and that the expanded structures satisfy Separation and Replacement for all formulae in the expanded language $\mathcal{L}^0_\mathrm{Sat}$.

Let $g \in \mathbb{P}$. We unravel it as $g = \{\langle m_\xi, n_\xi \rangle \mid \xi < \gamma \}$, for some $\gamma < \omega$. Let $m_\gamma \in \mathcal{M}$ be arbitrary. By $\mathcal{L}^0_\mathrm{Sat}$-Separation, there is a code $c$ in $\mathcal{M}$ for the set
\begin{align*}
\{ & \langle \delta, s \rangle \in (\mathcal{L}^0[\langle x_\xi \rangle_{\xi < \gamma}, x_\gamma, z] \cap \mathcal{S}) \times \mathcal{S} \mid \\
& (\mathcal{M}, \mathrm{Sat}^\mathcal{M}) \models \mathrm{Sat}(\delta, \langle m_\xi \rangle_{\xi < \gamma}, m_\gamma, s) \}.
\end{align*}
Since $\mathrm{SSy}_\mathcal{S}(\mathcal{M}) = \mathrm{SSy}_\mathcal{S}(\mathcal{N})$, this set is also coded by some $d$ in $\mathcal{N}$.

We define a formula:
\begin{align*}
\phi(\zeta, k, \langle x_\xi \rangle_{\xi < \gamma}, q) \equiv & \mathrm{Ord}(\zeta) \wedge k < \omega \wedge \\
& \exists x_\gamma . \forall \delta \in \Sigma_{k} . \forall t \in V_\zeta . \\
& (\mathrm{Sat}(\delta, \langle x_\xi \rangle_{\xi < \gamma}, x_\gamma, t) \leftrightarrow \langle \delta, t \rangle \in q).
\end{align*}
By construction of $c$ and correctness of $\mathrm{Sat}^\mathcal{M}$, we have that 
$$\mathcal{M} \models \phi(\zeta, k, \langle m_\xi \rangle_{\xi < \gamma}, c),$$ 
for each $\zeta \in \mathrm{Ord}^\mathcal{M} \cap \mathcal{S}$ and each $k < \omega = \mathrm{OSP}(\mathcal{M})$. So since $g$ is elementary, and since $\mathcal{M}_\zeta = \mathcal{N}_\zeta$ and $c_\mathcal{M} \cap \mathcal{M}_\zeta = d_\mathcal{N} \cap \mathcal{N}_\zeta$ for each $\zeta \in \mathrm{Ord}^\mathcal{N} \cap \mathcal{S}$, we also have that $\mathcal{N} \models \phi(\zeta, k, \langle n_\xi \rangle_{\xi < \gamma}, d)$ for each $\zeta \in \mathrm{Ord}^\mathcal{N} \cap \mathcal{S}$ and each $k < \omega$. Pick some $\nu \in \mathrm{Ord}^\mathcal{N} \setminus \mathcal{S}$. Now by Overspill on $\mathcal{S}$, for each $k < \omega$ there is $\nu'_k \in \mathrm{Ord}^\mathcal{N} \setminus \mathcal{S}$ such that $(\mathcal{N}, \mathrm{Sat}^\mathcal{N}) \models \nu'_k < \nu \wedge \phi(\nu'_k, k, \langle n_\xi \rangle_{\xi < \gamma}, d)$.

Pick some non-standard $o <^\mathcal{N} \omega^\mathcal{N}$. Working in $\mathcal{N}$, we construct a partial function $(k \mapsto \nu_k) : o \rightarrow \nu + 1$, such that for each $k < o$,
\[
\nu_k = \sup \{ \zeta \mid \zeta < \nu \wedge \phi(\zeta, k, \langle n_\xi \rangle_{\xi < \gamma}, d) \}.
\]
We return to reasoning in the meta-theory. By the Overspill-argument above, this function is total on $\omega$, and $\nu_k \not\in \mathcal{S}$ for each $k < \omega$. Moreover, by logic, $\nu_k \geq \nu_l$ for all $k \leq l < \omega$. So by $\omega$-toplessness, there is $\nu_\infty \in \mathrm{Ord}^\mathcal{N} \setminus \mathcal{S}$, such that for each $k < \omega$, $\nu_\infty <^\mathcal{N} \nu_k$. So for each $k < \omega$, we have $(\mathcal{N}, \mathrm{Sat}^\mathcal{N}) \models \phi(\nu_\infty, k, \langle n_\xi \rangle_{\xi < \gamma}, d)$, whence by Overspill on $\mathrm{WFP}(\mathcal{N})$, there is a non-standard $k_\infty \in^\mathcal{N} \omega^\mathcal{N}$ such that $(\mathcal{N}, \mathrm{Sat}^\mathcal{N}) \models \phi(\nu_\infty, k_\infty, \langle n_\xi \rangle_{\xi < \gamma}, d)$. Let $n_\gamma \in \mathcal{N}$ be a witness of this fact. Note that for all $s \in \mathcal{S}$ and for all $\delta \in \mathcal{L}^0[\langle x_\xi \rangle_{\xi < \gamma}, x_\gamma, z]$,
\[
\mathcal{N} \models \delta(\langle n_\xi \rangle_{\xi < \gamma}, n_\gamma, s) \Leftrightarrow \mathcal{N} \models \langle \delta, s \rangle \in d.
\]

Let $f = g \cup \{\langle m_\gamma, n_\gamma \rangle \}$. We need to show that $f \in \mathcal{C}'_{m_\gamma}$; it only remains to verify that $f$ is elementary. Now observe that for any $s \in \mathcal{S}$, and any formula $\delta(\langle x_\xi \rangle_{\xi < \gamma + 1}, z) \in \mathcal{L}^0[\langle x_\xi \rangle_{\xi < \gamma + 1}, z]$,
\begin{align*}
\mathcal{M} \models \delta(\langle m_\xi \rangle_{\xi < \gamma + 1}, s) &\Leftrightarrow \mathcal{M} \models \langle \delta, s \rangle \in c \\
&\Leftrightarrow \mathcal{N} \models \langle \delta, s \rangle \in d \\
&\Leftrightarrow \mathcal{N} \models \delta(\langle n_\xi \rangle_{\xi < \gamma + 1}, s).
\end{align*}
Therefore, $f \in \mathcal{C}'_{m_\gamma}$ as desired.
\end{proof}

\begin{thm}\label{rec sat iso thm}
Let $\mathcal{M}$ and $\mathcal{N}$ be countable recursively saturated models of $\mathrm{ZF}$, and let $\mathcal{S}$ be a common rank-initial $\omega$-topless substructure of $\mathcal{M}$ and $\mathcal{N}$. Let $m_0 \in \mathcal{M}$ and let $n_0 \in \mathcal{N}$. The following are equivalent:

\begin{enumerate}[{\normalfont (a)}]
	\item There is an isomorphism $i : \mathcal{M} \rightarrow \mathcal{N}$, fixing $\mathcal{S}$ pointwise, such that $i(m_0) = n_0$.
	\item $\mathrm{SSy}_\mathcal{S}(\mathcal{M}) = \mathrm{SSy}_\mathcal{S}(\mathcal{N})$ and $\mathrm{Th}_\mathcal{S}((\mathcal{M}, m_0)) = \mathrm{Th}_\mathcal{S}((\mathcal{N}, n_0))$.
\end{enumerate}
\end{thm}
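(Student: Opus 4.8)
The plan is to treat the two implications separately; essentially all of the genuine work is already packaged into Lemma \ref{rec sat iso lemma}, and what remains is a symmetrization and a standard filter argument.

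\textbf{Forward direction} (a) $\Rightarrow$ (b). Suppose $i : \mathcal{M} \rightarrow \mathcal{N}$ is an isomorphism fixing $\mathcal{S}$ pointwise with $i(m_0) = n_0$. For any code $c \in \mathcal{M}$ and any $s \in \mathcal{S}$, since $i$ fixes $\mathcal{S}$ we get $c_\mathcal{M} \cap \mathcal{S} = i(c)_\mathcal{N} \cap \mathcal{S}$, and conversely every code in $\mathcal{N}$ is $i(c)$ for some code $c$ in $\mathcal{M}$; hence $\mathrm{SSy}_\mathcal{S}(\mathcal{M}) = \mathrm{SSy}_\mathcal{S}(\mathcal{N})$. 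Elementarity of $i$ together with $i\restriction_\mathcal{S} = \mathrm{id}$ and $i(m_0) = n_0$ immediately gives $\mathrm{Th}_\mathcal{S}((\mathcal{M}, m_0)) = \mathrm{Th}_\mathcal{S}((\mathcal{N}, n_0))$. This direction is routine.

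\textbf{Backward direction} (b) $\Rightarrow$ (a). Put $\mathbb{P} = \llbracket \mathcal{M} \preceq_\mathcal{S}^{<\omega} \mathcal{N} \rrbracket$. The first thing to observe is that, because here the formula class is all of $\mathcal{L}^0$ and is closed under negation, the ``$\Rightarrow$'' in the definition of $\mathbb{P}$ is in fact a ``$\Leftrightarrow$'': the members of $\mathbb{P}$ are exactly the finite partial $\mathcal{L}^0$-elementary maps from $\mathcal{M}$ to $\mathcal{N}$ fixing $\mathcal{S}$ pointwise. In particular $f \mapsto f^{-1}$ is an order-isomorphism from $\mathbb{Q} := \llbracket \mathcal{N} \preceq_\mathcal{S}^{<\omega} \mathcal{M} \rrbracket$ onto $\mathbb{P}$. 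Next, $f_0 := \{\langle m_0, n_0 \rangle\}$ lies in $\mathbb{P}$: the second conjunct of (b) says precisely that $\mathcal{M} \models \phi(m_0, \vec{s}) \Leftrightarrow \mathcal{N} \models \phi(n_0, \vec{s})$ for every $\phi \in \mathcal{L}^0$ and $\vec{s} \in \mathcal{S}$ (and this forces $n_0 = m_0$ in the case $m_0 \in \mathcal{S}$, so $f_0$ does fix $\mathcal{S}$ pointwise).

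Now from (b) we have $\mathrm{SSy}_\mathcal{S}(\mathcal{M}) \subseteq \mathrm{SSy}_\mathcal{S}(\mathcal{N})$, so Lemma \ref{rec sat iso lemma} tells us that $\mathcal{C}'_m := \{ f \in \mathbb{P} \mid m \in \dom(f) \}$ is dense in $\mathbb{P}$ for each $m \in \mathcal{M}$. Applying Lemma \ref{rec sat iso lemma} with the roles of $\mathcal{M}$ and $\mathcal{N}$ interchanged — legitimate since we also have $\mathrm{SSy}_\mathcal{S}(\mathcal{N}) \subseteq \mathrm{SSy}_\mathcal{S}(\mathcal{M})$ — the sets $\{ g \in \mathbb{Q} \mid n \in \dom(g) \}$ are dense in $\mathbb{Q}$ for each $n \in \mathcal{N}$, and transporting these along the order-isomorphism $g \mapsto g^{-1}$ shows that $\mathcal{C}''_n := \{ f \in \mathbb{P} \mid n \in \mathrm{image}(f) \}$ is dense in $\mathbb{P}$ for each $n \in \mathcal{N}$. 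Since $\mathcal{M}$ and $\mathcal{N}$ are countable, $\mathbf{D} := \{ \mathcal{C}'_m \mid m \in \mathcal{M} \} \cup \{ \mathcal{C}''_n \mid n \in \mathcal{N} \}$ is a countable family of dense subsets of $\mathbb{P}$, so by Lemma \ref{generic filter existence} there is a $\mathbf{D}$-generic filter $\mathcal{I}$ on $\mathbb{P}$ containing $f_0$. Set $i = \bigcup \mathcal{I}$: downward directedness of $\mathcal{I}$ makes $i$ a function, injectivity of its members makes $i$ injective, genericity for the $\mathcal{C}'_m$ gives $\dom(i) = \mathcal{M}$, genericity for the $\mathcal{C}''_n$ gives $\mathrm{image}(i) = \mathcal{N}$, and any instance of an $\mathcal{L}^0$-formula with parameters in $\mathcal{M}$ is already decided by a member of $\mathcal{I}$ covering those parameters, so $i$ is elementary, hence an isomorphism. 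It fixes $\mathcal{S}$ pointwise because each member of $\mathcal{I}$ does, and $i(m_0) = n_0$ because $f_0 \in \mathcal{I}$.

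\textbf{Main obstacle.} The substantive content — using recursive saturation, correctness of $\mathrm{Sat}$, $\omega$-toplessness and overspill to make the ``forth'' step work — is entirely inside Lemma \ref{rec sat iso lemma}, so here it is assumed. The only point requiring care is the symmetrization: one must notice that with the full class $\mathcal{L}^0$ the forcing conditions are honest finite partial elementary maps, so that inverting conditions converts ``domain-density'' into ``range-density'', which is exactly what is needed to upgrade the one-sided density lemma into a genuine back-and-forth producing a bijection.
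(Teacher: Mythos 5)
Your proof is correct and follows the same route as the paper: build a generic filter over the poset $\llbracket \mathcal{M} \preceq_\mathcal{S}^{<\omega} \mathcal{N}\rrbracket$ using the density facts from Lemma \ref{rec sat iso lemma}, and read off the isomorphism from the union. In fact you are slightly more scrupulous than the paper's write-up, which asserts density of the range-sets $\mathcal{D}'_n$ by citing Lemma \ref{rec sat iso lemma} even though the lemma as stated only gives density of the domain-sets $\mathcal{C}'_m$; your observation that with $\Gamma = \mathcal{L}^0$ (closed under negation) the conditions are genuine finite partial elementary maps, so that $f \mapsto f^{-1}$ is an order-isomorphism carrying domain-density in $\llbracket \mathcal{N} \preceq_\mathcal{S}^{<\omega} \mathcal{M}\rrbracket$ to range-density in $\mathbb{P}$, is precisely the symmetrization the paper leaves implicit.
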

\begin{proof}
The forward direction is clear since $i$ is an isomorphism.

Let $\mathbb{P} = \llbracket \mathcal{M} \preceq_\mathcal{S}^{< \omega} \mathcal{N} \rrbracket$. Since $\mathrm{Th}_\mathcal{S}((\mathcal{M}, m_0)) = \mathrm{Th}_\mathcal{S}((\mathcal{N}, n_0))$, the function $(m_0 \mapsto n_0)$ is in $\mathbb{P}$. For each $m \in \mathcal{M}$ and each $n \in  \mathcal{N}$, let
\begin{align*}
\mathcal{C}'_m &=_\df \{ f \in \mathbb{P} \mid m \in \dom(f\hspace{2pt}) \}, \\
\mathcal{D}'_n &=_\df \{ f \in \mathbb{P} \mid n \in \mathrm{image}(f\hspace{2pt}) \}. \\
\end{align*}
By Lemma \ref{rec sat iso lemma}, $\mathcal{C}'_m$ and $\mathcal{D}'_n$ are dense in $\mathbb{P}$ for all $m \in \mathcal{M}$ and all $n \in \mathcal{N}$. By Lemma \ref{generic filter existence}, there is a $\mathcal{C}'_m \cup \mathcal{D}'_n$-generic filter $\mathcal{I}$ on $\mathbb{P}$ containing $(m_0 \mapsto n_0)$. Let $i = \bigcup \mathcal{I}$. 

By the genericity, $\dom(i) = \mathcal{M}$ and $\mathrm{image}(i) = \mathcal{N}$. Moreover, by the filter properties, for any $\vec{m} \in \mathcal{M}$, some finite extension $f \in \mathbb{P}$ of $i\restriction_{\vec{m}}$ is in $\mathcal{I}$. So by elementarity of $f$ and arbitrariness of $\vec{m}$, we have that $i$ is an isomorphism.
\end{proof}

The following Theorem is an improvement of Theorem \ref{rec sat reflection}. The proof given here is meant to be simpler and more accessible than the one given in \cite{Res87a}.

\begin{thm}[Ressayre]\label{Ressayre thm}
	Let $\mathcal{M}$ be a countable recursively saturated model of $\mathrm{ZF}$. There are unboundedly many $\alpha \in \mathrm{Ord}^\mathcal{M}$, such that for all $S \in \mathcal{M}_\alpha$ we have $\mathcal{M} \cong_{S_\mathcal{M}} \mathcal{M}_\alpha \preceq \mathcal{M}$.
\end{thm}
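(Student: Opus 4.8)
The plan is to derive this from the reflection theorem for recursively saturated models of $\mathrm{ZF}$ (Theorem \ref{rec sat reflection}) together with the isomorphism theorem just proved (Theorem \ref{rec sat iso thm}), the glue being the observation that a rank-initial elementary substructure of $\mathcal{M}$ is again countable, recursively saturated, and a model of $\mathrm{ZF}$. First I would use Theorem \ref{rec sat char} to note that $\mathcal{M}$ is $\omega$-non-standard and admits a safe satisfaction relation, and then apply Theorem \ref{rec sat reflection} to get unboundedly many $\alpha \in \mathrm{Ord}^\mathcal{M}$ with $\mathcal{M}_\alpha \preceq \mathcal{M}$. Fixing such an $\alpha$, I would observe at once that it is a limit ordinal: if $\alpha = \delta + 1$ then $V_\delta^\mathcal{M} \in \mathcal{M}_\alpha$ would have $\mathcal{M}_\alpha$-maximal rank (rank being absolute along the rank-initial inclusion $\mathcal{M}_\alpha \subseteq \mathcal{M}$), contradicting $\mathcal{M}_\alpha \models \mathrm{ZF}$. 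The claim will be that every such $\alpha$ works, which is exactly the desired cofinal family.

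The key intermediate step is that $\mathcal{M}_\alpha$ is itself a countable recursively saturated model of $\mathrm{ZF}$. Countability and $\mathcal{M}_\alpha \models \mathrm{ZF}$ are immediate (substructure of $\mathcal{M}$, plus elementarity). For recursive saturation, given a recursive type $p(x, \vec{b})$ over $\mathcal{M}_\alpha$ that is finitely satisfiable there (so $\vec{b} \in \mathcal{M}_\alpha$), I would pass to $q(x, \vec{b}, \alpha) = p(x, \vec{b}) \cup \{\rnk(x) < y\}$; this is still recursive, and by $\mathcal{M}_\alpha \preceq \mathcal{M}$ it is finitely satisfiable in $\mathcal{M}$. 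A realization $a^*$ of $q$ in the recursively saturated model $\mathcal{M}$ then has $\mathcal{M}$-rank below $\alpha$, hence lies in $\mathcal{M}_\alpha$, and realizes $p$ there by elementarity. In particular $\mathcal{M}_\alpha$ is $\omega$-non-standard.

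Next, for a given $S \in \mathcal{M}_\alpha$ I would produce a common rank-initial $\omega$-topless substructure of $\mathcal{M}$ and $\mathcal{M}_\alpha$ containing $S_\mathcal{M}$. With $\rho = \rnk^\mathcal{M}(S) < \alpha$ and $\alpha$ a limit, $\rho + \omega \le \alpha$, so by Lemma \ref{omega-topless existence} the set $\mathcal{S} = \bigcup_{k < \omega} \mathcal{M}_{\rho + k}$ is an $\omega$-topless rank-initial substructure of $\mathcal{M}$, and $S_\mathcal{M} \subseteq \mathcal{M}_\rho \subseteq \mathcal{S}$. Since $\{k \in \omega^\mathcal{M} : \rho + k <^\mathcal{M} \alpha\}$ is (by Separation) a set that $\mathcal{M}$ sees as an initial segment of $\omega$ and that contains every standard integer, it contains a non-standard $m$; then $\gamma := \rho + m <^\mathcal{M} \alpha$ bounds $\mathcal{S}$, so $\mathcal{S}$ is also a bounded rank-initial $\omega$-topless substructure of $\mathcal{M}_\alpha$ (failure of being $\omega$-coded from above transfers down from $\mathcal{M}$). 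I would then check the hypotheses of Theorem \ref{rec sat iso thm} for $\mathcal{M}$, $\mathcal{M}_\alpha$, $\mathcal{S}$ with $m_0 = n_0 = \varnothing$: the equality $\mathrm{Th}_\mathcal{S}(\mathcal{M}) = \mathrm{Th}_\mathcal{S}(\mathcal{M}_\alpha)$ follows from $\mathcal{M}_\alpha \preceq \mathcal{M}$, and $\mathrm{SSy}_\mathcal{S}(\mathcal{M}) = \mathrm{SSy}_\mathcal{S}(\mathcal{M}_\alpha)$ holds because for any $c \in \mathcal{M}$ the set $(c \cap V_\gamma)^\mathcal{M}$ has $\mathcal{M}$-rank $\le \gamma < \alpha$, so it lies in $\mathcal{M}_\alpha$ and codes the same subset $c_E \cap \mathcal{S}$ of $\mathcal{S}$. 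Theorem \ref{rec sat iso thm} then supplies an isomorphism $i : \mathcal{M} \rightarrow \mathcal{M}_\alpha$ fixing $\mathcal{S} \supseteq S_\mathcal{M}$ pointwise, giving $\mathcal{M} \cong_{S_\mathcal{M}} \mathcal{M}_\alpha \preceq \mathcal{M}$; since $S$ was arbitrary in $\mathcal{M}_\alpha$ and $\alpha$ ranges over a cofinal set, the theorem follows.

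The main obstacle I anticipate is the recursive-saturation step for $\mathcal{M}_\alpha$: elementary submodels of recursively saturated models need not be recursively saturated, and the whole point is the trick of adjoining $\rnk(x) < y$ to confine realizations to $\mathcal{M}_\alpha$ — one must be careful that this keeps the type recursive (only one extra formula is added) and that $\alpha$ is a legitimate parameter of a type over $\mathcal{M}$. By comparison, the choice of $\mathcal{S}$ via Lemma \ref{omega-topless existence} and overspill, and the verification of the $\mathrm{SSy}$ and theory hypotheses, are routine once $\alpha$ is known to be a limit ordinal.
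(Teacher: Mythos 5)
Your proof is correct and follows the same route as the paper's: apply Theorem \ref{rec sat reflection} to get $\alpha$ with $\mathcal{M}_\alpha \preceq \mathcal{M}$, form the common $\omega$-topless rank-initial substructure $\mathcal{S} = \bigcup_{k<\omega}\mathcal{M}_{\rnk(S)+k}$ via Lemma \ref{omega-topless existence}, and feed $\mathcal{M}$, $\mathcal{M}_\alpha$, $\mathcal{S}$ into Theorem \ref{rec sat iso thm}. The one place you go beyond the paper is worth noting: Theorem \ref{rec sat iso thm} requires both models to be recursively saturated, and the paper's proof applies it to $\mathcal{M}_\alpha$ without verifying this; your bounded-rank trick (adjoining $\rnk(x)<y$ with $y\mapsto\alpha$ to confine the realization inside $\mathcal{M}_\alpha$) is exactly the right way to close that gap, and it is sound because the augmented type is still recursive and finitely satisfiable in $\mathcal{M}$ by elementarity of the inclusion. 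The other verifications you supply (that $\alpha$ is a limit, that $\mathcal{S}$ is bounded in $\mathcal{M}_\alpha$ via overspill on $\{k : \rho+k<\alpha\}$, that the standard systems agree because $(c\cap V_\gamma)^\mathcal{M}$ lies in $\mathcal{M}_\alpha$) are all correct and are just the details that the paper's terser proof leaves implicit.
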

\begin{proof}
	Let $\alpha >^\mathcal{M} \omega^\mathcal{M}$ be as obtained from Theorem \ref{rec sat reflection}. Thus $\mathcal{M}_\alpha \preceq \mathcal{M}$. Let $S \in \mathcal{M}_\alpha$ be arbitrary. For each $m \in^\mathcal{M} \omega^\mathcal{M}$, let $\sigma_m = \rnk(S) + m$ as evaluated in $\mathcal{M}$. Since $\mathcal{M}_\alpha \preceq \mathcal{M}$, we have $\sigma_m < \alpha$ for each $m \in^\mathcal{M} \omega^\mathcal{M}$.
	
	Since $\mathcal{M}$ is recursively saturated, it is $\omega$-non-standard. Let $\mathcal{M}_{S, \omega} = \bigcup_{k < \omega} \mathcal{M}_{\sigma_k}$ (note that we take this union only over standard $k$). By Lemma \ref{omega-topless existence}, $\mathcal{M}_{S, \omega}$ is a common rank-initial $\omega$-topless substructure of $\mathcal{M}$ and $\mathcal{M}_\alpha$; and obviously $S_\mathcal{M} \subseteq \mathcal{M}_{S, \omega}$. 
	
	By rank-initiality, $\mathrm{SSy}_{\mathcal{M}_{S, \omega}}(\mathcal{M_\alpha}) = \mathrm{SSy}_{\mathcal{M}_{S, \omega}}(\mathcal{M})$, and by $\mathcal{M}_\alpha \preceq \mathcal{M}$, we have $\mathrm{Th}_{\mathcal{M}_{S, \omega}}(\mathcal{M_\alpha}) = \mathrm{Th}_{\mathcal{M}_{S, \omega}}(\mathcal{M})$. So it follows from Theorem \ref{rec sat iso thm} that $\mathcal{M} \cong_{S_\mathcal{M}} \mathcal{M}_\alpha$.
\end{proof}

\begin{thm}[Ressayre-style]\label{Ressayre characterization Sigma_1-Separation}
	Let $\mathcal{M} \models \mathrm{KP}^\mathcal{P}$ be countable and non-standard. The following are equivalent:
	\begin{enumerate}[{\normalfont (a)}]
		\item\label{Ressayre characterization Sigma_1-Separation Separation} $\mathcal{M} \models \Sigma_1^\mathcal{P} \textnormal{-Separation}.$
		\item\label{Ressayre characterization Sigma_1-Separation Self-embedding} For every $\alpha \in \mathrm{Ord}^\mathcal{M}$, there is a proper rank-initial self-embedding of $\mathcal{M}$ which fixes $\mathcal{M}_\alpha$ pointwise.
	\end{enumerate}
\end{thm}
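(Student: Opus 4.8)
The direction $(\ref{Ressayre characterization Sigma_1-Separation Separation}) \Rightarrow (\ref{Ressayre characterization Sigma_1-Separation Self-embedding})$ is immediate: it is exactly the content of Theorem \ref{Friedman selfembedding} applied with $\mathcal{S} = \mathcal{M}\restriction_{(V_\alpha^\mathcal{M})_\mathcal{M}}$, which is a rank-cut of $\mathcal{M}$ since $\mathcal{M}$ is non-standard and $\mathcal{M}_\alpha$ is rank-initial. (In fact even the weaker Corollary \ref{Friedman cor} with $\mathcal{M} = \mathcal{N}$ suffices, since the $\mathrm{SSy}$- and $\Sigma_1^\mathcal{P}$-theory conditions are trivially met when domain and codomain coincide.) The substantive direction is $(\ref{Ressayre characterization Sigma_1-Separation Self-embedding}) \Rightarrow (\ref{Ressayre characterization Sigma_1-Separation Separation})$. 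The plan is to assume $\neg\Sigma_1^\mathcal{P}\textnormal{-Separation}$ and produce, for a suitable parameter $a$, a failure of the hypothesis: some $\alpha$ for which no proper rank-initial self-embedding fixes $\mathcal{M}_\alpha$ pointwise.

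So suppose $\Sigma_1^\mathcal{P}$-Separation fails in $\mathcal{M}$: there is a $\Sigma_1^\mathcal{P}$-formula $\exists x.\,\delta(x,u,p)$ with $\delta\in\Delta_0^\mathcal{P}$, a parameter $p\in\mathcal{M}$, and an element $b\in\mathcal{M}$ such that the class $A = \{u\in^\mathcal{M} b \mid \mathcal{M}\models\exists x.\,\delta(x,u,p)\}$ is not coded in $\mathcal{M}$, i.e.\ is not of the form $c_\mathcal{M}$ for any $c\in^\mathcal{M} b$. The key observation is that, working in $\mathcal{M}$ via $\mathrm{Sat}_{\Delta_0^\mathcal{P}}\in\Delta_1^\mathcal{P}$, for each ordinal $\xi$ the set $A_\xi = \{u\in^\mathcal{M} b \mid \mathcal{M}\models\exists x\in V_\xi.\,\delta(x,u,p)\}$ \emph{is} coded in $\mathcal{M}$ (by $\Delta_0^\mathcal{P}$-Separation, which $\mathrm{KP}^\mathcal{P}$ has), and $A = \bigcup_{\xi\in\mathrm{Ord}^\mathcal{M}} A_\xi$. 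Since $A$ is not coded, the ``bounding'' function $u\mapsto (\text{least }\xi\text{ with }u\in A_\xi)$ (defined on $A$, a $\Sigma_1^\mathcal{P}$-function by Proposition \ref{tak_clo}) must be unbounded in $\mathrm{Ord}^\mathcal{M}$ — otherwise Strong $\Sigma_1^\mathcal{P}$-Collection, or just the already-coded $A_\xi$ for a bound $\xi$, would give a code for $A$. Now let $\alpha\in\mathrm{Ord}^\mathcal{M}$ be chosen large enough that $b, p\in\mathcal{M}_\alpha$, and claim that no proper rank-initial self-embedding $i:\mathcal{M}\rightarrow\mathcal{M}$ fixes $\mathcal{M}_\alpha$ pointwise.

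Indeed, suppose $i$ were such an embedding. Since $i$ is proper and rank-initial it is bounded, say $i(\mathcal{M})\subseteq\mathcal{M}_\gamma$ for some $\gamma\in\mathrm{Ord}^\mathcal{M}$. By the unboundedness above, pick $u\in A$ with least witness-rank $\xi_u \geq \gamma$; note $u\in^\mathcal{M} b$ and $b\in\mathcal{M}_\alpha$, so $u\in\mathcal{M}_\alpha$ (by rank-initiality of $\mathcal{M}_\alpha$ in $\mathcal{M}$) and hence $i(u)=u$. On the one hand $\mathcal{M}\models\exists x.\,\delta(x,u,p)$, so by Proposition \ref{emb pres}(\ref{emb pres Delta_0^P}) (since $i$ is $\mathcal{P}$-initial, hence $\Sigma_1^\mathcal{P}$-preserving) $\mathcal{M}\models\exists x.\,\delta(x,i(u),i(p)) = \exists x.\,\delta(x,u,p)$ — consistent, but the sharpening is the point: a $\Sigma_1^\mathcal{P}$ witness $x$ in $\mathcal{M}$ with $u$ and $p$ fixed can be taken, by rank-initiality of $i(\mathcal{M})$ in $\mathcal{M}$ together with $\Delta_0^\mathcal{P}$-absoluteness, to lie in $i(\mathcal{M})$; pulling back, $\mathcal{M}$ believes $\exists x.\,\delta(x,u,p)$ holds with a witness of rank $<\gamma$, contradicting $\xi_u\geq\gamma$ being the \emph{least} such rank. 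This contradiction shows no such $i$ exists, completing the contrapositive.

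The main obstacle is the final paragraph: making precise the sense in which a $\Sigma_1^\mathcal{P}$-witness over parameters pointwise fixed by a rank-initial embedding can be ``pulled back through the embedding with controlled rank.'' This is essentially a quantitative version of $\Sigma_1^\mathcal{P}$-elementarity for rank-initial embeddings — one wants not just that the witness exists on the other side, but that it exists at a rank dictated by the image $i(\mathcal{M})$ — and it should be extracted cleanly from Corollary \ref{rank-init equivalences in KPP} (which gives $i(V_\xi^\mathcal{M}) = V_{i(\xi)}^\mathcal{M}$) combined with Lemma \ref{Bound for sigma_1 lemma} to first replace the unbounded quantifier by one bounded by some $V_\beta$, and then track where $\beta$ goes under $i^{-1}$. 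I would set this up so that the $\alpha$ in the statement is chosen \emph{after} fixing the counterexample data $(b,p,\delta)$, and the rank bound $\beta$ from Lemma \ref{Bound for sigma_1 lemma} is used to pin down the exact ordinal at which the pullback witness lives.
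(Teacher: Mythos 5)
The direction (\ref{Ressayre characterization Sigma_1-Separation Separation}) $\Rightarrow$ (\ref{Ressayre characterization Sigma_1-Separation Self-embedding}) has a genuine gap: $\mathcal{M}\restriction_{(V_\alpha^\mathcal{M})_\mathcal{M}} = \mathcal{M}_\alpha$ is \emph{not} a rank-cut. It is rank-initial, but it is topped by $\alpha$ (equivalently by $V_\alpha^\mathcal{M}$), so it is not topless, and non-standardness of $\mathcal{M}$ does not change that. Hence neither Theorem \ref{Friedman selfembedding} nor Corollary \ref{Friedman cor}, both of which require a \emph{rank-cut} $\mathcal{S}$, applies with this choice. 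The repair is to take instead $\mathcal{S}=\bigcup_{\xi\in\mathrm{OSP}(\mathcal{M})}\mathcal{M}_{\alpha+\xi}$, which is topless precisely because $\mathcal{M}$ is non-standard (so $\mathrm{OSP}(\mathcal{M})$ has no supremum in $\mathrm{Ord}^\mathcal{M}$); the resulting self-embedding still fixes $\mathcal{M}_\alpha\subseteq\mathcal{S}$ pointwise, which is all the statement asks for.

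For (\ref{Ressayre characterization Sigma_1-Separation Self-embedding}) $\Rightarrow$ (\ref{Ressayre characterization Sigma_1-Separation Separation}) your plan is the contrapositive of the paper's direct argument (given $\phi(x)=\exists y.\,\delta(y,x)$ and $a$, the paper takes $i$ fixing $\mathcal{M}_{\rnk(a)+1}$ pointwise with $i(\mathcal{M})\subseteq\mathcal{M}_\mu$, shows $\{x\in a : \phi(x)\}=\{x\in a : \exists y\in V_\mu.\,\delta(y,x)\}$, and invokes $\Delta_0^\mathcal{P}$-Separation), and the ``pullback'' step you flag as the obstacle is in fact sound — but the tool you reach for is wrong. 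Lemma \ref{Bound for sigma_1 lemma} \emph{assumes} $\Sigma_1^\mathcal{P}$-Separation and would be circular here. The correct route is just that $i$ is an isomorphism from $\mathcal{M}$ onto the rank-initial substructure $\mathcal{M}\restriction_{i(\mathcal{M})}$: with $u,p$ fixed by $i$, the truth of $\exists x.\,\delta(x,u,p)$ transfers from $\mathcal{M}$ to $\mathcal{M}\restriction_{i(\mathcal{M})}$, giving a witness in $i(\mathcal{M})\subseteq\mathcal{M}_\gamma$, and $\Delta_0^\mathcal{P}$-absoluteness over rank-initial substructures (Proposition \ref{emb pres}) returns that witness to $\mathcal{M}$ at rank $<\gamma$. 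Also, to obtain a contradiction with the least-witness-rank you need $\xi_u>\gamma$ strictly, so use unboundedness at $\gamma+1$ rather than at $\gamma$.
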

\begin{proof}
	(\ref{Ressayre characterization Sigma_1-Separation Separation}) $\Rightarrow$ (\ref{Ressayre characterization Sigma_1-Separation Self-embedding}): Let $\mathcal{N} = \bigcup_{\xi \in \mathrm{OSP}(\mathcal{M})} \mathcal{M}_{\alpha + \xi}.$ Note that $\mathcal{N}$ is a rank-cut of $\mathcal{M}$. So by Theorem \ref{Friedman selfembedding}, we are done.
	
	(\ref{Ressayre characterization Sigma_1-Separation Self-embedding}) $\Rightarrow$ (\ref{Ressayre characterization Sigma_1-Separation Separation}): Let $\phi(x) \in \Sigma_1^\mathcal{P}[x]$ and let $a \in \mathcal{M}$. Let $i$ be a rank-initial self-embedding of $\mathcal{M}$ which fixes $\mathcal{M}_{(\rnk(a) + 1)^\mathcal{M}}$ pointwise and which satisfies $i(\mathcal{M}) \subseteq \mathcal{M}_\mu$, for some $\mu \in \mathrm{Ord}^\mathcal{M}$. Let us write $\phi(x)$ as $\exists y . \delta(y, x)$, where $\delta \in \Delta_0^\mathcal{P}$. Since $i$ fixes $a_\mathcal{M}$ pointwise, we have
	\[
	\mathcal{M} \models \forall x \in a . ( \exists y . \delta(y, x) \leftrightarrow \exists y \in V_\mu . \delta(y, x)).
	\]
	So $\{ x \in a \mid \phi(x) \} = \{ x \in a \mid \exists y \in V_\mu . \delta(y, x)) \}$, which exists by $\Delta_0^\mathcal{P}$-Separation.
\end{proof}

\section{Iterated ultrapowers with special self-embeddings}\label{Existence of models of set theory with automorphisms}

It is convenient to fix some objects which will be discussed throughout this section. Fix a countable model $(\mathcal{M}, \mathcal{A}) \models \mathrm{GBC} + \text{``$\mathrm{Ord}$ is weakly compact''}$. Fix $\mathbb{B}$ to be the boolean algebra $\{A \subseteq \mathrm{Ord}^\mathcal{M} \mid A \in \mathcal{A}\}$ induced by $\mathcal{A}$. Fix $\mathbb{P}$ to be the partial order of unbounded sets in $\mathbb{B}$ ordered under inclusion. Fix a filter $\mathcal{U}$ on $\mathbb{P}$. Note that by Proposition \ref{GBC weakly compact global choice}, $(\mathcal{M}, \mathcal{A}) \models \textnormal{Global Choice}$.

$\mathcal{U}$ is $\mathbb{P}$-{\em generic over} $(\mathcal{M}, \mathcal{A})$, or simply $(\mathcal{M}, \mathcal{A})$-{\em generic}, if it intersects every dense subset of $\mathbb{P}$ that is parametrically definable in $(\mathcal{M, A})$. $\mathcal{U}$ is $(\mathcal{M, A})$-{\em complete} if for every $a \in \mathcal{M}$ and every $f : \mathrm{Ord}^\mathcal{M} \rightarrow a_\mathcal{M}$ that is coded in $\mathcal{A}$, there is $b \in a_\mathcal{M}$ such that $f^{-1}(b) \in \mathcal{U}$. Considering the characteristic functions of the classes in $\mathcal{A}$, we can easily see that if $\mathcal{U}$ is $(\mathcal{M, A})$-{\em complete}, then it is an {\em ultrafilter} on $\mathbb{B}$, i.e. for any $A \in \mathbb{B}$, we have $A \in \mathcal{U}$ or $\mathrm{Ord}^\mathcal{M} \setminus A \in \mathcal{U}$. 

Let $P : \mathrm{Ord}^\mathcal{M} \times \mathrm{Ord}^\mathcal{M} \rightarrow \mathrm{Ord}^\mathcal{M}$ be a bijection coded in $\mathcal{A}$. For each $g : \mathrm{Ord}^\mathcal{M} \rightarrow \{0,1\}^\mathcal{M}$ coded in $\mathcal{A}$, and each $\alpha \in \mathrm{Ord}^\mathcal{M}$, define $S^g_\alpha =_\mathrm{df} \{\xi \in \mathrm{Ord}^\mathcal{M} \mid g(P(\alpha, \xi)) = 1\}$. Thus, $g$ may be thought of as coding an $\mathrm{Ord}^\mathcal{M}$-sequence of sets; indeed $(\alpha \mapsto S^g_\alpha) : \mathrm{Ord}^\mathcal{M} \rightarrow \mathbb{B}$. $\mathcal{U}$ is $(\mathcal{M, A})$-{\em iterable} if for every $g : \mathrm{Ord}^\mathcal{M} \rightarrow \{0,1\}^\mathcal{M}$ coded in $\mathcal{A}$, we have $\{\alpha \mid S^g_\alpha \in \mathcal{U}\} \in \mathcal{A}$.

A filter $\mathcal{U}$ is $(\mathcal{M, A})$-{\em canonically Ramsey} if for every $n \in \mathbb{N}$ and $f : [\mathrm{Ord}^\mathcal{M}]^n \rightarrow \mathrm{Ord}^\mathcal{M}$ coded in $\mathcal{A}$, there is $H \in \mathcal{U}$ and $S \subseteq \{1, \dots, n\}$, such that for any $\alpha_1, \dots, \alpha_n$ and $\beta_1, \dots, \beta_n$ in $H$, 
$$f(\alpha_1, \dots, \alpha_n) = f(\beta_1, \dots, \beta_n) \leftrightarrow \forall m \in S . \alpha_m = \beta_m.$$
We say that $f$ is {\em canonical} on $H$.

The following theorem is proved in \cite[p. 48]{Ena04}. Combined with Lemma \ref{generic filter existence}, it establishes the existence of an ultrafilter on $\mathbb{B}$, which is $(\mathcal{M, A})$-complete, $(\mathcal{M, A})$-iterable and $(\mathcal{M, A})$-canonically Ramsey, under the assumption that $(\mathcal{M, A}) \models \text{``$\mathrm{Ord}$ is weakly compact''}$.

\begin{thm}\label{weakly compact gives nice ultrafilter}
Let $(\mathcal{M, A})$ a model of $\mathrm{GBC} + \text{``$\mathrm{Ord}$ is weakly compact''}$. If $\mathcal{U}$ is $(\mathcal{M, A})$-generic, then $\mathcal{U}$ is 
\begin{enumerate}[{\normalfont (a)}]
\item $(\mathcal{M, A})$-complete,
\item $(\mathcal{M, A})$-iterable, and
\item $(\mathcal{M, A})$-canonically Ramsey.
\end{enumerate}
\end{thm}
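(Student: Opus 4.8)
The plan is to show that each of the three properties follows from $\mathbf{D}$-genericity for a suitable countable family $\mathbf{D}$ of dense subsets of $\mathbb{P}$ that are parametrically definable in $(\mathcal{M},\mathcal{A})$, using at each stage the hypothesis that ``$\mathrm{Ord}$ is weakly compact'' to produce the required homogeneous/large set and hence witness density. Since $(\mathcal{M},\mathcal{A})$ is countable, there are only countably many parametrically definable dense sets, so the existence of a $\mathbf{D}$-generic filter containing any prescribed condition is guaranteed by Lemma \ref{generic filter existence}; what must be checked is that the relevant sets are indeed dense.

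First I would handle completeness. Fix $a \in \mathcal{M}$ and $f : \mathrm{Ord}^\mathcal{M} \to a_\mathcal{M}$ coded in $\mathcal{A}$. Given any condition $P \in \mathbb{P}$ (an unbounded class in $\mathbb{B}$), partition $P$ into the pieces $P \cap f^{-1}(b)$ for $b \in a_\mathcal{M}$; I would argue inside $(\mathcal{M},\mathcal{A})$ that at least one of these is unbounded — this is where ``$\mathrm{Ord}$ is weakly compact'' enters, guaranteeing that an $\mathrm{Ord}$-long class cannot be carved into $<\mathrm{Ord}$-many bounded pieces in a way detectable by a class tree (equivalently, one invokes the regularity-like consequence of weak compactness of $\mathrm{Ord}$, or builds the obstruction tree directly). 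That unbounded piece $P \cap f^{-1}(b) \le P$ lies in the dense set $\mathcal{D}_f = \{Q \in \mathbb{P} \mid \exists b \in a_\mathcal{M}.\ Q \subseteq f^{-1}(b)\}$, so $\mathcal{D}_f$ is dense; a generic $\mathcal{U}$ meets it, giving $f^{-1}(b) \in \mathcal{U}$ by upward closure. The remark in the statement about characteristic functions then upgrades this to the ultrafilter property on $\mathbb{B}$.

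Next, iterability: fix $g : \mathrm{Ord}^\mathcal{M} \to \{0,1\}^\mathcal{M}$ coded in $\mathcal{A}$, giving the sequence $(\alpha \mapsto S^g_\alpha) : \mathrm{Ord}^\mathcal{M} \to \mathbb{B}$. By completeness/the ultrafilter property, for each $\alpha$ exactly one of $S^g_\alpha$, $\mathrm{Ord}^\mathcal{M}\setminus S^g_\alpha$ lies in $\mathcal{U}$; I want $\{\alpha \mid S^g_\alpha \in \mathcal{U}\} \in \mathcal{A}$. Consider, in $(\mathcal{M},\mathcal{A})$, the class tree whose nodes at level $\beta$ are functions $t : \beta \to 2$ such that $\bigcap_{\alpha < \beta, t(\alpha)=1} S^g_\alpha \cap \bigcap_{\alpha < \beta, t(\alpha)=0}(\mathrm{Ord}^\mathcal{M}\setminus S^g_\alpha)$ is unbounded (a ``finite-intersection'' tree, using that $\mathbb{B}$ is closed under the relevant set operations coded in $\mathcal{A}$). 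Genericity forces $\mathcal{U}$ to decide compatibly, so the characteristic function of $\{\alpha \mid S^g_\alpha \in \mathcal{U}\}$ is a branch of this tree; ``$\mathrm{Ord}$ is weakly compact'' guarantees a branch exists and is a class, i.e.\ lies in $\mathcal{A}$, and one checks the generic branch is that characteristic function. (Alternatively, this is exactly the content of the cited argument on p.~48 of \cite{Ena04}, which I would follow.)

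Finally, canonical Ramseyness: fix $n \in \mathbb{N}$ and $f : [\mathrm{Ord}^\mathcal{M}]^n \to \mathrm{Ord}^\mathcal{M}$ coded in $\mathcal{A}$. The key input is a \emph{canonical partition} version of the tree property for $\mathrm{Ord}$: ``$\mathrm{Ord}$ is weakly compact'' yields, for each such $f$ and each condition $P$, an unbounded $H \subseteq P$ in $\mathbb{B}$ together with $S \subseteq \{1,\dots,n\}$ on which $f$ is canonical, by the Erd\H{o}s--Rado/canonical-Ramsey analysis adapted to the class setting (run the usual finite iterated stratification of $[H]^n$ by which coordinates determine the value, organized as a class tree of height $\mathrm{Ord}$ whose branch yields $H$). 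Thus $\mathcal{D}_{f} = \{Q \in \mathbb{P} \mid f \text{ is canonical on } Q\}$ is dense, so a generic $\mathcal{U}$ contains such an $H$. The main obstacle throughout is the first one — establishing that weak compactness of $\mathrm{Ord}$ delivers the relevant large (unbounded) homogeneous classes and, in the iterability case, a \emph{class} branch; once these combinatorial facts are in hand, the genericity bookkeeping via Lemma \ref{generic filter existence} is routine. Since all of this is carried out in detail in \cite[p.~48]{Ena04}, I would present the above as the structure of the argument and defer the combinatorial core to that reference.
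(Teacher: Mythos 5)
The paper itself does not prove Theorem~\ref{weakly compact gives nice ultrafilter}: it cites \cite[p.~48]{Ena04} and moves on. Your proposal follows the same plan (density arguments, genericity bookkeeping via Lemma~\ref{generic filter existence}, weak compactness of $\mathrm{Ord}$ supplying the combinatorial core, with the details deferred to the same reference), so at the level of "what route is taken'' you are aligned with the paper. Two remarks on your sketch, though. For completeness, you do not need weak compactness at all: if $P$ is unbounded and $f:\mathrm{Ord}^\mathcal{M}\to a_\mathcal{M}$ with $a$ a \emph{set}, and every $P\cap f^{-1}(b)$ were bounded, then Replacement in $\mathrm{GBC}$ would bound $P=\bigcup_{b\in a_\mathcal{M}}(P\cap f^{-1}(b))$; so density of $\mathcal{D}_f$ is just regularity of $\mathrm{Ord}$.

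The iterability sketch has a genuine gap. You construct the tree $\mathcal{T}$ whose level-$\beta$ nodes are $t:\beta\to 2$ with $\bigcap_{\alpha<\beta}S^g_\alpha{}^{t(\alpha)}$ unbounded, and you claim that the branch produced by weak compactness "is that characteristic function'' of $\{\alpha\mid S^g_\alpha\in\mathcal{U}\}$. But nothing in the argument identifies the weak-compactness branch (which is \emph{some} branch lying in $\mathcal{A}$) with the externally defined $\chi_\mathcal{U}$; the tree $\mathcal{T}$ can have many class branches, and $\chi_\mathcal{U}$ is defined from the external object $\mathcal{U}$, not from any tree combinatorics. The correct use of the tree property is to establish density of the class of \emph{eventually homogeneous} conditions, i.e.
\[
\mathcal{D}_g=\bigl\{Q\in\mathbb{P}\ \big|\ \forall\alpha\in\mathrm{Ord}^\mathcal{M}.\ \bigl(Q\setminus(\alpha{+}1)\subseteq S^g_\alpha\bigr)\ \vee\ \bigl(Q\setminus(\alpha{+}1)\subseteq\mathrm{Ord}^\mathcal{M}\setminus S^g_\alpha\bigr)\bigr\}.
\]
To show $\mathcal{D}_g$ is dense below $P$, build a tree whose level-$\beta$ nodes are pairs $(t,q)$ with $t:\beta\to 2$ and $q:\beta\to P$ strictly increasing such that $q(\gamma)\in S^g_\alpha{}^{t(\alpha)}$ for all $\alpha<\gamma<\beta$; Replacement gives a node at every level, so the tree has height $\mathrm{Ord}$, and a branch yields an unbounded $Q=\mathrm{range}(q_\infty)\subseteq P$ in $\mathcal{D}_g$. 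Given $Q\in\mathcal{D}_g\cap\mathcal{U}$, one then has $\{\alpha\mid S^g_\alpha\in\mathcal{U}\}=\{\alpha\mid Q\setminus(\alpha{+}1)\subseteq S^g_\alpha\}$, which \emph{is} in $\mathcal{A}$ because it is parametrically defined from $Q$ and $g$. So the conclusion is reached by density of $\mathcal{D}_g$, not by matching a weak-compactness branch to $\chi_\mathcal{U}$; your current phrasing papers over exactly the step where the argument lives.
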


Using such an ultrafilter, we shall now construct an iterated ultrapower of $(\mathcal{M}, \mathcal{A}$). A more detailed account of this construction is found in \cite{EKM17}.

\begin{constr}\label{constr iter ultra}
Suppose that $\mathcal{U}$ is a non-principal $(\mathcal{M}, \mathcal{A})$-iterable ultrafilter on $\mathbb{B}$. Then for any $n \in \mathbb{N}$, an ultrafilter $\mathcal{U}^n$ can be recursively constructed on $\mathbb{B}^n =_\mathrm{df} \{A \subseteq (\mathrm{Ord}^\mathcal{M})^n \mid A \in \mathcal{A}\}$ as follows:

First, we extend the definition of iterability. An ultrafilter $\mathcal{V}$ on $\mathbb{B}^n$ is $(\mathcal{M}, \mathcal{A})$-{\em iterable} if for any function $(\alpha \mapsto S_\alpha) : \mathrm{Ord}^\mathcal{M} \rightarrow \mathbb{B}^n$ coded in $\mathcal{A}$, we have $\{\alpha \mid S_\alpha \in \mathcal{V}\} \in \mathcal{A}$. 

$\mathcal{U}^0$ is the trivial (principal) ultrafilter $\{\{\langle\rangle\}\}$ on the boolean algebra $\{\langle\rangle, \{\langle\rangle\}\}$, where $\langle\rangle$ is the empty tuple.

For any $n \in \mathbb{N}$, $X \in \mathbb{B}^{n+1}$ and any $\alpha \in \mathrm{Ord}^\mathcal{M}$, define 
$$X_{\alpha} =_\mathrm{df} \{ \langle \alpha_2, \dots, \alpha_{n+1}\rangle \mid \langle \alpha, \alpha_2, \dots, \alpha_{n+1}\rangle \in X\}$$
$$X \in \mathcal{U}^{n+1} \Leftrightarrow_\mathrm{df} \big\{\alpha \mid X_{\alpha} \in \mathcal{U}^n \big\} \in \mathcal{U}.$$

Note that there are other equivalent definitions:
$$
\begin{array}{cl}
& X \in \mathcal{U}^{n} \\
\Leftrightarrow& \big\{\alpha_1 \mid \{ \langle \alpha_2, \dots, \alpha_{n}\rangle \mid \langle \alpha_1, \alpha_2, \dots, \alpha_{n}\rangle \in X\} \in \mathcal{U}^{n-1} \big\} \in \mathcal{U}  \\ 
\Leftrightarrow& \{ \alpha_1 \mid \{ \alpha_2 \mid \dots \{ \alpha_n \mid \langle \alpha_1, \dots, \alpha_n \rangle \in X\} \in \mathcal{U} \} \in \mathcal{U} \dots \} \in \mathcal{U}  \\
\Leftrightarrow& \{ \langle \alpha_1, \dots \alpha_{n-1} \rangle \mid \{ \alpha_n \mid \langle \alpha_1, \dots \alpha_{n} \rangle \in X \} \in \mathcal{U} \} \in \mathcal{U}^{n-1}
\end{array}
$$

By the setup, $\mathcal{U}^1 = \mathcal{U}$, which is an $(\mathcal{M}, \mathcal{A})$-iterable ultrafilter on $\mathbb{B}^1$ by assumption. Assuming that $\mathcal{U}^n$ is an $(\mathcal{M}, \mathcal{A})$-iterable ultrafilter on $\mathbb{B}^n$, we shall show that $\mathcal{U}^{n+1}$ is an $(\mathcal{M}, \mathcal{A})$-iterable ultrafilter on $\mathbb{B}^{n+1}$. 

Let $X \in \mathcal{U}^{n+1}$. If $X \subseteq Y \in \mathbb{B}^{n+1}$, then $X_\alpha \subseteq Y_\alpha$, for each $\alpha \in \mathrm{Ord}^\mathcal{M}$, and by iterability of $\mathcal{U}^{n}$, $\{\alpha \mid Y_\alpha \in \mathcal{U}\} \in \mathcal{A}$. So $Y \in \mathcal{U}^{n+1}$ by upwards closure of $\mathcal{U}^n$ and $\mathcal{U}$. Similarly, the iterability of $\mathcal{U}^n$ and the finite intersection and maximality properties of $\mathcal{U}^n$ and $\mathcal{U}$ imply that $\mathcal{U}^{n+1}$ has the finite intersection and maximality properties, respectively. To show iterability, suppose that the function $(\xi \mapsto S^g_\xi) : \mathrm{Ord}^\mathcal{M} \rightarrow \mathbb{B}^{n+1}$ is coded in $\mathcal{A}$. Then 
$$\{\xi \mid S^g_\xi \in \mathcal{U}^{n+1}\} = \big\{\xi \mid \{\alpha \mid (S^g_\xi)_\alpha \in \mathcal{U}^n \} \in \mathcal{U} \big\} \in \mathcal{A}$$ 
by iterability of  $\mathcal{U}$. So $\mathcal{U}^{n+1}$ is also $(\mathcal{M}, \mathcal{A})$-iterable. We have proved:

\begin{lemma}
If $\mathcal{U}$ is an $(\mathcal{M, A})$-iterable ultrafilter on $\mathbb{B}$, then $\mathcal{U}^n$ is an $(\mathcal{M, A})$-iterable ultrafilter on $\mathbb{B}^n$, for every $n \in \mathbb{N}$.
\end{lemma}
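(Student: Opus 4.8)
The plan is to prove the statement by induction on $n \in \mathbb{N}$, in tandem with the recursive definition of $\mathcal{U}^n$. The base case is immediate from the setup: $\mathcal{U}^0 = \{\{\langle\rangle\}\}$ is an ultrafilter on the two-element boolean algebra $\{\langle\rangle, \{\langle\rangle\}\}$ and is (vacuously) $(\mathcal{M}, \mathcal{A})$-iterable; and $\mathcal{U}^1 = \mathcal{U}$ is an $(\mathcal{M}, \mathcal{A})$-iterable ultrafilter on $\mathbb{B}^1 = \mathbb{B}$ by hypothesis. So one may as well run the induction from $n = 1$.

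For the inductive step, assume $\mathcal{U}^n$ is an $(\mathcal{M}, \mathcal{A})$-iterable ultrafilter on $\mathbb{B}^n$. The observation used throughout is that for any function $(\alpha \mapsto Y_\alpha) : \mathrm{Ord}^\mathcal{M} \rightarrow \mathbb{B}^n$ coded in $\mathcal{A}$, iterability of $\mathcal{U}^n$ gives $\{\alpha \mid Y_\alpha \in \mathcal{U}^n\} \in \mathcal{A}$, and as this is a subclass of $\mathrm{Ord}^\mathcal{M}$ it lies in $\mathbb{B}$; so the defining clause ``$\{\alpha \mid X_\alpha \in \mathcal{U}^n\} \in \mathcal{U}$'' for membership in $\mathcal{U}^{n+1}$ is always meaningful. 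I would then verify in turn: $\varnothing \notin \mathcal{U}^{n+1}$, since $\varnothing_\alpha = \varnothing \notin \mathcal{U}^n$; upward closure within $\mathbb{B}^{n+1}$, via $X \subseteq Y \Rightarrow X_\alpha \subseteq Y_\alpha$, reducing to upward closure of $\mathcal{U}^n$ and then of $\mathcal{U}$; closure under finite intersections, via $(X \cap Y)_\alpha = X_\alpha \cap Y_\alpha$, likewise; and maximality, using $((\mathrm{Ord}^\mathcal{M})^{n+1} \setminus X)_\alpha = (\mathrm{Ord}^\mathcal{M})^n \setminus X_\alpha$ and the fact that $\mathcal{U}^n$ is an ultrafilter to see that $\{\alpha \mid X_\alpha \in \mathcal{U}^n\}$ and $\{\alpha \mid ((\mathrm{Ord}^\mathcal{M})^{n+1} \setminus X)_\alpha \in \mathcal{U}^n\}$ partition $\mathrm{Ord}^\mathcal{M}$ (both in $\mathcal{A}$ by iterability of $\mathcal{U}^n$), whence maximality of $\mathcal{U}$ puts exactly one of $X$, $(\mathrm{Ord}^\mathcal{M})^{n+1} \setminus X$ into $\mathcal{U}^{n+1}$.

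The step needing the most care — the main bookkeeping obstacle — is iterability of $\mathcal{U}^{n+1}$. Given $(\xi \mapsto S_\xi) : \mathrm{Ord}^\mathcal{M} \rightarrow \mathbb{B}^{n+1}$ coded in $\mathcal{A}$, I want $\{\xi \mid S_\xi \in \mathcal{U}^{n+1}\} = \big\{\xi \mid \{\alpha \mid (S_\xi)_\alpha \in \mathcal{U}^n\} \in \mathcal{U}\big\} \in \mathcal{A}$. To apply iterability of $\mathcal{U}$ to the right-hand side I first need the function $\xi \mapsto T_\xi$, where $T_\xi =_\mathrm{df} \{\alpha \mid (S_\xi)_\alpha \in \mathcal{U}^n\}$, to be coded in $\mathcal{A}$; this I would obtain by using the pairing bijection $P : \mathrm{Ord}^\mathcal{M} \times \mathrm{Ord}^\mathcal{M} \rightarrow \mathrm{Ord}^\mathcal{M}$ coded in $\mathcal{A}$ to recode the doubly-indexed family $\big((\xi, \alpha) \mapsto (S_\xi)_\alpha\big)$ as a single $\mathrm{Ord}^\mathcal{M}$-indexed family in $\mathbb{B}^n$ coded in $\mathcal{A}$, then applying iterability of $\mathcal{U}^n$ to get $\{P(\xi, \alpha) \mid (S_\xi)_\alpha \in \mathcal{U}^n\} \in \mathcal{A}$, and un-pairing to conclude that the graph of $\xi \mapsto T_\xi$ lies in $\mathcal{A}$. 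Finally, iterability of $\mathcal{U}$ applied to $\xi \mapsto T_\xi$ gives $\{\xi \mid T_\xi \in \mathcal{U}\} \in \mathcal{A}$, completing the induction. This is exactly the computation indicated in the Construction above; the proposal is simply to make the $\mathcal{A}$-codings of the intermediate families and the slice identities explicit.
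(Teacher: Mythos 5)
Your argument is correct and follows the same induction as the paper's proof, which is carried out inside Construction \ref{constr iter ultra}: the ultrafilter axioms for $\mathcal{U}^{n+1}$ are verified via the slice identities, and iterability of $\mathcal{U}^{n+1}$ is reduced to that of $\mathcal{U}$. You are in fact slightly more explicit than the paper at the last step, supplying the re-pairing argument (via $P$ and iterability of $\mathcal{U}^n$) needed to see that $\xi \mapsto T_\xi$ is $\mathcal{A}$-coded before invoking iterability of $\mathcal{U}$ --- a detail the paper's text elides but which is genuinely required for the appeal to iterability of $\mathcal{U}$ to be legitimate.
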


Since $\mathcal{U}$ is a non-principal ultrafilter, it contains all final segments of $\mathrm{Ord}^\mathcal{M}$. So by induction, we have 
$$\{ \langle \alpha_1, \dots, \alpha_n \rangle \mid \alpha_1 < \dots < \alpha_n \in \mathrm{Ord}^\mathcal{M}\} \in \mathcal{U}^n,$$ 
for every $n \in \mathbb{N}$.

Lastly, we shall extend the definition of completeness and show that each $\mathcal{U}^n$ has this property. An ultrafilter $\mathcal{V}$ on $\mathbb{B}^n$ is $(\mathcal{M, A})$-{\em complete} if for any $m < n$ and any functions $f : (\mathrm{Ord}^\mathcal{M})^m \rightarrow M$ and $g : (\mathrm{Ord}^\mathcal{M})^n \rightarrow M$ coded in $\mathcal{A}$, such that 
$$\{\langle \alpha_1, \dots, \alpha_n \rangle \mid g(\alpha_1, \dots, \alpha_n) \in f(\alpha_1, \dots, \alpha_m)\} \in \mathcal{V}^n,$$ 
we have that $\mathcal{A}$ codes a function $f' : (\mathrm{Ord}^\mathcal{M})^m \rightarrow M$, such that
$$\{\langle \alpha_1, \dots, \alpha_n \rangle \mid g(\alpha_1, \dots, \alpha_n) = f'(\alpha_1, \dots, \alpha_m)\} \in \mathcal{V}^n.$$
\end{constr}

\begin{lemma}\label{iter complete}
If $\mathcal{U}$ is an $(\mathcal{M, A})$-complete and $(\mathcal{M, A})$-iterable ultrafilter, then $\mathcal{U}^n$ is $(\mathcal{M, A})$-complete, for every $n \in \mathbb{N}$.
\end{lemma}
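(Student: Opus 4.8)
The proof is by induction on $n$; the cases $n\le 1$ are immediate ($\mathcal{U}^0$ vacuously, and $\mathcal{U}^1=\mathcal{U}$ by hypothesis, modulo the trivial passage from a partial to a total function), so assume $\mathcal{U}^j$ is $(\mathcal{M},\mathcal{A})$-complete for all $j\le n$ and prove it for $\mathcal{U}^{n+1}$. Fix $m<n+1$, functions $f:(\mathrm{Ord}^\mathcal{M})^m\to M$ and $g:(\mathrm{Ord}^\mathcal{M})^{n+1}\to M$ coded in $\mathcal{A}$, and suppose $X:=\{\vec\alpha\mid g(\vec\alpha)\in f(\alpha_1,\dots,\alpha_m)\}\in\mathcal{U}^{n+1}$. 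Two standing facts are used throughout. First, a ``Fubini'' identity: for $a+b=c$ and $Y\in\mathbb{B}^c$ one has $Y\in\mathcal{U}^c$ iff $\{\vec\beta\in(\mathrm{Ord}^\mathcal{M})^a\mid Y_{\vec\beta}\in\mathcal{U}^b\}\in\mathcal{U}^a$, where $Y_{\vec\beta}=\{\vec\gamma\mid\langle\vec\beta,\vec\gamma\rangle\in Y\}$; this follows by a routine induction from the equivalent descriptions of the $\mathcal{U}^k$ recorded in Construction \ref{constr iter ultra}. Second, by Proposition \ref{GBC weakly compact global choice} we have $(\mathcal{M},\mathcal{A})\models\textnormal{Global Choice}$, hence a class well-ordering of $M$ coded in $\mathcal{A}$; using it, every set coded in $\mathcal{A}$ is enumerated by an initial segment of $\mathrm{Ord}^\mathcal{M}$, uniformly in a parameter ranging over $(\mathrm{Ord}^\mathcal{M})^p$.

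The heart of the matter is the following uniformization. Let $1\le k\le n$, $p\in\mathbb{N}$, and let $\langle a_{\vec\beta}\rangle_{\vec\beta\in(\mathrm{Ord}^\mathcal{M})^p}$ be a sequence of nonempty sets and $\langle g_{\vec\beta}\rangle$ a sequence of functions $g_{\vec\beta}:(\mathrm{Ord}^\mathcal{M})^k\to M$, all coded in $\mathcal{A}$. Then the partial map sending $\vec\beta$ to the unique $y\in a_{\vec\beta}$ with $\{\vec\gamma\mid g_{\vec\beta}(\vec\gamma)=y\}\in\mathcal{U}^k$ — single-valued because these sets are pairwise disjoint in $y$ and $\mathcal{U}^k$ is an ultrafilter — extends to a function $(\mathrm{Ord}^\mathcal{M})^p\to M$ coded in $\mathcal{A}$. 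Indeed, enumerating each $a_{\vec\beta}$ by an initial segment of the ordinals as above and identifying tuples of ordinals with ordinals via a pairing bijection coded in $\mathcal{A}$, the $\mathrm{Ord}^\mathcal{M}$-indexed sequence $\langle\vec\beta,\xi\rangle\mapsto\big(\{\vec\gamma\mid g_{\vec\beta}(\vec\gamma)=(\xi\text{-th element of }a_{\vec\beta})\}$, or $\varnothing\big)$ is a sequence of members of $\mathbb{B}^k$ coded in $\mathcal{A}$, so by $(\mathcal{M},\mathcal{A})$-iterability of $\mathcal{U}^k$ the class of pairs $\langle\vec\beta,\xi\rangle$ whose value lies in $\mathcal{U}^k$ belongs to $\mathcal{A}$; reading off the corresponding element of $a_{\vec\beta}$ (and $\varnothing^\mathcal{M}$ when there is none) produces the desired $\mathcal{A}$-coded function.

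With this in hand I would split into two cases. If $m\ge1$: by Fubini, $B_0:=\{\vec\beta\in(\mathrm{Ord}^\mathcal{M})^m\mid X_{\vec\beta}\in\mathcal{U}^{n+1-m}\}\in\mathcal{U}^m$, and $1\le n+1-m\le n$, with $f(\vec\beta)\ne\varnothing$ for $\vec\beta\in B_0$. For such $\vec\beta$, the function $\vec\gamma\mapsto g(\vec\beta,\vec\gamma)$ takes values in the set $f(\vec\beta)$ on the $\mathcal{U}^{n+1-m}$-large set $X_{\vec\beta}$, so by completeness of $\mathcal{U}^{n+1-m}$ (inductive hypothesis, with outer arity $0$) there is $y_{\vec\beta}\in f(\vec\beta)$ with $\{\vec\gamma\mid g(\vec\beta,\vec\gamma)=y_{\vec\beta}\}\in\mathcal{U}^{n+1-m}$. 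By the uniformization (with $a_{\vec\beta}=f(\vec\beta)$, $g_{\vec\beta}=g(\vec\beta,\cdot)$, $k=n+1-m$, $p=m$) the map $f'(\vec\beta):=y_{\vec\beta}$ (and $\varnothing^\mathcal{M}$ off $B_0$) is coded in $\mathcal{A}$, and one further application of Fubini gives $\{\vec\alpha\mid g(\vec\alpha)=f'(\alpha_1,\dots,\alpha_m)\}\supseteq\{\langle\vec\beta,\vec\gamma\rangle\mid\vec\beta\in B_0\wedge g(\vec\beta,\vec\gamma)=f'(\vec\beta)\}\in\mathcal{U}^{n+1}$, as required. If $m=0$: then $f$ is a nonempty constant set $c$; peel off only the first coordinate, so $\{\alpha_1\mid X_{\alpha_1}\in\mathcal{U}^n\}\in\mathcal{U}$. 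For $\mathcal{U}$-almost all $\alpha_1$, completeness of $\mathcal{U}^n$ (outer arity $0$) gives the unique $y_{\alpha_1}\in c$ with $\{\langle\alpha_2,\dots,\alpha_{n+1}\rangle\mid g(\alpha_1,\dots,\alpha_{n+1})=y_{\alpha_1}\}\in\mathcal{U}^n$; uniformization ($p=1$, $k=n$) packages these into an $\mathcal{A}$-coded $h:\mathrm{Ord}^\mathcal{M}\to M$ with $h(\alpha_1)\in c$ on a $\mathcal{U}$-large set. Applying completeness of $\mathcal{U}=\mathcal{U}^1$ (the hypothesis of the lemma) to $h$ and $c$ yields $c'\in c$ with $\{\alpha_1\mid h(\alpha_1)=c'\}\in\mathcal{U}$, whence $\{\langle\alpha_2,\dots,\alpha_{n+1}\rangle\mid g(\alpha_1,\dots,\alpha_{n+1})=c'\}\in\mathcal{U}^n$ for $\mathcal{U}$-almost all $\alpha_1$, and by Fubini $\{\vec\alpha\mid g(\vec\alpha)=c'\}\in\mathcal{U}^{n+1}$, so the constant $c'$ serves as $f'$.

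The combinatorial parts — the existence and uniqueness of the pointwise choices $y_{\vec\beta}$, and the Fubini bookkeeping — are routine consequences of the inductive hypothesis and of each $\mathcal{U}^k$ being an ultrafilter. The genuinely delicate step, and the reason both Global Choice and iterability are invoked, is the uniformization: converting the pointwise-defined selection $\vec\beta\mapsto y_{\vec\beta}$ into a single function coded in $\mathcal{A}$. I expect that to be the main obstacle in writing out the full details.
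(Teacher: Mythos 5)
Your proof is correct, and it takes a genuinely different route from the paper's. The paper opens with ``We may assume that $m + 1 = n$'' and then handles only that single-step case: a Fubini rewrite gives $A \in \mathcal{U}^m$, pointwise completeness of $\mathcal{U}$ supplies a choice $y$ for each $\vec{\alpha} \in A$, and $f'$ is produced in one stroke as the class $\{\langle\vec{\alpha},y\rangle \mid \{\alpha_n \mid g(\vec{\alpha},\alpha_n)=y\}\in\mathcal{U}\}$, which belongs to $\mathcal{A}$ by iterability of $\mathcal{U}$ alone (re-indexing $\mathcal{M}^n$ by $\mathrm{Ord}^\mathcal{M}$ via a coded bijection). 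What the paper leaves tacit is that the opening reduction to $m+1=n$ is not a mere relabelling: unwinding it means peeling one coordinate at a time and then invoking completeness of $\mathcal{U}^{n-1}$ on the resulting $(n-1)$-ary selector, i.e., an induction on $n-m$ that the paper never writes out. You instead run an explicit induction on $n$, strip all $n+1-m$ inner coordinates at once, apply the inductive hypothesis (completeness of $\mathcal{U}^{n+1-m}$ at outer arity $0$) pointwise, and package the pointwise choices into an $\mathcal{A}$-coded function via a separately stated uniformization lemma, which uses iterability of $\mathcal{U}^k$ (from the un-numbered lemma inside Construction~\ref{constr iter ultra}) together with Global Choice (Proposition~\ref{GBC weakly compact global choice}) to enumerate the target sets. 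The trade-off is clear: your version is longer but fully explicit, with the genuinely delicate step — converting pointwise selections into a single class-coded function — isolated and its dependencies named, whereas the paper's is leaner (only iterability of $\mathcal{U}$ itself is invoked) but hides an induction inside an unjustified ``we may assume.''
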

\begin{proof}
Suppose that $m < n$ and that 
\begin{align*}
f &: (\mathrm{Ord}^\mathcal{M})^m \rightarrow \mathcal{M} \\
g &: (\mathrm{Ord}^\mathcal{M})^n \rightarrow \mathcal{M}
\end{align*}
satisfy
$$\{\langle \alpha_1, \dots, \alpha_n \rangle \mid g(\alpha_1, \dots, \alpha_n) \in f(\alpha_1, \dots, \alpha_m)\} \in \mathcal{U}^n.$$
We may assume that $m + 1 = n$. The above is equivalent to
\[
A =_\df \big\{\langle \alpha_1, \dots, \alpha_m \rangle \mid \{\alpha_n \mid g(\alpha_1, \dots, \alpha_n) \in f(\alpha_1, \dots, \alpha_m)\} \in \mathcal{U} \big\} \in \mathcal{U}^m. 
\]
Moreover, by completeness of $\mathcal{U}$, for any $\langle \alpha_1, \dots, \alpha_m \rangle \in A$, there is $y \in f(\alpha_1, \dots, \alpha_m)$ such that
\[
\{\alpha_n \mid g(\alpha_1, \dots, \alpha_n) = y\} \in \mathcal{U}. \tag{$\dagger$}
\]
On the other hand, we have by iterability of $\mathcal{U}$ (utilizing a bijection coded in $\mathcal{A}$ between $\mathrm{Ord}^\mathcal{M}$ and $\mathcal{M}^n$) that
\[
f' =_\df \big\{\langle \langle \alpha_1, \dots, \alpha_m \rangle, y \rangle \mid \{\alpha_n \mid g(\alpha_1, \dots, \alpha_n) = y\} \in \mathcal{U} \big\} \in \mathcal{A}.
\]
Since $\mathcal{U}$ is an ultrafilter, $f'$ is a function whose domain is a superset of $A \in \mathcal{U}$, and by extending it arbitrarily we may assume its domain is $(\mathrm{Ord}^\mathcal{M})^m$.
Now by ($\dagger$), we have 
$$\{\langle \alpha_1, \dots, \alpha_m \rangle \mid \{\alpha_n \mid g(\alpha_1, \dots, \alpha_n) = f'(\alpha_1, \dots, \alpha_m)\} \in \mathcal{U}\} \in \mathcal{U}^m.$$
Thus
$$\{\langle \alpha_1, \dots, \alpha_n \rangle \mid g(\alpha_1, \dots, \alpha_n) = f'(\alpha_1, \dots, \alpha_m)\} \in \mathcal{U}^n,$$
as desired.
\end{proof}

\begin{constr}\label{constr iter power}
Let $\mathcal{L}^0_\mathcal{A}$ be the language obtained from $\mathcal{L}^0$ by adding constant symbols for all elements of $\mathcal{M}$ and adding relation and function symbols for all relations and functions on $\mathcal{M}$ coded in $\mathcal{A}$. $(\mathcal{M}, A)_{A \in \mathcal{A}}$ denotes the canonical expansion of $\mathcal{M}$ to $\mathcal{L}^0_\mathcal{A}$ determined by $(\mathcal{M}, \mathcal{A})$. Assume that $\mathcal{U}$ is a non-principal $(\mathcal{M}, \mathcal{A})$-iterable ultrafilter on $\mathrm{Ord}^\mathcal{M}$ and let $\mathbb{L}$ be a linear order. We construct $\mathrm{Ult}_{\mathcal{U}, \mathbb{L}}(\mathcal{M}, \mathcal{A})$ as follows:

For each $n \in \mathbb{N}$, define 
\begin{align*}
\Gamma_n =_\mathrm{df} \big\{ & \phi(\xs) \in \mathcal{L}^0_\mathcal{A} \mid \{\langle \alpha_1, \dots, \alpha_n \rangle \mid (\mathcal{M}, A)_{A \in \mathcal{A}} \models \phi(\alpha_1, \dots, \alpha_n)\} \in \mathcal{U}^n\big\}.
\end{align*}

Since $\mathcal{U}^n$ is an ultrafilter on $(\mathrm{Ord}^\mathcal{M})^n$, each $\Gamma_n$ is a complete $n$-type over $\mathcal{M}$ in the language $\mathcal{L}^0_\mathcal{A}$. Moreover, each $\Gamma_n$ contains the elementary diagram of $(\mathcal{M}, A)_{A \in \mathcal{A}}$.

For each $l \in \mathbb{L}$, let $c_l$ be a new constant symbol, and let $\mathcal{L}^0_{\mathcal{A}, \mathbb{L}}$ be the language generated by $\mathcal{L}^0_\mathcal{A} \cup \{c_l \mid l \in \mathbb{L}\}$. Define 
\begin{align*}
T_{\mathcal{U}, \mathbb{L}} =_\mathrm{df} \{\phi(c_{l_1}, \dots, c_{l_n}) \in \mathcal{L}^0_{\mathcal{A}, \mathbb{L}} \mid & n \in \mathbb{N} \wedge (l_1 <_\mathbb{L} \dots <_\mathbb{L} l_n \in \mathbb{L}) \wedge \phi(\xs) \in \Gamma_n\}.
\end{align*}

$T_{\mathcal{U}, \mathbb{L}}$ is complete and contains the elementary diagram of $(\mathcal{M}, \mathcal{A})$, because the same holds for each $\Gamma_n$. By Construction \ref{constr iter ultra},
$$T_{\mathcal{U}, \mathbb{L}} \vdash c_{l_1} < c_{l_2} \in \mathrm{Ord} \text{, for any $l_1 <_\mathbb{L} l_2$.}$$ 
Moreover, $T_{\mathcal{U}, \mathbb{L}}$ has definable Skolem functions: For each $\mathcal{L}^0_{\mathcal{A}, \mathbb{L}}$-formula $\exists x . \phi(x)$, we can prove in $T_{\mathcal{U}, \mathbb{L}}$ that the set of witnesses of $\exists x . \phi(x, y)$ of least rank exists, and provided this set is non-empty an element is picked out by a global choice function coded in $\mathcal{A}$. Thus we can define the {\em iterated ultrapower of} $(\mathcal{M, A})$ {\em modulo} $\mathcal{U}$ {\em along} $\mathbb{L}$ as 
\[
\mathrm{Ult}_{\mathcal{U}, \mathbb{L}}(\mathcal{M}, \mathcal{A}) =_\mathrm{df} \text{``the prime model of $T_{\mathcal{U}, \mathbb{L}}$''}.
\]
In particular, every element of $\mathrm{Ult}_{\mathcal{U}, \mathbb{L}}(\mathcal{M}, \mathcal{A})$ is of the form $f(c_{l_1}, \dots, c_{l_n})$, where $l_1 < \dots < l_n \in \mathbb{L}$ and $f \in \mathcal{A}$ (considered as a function symbol of $\mathcal{L}^0_{\mathcal{A}, \mathbb{L}}$). Note that for any $A \in \mathcal{A}$, any function $f$ coded in $\mathcal{A}$ and for any $l_1, \dots, l_n \in \mathbb{L}$, where $n \in \mathbb{N}$, we have
\[
\begin{array}{cl}
&\mathrm{Ult}_{\mathcal{U}, \mathbb{L}}(\mathcal{M}, \mathcal{A}) \models f(c_{l_1}, \dots, c_{l_n}) \in A \\
\Leftrightarrow & \{ \xi \in \mathrm{Ord}^\mathcal{M} \mid (\mathcal{M}, A)_{A \in \mathcal{A}} \models f(\xi) \in A \} \in \mathcal{U}.
\end{array}
\]
A different way of saying the same thing:
\begin{align*}
A^{\mathrm{Ult}_{\mathcal{U}, \mathbb{L}}(\mathcal{M}, \mathcal{A})} = \big\{ & (f(c_{l_1}, \dots, c_{l_n}))^{\mathrm{Ult}_{\mathcal{U}, \mathbb{L}}(\mathcal{M}, \mathcal{A})} \mid \{ \xi \in \mathrm{Ord}^\mathcal{M} \mid (\mathcal{M}, A)_{A \in \mathcal{A}} \models f(\xi) \in A \} \in \mathcal{U} \big\}.
\end{align*}

Since $T_{\mathcal{U}, \mathbb{L}}$ contains the elementary diagram of $(\mathcal{M}, \mathcal{A})$, the latter embeds elementarily in $\mathrm{Ult}_{\mathcal{U}, \mathbb{L}}(\mathcal{M}, \mathcal{A})$. For simplicity of presentation, we assume that this is an elementary extension. Note that if $\mathbb{L}$ is empty, then $\mathrm{Ult}_{\mathcal{U}, \mathbb{L}}(\mathcal{M}, \mathcal{A}) = (\mathcal{M, A})$. If $\mathcal{U}$ is non-principal, then it is easily seen from Construction \ref{constr iter ultra} that for any $l, l' \in \mathbb{L}$ and any $\alpha \in \mathrm{Ord}^\mathcal{M}$,
$$l <_\mathbb{L} l' \Leftrightarrow \alpha <_\mathbb{O} c_l <_\mathbb{O} c_{l'},$$
where $\mathbb{O} = \mathrm{Ord}^{\mathrm{Ult}_{\mathcal{U}, \mathbb{L}}(\mathcal{M}, \mathcal{A})}$. So $\mathbb{L}$ embeds into the linear order of the ordinals in $\mathrm{Ult}_{\mathcal{U}, \mathbb{L}}(\mathcal{M}, \mathcal{A})$, above the ordinals of $\mathcal{M}$.

It will be helpful to think of the ultrapower as a function (actually functor) of $\mathbb{L}$ rather than as a function of $(\mathcal{M, A})$, so we introduce the alternative notation 
\[
\mathcal{G}_{\mathcal{U}, (\mathcal{M, A})}(\mathbb{L}) =_\mathrm{df} \mathrm{Ult}_{\mathcal{U}, \mathbb{L}}(\mathcal{M}, \mathcal{A}).
\]

Suppose that $(\mathcal{M, A})$ is a countable model of $\mathrm{GBC} + \text{``$\mathrm{Ord}$ is weakly compact''}$ and let $\mathcal{U}$ be an iterable non-principal ultrafilter on $\mathbb{B}$. Given an embedding $i : \mathbb{K} \rightarrow \mathbb{L}$, we construct an embedding 
$$\mathcal{G}_{\mathcal{U}, (\mathcal{M, A})}(i) : \mathcal{G}_{\mathcal{U}, (\mathcal{M, A})}(\mathbb{K}) \rightarrow \mathcal{G}_{\mathcal{U}, (\mathcal{M, A})}(\mathbb{L})$$ 
as follows: Note that any $a \in \mathcal{G}_{\mathcal{U}, (\mathcal{M, A})}(\mathbb{K})$ is of the form $f(c_{k_1}, \dots, c_{k_n})$ for some $f \in \mathcal{A}$, $n \in \mathbb{N}$ and $k_1, \dots, k_n \in \mathbb{K}$. Define $\mathcal{G}_{\mathcal{U}, (\mathcal{M, A})}(i)(a) = f(c_{i(k_1)}, \dots, c_{i(k_n)})$.

As shown in Theorem \ref{Gaifman thm}, $\mathcal{G}_{\mathcal{U}, (\mathcal{M, A})}(i)$ is an elementary embedding, and further more, $\mathcal{G}_{\mathcal{U}, (\mathcal{M, A})}$ is a functor from the category of linear orders, with embeddings as morphisms, to the category of models of the $\mathcal{L}^0_\mathcal{A}$-theory of $(\mathcal{M}, A)_{A \in \mathcal{A}}$, with elementary embeddings as morphisms. We call this the {\em Gaifman functor} of $\mathcal{U}, (\mathcal{M, A})$ and denote it by $\mathcal{G}_{\mathcal{U}, (\mathcal{M, A})}$, or just $\mathcal{G}$ for short.
\end{constr}

Gaifman \cite{Gai76} essentially proved the theorem below for models of arithmetic. A substantial chunk of its generalization to models of set theory was proved for specific needs in \cite{Ena04}.

\begin{thm}[Gaifman-style]\label{Gaifman thm}
Suppose that $(\mathcal{M}, \mathcal{A}) \models \mathrm{GBC} + $``$\mathrm{Ord}$ is weakly compact'' is countable and let $\mathcal{U}$ be an $(\mathcal{M}, \mathcal{A})$-generic ultrafilter. Write $\mathcal{G} = \mathcal{G}_{\mathcal{U}, (\mathcal{M, A})}$ for the corresponding Gaifman functor. Let $i : \mathbb{K} \rightarrow \mathbb{L}$ be an embedding of linear orders.
\begin{enumerate}[{\normalfont (a)}]
\item \label{satisfaction} For each $n \in \mathbb{N}$ and each $\phi(x_1, \dots, x_n) \in \mathcal{L}^0_{\mathcal{A}, \mathbb{L}}$:
\begin{align*}
&\mathcal{G}(\mathbb{L}) \models \phi(c_{l_1}, \dots, c_{l_n}) \Leftrightarrow \\
&\big\{ \langle \alpha_1, \dots, \alpha_n \rangle \in (\mathrm{Ord}^\mathcal{M})^n \mid (\mathcal{M}, A)_{A \in \mathcal{A}} \models \phi(\alpha_1, \dots, \alpha_n) \big\} \in \mathcal{U}^n.
\end{align*}
\item \label{elem} $\mathcal{G}(i) : \mathcal{G}(\mathbb{K}) \rightarrow \mathcal{G}(\mathbb{L})$ is an elementary embedding.
\item \label{func} $\mathcal{G}$ is a functor.
\item \label{cons} If $\mathbb{L} \neq \varnothing$, then $\mathrm{SSy}_\mathcal{M}(\mathcal{G}(\mathbb{L})) = (\mathcal{M}, \mathcal{A})$.
\item \label{card} $|\mathcal{G}(\mathbb{L})| = |\mathbb{L}| + \aleph_0$.
\item \label{init} $i$ is initial iff $\mathcal{G}(i)$ is rank-initial. 
\item \label{iso} $i$ is an isomorphism iff $\mathcal{G}(i)$ is an isomorphism.
\item \label{bnd} Let $l_0 \in \mathbb{L}$. $i$ is strictly bounded above by $l_0$ iff $\mathcal{G}(i)\restriction_{\mathrm{Ord}^{\mathcal{G}(\mathbb{K})}}$ is strictly bounded above by $c_{l_0}$.
\item \label{downcof} If $\mathbb{L} \setminus i(\mathbb{K})$ has no least element, then $\{ c_l \mid l \in \mathbb{L} \setminus i(\mathbb{K})\}$ is downwards cofinal in $\mathrm{Ord}^{\mathcal{G}(\mathbb{L})} \setminus \mathrm{Ord}^{\mathcal{G}(i(\mathbb{K}))}$.
\item \label{equal} Let $\mathbb{L}'$ be a linear order and let $j, j' : \mathbb{L} \rightarrow \mathbb{L}'$ be embeddings. $i$ is an equalizer of $j, j' : \mathbb{L} \rightarrow \mathbb{L}'$ iff $\mathcal{G}(i)$ is an equalizer of $\mathcal{G}(j), \mathcal{G}(j') : \mathcal{G}(\mathbb{L}) \rightarrow \mathcal{G}(\mathbb{L}')$.
\item \label{contr} Let $i' : \mathbb{K} \rightarrow \mathbb{L}$ be an embedding. We have $\forall k \in \mathbb{K} . i(k) < i'(k)$ iff $\forall \xi \in \mathrm{Ord}^{\mathcal{G}(\mathbb{K})} \setminus \mathrm{Ord}^\mathcal{M} . \mathcal{G}(i)(\xi) < \mathcal{G}(i')(\xi)$.
\end{enumerate}
\end{thm}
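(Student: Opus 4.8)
The plan is to first establish part (\ref{satisfaction}) --- a \L{}o\'s-style theorem for the iterated ultrapower --- by a simultaneous induction on the complexity of $\phi$: the atomic and Boolean cases use that each $\mathcal{U}^n$ is an ultrafilter (finite intersection and maximality, from Construction \ref{constr iter ultra}), and the quantifier case uses the ``Fubini'' behaviour of the $\mathcal{U}^n$ coming from iterability, together with the definable Skolem functions of $T_{\mathcal{U},\mathbb{L}}$ to supply witnesses. From (\ref{satisfaction}) the parts (\ref{elem}), (\ref{func}) and (\ref{card}) are then almost immediate: for (\ref{elem}) write an element of $\mathcal{G}(\mathbb{K})$ as a Skolem term $f(c_{k_1},\dots,c_{k_n})$ with $k_1<_\mathbb{K}\dots<_\mathbb{K}k_n$, use that $i$ preserves order to get $i(k_1)<_\mathbb{L}\dots<_\mathbb{L}i(k_n)$, and apply (\ref{satisfaction}) on both sides; (\ref{func}) is a direct computation on such terms; and (\ref{card}) follows from counting Skolem terms (there are $\le\aleph_0+|\mathbb{L}|$ of them, since $(\mathcal{M},\mathcal{A})$ is countable) together with $\mathbb{L}\hookrightarrow\mathrm{Ord}^{\mathcal{G}(\mathbb{L})}$ and $\mathcal{M}\hookrightarrow\mathcal{G}(\mathbb{L})$. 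For (\ref{cons}): granting the $\mathbb{K}=\varnothing$ instance of (\ref{init}) (so $\mathcal{M}$ is rank-initial in $\mathcal{G}(\mathbb{L})$), one gets $\mathcal{A}\subseteq\mathrm{Cod}_\mathcal{M}(\mathcal{G}(\mathbb{L}))$ by fixing $l\in\mathbb{L}$ and checking via (\ref{satisfaction}) that the $\mathcal{A}$-coded function $\xi\mapsto A\cap V_\xi$ takes at $c_l$ a value whose externalization meets $\mathcal{M}$ in exactly $A$ (using that $c_l$ lies above $\mathrm{Ord}^\mathcal{M}$ and that final segments of $\mathrm{Ord}^\mathcal{M}$ lie in $\mathcal{U}$); and $\mathrm{Cod}_\mathcal{M}(\mathcal{G}(\mathbb{L}))\subseteq\mathcal{A}$ from iterability, since for $\mathcal{A}$-coded $f$ the set $\{m\in\mathcal{M}\mid\{\vec\alpha\mid (\mathcal{M},A)_{A\in\mathcal{A}}\models m\in f(\vec\alpha)\}\in\mathcal{U}^n\}$ is $\mathcal{A}$-coded (re-indexing $\mathcal{M}$ by $\mathrm{Ord}^\mathcal{M}$ using global choice, available by Proposition \ref{GBC weakly compact global choice}) and by (\ref{satisfaction}) equals the externalization of $f(c_{l_1},\dots,c_{l_n})$ intersected with $\mathcal{M}$.

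The remaining parts (\ref{init}), (\ref{iso}), (\ref{bnd}), (\ref{downcof}), (\ref{equal}), (\ref{contr}) each assert that an order-theoretic property of $i$ corresponds to a structural property of $\mathcal{G}(i)$, and I would handle all of them by a common two-sided pattern. For the direction ``order property of $i$ $\Rightarrow$ structural property of $\mathcal{G}(i)$'': represent the relevant element $b$ of $\mathcal{G}(\mathbb{L})$ as a Skolem term $h(c_{l_1},\dots,c_{l_m})$ and split its constants into ``old'' ones (from $i(\mathbb{K})$) and ``new'' ones. When $i$ is initial, $i(\mathbb{K})$ is an initial segment of $\mathbb{L}$, so in the sorted tuple all old constants precede all new ones; if $\mathrm{rank}^{\mathcal{G}(\mathbb{L})}(b)$ is bounded by an ordinal $g(c_{l_1},\dots,c_{l_p})$ in $\mathrm{image}(\mathcal{G}(i))$, then $h(\vec\alpha)\in V_{g(\alpha_1,\dots,\alpha_p)+1}$ holds on a $\mathcal{U}^m$-large set, and completeness of $\mathcal{U}^m$ (Lemma \ref{iter complete}) yields an $\mathcal{A}$-coded $h'$ in the first $p$ coordinates with $h(\vec\alpha)=h'(\alpha_1,\dots,\alpha_p)$ on a $\mathcal{U}^m$-large set, whence $b=h'(c_{l_1},\dots,c_{l_p})\in\mathrm{image}(\mathcal{G}(i))$ by (\ref{satisfaction}); this gives rank-initiality of $\mathcal{G}(i)$. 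The same ``eliminate the new constants via completeness'' move, applied to the appropriate ordinal or function, handles the forward direction of (\ref{bnd}) (where only the fact that final segments of $\mathrm{Ord}^\mathcal{M}$ lie in $\mathcal{U}$ is needed), of (\ref{equal}) (eliminating the coordinates on which $j$ and $j'$ differ), and of (\ref{iso}) (where no new constants occur, so this is just surjectivity); for the forward direction of (\ref{downcof}) and (\ref{contr}) one additionally uses the canonical Ramsey property of $\mathcal{U}$ (available by Theorem \ref{weakly compact gives nice ultrafilter}) to normalize the defining function on a $\mathcal{U}^n$-large set so that its value depends monotonically on a fixed subset of its arguments, allowing the order relations $i(k)<l$, resp.\ $i(k)<i'(k)$, to transfer to the corresponding values in $\mathcal{G}(\mathbb{L})$.

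For the converse direction in each part I would use the dual fact that a ``new'' constant is never captured by a Skolem term in the ``old'' ones: if $l\notin i(\mathbb{K})$ and $c_l=h(c_{l_1},\dots,c_{l_n})$ with $l_1,\dots,l_n\in i(\mathbb{K})$, then inserting $l$ into the sorted order and testing the set $\{\vec\alpha\mid \delta=h(\vec\alpha_{\mathrm{rest}})\}\in\mathcal{U}^{n+1}$ by integrating the $\delta$-coordinate first gives, for each fixed choice of the other coordinates, an at most singleton slice, which by non-principality of $\mathcal{U}$ contradicts membership in $\mathcal{U}^{n+1}$. This yields at once: $\mathcal{G}(i)$ not initial when $i$ is not (take $c_l<\mathcal{G}(i)(c_k)$ with $c_l$ outside the image), $\mathcal{G}(i)$ not onto when $i$ is not, and --- reading off from the construction that $c_{i(k)}<c_{l_0}$ iff $i(k)<l_0$ --- the converse of (\ref{bnd}); the converses of (\ref{downcof}), (\ref{equal}) and (\ref{contr}) come out the same way, combined with functoriality (\ref{func}) for the easy inclusion in (\ref{equal}) and with the canonical-Ramsey normal form.

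The main obstacle, absorbing most of the real work, is the package of combinatorial lemmas about the $\mathcal{U}^n$ that the above invokes: (i) a Fubini principle saying $X\in\mathcal{U}^n$ may be tested by integrating the coordinates in any order (already needed in the quantifier step of (\ref{satisfaction}) and in the ``new constants are not captured'' arguments), to be proved from iterability by induction on $n$; (ii) a mild strengthening of Lemma \ref{iter complete} allowing the ``dependent'' coordinates to occupy an arbitrary subset of positions rather than an initial segment, obtained from the stated form by a permutation argument; and (iii) the canonical-Ramsey normal-form lemma --- that every $\mathcal{A}$-coded $f:[\mathrm{Ord}^\mathcal{M}]^n\to\mathrm{Ord}^\mathcal{M}$ agrees on a $\mathcal{U}^n$-large set with a function that is, on each of its relevant coordinates, either constant or strictly increasing --- which is what makes the order structure of $\mathbb{L}$ faithfully visible inside $\mathcal{G}(\mathbb{L})$ and is the crux of (\ref{downcof}), (\ref{equal}) and (\ref{contr}).
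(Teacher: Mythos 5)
Your overall architecture matches the paper's: establish (\ref{satisfaction}), derive (\ref{elem}), (\ref{func}), (\ref{card}), (\ref{cons}) from it, and then settle the order-theoretic transfer statements using completeness, iterability, and canonical Ramsey. But you take genuinely different routes at several points, and one of them opens an obligation you would need to discharge carefully.

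For (\ref{satisfaction}), the paper treats it as an immediate consequence of Construction \ref{constr iter power}: $T_{\mathcal{U},\mathbb{L}}$ is \emph{defined} so that each $\Gamma_n$ is a complete type, and $\mathcal{G}(\mathbb{L})$ is the prime model of a complete theory, so the satisfaction equivalence holds by fiat. Your \L{}o\'s-style induction with a Fubini step and Skolem-function witnessing is sound but does more work than is needed given that construction. For (\ref{bnd}) the paper argues through nested prime models $\mathcal{G}(\mathbb{K})\prec\mathcal{G}(\mathbb{L}_{<l_0})\prec\mathcal{G}(\mathbb{L})$ plus (\ref{init}), whereas you argue directly at the ultrafilter level; both work. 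For (\ref{downcof}) the paper uses only completeness and iterability (the ``eliminate the extra ordinal coordinate'' computation), and for (\ref{contr}) it avoids the ultrafilter entirely, using a minimal-$k^*$ argument together with (\ref{elem}) and (\ref{init}) applied to the nested structures $\mathcal{G}(\mathbb{L}_{\leq i(k^*)})\prec\mathcal{G}(\mathbb{L}_{\leq i'(k^*)})$. Canonical Ramsey is used in the paper only for (\ref{equal}).

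The place where your plan carries a real extra burden is lemma (iii): the claim that every $\mathcal{A}$-coded $f:[\mathrm{Ord}^\mathcal{M}]^n\to\mathrm{Ord}^\mathcal{M}$ agrees $\mathcal{U}^n$-largely with a function that is, in each relevant coordinate, constant or \emph{strictly increasing}. The stated $(\mathcal{M},\mathcal{A})$-canonically-Ramsey property gives only the equality structure (the subset $S$), not monotonicity, and you cannot appeal to an external Ramsey argument to rule out the ``decreasing'' alternative because the ordinals of $\mathcal{M}$ may be externally ill-founded. You can get monotonicity, but you need the extra step: derive ordinary Ramsey for $\mathcal{U}$ from canonical Ramsey, use it to homogenize the order relation between $f$-values, and then invoke \emph{internal} Foundation in $(\mathcal{M},\mathcal{A})$ (the class $H$ and the function $f\restriction_H$ are $\mathcal{A}$-coded, so a strictly decreasing $\mathrm{Ord}$-indexed sequence of ordinals inside $(\mathcal{M},\mathcal{A})\models\mathrm{GBC}$ is impossible) to exclude the decreasing case. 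Your lemma (ii), the permutation strengthening of completeness, similarly needs a Fubini-type reordering argument for the $\mathcal{U}^n$'s, which is not automatic from the asymmetric recursive definition, though it is true. Since the paper reaches (\ref{downcof}) and (\ref{contr}) without either (ii) or (iii), your route is more homogeneous (one normal-form lemma doing all the work) but costs more in preliminary combinatorics than the paper's more ad hoc but lighter arguments.
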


{\em Remark. } (\ref{elem}) and (\ref{init}) imply that $(\mathcal{M, A})$ is a rank-initial elementary substructure of $\mathcal{G}(\mathbb{L})$. It follows from (\ref{equal}) that if $j : \mathbb{L} \rightarrow \mathbb{L}$ is a self-embedding with no fixed point, then the fixed point set of $\mathcal{G}(j)$ is $\mathcal{M}$ (consider the equalizer of $j$ and $\id_\mathbb{L}$). It follows from (\ref{contr}) that $i : \mathbb{L} \rightarrow \mathbb{L}$ is contractive iff $\mathcal{G}(i)$ is contractive on $\mathcal{M} \setminus \mathcal{S}$.

\begin{proof}
(\ref{satisfaction}) This is immediate from Construction \ref{constr iter power}.

(\ref{elem}) We may assume that $\mathbb{K} \subseteq \mathbb{L}$ and that $i$ is the corresponding inclusion function. This has the convenient consequence that $\mathcal{L}^0_{\mathcal{A}, \mathbb{K}} \subseteq \mathcal{L}^0_{\mathcal{A}, \mathbb{L}}$. Let $\phi(\vec{c}) \in \mathcal{L}^0_{\mathcal{A}, \mathbb{K}}$ be a sentence, where $\vec{c}$ is a tuple of constants. By (\ref{satisfaction}), 
\[
\mathcal{G}(\mathbb{K}) \models \phi(\vec{c}) \Leftrightarrow \mathcal{G}(\mathbb{L}) \models \phi(\vec{c}).
\]
Since every element of $\mathcal{G}(\mathbb{K})$ interprets a term, this equivalence establishes $\mathcal{G}(\mathbb{K}) \preceq \mathcal{G}(\mathbb{L})$, as $\mathcal{L}^0_{\mathcal{A}, \mathbb{K}}$-structures.

(\ref{func}) It is clear that $\mathcal{G}(\id_\mathbb{L}) = \id_{\mathcal{G}(\mathbb{L})}$. It only remains to verify that composition is preserved. Let $j : \mathbb{L} \rightarrow \mathbb{L}'$ and $j' : \mathbb{L}' \rightarrow \mathbb{L}''$ be embeddings of linear orders. Let $a$ be an arbitrary element of $\mathcal{G}(\mathbb{L})$. Then $a = f(c_{l_1}, \dots, c_{l_n})$, for some $f \in \mathcal{A}$, $n \in \mathbb{N}$ and $l_1, \dots, l_n \in \mathbb{L}$. $\mathcal{G}(j'\circ j)(a) = f(c_{j'\circ j(l_1)}, \dots, c_{j'\circ j(l_n)}) = f(c_{j'(j(l_1))}, \dots, c_{j'(j(l_n))}) = (\mathcal{G}(j') \circ \mathcal{G}(j))(a)$, as desired.

(\ref{cons}) We start with $\mathcal{A} \subseteq \mathrm{Cod}_\mathcal{M}(\mathcal{G}(\mathbb{L}))$: Let $A \in \mathcal{A}$. Since $(\mathcal{M}, \mathcal{A}) \models \mathrm{GBC}$, the function $f_A : \mathrm{Ord}^\mathcal{M} \rightarrow \mathcal{M}$, defined by $f_A(\xi) = V_\xi \cap A$ for all $\xi \in \mathrm{Ord}^\mathcal{M}$, is coded in $\mathcal{A}$. Since $\mathbb{L} \neq \varnothing$, let $l \in \mathbb{L}$. Now by (\ref{satisfaction}), for each $a \in \mathcal{M}$,
\[
\begin{array}{cl}
&\mathcal{G}(\mathbb{L}) \models a \in f_A(c_l) \\
\Leftrightarrow & \{ \alpha \in \mathrm{Ord^\mathcal{M}} \mid (\mathcal{M}, A)_{A \in \mathcal{A}} \models a \in f_A(\alpha) \} \in \mathcal{U} \\
\Leftrightarrow & a \in A,
\end{array}
\]
so $f_A(c_l)$ codes $A$.

We proceed with $\mathrm{Cod}_\mathcal{M}(\mathcal{G}(\mathbb{L})) \subseteq \mathcal{A}$: Let $b \in \mathcal{G}(\mathbb{L})$. Then $\mathcal{G}(\mathbb{L}) \models b = f(c_{l_1}, \dots, c_{l_n})$, for some $n \in \mathbb{N}$ and $l_1, \dots, l_n \in \mathbb{L}$. We need to show that 
\[
\{ x \in \mathcal{M} \mid \mathcal{G}(\mathbb{L}) \models x \in f(c_{l_1}, \dots, c_{l_n}) \} \in \mathcal{A}.
\]
By (\ref{satisfaction}), this amounts to showing that
\begin{align*}
\big\{ & x \in \mathcal{M} \mid \{ \langle \alpha_1, \dots, \alpha_n \rangle \in (\mathrm{Ord}^\mathcal{M})^n \mid (\mathcal{M}, A)_{A \in \mathcal{A}} \models x \in f(\alpha_1, \dots, \alpha_n) \} \in \mathcal{U}^n \big\} \in \mathcal{A}.
\end{align*}
Letting $w : \mathrm{Ord}^\mathcal{M} \rightarrow \mathcal{M}$ be a well-ordering of $\mathcal{M}$ coded in $\mathcal{A}$, the above is equivalent to
\begin{align*}
\big\{ \xi \in \mathrm{Ord}^\mathcal{M} \mid \{ \langle \alpha_1, \dots, \alpha_n \rangle \in (\mathrm{Ord}^\mathcal{M})^n \mid \phantom{\{}(\mathcal{M}, A)_{A \in \mathcal{A}} \models w(\xi) \in f(\alpha_1, \dots, \alpha_n) \} \in \mathcal{U}^n \big\} \in \mathcal{A}.
\end{align*}
This last statement holds since $\mathcal{U}^n$ is $(\mathcal{M}, \mathcal{A})$-iterable on $\mathbb{B}^n$.

(\ref{card}) Since $(\mathcal{M}, \mathcal{A})$ is countable, $|\mathbb{L}| + \aleph_0 = |\mathcal{L}^0_{\mathcal{A}, \mathbb{L}}|$. So since $\mathcal{G}(\mathbb{L})$ is a prime model in that language, we have $|\mathcal{G}(\mathbb{L})| = |\mathbb{L}| + \aleph_0$.

(\ref{init}) We may assume that $\mathbb{L}$ extends $\mathbb{K}$ and that $\mathcal{G}(\mathbb{L})$ extends $\mathcal{G}(\mathbb{K})$. By Proposition \ref{elem initial is rank-initial}, it suffices to show that $i$ is initial. Let $a \in \mathcal{G}(\mathbb{K})$ and $b \in \mathcal{G}(\mathbb{L})$, such that $\mathcal{G}(\mathbb{L}) \models b \in a$. We need to show that $b \in \mathcal{G}(\mathbb{K})$. Note that $\mathcal{G}(\mathbb{K}) \models a = f(c_{l_1}, \dots, c_{l_m})$ and $\mathcal{G}(\mathbb{L}) \models b = g(c_{l_1}, \dots, c_{l_n})$, for some $m \leq n \in \mathbb{N}$, $l_1, \dots, l_m \in \mathbb{K}$, $ l_{m+1}, \dots, l_n \in \mathbb{L}$ and $f, g \in \mathcal{A}$. By (\ref{satisfaction}), we have that 
$$\big\{\alpha_1, \dots \alpha_n \mid (\mathcal{M, A}) \models g(\alpha_1, \dots, \alpha_n) \in f(\alpha_1, \dots, \alpha_m)\big\} \in \mathcal{U}^n.$$
So by $(\mathcal{M, A})$-completeness of $\mathcal{U}$ and Lemma \ref{iter complete}, there is $f' : (\mathrm{Ord}^\mathcal{M})^m \rightarrow \mathcal{M}$ in $\mathcal{A}$, such that
\[
\big\{\alpha_1, \dots \alpha_n \mid (\mathcal{M, A}) \models g(\alpha_1, \dots, \alpha_n) = f'(\alpha_1, \dots, \alpha_m)\big\} \in \mathcal{U}^n,
\]
whence $\mathcal{G}(\mathbb{L}) \models b = f'(c_{l_1}, \dots, c_{l_m})$. But $f'(c_{l_1}, \dots, c_{l_m}) \in \mathcal{G}(\mathbb{K})$. So $i$ is initial.

(\ref{iso}) ($\Leftarrow$) follows from that the orderings embed into the respective sets of ordinals of the models, and that any isomorphism of the models preserves the order of their ordinals. ($\Rightarrow$) follows from that functors preserve isomorphisms. 

(\ref{bnd}) ($\Leftarrow$) is obvious. For ($\Rightarrow$), we may assume that $\mathbb{K}$ is a linear suborder of $\mathbb{L}$ that is strictly bounded above by $l_0 \in \mathbb{L}$. Note that $\mathcal{G}(\mathbb{K}) \prec \mathcal{G}(\mathbb{L}_{< l_0}) \prec \mathcal{G}(\mathbb{L})$. So every ordinal of $\mathcal{G}(\mathbb{K})$ is an ordinal of $\mathcal{G}(\mathbb{L}_{< l_0})$, and by (\ref{init}), every ordinal of $\mathcal{G}(\mathbb{L}_{< l_0})$ is an ordinal of $\mathcal{G}(\mathbb{L})$ below $c_{l_0}$.

(\ref{downcof}) We may assume that $\mathbb{K} \subseteq \mathbb{L}$. Suppose that $\mathbb{L} \setminus \mathbb{K}$ has no least element. Let $\alpha \in \mathrm{Ord}^{\mathcal{G}(\mathbb{L})} \setminus \mathrm{Ord}^{\mathcal{G}(\mathbb{K})}$. Then $\mathcal{G}(\mathbb{L}) \models \alpha = f(c_{l_1}, \dots, c_{l_n})$, for some $n \in \mathbb{N}$ and $l_1 < \dots < l_n \in \mathbb{L}$. Let $1 \leq n^\circ \leq n$ be the least natural number such that there is $l \in \mathbb{L}\setminus \mathbb{K}$ with $l < l_{n^\circ}$. Let $l^* \in \mathbb{L}\setminus \mathbb{K}$ witness this for $n^\circ$. To show that $\mathcal{G}(\mathbb{L}) \models l^* < \alpha$, it suffices to show that
$$\{\langle \xi_1, \dots, \xi_{n^\circ - 1}, \xi^*, \xi_{n^\circ}, \dots, \xi_n \rangle \mid \xi^* < f(\xi_1, \dots, \xi_n) \} \in \mathcal{U}^{n+1}.$$
Suppose not. Then
$$\{\langle \xi_1, \dots, \xi_{n^\circ - 1}, \xi^*, \xi_{n^\circ}, \dots, \xi_n \rangle \mid \xi^* \geq f(\xi_1, \dots, \xi_n) \} \in \mathcal{U}^{n+1 },$$
so by completeness
\begin{align*}
\big\{ & \langle \xi_1, \dots, \xi_{n^\circ - 1} \rangle \mid  \exists \xi . \{ \langle \xi_{n^\circ}, \dots, \xi_n \rangle \mid \xi = f(\xi_1, \dots, \xi_n)\} \in \mathcal{U}^{n - n^\circ + 1} \big\} \in \mathcal{U}^{n^\circ - 1}.
\end{align*}
Hence, by iterability, we can code a function $f' : (\mathrm{Ord}^\mathcal{M})^{n^\circ -1} \rightarrow \mathrm{Ord}^\mathcal{M}$ by the class
$$\{\langle \langle \xi_1, \dots, \xi_{n^\circ - 1} \rangle, \xi \rangle \mid \{ \langle \xi_{n^\circ}, \dots, \xi_n \rangle \mid \xi = f(\xi_1, \dots, \xi_n)\} \in \mathcal{U}^{n - n^\circ + 1} \} \in \mathcal{A},$$
and $\mathcal{G}(\mathbb{L}) \models f'(l_{c_1}, \dots, l_{c_{n^\circ - 1}}) = \alpha$. But this means that $\alpha \in \mathcal{G}(\mathbb{K})$, contradicting assumption. 

(\ref{equal}) ($\Leftarrow$) is obvious. For ($\Rightarrow$), assume that $i : \mathbb{K} \rightarrow \mathbb{L}$ is an equalizer of $j, j' : \mathbb{L} \rightarrow \mathbb{L}'$, i.e. we may assume that $\mathbb{K}$ is the linear suborder of $\mathbb{L}$ on $\{l \in \mathbb{L} \mid j(l) = j'(l)\}$. It suffices to show that for all elements $x$ of $\mathcal{G}(\mathbb{L})$, we have $\mathcal{G}(j)(x) = \mathcal{G}(j')(x) \leftrightarrow x \in \mathcal{G}(\mathbb{K})$. ($\leftarrow$) is obvious. For ($\rightarrow$), suppose that $a \in \mathcal{G}(\mathbb{L}) \setminus \mathcal{G}(\mathbb{K})$. Let $n^*$ be the least natural number such that $\mathcal{G}(\mathbb{L}) \models a = f(c_{l_1}, \dots, c_{l_{n^*}})$, for some $f \in \mathcal{A}$ and $l_1 < \dots < l_{n^*} \in \mathbb{L}$. Suppose towards a contradiction that $\mathcal{G}(\mathbb{L}) \models f(c_{j(l_1)}, \dots, c_{j(l_{n^*})}) = f(c_{j'(l_1)}, \dots, c_{j'(l_{n^*})})$. Since $\mathcal{U}$ is $(\mathcal{M, A})$-canonically Ramsey, since $f$ is coded in $\mathcal{A}$ and since there is a bijection between the universe and the ordinals coded in $\mathcal{A}$, there is $H \in \mathcal{U}$ and $S \subseteq \{1, \dots, n^*\}$, such that for any $\alpha_1 < \dots < \alpha_{n^*}$ and $\beta_1 < \dots < \beta_{n^*}$ in $H$, 
$$f(\alpha_1, \dots, \alpha_{n^*}) = f(\beta_1, \dots, \beta_{n^*}) \leftrightarrow \forall m \in S . \alpha_m = \beta_m.$$
Since $a \not\in \mathcal{G}(\mathbb{K})$, there is $1 \leq n^\circ \leq n^*$, such that $l_{n^\circ} \in \mathbb{L} \setminus \mathbb{K}$ and consequently $j(l_{n^\circ}) \neq j'(l_{n^\circ})$. So since $f(c_{j(l_1)}, \dots, c_{j(l_{n^*})}) = f(c_{j'(l_1)}, \dots, c_{j'(l_{n^*})})$, we have $n^\circ \not\in S$.
Note that $f' : (\mathrm{Ord}^\mathcal{M})^{n^*-1} \rightarrow \mathcal{M} $, defined by 
\begin{align*}
& f'(\alpha_1, \dots, \alpha_{n^\circ - 1}, \alpha_{n^\circ +1}, \dots, \alpha_{n^*}) = \\
& f(\alpha_1, \dots, \alpha_{n^\circ-1}, \alpha_{n^\circ-1}+1, \alpha_{n^\circ + 1}, \dots, \alpha_{n^*}),
\end{align*}
is coded in $\mathcal{A}$; and by $n^\circ \not\in S$, 
$$f(\alpha_1, \dots, \alpha_{n^\circ-1}, \alpha_{n^\circ-1}+1, \alpha_{n^\circ + 1}, \dots, \alpha_{n^*}) = f(\alpha_1, \dots, \alpha_{n^*})$$ 
for all $\alpha_1< \dots< \alpha_{n^*} \in H$. Since $H \in \mathcal{U}$, it follows that 
$$\mathcal{G}(\mathbb{L}) \models a = f'(c_{l_1}, \dots, c_{l_{n^\circ -1}}, c_{l_{n^\circ +1}}, \dots, c_{l_n}),$$ 
contradicting minimality of $n^*$. 

(\ref{contr}) ($\Leftarrow$) is obvious. For ($\Rightarrow$), let $\alpha$ be an arbitrary ordinal in $\mathcal{G}(\mathbb{K})$ not in $\mathcal{M}$. Let $k^*$ be the least element of $\mathbb{K}$, such that $\mathcal{G}(\mathbb{K}) \models \alpha =  f(c_{k_1}, \dots, c_{k_n})$, where $f \in \mathcal{A}$, $n \in \mathbb{N}$ and $k_1 < \dots < k_n = k^* \in \mathbb{K}$. Note that $i'(\alpha)$ is in $\mathcal{G}(\mathbb{L}_{\leq i'(k^*)})$ and that $i(\alpha)$ is in $\mathcal{G}(\mathbb{L}_{\leq i(k^*)})$. On the other hand, since $i(k^*) < i'(k^*)$, we have by minimality of $k^*$ that $i'(\alpha)$ is not in $\mathcal{G}(\mathbb{L}_{\leq i(k^*)})$. So by (\ref{elem}) and (\ref{init}), $\mathcal{G}(\mathbb{L}) \models i(\alpha) < i'(\alpha)$.
\end{proof}

As seen in the proofs of the Corollaries below, this theorem is quite powerful when applied to the set of rational numbers $\mathbb{Q}$, with the usual ordering $<_\mathbb{Q}$. For any structure $\mathcal{K}$, and $S \subseteq \mathcal{K}$, we define $\mathrm{End}_S(\mathcal{K})$ as the monoid of endomorphisms of $\mathcal{K}$ that fix $S$ pointwise, and we define $\mathrm{Aut}_S(\mathcal{K})$ as the group of automorphisms of $\mathcal{K}$ that fix $S$ pointwise.

\begin{cor}\label{end extend model of weakly compact to model with autos}
	If $\mathcal{M} \models \mathrm{ZFC}$ expands to a countable model $(\mathcal{M}, \mathcal{A})$ of $\mathrm{GBC} + $ ``$\mathrm{Ord}$ is weakly compact'', then there is $\mathcal{M} \prec^\textnormal{rank-cut} \mathcal{N}$, such that $\mathrm{SSy}_\mathcal{M}(\mathcal{N}) = (\mathcal{M}, \mathcal{A})$, and such that for any countable linear order $\mathbb{L}$, there is an embedding of $\mathrm{End}(\mathbb{L})$ into $\mathrm{End}_\mathcal{M}(\mathcal{N})$. Moreover, this embedding sends every automorphism of $\mathbb{L}$ to an automorphism of $\mathcal{N}$, and sends every contractive self-embedding of $\mathbb{L}$ to a self-embedding of $\mathcal{N}$ that is contractive on $\mathrm{Ord}^\mathcal{N} \setminus \mathcal{M}$ and whose fixed-point set is $\mathcal{M}$.
\end{cor}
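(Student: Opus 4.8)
The plan is to let $\mathcal{N}$ be the iterated ultrapower along $\mathbb{Q}$ and to route everything through the Gaifman functor of Theorem \ref{Gaifman thm}, exploiting that $\mathbb{Q}$ is universal among countable linear orders. Fix an $(\mathcal{M},\mathcal{A})$-generic ultrafilter $\mathcal{U}$ on $\mathbb{B}$ (it exists by Lemma \ref{generic filter existence}, and by Theorem \ref{weakly compact gives nice ultrafilter} it is $(\mathcal{M},\mathcal{A})$-complete, iterable and canonically Ramsey), write $\mathcal{G}=\mathcal{G}_{\mathcal{U},(\mathcal{M},\mathcal{A})}$, and let $\mathcal{N}$ be the $\mathcal{L}^0$-reduct of $\mathcal{G}(\mathbb{Q})$. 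Applying Theorem \ref{Gaifman thm} to the inclusion $\varnothing\hookrightarrow\mathbb{Q}$: by parts (\ref{elem}), (\ref{init}) and the Remark, $\mathcal{M}$ is a rank-initial elementary substructure of $\mathcal{N}$; by part (\ref{bnd}), $\mathrm{Ord}^\mathcal{M}$ is bounded above in $\mathcal{N}$ by every $c_q$; and by part (\ref{downcof}), since $\mathbb{Q}$ has no least element, $\{c_q\mid q\in\mathbb{Q}\}$ is downwards cofinal in $\mathrm{Ord}^\mathcal{N}\setminus\mathrm{Ord}^\mathcal{M}$, and as this set is an order-copy of $\mathbb{Q}$ it has no least element, so the inclusion $\mathcal{M}\hookrightarrow\mathcal{N}$ is topless. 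Hence $\mathcal{M}\prec^\textnormal{rank-cut}\mathcal{N}$; and $\mathrm{SSy}_\mathcal{M}(\mathcal{N})=(\mathcal{M},\mathcal{A})$ by part (\ref{cons}) (valid since $\mathbb{Q}\neq\varnothing$).

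It remains, for each countable linear order $\mathbb{L}$, to produce the monoid embedding, and the key step is to realize $\mathrm{End}(\mathbb{L})$ inside $\mathrm{End}(\mathbb{Q})$. Assume $\mathbb{L}\neq\varnothing$ (otherwise $\mathrm{End}(\mathbb{L})$ is trivial and there is nothing to do). The lexicographic product $\mathbb{Q}\cdot\mathbb{L}$ — consisting of $\mathbb{L}$-many consecutive copies of $\mathbb{Q}$ — is a countable dense linear order without endpoints, hence isomorphic to $\mathbb{Q}$ by Cantor's theorem; fix an isomorphism $\theta:\mathbb{Q}\xrightarrow{\sim}\mathbb{Q}\cdot\mathbb{L}$. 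For a self-embedding $h$ of $\mathbb{L}$, the map $\tilde h:(q,l)\mapsto(q,h(l))$ is a self-embedding of $\mathbb{Q}\cdot\mathbb{L}$, and $\iota_\mathbb{L}(h)=_\df\theta^{-1}\circ\tilde h\circ\theta$ is a self-embedding of $\mathbb{Q}$. Since $\widetilde{h_1\circ h_2}=\tilde h_1\circ\tilde h_2$, $\widetilde{\id}=\id$, and $\tilde h_1=\tilde h_2$ forces $h_1=h_2$, the assignment $h\mapsto\iota_\mathbb{L}(h)$ is an injective monoid homomorphism $\mathrm{End}(\mathbb{L})\to\mathrm{End}(\mathbb{Q})$. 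Moreover: if $h\in\mathrm{Aut}(\mathbb{L})$ then $\tilde h$ is a bijection, so $\iota_\mathbb{L}(h)\in\mathrm{Aut}(\mathbb{Q})$; and if $h$ is contractive, i.e. $h(l)<l$ for all $l$, then $(q,h(l))<(q,l)$ in $\mathbb{Q}\cdot\mathbb{L}$, so $\tilde h$ is contractive there and hence its $\theta$-conjugate $\iota_\mathbb{L}(h)$ is contractive on $\mathbb{Q}$ — in particular fixed-point-free.

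The sought embedding $\mathrm{End}(\mathbb{L})\to\mathrm{End}_\mathcal{M}(\mathcal{N})$ is $h\mapsto\mathcal{G}(\iota_\mathbb{L}(h))$. By Theorem \ref{Gaifman thm}(\ref{func}), $j\mapsto\mathcal{G}(j)$ is a monoid homomorphism on the self-embeddings of $\mathbb{Q}$; it is injective because if $j_1(q)\neq j_2(q)$ then $\mathcal{G}(j_1)$ and $\mathcal{G}(j_2)$ send $c_q$ to the distinct ordinals $c_{j_1(q)},c_{j_2(q)}$ of $\mathcal{N}$; and each $\mathcal{G}(j)$ fixes $\mathcal{M}$ pointwise, since elements of $\mathcal{M}$ are interpreted by constants of $\mathcal{L}^0_\mathcal{A}$ and are not moved by $\mathcal{G}(j)$. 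So the composite $h\mapsto\mathcal{G}(\iota_\mathbb{L}(h))$ is an injective monoid homomorphism into $\mathrm{End}_\mathcal{M}(\mathcal{N})$. For the ``moreover'' clauses: if $h\in\mathrm{Aut}(\mathbb{L})$, then $\iota_\mathbb{L}(h)$ is an isomorphism, so $\mathcal{G}(\iota_\mathbb{L}(h))$ is an automorphism of $\mathcal{N}$ by Theorem \ref{Gaifman thm}(\ref{iso}); if $h$ is contractive, then $\iota_\mathbb{L}(h)$ is contractive on $\mathbb{Q}$, so Theorem \ref{Gaifman thm}(\ref{contr}), applied with $i'=\id_\mathbb{Q}$, gives $\mathcal{G}(\iota_\mathbb{L}(h))(\xi)<\xi$ for all $\xi\in\mathrm{Ord}^\mathcal{N}\setminus\mathrm{Ord}^\mathcal{M}$ — equivalently, since $\rnk^\mathcal{N}(\xi)=\xi$ for ordinals, $\mathcal{G}(\iota_\mathbb{L}(h))$ is contractive on $\mathrm{Ord}^\mathcal{N}\setminus\mathcal{M}$; and since $\iota_\mathbb{L}(h)$ has no fixed point, the remark following Theorem \ref{Gaifman thm} (via part (\ref{equal}), applied to $\iota_\mathbb{L}(h)$ and $\id_\mathbb{Q}$) gives that the fixed-point set of $\mathcal{G}(\iota_\mathbb{L}(h))$ is $\mathcal{M}$.

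The one genuinely non-mechanical ingredient is the second paragraph: identifying the realization of $\mathrm{End}(\mathbb{L})$ inside $\mathrm{End}(\mathbb{Q})$ — the pair consisting of the isomorphism $\mathbb{Q}\cdot\mathbb{L}\cong\mathbb{Q}$ and the diagonal lift $h\mapsto\tilde h$ — and verifying that it is a \emph{faithful} monoid homomorphism carrying automorphisms to automorphisms and contractive self-embeddings to contractive (hence fixed-point-free) self-embeddings. Once this is in place, the rest is an assembly of the functorial properties catalogued in Theorem \ref{Gaifman thm}; the only book-keeping subtlety is keeping straight that, for ordinals $\xi$, the assertion ``$\mathcal{G}(\iota_\mathbb{L}(h))$ is contractive on $\mathrm{Ord}^\mathcal{N}\setminus\mathcal{M}$'' is literally the inequality $\mathcal{G}(\iota_\mathbb{L}(h))(\xi)<\xi$ supplied by part (\ref{contr}).
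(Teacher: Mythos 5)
Your proof is correct and follows essentially the same route as the paper's: take $\mathcal{N}=\mathcal{G}_{\mathcal{U},(\mathcal{M},\mathcal{A})}(\mathbb{Q})$ and push endomorphisms of $\mathbb{L}$ through $\mathrm{End}(\mathbb{Q})$ via the Gaifman functor, reading off the ``moreover'' clauses from parts (\ref{iso}), (\ref{equal}) and (\ref{contr}) of Theorem \ref{Gaifman thm}. The only substantive difference is that the paper treats the embedding $\mathrm{End}(\mathbb{L})\hookrightarrow\mathrm{End}(\mathbb{Q})$ as a well-known black box, whereas you supply it explicitly via the lexicographic product $\mathbb{Q}\cdot\mathbb{L}\cong\mathbb{Q}$ (also verifying along the way that it preserves automorphisms and contractivity, which the cited black-box fact must be understood to include for the ``moreover'' part to go through).
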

\begin{proof}
	Since $(\mathcal{M}, \mathcal{A})$ is countable, Lemma \ref{generic filter existence} tells us that there is an $(\mathcal{M}, \mathcal{A})$-generic ultrafilter $\mathcal{U}$. Let $\mathcal{N} = \mathcal{G}_{\mathcal{U}, (\mathcal{M}, \mathcal{A})}(\mathbb{Q})$. By Theorem \ref{Gaifman thm} (\ref{elem}), (\ref{cons}) and (\ref{init}), $\mathcal{M} \prec^\textnormal{rank-cut} \mathcal{N}$ and $\mathrm{SSy}_\mathcal{M}(\mathcal{N}) = (\mathcal{M}, \mathcal{A})$. By Theorem \ref{Gaifman thm} (\ref{func}) and (\ref{equal}), there is an embedding of $\mathrm{End}(\mathbb{Q})$ into $\mathrm{End}_\mathcal{M}(\mathcal{N})$. Moreover, it is well-known that for any countable linear order $\mathbb{L}$, there is an embedding of $\mathrm{End}(\mathbb{L})$ into $\mathrm{End}(\mathbb{Q})$. Composing these two embeddings gives the result. The last sentence in the statement follows from Theorem \ref{Gaifman thm} (\ref{iso}), (\ref{equal}) and (\ref{contr}).
\end{proof}

\begin{lemma}\label{contractive proper selfembedding of Q}
For any $q \in \mathbb{Q}$, there is an initial topless contractive self-embedding of the usual linear order on  $\mathbb{Q}$ that is strictly bounded by $q$.
\end{lemma}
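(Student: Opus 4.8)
The plan is to reduce the lemma to a single back-and-forth construction. Fix an irrational number $\alpha$ with $\alpha < q$ (possible since every open interval of reals contains an irrational). I will build an order-preserving injection $i : \mathbb{Q} \to \mathbb{Q}$ with $\mathrm{image}(i) = \mathbb{Q} \cap (-\infty, \alpha)$ and with $i(x) < x$ for all $x \in \mathbb{Q}$. Granting such an $i$, every clause of the lemma follows at once: $i$ is \emph{initial} because $\mathbb{Q} \cap (-\infty,\alpha)$ is downward closed in $\mathbb{Q}$; it is \emph{contractive} by construction; it is \emph{strictly bounded by $q$} because all its values lie below $\alpha < q$; and it is \emph{topless} because the set of upper bounds of $i$ is $\{ y \in \mathbb{Q} \mid y > \alpha\}$, which has no least element as $\alpha \notin \mathbb{Q}$. (That we should aim for an image of the form $\mathbb{Q}\cap(-\infty,\alpha)$ with $\alpha$ irrational is essentially forced: since $\mathbb{Q}$ has no greatest element, neither does the image of a self-embedding of $\mathbb{Q}$, so a downward-closed bounded image with no least upper bound must be the lower part of a Dedekind cut at an irrational.)

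For the construction, enumerate $\mathbb{Q} = \{ r_n \mid n < \omega\}$ and $\mathbb{Q} \cap (-\infty,\alpha) = \{ s_n \mid n < \omega\}$, and recursively build an increasing chain $p_0 \subseteq p_1 \subseteq \cdots$ of finite order-preserving partial injections from $\mathbb{Q}$ to $\mathbb{Q}$, maintaining the two invariants $\mathrm{image}(p_k) \subseteq \mathbb{Q} \cap (-\infty,\alpha)$ and $p_k(x) < x$ for every $x \in \dom(p_k)$. At even stages I add the least not-yet-enumerated $r_n$ to the domain: it lies in a unique gap $(a, b)$ of $\dom(p_k)$ (allowing $a = -\infty$ or $b = +\infty$), and I must pick a rational value $y$ in the interval $\big(p_k(a),\, \min(p_k(b), r_n, \alpha)\big)$, with the conventions $p_k(-\infty) = -\infty$, $p_k(+\infty) = +\infty$. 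The invariants guarantee this interval is nonempty, since $p_k(a) < a < r_n$, $p_k(a) < \alpha$, and $p_k(a) < p_k(b)$, and its endpoints are rational or infinite, so a rational $y$ is available; adjoining $\langle r_n, y\rangle$ preserves both invariants. At odd stages I perform the dual move, adding the least not-yet-enumerated $s_n$ to the image: I locate $s_n$ in a gap of $\mathrm{image}(p_k)$ and pick a rational preimage $x$ in the corresponding gap of $\dom(p_k)$ subject to the extra requirement $x > s_n$; nonemptiness of the admissible interval here uses $s_n < p_k(b) < b$ for the relevant right endpoint $b$ of the domain gap, together with $s_n < \alpha$. Setting $i = \bigcup_k p_k$, the even stages make $\dom(i) = \mathbb{Q}$ and the odd stages make $\mathrm{image}(i) = \mathbb{Q} \cap (-\infty,\alpha)$, so $i$ is as required.

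The part that needs genuine care is checking, at each extension step, that the contractiveness demand ($y < r_n$ at even stages, $x > s_n$ at odd stages) together with the boundedness demand ($\mathrm{image} \subseteq (-\infty,\alpha)$) never conflicts with the density moves forcing totality and surjectivity onto $\mathbb{Q}\cap(-\infty,\alpha)$. This amounts to a short case split according to whether the new point lies below, between, or above the current finite approximation (plus the degenerate cases where $p_k$ is empty or a gap endpoint is infinite); in every case it reduces to the observation already noted, namely that the invariant $p_k(x) < x$ keeps the relevant interval open and nonempty precisely because $\alpha$ was chosen irrational with $\alpha < q$ while the competing bounds $p_k(a), p_k(b), r_n, s_n$ are all rational. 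I expect no further difficulty beyond this bookkeeping.
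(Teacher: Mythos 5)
Your back-and-forth construction is correct: the two invariants you carry (image contained in $\mathbb{Q}\cap(-\infty,\alpha)$, and $p_k(x)<x$ for all $x$ in the domain) are exactly what keeps every extension interval open and nonempty at both forth and back stages, and fixing $\alpha$ irrational below $q$ simultaneously yields the bound by $q$ and the toplessness. The paper takes a genuinely different route: it writes down a finite first-order theory $T$ in the language $\{<,f\}$ axiomatizing a dense linear order without endpoints equipped with an initial, bounded, topless, contractive self-embedding, offers the punctured real line $\mathcal{R}$ on $\mathbb{R}\setminus\{0\}$ with $f^{\mathcal{R}}(x)=-2^{-x}$ as a model of $T$, and then uses downward L\"owenheim--Skolem together with Cantor's $\aleph_0$-categoricity of dense linear orders without endpoints to transfer to $\mathbb{Q}$. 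Your argument is more elementary, and as it happens more reliable: the paper's $\mathcal{R}$ in fact violates the initiality axiom of $T$, since $-1<f^{\mathcal{R}}(1)=-\tfrac12$ yet $-1\notin\mathrm{range}(f^{\mathcal{R}})$ (the only real with $-2^{-x}=-1$ is $x=0$, which is excluded from the domain), and indeed no interpretation of $f$ on $\mathbb{R}\setminus\{0\}$ can satisfy $T$: the axioms force $\mathrm{range}(f)$ to be a nonempty downward-closed bounded subset whose set of upper bounds has no least element, the only such subset of $\mathbb{R}\setminus\{0\}$ is $(-\infty,0)$, and $(-\infty,0)$ is Dedekind complete whereas $\mathbb{R}\setminus\{0\}$ is not, so no order-bijection between them exists. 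Your direct construction on $\mathbb{Q}$ sidesteps the need for any uncountable witness.
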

\begin{proof}
It suffices to show that there is an initial topless contractive self-embedding of $\mathbb{Q}$, because by toplessness that would be bounded by some $q' \in \mathbb{Q}$ and we can compose it with the self-embedding $(x \mapsto x - |q' - q|)$ to obtain an initial topless contractive self-embedding bounded by $q$. Thus, we proceed to show that the usual linear order on $\mathbb{Q}$ can be expanded to a model of the following theory $T$, in the language of a binary relation $<$ and a unary function $f$:
\begin{align*}
&\text{``$<$ is a dense linear order without endpoints''}; \\
&\forall x, y . ( x < y \leftrightarrow f(x) < f(y)); \\
&\forall x, y . (x < f(y) \rightarrow \exists z . f(z) = x); \\
&\exists y . \forall x . f(x) < y; \\
&\forall y . \big((\forall x . f(x) < y) \rightarrow \exists y' . (y' < y \wedge \forall x . f(x) < y') \big); \\
&\forall x . f(x) < x.
\end{align*}
Let $\mathcal{R}$ be the expansion of the order of the punctured reals $\mathbb{R} \setminus \{0\}$ inherited from the usual order of $\mathbb{R}$, interpreting $f$ by the function $f^\mathcal{R} : \mathbb{R} \setminus \{0\} \rightarrow \mathbb{R} \setminus \{0\}$, defined by $f^\mathcal{R}(x) = -2^{-x}$, for all $x \in \mathbb{R} \setminus \{0\}$. Note that $\mathcal{R} \models T$. Now by the Downward L\"owenheim-Skolem Theorem, there is a countable model $\mathcal{Q}$ of $T$. Since every countable dense linear order without endpoints is isomorphic to $(\mathbb{Q}, <_\mathbb{Q})$, it follows that $f^\mathcal{Q}$ induces an initial topless contractive self-embedding of $\mathbb{Q}$.
\end{proof}

\begin{cor}\label{end extend model of weakly compact to model with endos}
	Suppose that $\mathcal{M} \models \mathrm{ZFC}$ expands to a countable model $(\mathcal{M}, \mathcal{A})$ of $\mathrm{GBC} + $``$\mathrm{Ord}$ is weakly compact''. Then there is a model $\mathcal{M} \prec^\textnormal{rank-cut} \mathcal{N}$, with $\mathrm{SSy}_\mathcal{M}(\mathcal{N}) = (\mathcal{M}, \mathcal{A})$, such that for any $\nu \in \mathrm{Ord}^\mathcal{N} \setminus \mathcal{M}$, there is a rank-initial topless elementary self-embedding $j$ of $\mathcal{N}$, which is contractive on $\mathrm{Ord}^\mathcal{N} \setminus \mathcal{M}$, bounded by $\nu$, and satisfies $\mathcal{M} = \mathrm{Fix}(j)$.
\end{cor}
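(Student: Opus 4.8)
The plan is to build $\mathcal{N}$ by exactly the same iterated-ultrapower construction used in Corollary~\ref{end extend model of weakly compact to model with autos}, and then realize the required self-embedding as the image under the Gaifman functor of a carefully chosen self-embedding of $\mathbb{Q}$. First I would use that $(\mathcal{M},\mathcal{A})$ is countable to invoke Lemma~\ref{generic filter existence} and obtain an $(\mathcal{M},\mathcal{A})$-generic ultrafilter $\mathcal{U}$ on $\mathbb{P}$, and set $\mathcal{N} = \mathcal{G}_{\mathcal{U},(\mathcal{M},\mathcal{A})}(\mathbb{Q})$, where $\mathbb{Q}$ carries its usual order $<_\mathbb{Q}$. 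By Theorem~\ref{Gaifman thm}(\ref{elem}),(\ref{init}),(\ref{cons}),(\ref{downcof}) applied to the inclusion $\varnothing \hookrightarrow \mathbb{Q}$, this yields $\mathcal{M} \prec^\textnormal{rank-cut} \mathcal{N}$ with $\mathrm{SSy}_\mathcal{M}(\mathcal{N}) = (\mathcal{M},\mathcal{A})$; moreover clause (\ref{downcof}) tells me, since $\mathbb{Q}$ has no least element, that $\{c_q \mid q \in \mathbb{Q}\}$ is downwards cofinal in $\mathrm{Ord}^{\mathcal{N}} \setminus \mathrm{Ord}^{\mathcal{M}}$, a fact I will reuse.

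Next, given $\nu \in \mathrm{Ord}^{\mathcal{N}} \setminus \mathcal{M}$, I would use that downward cofinality to pick $q^* \in \mathbb{Q}$ with $c_{q^*} \leq^{\mathcal{N}} \nu$, and then apply Lemma~\ref{contractive proper selfembedding of Q} with $q = q^*$ to obtain an initial, topless, contractive self-embedding $i$ of $(\mathbb{Q},<_\mathbb{Q})$ that is strictly bounded above by $q^*$. The candidate self-embedding of $\mathcal{N}$ is then $j := \mathcal{G}(i)$.

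The remaining work is to read off the six properties of $j$ from Theorem~\ref{Gaifman thm}: elementarity from (\ref{elem}); rank-initiality from (\ref{init}) together with initiality of $i$ (so $j$ in particular commutes with the definable function $\rnk$); boundedness by $\nu$ from (\ref{bnd}), which gives $j\restriction_{\mathrm{Ord}^{\mathcal{N}}}$ strictly bounded above by $c_{q^*}$, so that $\rnk^{\mathcal{N}}(j(x)) = j(\rnk^{\mathcal{N}}(x)) <^{\mathcal{N}} c_{q^*} \leq^{\mathcal{N}} \nu$ for every $x \in \mathcal{N}$; toplessness from (\ref{downcof}) applied to $i$, since toplessness of $i$ means the final segment $\mathbb{Q}\setminus i(\mathbb{Q})$ has no least element, hence the ordinals $\{c_q \mid q \in \mathbb{Q}\setminus i(\mathbb{Q})\}$ are downwards cofinal in $\mathrm{Ord}^{\mathcal{N}}\setminus\mathrm{Ord}^{j(\mathcal{N})}$, which therefore has no least element; contractiveness on $\mathrm{Ord}^{\mathcal{N}}\setminus\mathcal{M}$ from (\ref{contr}) with $i' = \id_\mathbb{Q}$ and contractiveness of $i$; and finally $\mathrm{Fix}(j) = \mathcal{M}$ from (\ref{equal}): since $i$ is contractive it has no fixed point, so the equalizer of $i$ and $\id_\mathbb{Q}$ is empty, and $\mathcal{G}$ of that equalizer — namely $\mathrm{Fix}(j)$ — is $\mathcal{G}(\varnothing) = \mathcal{M}$, exactly as in the Remark following Theorem~\ref{Gaifman thm}.

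I do not expect a genuine obstacle: the argument is an assembly of Lemma~\ref{contractive proper selfembedding of Q} with the functoriality package of Theorem~\ref{Gaifman thm}. The only mildly delicate point is matching the \emph{arbitrary} prescribed ordinal bound $\nu$ to a bound of the form $c_{q^*}$ — i.e. a bound attainable by a self-embedding of $\mathbb{Q}$ — and this is precisely what the downward-cofinality clause (\ref{downcof}) supplies.
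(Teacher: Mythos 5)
Your proof is correct and follows the paper's own proof almost line for line: the same choice of $\mathcal{N} = \mathcal{G}_{\mathcal{U},(\mathcal{M},\mathcal{A})}(\mathbb{Q})$ via a generic ultrafilter, the same use of Theorem~\ref{Gaifman thm}(\ref{downcof}) to locate $q^*$ with $c_{q^*}$ at most $\nu$, the same appeal to Lemma~\ref{contractive proper selfembedding of Q} to produce the initial topless contractive self-embedding of $\mathbb{Q}$, and the same clause-by-clause read-off of (\ref{elem}), (\ref{init}), (\ref{bnd}), (\ref{downcof}), (\ref{contr}), and (\ref{equal}).
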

\begin{proof}
	Let $\mathcal{U}$ be an $(\mathcal{M}, \mathcal{A})$-generic ultrafilter, and let $\mathcal{N}$ be the model $\mathcal{G}_{\mathcal{U}, (\mathcal{M}, \mathcal{A})}(\mathbb{Q})$. As in Corollary \ref{end extend model of weakly compact to model with autos}, $\mathcal{M} \prec^\textnormal{rank-cut} \mathcal{N}$ and $\mathrm{SSy}_\mathcal{M}(\mathcal{N}) = (\mathcal{M}, \mathcal{A})$. Let $\nu \in \mathrm{Ord}^\mathcal{N} \setminus \mathcal{M}$. By Theorem \ref{Gaifman thm} (\ref{downcof}), there is $q \in \mathbb{Q}$, such that $\mathcal{N} \models c_q < \nu$. Using Lemma \ref{contractive proper selfembedding of Q}, let $\hat j$ be an initial topless contractive self-embedding of $\mathbb{Q}$ that is strictly bounded by $q$. Let $j = \mathcal{G}(\hat j)$. The result now follows from Theorem \ref{Gaifman thm}: By (\ref{elem}), $j$ is an elementary embedding; by (\ref{init}), $j$ is rank-initial; by (\ref{bnd}), $j$ is bounded by $\nu$; by toplessness of $\hat j$ and by (\ref{downcof}) applied to $\mathcal{N} \setminus \mathcal{G}(\hat j(\mathbb{Q}))$, $j$ is topless; by (\ref{contr}), $j$ is contractive on $\mathcal{N} \setminus \mathcal{M}$; and by (\ref{equal}) $\mathcal{M} = \mathrm{Fix}(j)$.
\end{proof}

\section{Geometric results}\label{Characterizations}

\begin{thm}[Kirby-Paris-style]\label{Kirby Paris thm}
	Let $\mathcal{M} \models \mathrm{KP}^\mathcal{P} + \textnormal{Choice}$ be countable and let $\mathcal{S} \leq^\textnormal{rank-cut} \mathcal{M}$. The following are equivalent:
	\begin{enumerate}[{\normalfont (a)}]
		\item $\mathcal{S}$ is a strong rank-cut in $\mathcal{M}$ and $\omega^\mathcal{M} \in \mathcal{S}$.
		\item $\mathrm{SSy}_\mathcal{S}(\mathcal{M}) \models \mathrm{GBC} + \text{``$\mathrm{Ord}$ is weakly compact''}$.
	\end{enumerate}
\end{thm}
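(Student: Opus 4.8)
Write $(\mathcal{S},\mathcal{X}) = \mathrm{SSy}_\mathcal{S}(\mathcal{M})$ with $\mathcal{X} = \mathrm{Cod}_\mathcal{S}(\mathcal{M})$, and fix ordinals $\beta_0 >^\mathcal{M} \beta_1 >^\mathcal{M} \cdots$ in $\mathcal{M}\setminus\mathcal{S}$ witnessing that $\mathcal{S}$ is a rank-cut, so that $\mathcal{S} = \bigcup_n \mathcal{M}_{\beta_n} \subseteq \mathcal{M}_{\beta_0}$ and $\mathrm{Ord}^\mathcal{S}$ is closed under successor. By Proposition \ref{emb pres}, the inclusion $\mathcal{S}\hookrightarrow\mathcal{M}$ is $\Delta_0^\mathcal{P}$-elementary, and since ``$x=\omega$'' is $\Delta_0$, once either (a) is assumed or we know $\mathcal{S}\models\mathrm{Infinity}$ we get $\omega^\mathcal{S}$ exists and equals $\omega^\mathcal{M}$, disposing of the clause $\omega^\mathcal{M}\in\mathcal{S}$. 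The axioms of Pair, Union, Powerset, Extensionality, Foundation for sets and Choice all transfer down from $\mathcal{M}$ to $\mathcal{S}$ routinely, using that $\mathcal{S}$ is rank-initial and closed under these operations (e.g. $\mathcal{P}(x)^\mathcal{M}$ has rank $\mathrm{rnk}^\mathcal{M}(x)+1 \in \mathrm{Ord}^\mathcal{S}$). So on the $\mathcal{L}^0$-side the only substantial point is Separation/Replacement, and on the $\mathcal{L}^1$-side it is predicative class-comprehension and ``$\mathrm{Ord}$ is weakly compact''.

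\textbf{(a)$\Rightarrow$(b).} The engine is a \emph{coding lemma}: for every $\mathcal{L}^1$-formula $\psi(\vec x,\vec Y)$ with no class quantifiers and every $\vec C\in\mathcal{X}$, the class $\{\vec a\in\mathcal{S} : (\mathcal{S},\mathcal{X})\models\psi(\vec a,\vec C)\}$ lies in $\mathcal{X}$, and likewise for set parameters one gets a code in $\mathcal{S}$. Prove it by induction on $\psi$. Atomic and Boolean cases are immediate from $\Delta_0^\mathcal{P}$-Separation in $\mathcal{M}$ (codes of subsets of $\mathcal{S}$ are closed under Booleans). For an existential set quantifier $\exists y\,\theta$, let $t\in\mathcal{M}$ code the $\theta$-relation (induction hypothesis), let $h$ be the $\Sigma_1^\mathcal{P}$-hence-$\Delta_1^\mathcal{P}$ function in $\mathcal{M}$ sending a parameter tuple to the least rank below which a $\theta$-witness (if any) occurs, assemble from $h$ and $\Sigma_1^\mathcal{P}$-Collection a coded function $g:\mathrm{Ord}^\mathcal{S}\to\mathrm{Ord}^\mathcal{M}$ recording, rank by rank, a common bound for all these witnesses, and apply the strongness of the cut to $g$ to get a single $\mu\in\mathcal{M}\setminus\mathcal{S}$ with $g(\xi)\in\mathcal{S}\Leftrightarrow g(\xi)\leq\mu$. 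Then ``$\exists y\in\mathcal{S}\,\theta$'' becomes ``$\mathcal{M}\models\exists y\in V_\mu\,\theta$'' on parameters from $\mathcal{S}$, which (with $V_\mu^\mathcal{M}$ a genuine set parameter) is bounded, so cutting down by $\Delta_0^\mathcal{P}$-Separation inside $V_{\beta_0}^\mathcal{M}$ yields the desired code. Running this for first-order $\psi$ with set parameters gives $\mathcal{S}\models\Sigma_n\text{-Separation}$ for all $n$, hence $\mathcal{S}\models\mathrm{ZFC}$ (Replacement from Separation plus Collection, with its range bound landing in $\mathcal{S}$ again by strongness); running it with class parameters gives exactly GBC's predicative Class-Comprehension, so $(\mathcal{S},\mathcal{X})\models\mathrm{GBC}$, and a well-ordering of $V_{\beta_0}^\mathcal{M}$ coming from $\mathcal{M}\models\mathrm{Choice}$ supplies Global Choice. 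Finally, for ``$\mathrm{Ord}$ is weakly compact'': given a class binary tree $\mathcal{T}\in\mathcal{X}$ of height $\mathrm{Ord}^\mathcal{S}$, code it by $T\in\mathcal{M}$; in $\mathcal{M}$ form the subtree-height function $t\mapsto h(t)$, apply strongness to a reindexing of $h$ to obtain $\mu\in\mathcal{M}\setminus\mathcal{S}$ converting the external condition ``$h(t)\notin\mathcal{S}$'' into the internal ``$h(t)>\mu$'', prune $T$ internally to $T^\ast=\{t\in T : h(t)>\mu\}$, observe ($\Delta_0$-absoluteness of ``$\max$ of two ordinals is in $\mathcal{S}$'') that $T^\ast$ has a root and that every node of $T^\ast$ has a child in $T^\ast$, so that $\mathcal{M}$ builds (by $\Sigma_1^\mathcal{P}$-Recursion, selecting left-then-right child) a branch of $T^\ast$ reaching height $>\mu$; its restriction to heights $<\mathrm{Ord}^\mathcal{S}$ is a branch through $\mathcal{T}$ lying in $\mathcal{X}$.

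\textbf{(b)$\Rightarrow$(a).} Since $(\mathcal{S},\mathcal{X})\models\mathrm{GBC}+\text{``}\mathrm{Ord}\text{ is weakly compact''}$ is countable, Lemma \ref{generic filter existence} and Theorem \ref{weakly compact gives nice ultrafilter} give an $(\mathcal{S},\mathcal{X})$-generic, $(\mathcal{S},\mathcal{X})$-complete, iterable ultrafilter, and the Gaifman functor applied to $\mathbb{Q}$ (Corollaries \ref{end extend model of weakly compact to model with autos}--\ref{end extend model of weakly compact to model with endos}, Theorem \ref{Gaifman thm}) produces $\mathcal{S}\prec^{\textnormal{rank-cut}}\mathcal{N}$ with $\mathcal{N}\models\mathrm{ZFC}$ and $\mathrm{SSy}_\mathcal{S}(\mathcal{N}) = (\mathcal{S},\mathcal{X}) = \mathrm{SSy}_\mathcal{S}(\mathcal{M})$; moreover, from the $(\mathcal{S},\mathcal{X})$-completeness of the ultrafilter together with Lemma \ref{iter complete} and Theorem \ref{Gaifman thm}(\ref{downcof}) (the set-theoretic analogue of the ground model sitting as a strong cut in a Kunen-style iterated ultrapower), $\mathcal{S}$ is a \emph{strong} rank-cut of $\mathcal{N}$. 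Now apply Theorem \ref{Friedman thm} with domain $\mathcal{N}$ (which satisfies $\Sigma_1^\mathcal{P}\text{-Separation}$ as a model of $\mathrm{ZFC}$), codomain $\mathcal{M}$ (which satisfies $\mathrm{KP}^\mathcal{P}$), common rank-cut $\mathcal{S}$, $m_0 = \varnothing^\mathcal{N}$, $n_0 = \varnothing^\mathcal{M}$, and $\beta := \beta_0$. Condition (b) of that theorem holds: the $\mathrm{SSy}$'s agree, and if $\mathcal{N}\models\exists x\,\delta(x,s)$ for $s\in\mathcal{S}$ and $\delta\in\Delta_0^\mathcal{P}$, then since $\mathcal{S}\prec\mathcal{N}$ there is a witness already in $\mathcal{S}$, which (by $\Delta_0^\mathcal{P}$-elementarity of $\mathcal{S}\hookrightarrow\mathcal{M}$) witnesses $\mathcal{M}\models\exists x\in V_{\beta_0}\,\delta(x,s)$; thus no uniform-bound lemma requiring $\Sigma_1^\mathcal{P}$-Separation in $\mathcal{M}$ is needed. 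We obtain a rank-initial embedding $e:\mathcal{N}\to\mathcal{M}$ fixing $\mathcal{S}$ pointwise. Finally, the inclusion $\mathcal{S}\hookrightarrow\mathcal{N}$ is strongly topless, $e$ is rank-initial, and $e\circ(\mathcal{S}\hookrightarrow\mathcal{N}) = (\mathcal{S}\hookrightarrow\mathcal{M})$, so Proposition \ref{comp emb}(\ref{comp emb strongly topless}) gives that $\mathcal{S}\hookrightarrow\mathcal{M}$ is strongly topless, i.e. $\mathcal{S}$ is a strong rank-cut of $\mathcal{M}$, and $\omega^\mathcal{M} = \omega^\mathcal{S}\in\mathcal{S}$ as above.

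\textbf{Main obstacle.} The delicate point in (a)$\Rightarrow$(b) is the quantifier step of the coding lemma: one must produce a \emph{single} bound $\mu$ valid uniformly across all parameter values, which is exactly where strongness (as opposed to mere toplessness) is indispensable, and one must check that the auxiliary functions ($h$, $g$, the subtree-height function) stay $\Sigma_1^\mathcal{P}$/$\Delta_1^\mathcal{P}$ and that only the Separation/Collection available in $\mathrm{KP}^\mathcal{P}+\mathrm{Choice}$ is used. In (b)$\Rightarrow$(a) the work is concentrated in verifying that $\mathcal{S}$ is a strong rank-cut of the Gaifman model $\mathcal{N}$ directly from $(\mathcal{S},\mathcal{X})$-completeness of the generic ultrafilter; once that is in hand, the fact that the Friedman embedding $\mathcal{N}\to\mathcal{M}$ can be taken with the trivial bound $\beta_0$ — because witnesses of $\Sigma_1^\mathcal{P}$-facts over $\mathcal{S}$ true in the elementary extension $\mathcal{N}$ already live in $\mathcal{S}$ — is what makes the weak hypothesis $\mathrm{KP}^\mathcal{P}+\mathrm{Choice}$ on $\mathcal{M}$ suffice.
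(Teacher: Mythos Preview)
Your (a)$\Rightarrow$(b) matches the paper's Lemma~\ref{Kirby Paris lemma} and Lemma~\ref{Kirby Paris forward} in strategy, though the paper handles the existential step of the coding lemma more simply: rather than a least-rank function plus Collection, it uses Choice once to pick a witness $f(\vec t)\in d=V_\mu^{\mathcal{M}}$ of $\theta_\phi$, and then applies strong toplessness to $\mathrm{rnk}\circ f$ (after an implicit well-ordering of $d$) to obtain the bound $b=V_\beta^{\mathcal{M}}$. For ``$\mathrm{Ord}$ is weakly compact'' the paper's argument is substantially lighter than your prune-and-recurse: it observes that ``$\tau_\zeta$ is a binary tree of height $\zeta$'' is $\Delta_0$, applies Overspill to get $\mu\in\mathrm{Ord}^\mathcal{M}\setminus\mathcal{S}$ with $\tau_\mu$ a binary tree of height $\mu$, and then any node of $\tau_\mu$ whose domain lies outside $\mathcal{S}$ already codes a cofinal branch.

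Your (b)$\Rightarrow$(a) follows the same Gaifman-then-Friedman-then-Proposition~\ref{comp emb} template as Lemma~\ref{Kirby Paris backward}, but diverges at the crucial step. The paper does \emph{not} argue directly from ultrafilter completeness that $\mathcal{S}$ is strong in $\mathcal{N}$; instead it takes the self-embedding $i$ of $\mathcal{N}$ supplied by Corollary~\ref{end extend model of weakly compact to model with endos} with $\mathrm{Fix}(i)=\mathcal{S}$, and reads off strongness from the (a)$\Rightarrow$(b) direction of Theorem~\ref{characterize strongly topless substructure}, whose proof is a short $\Pi_1^\mathcal{P}$-Overspill exploiting $f(\zeta)\notin\mathcal{S}\Leftrightarrow f(\zeta)\neq i(f)(\zeta)$. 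Your appeal to completeness via Lemma~\ref{iter complete} and Theorem~\ref{Gaifman thm}(\ref{downcof}) is a reasonable alternative but is not actually carried out: you would still need to show, for an arbitrary $\mathcal{N}$-coded $f:\alpha\to\beta$ with $\alpha_\mathcal{N}\supseteq\mathrm{Ord}^\mathcal{S}$, how to produce the required $\nu$. On the other hand, your observation that Theorem~\ref{Friedman thm} can be applied with the trivial bound $\beta_0$ (because $\mathcal{S}\prec\mathcal{N}$ already supplies $\Sigma_1^\mathcal{P}$-witnesses inside $\mathcal{S}\subseteq\mathcal{M}_{\beta_0}$) is correct and is in fact tighter than the paper's invocation of Corollary~\ref{Friedman cor}, which as stated demands $\Sigma_1^\mathcal{P}$-Separation on the codomain $\mathcal{M}$.
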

\begin{proof}
	The two directions are proved as Lemmata \ref{Kirby Paris forward} and \ref{Kirby Paris backward} below.
\end{proof}

\begin{lemma}\label{Kirby Paris lemma}
	Let $\mathcal{M} \models \mathrm{KP}^\mathcal{P} + \textnormal{Choice}$, let $\mathcal{S}$ be a strongly topless rank-initial substructure of $\mathcal{M}$ and let us write $\mathrm{SSy}_\mathcal{S}(\mathcal{M})$ as $(\mathcal{S}, \mathcal{A})$. For any $\phi(\vec{x}, \vec{Y}) \in \mathcal{L}^1$ and for any $\vec{A} \in \mathcal{A}$, there is a formula $\theta_\phi(\vec{x}, \vec{y}) \in \Delta_0^\mathcal{P} \subseteq \mathcal{L}^0$ and parameters $\vec{p} \in \mathcal{M}$, such that for all $\vec{s} \in \mathcal{S}$,
	\[
	(\mathcal{S}, \mathcal{A}) \models \phi(\vec{s}, \vec{A}) \Leftrightarrow \mathcal{M} \models \theta_\phi(\vec{s}, \vec{p}).
	\]
\end{lemma}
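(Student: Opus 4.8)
The statement is essentially a "translation lemma": every second-order statement about $(\mathcal{S},\mathcal{A}) = \mathrm{SSy}_\mathcal{S}(\mathcal{M})$ with class parameters in $\mathcal{A}$ is equivalent to a $\Delta_0^\mathcal{P}$ statement in $\mathcal{M}$ with suitable set parameters. I would prove this by induction on the structure of the $\mathcal{L}^1$-formula $\phi$, after first fixing, for each class parameter $A_i \in \mathcal{A}$, a code $a_i \in \mathcal{M}$ with $(a_i)_\mathcal{M} \cap \mathcal{S} = A_i$; the tuple $\vec p$ will consist of these codes $\vec a$, together with an ordinal bound $\alpha \in \mathrm{Ord}^\mathcal{M}\setminus\mathcal{S}$ chosen small enough that $\mathcal{S}\subseteq\mathcal{M}_\alpha$, and possibly a few more auxiliary ordinals produced by strong toplessness (see below). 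The key point throughout is that a $\Delta_0^\mathcal{P}$-formula relativized to $V_\alpha$ in $\mathcal{M}$ — i.e., with all quantifiers bounded or $\mathcal{P}$-bounded by $V_\alpha$ — is itself $\Delta_0^\mathcal{P}$ (this is the content of Proposition \ref{tak_clo}, since $u\mapsto V_\alpha$ with $\alpha$ a parameter makes these bounded quantifiers), so the relativized formulas we build stay inside $\Delta_0^\mathcal{P}$.

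\textbf{The induction.} For atomic $\phi$: a set-variable equality or membership $s=t$, $s\in t$ translates directly; a membership $s \in Y_i$ (with $Y_i$ a class variable interpreted as $A_i$) translates to $s \in a_i$, which holds in $\mathcal{M}$ iff $s \in^{\mathcal{M}} a_i$ iff $s \in^{(\mathcal{S},\mathcal{A})} A_i$ by the very definition of $\mathrm{SSy}_\mathcal{S}$. Boolean connectives are handled by taking Boolean combinations of the $\theta$'s, which preserves $\Delta_0^\mathcal{P}$. A set-quantifier $\exists x\,\psi(x,\vec s,\vec A)$ translates to $\exists x \in V_\alpha\,\theta_\psi$: here I use that $\mathcal{S}$ is rank-initial in $\mathcal{M}$ and $\mathcal{S}\subseteq\mathcal{M}_\alpha$, so that an element of $\mathcal{S}$ witnessing the $\mathcal{L}^1$-existential can be found as an element of $V_\alpha^\mathcal{M}$, and conversely any witness in $V_\alpha^\mathcal{M}$ has rank below $\alpha$... but here one must be slightly careful: a witness $x\in\mathcal{M}_\alpha$ need not lie in $\mathcal{S}$. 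This is where I expect the real work.

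\textbf{The main obstacle: class quantifiers and unbounded set quantifiers.} The genuinely hard case is the second-order quantifier $\exists Y\,\psi(Y,\vec s,\vec A)$, which must translate to a \emph{first-order} $\Delta_0^\mathcal{P}$ statement in $\mathcal{M}$. The idea — this is exactly the Kirby–Paris mechanism, and the reason strong toplessness is hypothesized — is that a class $Y\in\mathcal{A}$ coded by $b\in\mathcal{M}$ can, after intersecting with $\mathcal{S}$, be assumed coded by some $b'$ of rank just above $\mathcal{S}$, i.e. $\mathrm{rnk}^\mathcal{M}(b') < \mu$ for an ordinal $\mu\in\mathrm{Ord}^\mathcal{M}\setminus\mathcal{S}$ witnessing strong toplessness relative to an appropriate coded function; so $\exists Y\,\psi$ becomes $\exists y \in V_\mu\,\theta_\psi(\dots)$ in $\mathcal{M}$. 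To make this precise one argues: given the subformula $\psi$ and the parameters, the function sending each relevant code to (a code for) its truncation to $V_{\mathrm{rnk}}$-bounded height is coded in $\mathcal{M}$ (using Choice to well-order and $\mathrm{KP}^\mathcal{P}$ to form the relevant sets), and strong toplessness yields a single ordinal $\mu\notin\mathcal{S}$ such that $(b)_\mathcal{M}\cap\mathcal{S}\in\mathcal{A}$ is realized by some code of rank $<\mu$ \emph{for every} $b$ — equivalently, that every element of $\mathcal{A}$ has a code in $V_\mu^\mathcal{M}$. Granting this, the quantifier over $\mathcal{A}$ collapses to a $V_\mu$-bounded set-quantifier, and symmetrically an unbounded set-quantifier $\exists x\,\psi$ (with $x$ ranging only over $\mathcal{S}$ in the intended semantics) collapses to a $V_\alpha$-bounded one using rank-initiality. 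I would then fold all the bounding ordinals produced along the way (finitely many, since $\phi$ is finite) into the parameter tuple $\vec p$, using toplessness of $\mathcal{S}$ to absorb them below a common ordinal, and the absoluteness of $\Delta_0^\mathcal{P}$ formulas under the relativizations keeps $\theta_\phi\in\Delta_0^\mathcal{P}$. The delicate bookkeeping — ensuring the ordinal bounds are chosen \emph{before} the induction descends, or else threading them through uniformly — is the crux; everything else is a routine syntactic induction.
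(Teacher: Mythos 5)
You have the right recursion and the right base cases, and you correctly flag the delicate point — a witness of the bounded translation need not lie in $\mathcal{S}$ — but you then misattribute where strong toplessness enters and never close the gap you noticed. First, there is no class-quantifier case: the $\mathcal{L}^1$-formulas at stake (this lemma is applied to verify Class Comprehension in Lemma \ref{Kirby Paris forward}) have only \emph{free} class variables, and the code-compression observation you make (every class in $\mathcal{A}$ has a code of bounded rank) uses only ordinary boundedness of $\mathcal{S}$, not strong toplessness. Second — and this is the real problem — your treatment of $\exists x . \psi$ via $\exists x \in V_\alpha . \theta_\psi$, with $\alpha$ the single global bound on $\mathcal{S}$, fails for precisely the reason you identify one sentence earlier: rank-initiality gives you no way to replace a witness in $\mathcal{M}_\alpha \setminus \mathcal{S}$ by one in $\mathcal{S}$, and the correct bound is not the fixed $\alpha$ but a fresh, formula- and parameter-dependent ordinal.

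The mechanism your outline is missing is the following. Fix $d = V_\mu^\mathcal{M}$ with $\mu \in \mathrm{Ord}^\mathcal{M} \setminus \mathcal{S}$. Using Choice, $\Delta_0^\mathcal{P}$-Separation and $\Pi_1^\mathcal{P}$-Foundation one constructs inside $\mathcal{M}$ a Skolem function $f : d \to d$ that, whenever $\exists x \in d . \theta_\psi(x, \vec{t}, \vec{a})$ holds, returns a witness of minimal rank. Strong toplessness of $\mathcal{S}$, applied to $\rnk \circ f$, then produces an ordinal $\beta \in \mathrm{Ord}^\mathcal{M} \setminus \mathcal{S}$ such that for all $\vec{s} \in \mathcal{S}$ one has $f(\vec{s}) \in \mathcal{S} \Leftrightarrow \rnk(f(\vec{s})) < \beta$. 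The right clause is $\theta_{\exists x . \psi} \equiv \exists x \in b . \theta_\psi$ with $b = V_\beta^\mathcal{M}$: a witness below $V_\beta$ forces the rank-minimal one $f(\vec{s})$ to have rank $< \beta$ and hence to lie in $\mathcal{S}$, while a witness $s_0 \in \mathcal{S}$ forces $f(\vec{s})$ to have rank $\leq \rnk(s_0)$, hence $f(\vec{s}) \in \mathcal{S}$ by rank-initiality, hence $\rnk(f(\vec{s})) < \beta$. Strong toplessness is not there to bound class codes; it is there to produce, for each subformula and each parameter tuple, a uniform separating ordinal $\beta$ for the internal Skolem function. That step — which is where the hypothesis is actually spent — is absent from your proposal.
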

\begin{proof}
	We construct $\theta_\phi$ recursively on the structure of $\phi$. Let $\vec{A} \in \mathcal{A}$ be arbitrary and let $\vec{a}$ be a tuple of codes in $\mathcal{M}$ for $\vec{A}$. In the base cases, given a coordinate $A$ in $\vec{A}$ and its code $a$ in $\vec{a}$, put:
	\begin{align*}
	\theta_{x=y} &\equiv x = y, \\
	\theta_{x \in y} &\equiv x \in y, \\
	\theta_{x \in A} &\equiv x \in a. 
	\end{align*}
	It is clear that the result holds in the first two cases, and also in the third case since $a$ codes $A$. 
	
	Assume inductively that the result holds for $\phi(\vec{x}, \vec{Y}), \psi(\vec{x}, \vec{Y}) \in \mathcal{L}^1$ and $\theta_\phi(\vec{x}, \vec{y}), \theta_\psi(\vec{x}, \vec{y}) \in \Delta_0^\mathcal{P}$, and put:
	\begin{align*}
	\theta_{\neg \phi} &\equiv \neg \theta_\phi, \\
	\theta_{\phi \vee \psi} &\equiv \theta_\phi \vee \theta_\psi, \\
	\theta_{\exists x . \phi} &\equiv \exists x \in b . \theta_\phi, 
	\end{align*}
	where $b \in \mathcal{M}$ is next to be constructed. But before doing so, note that the result holds for the first two cases, simply because the connectives commute with $\models$.
	
	Let us write $\vec{x}$ as $x, \vec{x}'$, and let $k = \mathrm{arity}(\vec{x}')$. Since $\mathcal{S}$ is bounded in $\mathcal{M} \models \textnormal{Infinity}$, there is a limit $\mu \in \mathrm{Ord}^\mathcal{M} \setminus \mathcal{S}$. Therefore, by letting $d = V_\mu^\mathcal{M}$, we obtain $d \in \mathcal{M}$, $\mathcal{S} \subseteq d_\mathcal{M}$, and $\mathcal{M} \models d^k \subseteq d$. Working in $\mathcal{M}$, by Choice and $\Delta_0^\mathcal{P}$-Separation, there is a function $f : d \rightarrow d$, such that for all $\vec{t} \in d^k$,
	\[
	\exists x \in d . \theta_\phi(x, \vec{t}, \vec{a}) \Leftrightarrow f(\vec{t}) \in \{ u \in d \mid \theta_\phi(u, \vec{t}, \vec{a}) \} \Leftrightarrow \theta_\phi(f(\vec{t}), \vec{t}, \vec{a}). \tag{$\dagger$}
	\]
	By strong toplessness and rank-initiality, there is $\beta \in \mathrm{Ord}^\mathcal{M} \setminus \mathcal{S}$ such that for all $s \in \mathcal{S}$,
	\[
	f(s) \in \mathcal{S} \Leftrightarrow \rnk(f(s)) < \beta.  \tag{$\ddagger$}
	\]
	Put $b = V_\beta^\mathcal{M}$. Note that by toplessness, $\mathcal{S}$ is closed under ordered pair. Putting ($\dagger$) and ($\ddagger$) together, we have that, for all $\vec{s} \in \mathcal{S}$,
	\[
	\mathcal{M} \models \exists x \in b . \theta_\phi(x, \vec{s}, \vec{a}) \Leftrightarrow \exists s_0 \in \mathcal{S} . \mathcal{M} \models \theta_\phi(s_0, \vec{s}, \vec{a}),
	\]
	and by induction hypothesis,
	\begin{align*}
	\exists s_0 \in \mathcal{S} . \mathcal{M} \models \theta_\phi(s_0, \vec{s}, \vec{a}) & \Leftrightarrow \exists s_0 \in \mathcal{S} . (\mathcal{S}, \mathcal{A}) \models \phi(s_0, \vec{s}, \vec{A}) \\
	& \Leftrightarrow  (\mathcal{S}, \mathcal{A}) \models \exists x . \phi(x, \vec{s}, \vec{A}).
	\end{align*}
	Putting these equivalences together yields the desired result for $\exists x . \phi$. The parameters $\vec{p}$ appearing in $\theta_{\exists x . \phi}$ are $b$ together with $\vec{a}$.
\end{proof}

\begin{lemma}\label{Kirby Paris forward}
	Let $\mathcal{M} \models \mathrm{KP}^\mathcal{P} + \textnormal{Choice}$. If $\mathcal{S}$ is strong rank-cut in $\mathcal{M}$ and $\omega^\mathcal{M} \in \mathcal{S}$, then $\mathrm{SSy}_\mathcal{S}(\mathcal{M}) \models \mathrm{GBC} + \text{``$\mathrm{Ord}$ is weakly compact''}$.
\end{lemma}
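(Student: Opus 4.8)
The plan is to verify the two conjuncts of $\mathrm{GBC} + \text{``}\mathrm{Ord}\text{ is weakly compact''}$ separately for $(\mathcal{S},\mathcal{A}) =_\df \mathrm{SSy}_\mathcal{S}(\mathcal{M})$, where $\mathcal{A} = \mathrm{Cod}_\mathcal{S}(\mathcal{M})$. I will use freely that a rank-cut $\mathcal{S}$ is a rank-initial substructure of $\mathcal{M}$, that its ordinals form an initial segment of $\mathrm{Ord}^\mathcal{M}$ closed under $\xi \mapsto \xi + k$ for each standard $k$ (otherwise the least element of $\mathrm{Ord}^\mathcal{M}\setminus\mathcal{S}$ would contradict toplessness), that $\mathcal{S}$ is properly bounded in $\mathcal{M}$, and that $\omega^\mathcal{M}\in\mathcal{S}$. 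The key auxiliary observation — call it \emph{absorption} — is: if $X\in\mathcal{M}$ and $X_\mathcal{M}\subseteq\mathrm{Ord}^\mathcal{S}$, then $\sup^\mathcal{M}X\in\mathcal{S}$; for if $\sigma := \sup^\mathcal{M}X\notin\mathcal{S}$, toplessness gives $\sigma'\in\mathrm{Ord}^\mathcal{M}\setminus\mathcal{S}$ with $\sigma'<\sigma$, and since $\mathcal{S}$ is an initial segment every ordinal in $\mathcal{S}$ is $<\sigma'$, so $X_\mathcal{M}\subseteq\{\eta : \eta<\sigma'\}$ and $\sigma\le\sigma'<\sigma$, absurd.

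For $\mathrm{GBC}$: the set axioms Extensionality, Pairing, Union, Power Set and Foundation hold in $\mathcal{S}$ by rank-initiality and $\Delta_0$-absoluteness (Power Set uses closure under successor; Foundation transfers $\mathcal{M}$'s Regularity), and Infinity holds because $\omega^\mathcal{M}\in\mathcal{S}$. Global Choice holds because, fixing $\mu\in\mathrm{Ord}^\mathcal{M}\setminus\mathcal{S}$ (so $\mathcal{S}\subseteq V_\mu^\mathcal{M}$), $\mathcal{M}\models\mathrm{Choice}$ yields a well-ordering $W\in\mathcal{M}$ of $V_\mu^\mathcal{M}$, and $W_\mathcal{M}\cap\mathcal{S}$ — which by a rank computation equals $W$ restricted to pairs both of whose coordinates lie in $\mathcal{S}$ — is a well-ordering of $\mathcal{S}$ coded in $\mathcal{M}$, hence a class of $(\mathcal{S},\mathcal{A})$. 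Predicative Class Comprehension is exactly what Lemma~\ref{Kirby Paris lemma} provides: given a class-formula $\phi(x,\vec{y},\vec{Y})$ with only set quantifiers, $\vec{s}\in\mathcal{S}$, $\vec{A}\in\mathcal{A}$, the lemma gives $\theta_\phi\in\Delta_0^\mathcal{P}$ and $\vec{p}\in\mathcal{M}$ with $(\mathcal{S},\mathcal{A})\models\phi(s,\vec{s},\vec{A})\Leftrightarrow\mathcal{M}\models\theta_\phi(s,\vec{s},\vec{p})$, and then $c := \{x\in V_\mu^\mathcal{M} : \theta_\phi(x,\vec{s},\vec{p})\}^\mathcal{M}$ exists by $\Delta_0^\mathcal{P}$-Separation in $\mathcal{M}$, so $c_\mathcal{M}\cap\mathcal{S} = \{s\in\mathcal{S} : (\mathcal{S},\mathcal{A})\models\phi(s,\vec{s},\vec{A})\}\in\mathcal{A}$. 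The class-Replacement axiom is checked directly: if $F\in\mathcal{A}$ is a class function coded by $f\in\mathcal{M}$ and $a\in\mathcal{S}$, then $a_\mathcal{M} = a_\mathcal{S}\subseteq\mathcal{S}$ by rank-initiality; the $\mathcal{M}$-definable function $g(x) = $ ``least $\xi$ with $\exists y\in V_\xi^\mathcal{M}.\langle x,y\rangle\in^\mathcal{M}f$'' exists on $a$ in $\mathcal{M}$ by $\Sigma_1^\mathcal{P}$-Replacement and has $g_\mathcal{M}[a]\subseteq\mathrm{Ord}^\mathcal{S}$ (each $x\in a_\mathcal{S}$ has witness $F(x)\in\mathcal{S}$), so by absorption $\sigma := \sup^\mathcal{M}g_\mathcal{M}[a]\in\mathcal{S}$, whence $F[a] = \{y\in V_{\sigma+1}^\mathcal{M} : \exists x\in^\mathcal{M}a.\langle x,y\rangle\in^\mathcal{M}f\}^\mathcal{M}$ has rank $\le\sigma+1\in\mathrm{Ord}^\mathcal{S}$ and lies in $\mathcal{S}$. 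Together with Class Comprehension this yields the Separation and Replacement schemata for $\mathcal{S}$, so $\mathcal{S}\models\mathrm{ZFC}$, and $(\mathcal{S},\mathcal{A})\models\mathrm{GBC}$.

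For ``$\mathrm{Ord}$ is weakly compact'': let $\mathcal{T}\in\mathcal{A}$ be a class binary tree of height $\mathrm{Ord}^\mathcal{S}$ in $(\mathcal{S},\mathcal{A})$, coded by $t\in\mathcal{M}$. Working in $\mathcal{M}$, form the internal binary tree $\tilde{\mathcal{T}} = \{h : h\text{ is a function from an ordinal to }2\text{ and }\forall\eta\le\mathrm{dom}(h).\,h\restriction_\eta\in^\mathcal{M}t\}$, a set by $\Delta_0^\mathcal{P}$-Separation below $t$; downward-closedness of $\mathcal{T}$ and $\Delta_0$-absoluteness give $\tilde{\mathcal{T}}_\mathcal{M}\cap\mathcal{S} = \mathcal{T}$. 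Since $\mathcal{T}$ has a node on every level $\xi\in\mathrm{Ord}^\mathcal{S}$, so does $\tilde{\mathcal{T}}$, so the set $L = \{\mathrm{dom}(h) : h\in\tilde{\mathcal{T}}\}^\mathcal{M}\in\mathcal{M}$ (formed by $\Sigma_1^\mathcal{P}$-Replacement) contains all of $\mathrm{Ord}^\mathcal{S}$; by absorption $L\not\subseteq\mathrm{Ord}^\mathcal{S}$, so there is $h^*\in\tilde{\mathcal{T}}^\mathcal{M}$ with $\gamma := \mathrm{dom}^\mathcal{M}(h^*)\in\mathrm{Ord}^\mathcal{M}\setminus\mathcal{S}$, hence $\mathrm{Ord}^\mathcal{S}\subseteq\gamma$. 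For each $\xi\in\mathrm{Ord}^\mathcal{S}$, $h^*\restriction_\xi\in\tilde{\mathcal{T}}^\mathcal{M}$ by downward-closedness and $\rnk^\mathcal{M}(h^*\restriction_\xi)\le\xi+k$ for a fixed standard $k$, so $h^*\restriction_\xi\in\mathcal{S}$, hence $h^*\restriction_\xi\in\tilde{\mathcal{T}}^\mathcal{M}\cap\mathcal{S} = \mathcal{T}$. Thus $\xi\mapsto h^*\restriction_\xi$ is a branch of $\mathcal{T}$, and it is coded in $\mathcal{M}$ by $b = \{\langle\xi,h^*\restriction_\xi\rangle : \xi<\gamma\}^\mathcal{M}$, since $b_\mathcal{M}\cap\mathcal{S}$ consists precisely of the pairs with $\xi\in\mathrm{Ord}^\mathcal{S}$; so the branch is a class of $(\mathcal{S},\mathcal{A})$. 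As $\mathcal{T}$ was arbitrary, $(\mathcal{S},\mathcal{A})\models\text{``}\mathrm{Ord}\text{ is weakly compact''}$.

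I expect the weak-compactness clause to be the main obstacle: one must reflect the class tree $\mathcal{T}$ to an internal tree of $\mathcal{M}$, argue via absorption (which rests on toplessness) that this internal tree is too tall to be exhausted at levels in $\mathcal{S}$, and then pull a sufficiently high node back down coordinatewise to a branch that is visibly coded in $\mathcal{M}$. By contrast, the $\mathrm{GBC}$ clause is comparatively routine once Lemma~\ref{Kirby Paris lemma} is in hand — and it is in the proof of that lemma that strong toplessness (as opposed to mere toplessness) is genuinely used.
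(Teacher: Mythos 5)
Your proof is correct and follows the same overall decomposition as the paper's (verify each $\mathrm{GBC}$ axiom and then ``$\mathrm{Ord}$ is weakly compact'', using Lemma~\ref{Kirby Paris lemma} for Class Comprehension), but the mechanism you use for the quantitative steps is genuinely different, and it is worth noting what each buys. Where the paper invokes $\Sigma_n^\mathcal{P}$-Overspill (Lemma~\ref{overspill}) --- both for Class Replacement and for reflecting the tree property into $\mathcal{M}$ --- you instead isolate the elementary ``absorption'' principle: a coded set of $\mathcal{S}$-ordinals has its supremum (taken in $\mathcal{M}$) still inside $\mathcal{S}$. Absorption is a direct consequence of toplessness and the Union axiom alone, whereas Overspill is proved from $\Pi_n^\mathcal{P}$-Foundation; so in this respect your proof trades a schema of $\mathcal{M}$ for a tidier pigeonhole observation. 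In the weak-compactness clause the paper overspills the $\Delta_0^\mathcal{P}$-property ``$\tau_\mu$ is a binary tree of height $\mu$'' to find a tall level, while you materialize the internal tree $\tilde{\mathcal{T}}$ outright by $\Delta_0^\mathcal{P}$-Separation below $t$, verify $\tilde{\mathcal{T}}_\mathcal{M}\cap\mathcal{S}=\mathcal{T}$ by downward-closure and absoluteness, and then use absorption on $L=\{\dom h:h\in\tilde{\mathcal{T}}\}$. Both arguments produce the same tall node and branch; yours avoids one appeal to Overspill and is arguably more transparent. Your Global Choice is obtained via a well-ordering of $V_\mu^\mathcal{M}$ rather than the paper's direct choice function on $d\setminus\{\varnothing\}$; this is a stylistic choice, but note one must quietly invoke that $\mathrm{KP}^\mathcal{P}+\text{Choice}$ proves Zermelo's well-ordering theorem (which it does, via $\Sigma_1^\mathcal{P}$-Recursion), so the paper's route is slightly more economical. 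Finally, your closing remark correctly identifies where strong toplessness (not mere toplessness) is essential: only in Lemma~\ref{Kirby Paris lemma}, via the choice of the bound $\beta$ in the $\exists$-case; everywhere else toplessness suffices. Two small points worth tidying up: in Class Replacement you should assume $a\subseteq\dom F$ (or first intersect with $\dom F$, which is available by Comprehension and rank-initiality), since otherwise $g$ may fail to be total or land outside $\mathrm{Ord}^\mathcal{S}$; and you establish only Set Foundation directly, whereas the paper states Class Foundation --- that gap is easy to bridge (apply Set Foundation to $A\cap\mathrm{TC}(\{a\})$ for any $a\in A$, using Comprehension), but worth saying.
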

\begin{proof}
	Let us write $\mathrm{SSy}_\mathcal{S}(\mathcal{M})$ as $(\mathcal{S}, \mathcal{A})$. By toplessness and rank-initiality, there is $d \in \mathcal{M}$ such that $d_\mathcal{M} \supset \mathcal{S}$. 
	
	$(\mathcal{S}, \mathcal{A}) \models \textnormal{Class Extensionality, Pair, Union, Powerset, Infinity}$ are inherited from $\mathcal{M}$, because $\omega^\mathcal{M} \in \mathcal{S}$ and for any $\alpha \in \mathrm{Ord}^\mathcal{S}$, $\alpha +^\mathcal{M} 2 \in \mathcal{S}$ by toplessness, and $\mathcal{M}_{\alpha +^\mathcal{M} 2} \subseteq \mathcal{S}$ by rank-initiality.
	
	$(\mathcal{S}, \mathcal{A}) \models \textnormal{Class Foundation}$: Let $A \in \mathcal{A}$ such that $(\mathcal{S}, \mathcal{A}) \models A \neq \varnothing$. Let $a$ be a code for $A$ in $\mathcal{M}$. By $\Pi_1^\mathcal{P}$-Foundation, there is an $\in^\mathcal{M}$-minimal element $m \in^\mathcal{M} a$. Since $A$ is non-empty, we have by rank-initiality that $m \in \mathcal{S}$. If there were $s \in \mathcal{S}$ such that $(\mathcal{S}, \mathcal{A}) \models s \in m \cap A$, then we would have $\mathcal{M} \models s \in m \cap a$, contradicting $\in^\mathcal{M}$-minimality of $m$. Hence, $m$ is an $\in$-minimal element of $A$ in $(\mathcal{S}, \mathcal{A})$.
		
	$(\mathcal{S}, \mathcal{A}) \models \textnormal{Global Choice}$: By Choice in $\mathcal{M}$, there is a choice function $f$ on $(d \setminus \{\varnothing\})^\mathcal{M}$. Note that $f$ codes a global choice function $F \in \mathcal{A}$ on $(V \setminus \{\varnothing\})^{(\mathcal{M}, \mathcal{A})} \in \mathcal{A}$.
	
	$(\mathcal{S}, \mathcal{A}) \models \textnormal{Class Comprehension}$: Let $\phi(x, \vec{Y}) \in \mathcal{L}^1$, in which all variables of sort $\mathsf{Class}$ are free, and let $\vec{A} \in \mathcal{A}$. By Lemma \ref{Kirby Paris lemma}, there are $\theta_\phi \in \Delta_0^\mathcal{P}$ and $\vec{a} \in \mathcal{M}$, such that for all $s \in \mathcal{S}$,
	\[
	(\mathcal{S}, \mathcal{A}) \models \phi(s, \vec{A}) \Leftrightarrow \mathcal{M} \models \theta_\phi(s, \vec{a}).
	\]
	Working in $\mathcal{M}$, let $c = \{ t \in d \mid \theta_\phi(t, \vec{a}) \}$. Let $C \in \mathcal{A}$ be the class coded by $c$. It follows that 
	\[
	\mathcal{S} \models \forall x . (x \in c \leftrightarrow \phi(x, \vec{A})).
	\]
	
	$(\mathcal{S}, \mathcal{A}) \models \textnormal{Extended Separation}$: Simply observe that if $s \in \mathcal{S}$, $\vec{A} \in \mathcal{A}$ and $\phi(x, \vec{Y}) \in \mathcal{L}^1$, in which all variables of sort $\mathsf{Class}$ are free, then by Class Comprehension, the class $\{ x \mid \phi(x, \vec{A}) \}^{(\mathcal{S}, \mathcal{A})}$ exists in $\mathcal{A}$ and is coded in $\mathcal{M}$ by $c$, say. So by rank-initiality, $c \cap s \in \mathcal{S}$, and $c \cap s$ clearly witnesses the considered instance of $\textnormal{Extended Separation}$.
	
	$(\mathcal{S}, \mathcal{A}) \models \textnormal{Class Replacement}$: Let $F \in \mathcal{A}$ be a class function such that $\dom(F) \in \mathcal{S}$, and let $f$ be a code in $\mathcal{M}$ for $F$. In $\mathcal{M}$, using $\dom(F)$ and $f$ as parameters, we can construct a function $f'$ such that
	\begin{align*}
	\dom(f'_\mathcal{M}) = \phantom{.}& d \supseteq \mathcal{S}, \\
	\mathcal{M} \models \phantom{.}& \forall x \in \dom(F) . \big( (x \in \dom(F) \rightarrow f'(x) = f(x)) \wedge \\
	& (x \not\in \dom(F) \rightarrow f'(x) = 0).
	\end{align*}
	Note that $f'_\mathcal{M}\restriction_{\mathcal{S}} \subseteq \mathcal{S}$. Suppose that 
	$$(\mathcal{S}, \mathcal{A}) \models \forall \xi \in \mathrm{Ord} . \exists x \in \dom(F) . \rnk(F(x)) > \xi.$$ 
	Then we have for all $\xi \in \mathrm{Ord}^\mathcal{S}$ that 
	$$\mathcal{M} \models \exists x \in \dom(F) . \rnk(d) > \rnk(f'(x)) > \xi.$$ 
	So by Overspill, there is $\mu \in \mathrm{Ord}^\mathcal{M} \setminus \mathcal{S}$ such that 
	$$\mathcal{M} \models \exists x \in \dom(F) . \rnk(d) > \rnk(f'(x)) > \mu.$$
	But this contradicts that $f'_\mathcal{M}\restriction_{\mathcal{S}} \subseteq \mathcal{S}$. Therefore, 
	$$(\mathcal{S}, \mathcal{A}) \models \exists \xi \in \mathrm{Ord} . \forall x \in \dom(F) . \rnk(F(x)) < \xi.$$
	Now it follows by $\textnormal{Extended Separation}$ that $\mathrm{image}^{(\mathcal{S}, \mathcal{A})}(F) \in \mathcal{S}$.
	
	$(\mathcal{S}, \mathcal{A}) \models$ ``$\mathrm{Ord}$ is weakly compact'': Let $\mathcal{T}$ be a binary tree of height $\mathrm{Ord}$ in $\mathcal{S}$, coded in $\mathcal{M}$ by $\tau \in \mathcal{M} \setminus \mathcal{S}$. Note that for all $\zeta \in \mathrm{Ord}^\mathcal{S}$, $\tau_\zeta$ is a binary tree of height $\zeta$. So by $\Delta_0$-Overspill, there is $\mu \in \mathrm{Ord}^\mathcal{M} \setminus \mathcal{S}$, such that $\tau_\mu$ is a binary tree of height $\mu$. Let $f \in^\mathcal{M} \tau_\mu$ such that $\dom^\mathcal{M}(f) \in \mathrm{Ord}^\mathcal{M} \setminus \mathcal{S}$. Let $F \in \mathcal{A}$ be the class coded by $f$. Since $\mathcal{S}$ is rank-initial in $\mathcal{M}$, we have for each $\zeta \in \mathrm{Ord}^\mathcal{S}$, that $F_{(\mathcal{S}, \mathcal{A})}(\zeta) = f_\mathcal{M}\restriction_\zeta$. It follows that $F$ is a branch in $\mathcal{T}$.  
\end{proof}

\begin{lemma}\label{embed in Gaifman model}
	Let $\mathcal{M} \models \mathrm{KP}^\mathcal{P}  + \Sigma_1^\mathcal{P}\textnormal{-Separation}$ be countable and non-standard, and let $\mathcal{S}$ be a $\Sigma_1^\mathcal{P}$-elementary rank-cut of $\mathcal{M}$, such that 
	\[
	\mathrm{SSy}_\mathcal{S}(\mathcal{M}) \models \mathrm{GBC} + \textnormal{``$\mathrm{Ord}$ is weakly compact''}.
	\]
	Then there is a structure $\mathcal{N} \succ \mathcal{S}$, and a self-embedding $i : \mathcal{N} \rightarrow \mathcal{N}$, such that $\mathrm{Fix}(i) = \mathcal{S}$, $i$ is contractive on $\mathcal{N} \setminus \mathcal{S}$, and
	\[
	\mathcal{S} <^{\textnormal{rank-cut}} i(\mathcal{M}) <^{\textnormal{rank-cut}} i(\mathcal{N})  <^{\textnormal{rank-cut}} \mathcal{M} <^{\textnormal{rank-cut}} \mathcal{N}.
	\] 
\end{lemma}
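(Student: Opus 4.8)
The plan is to realize $\mathcal{N}$ as a Gaifman-style elementary rank-end-extension of $\mathcal{S}$ — which automatically carries well-behaved contractive self-embeddings whose fixed-point set is exactly $\mathcal{S}$ — and then to implant a Friedman-style rank-cut copy of $\mathcal{M}$ inside it, positioned so that one of those Gaifman self-embeddings collapses all of $\mathcal{N}$ into the copy of $\mathcal{M}$.

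First I would put $(\mathcal{S}, \mathcal{A}) = \mathrm{SSy}_\mathcal{S}(\mathcal{M})$; by hypothesis this is a countable model of $\mathrm{GBC} + \textnormal{``$\mathrm{Ord}$ is weakly compact''}$, so in particular $\mathcal{S} \models \mathrm{ZFC}$. Applying Corollary \ref{end extend model of weakly compact to model with endos} to $\mathcal{S}$ (with class part $\mathcal{A}$) yields a countable $\mathcal{N}$ with $\mathcal{S} \prec^\textnormal{rank-cut} \mathcal{N}$ and $\mathrm{SSy}_\mathcal{S}(\mathcal{N}) = (\mathcal{S}, \mathcal{A})$, such that for every $\nu \in \mathrm{Ord}^\mathcal{N} \setminus \mathcal{S}$ there is a rank-initial, topless, elementary self-embedding of $\mathcal{N}$ which is contractive on $\mathrm{Ord}^\mathcal{N} \setminus \mathcal{S}$, bounded by $\nu$, and has fixed-point set $\mathcal{S}$. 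Then $\mathcal{N} \succ \mathcal{S}$, $\mathcal{N} \models \mathrm{ZFC}$ (hence $\mathrm{KP}^\mathcal{P} + \Sigma_1^\mathcal{P}\textnormal{-Separation}$), and $\mathcal{N}$ is non-standard since the inclusion $\mathcal{S} \subseteq \mathcal{N}$ is topless.

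Next I would embed $\mathcal{M}$ rank-initially and toplessly into $\mathcal{N}$ fixing $\mathcal{S}$ pointwise, via Corollary \ref{Friedman cor}. Its hypotheses hold: $\mathcal{M}$ and $\mathcal{N}$ are countable non-standard models of $\mathrm{KP}^\mathcal{P} + \Sigma_1^\mathcal{P}\textnormal{-Separation}$ with common rank-cut $\mathcal{S}$; $\mathrm{SSy}_\mathcal{S}(\mathcal{M}) = (\mathcal{S}, \mathcal{A}) = \mathrm{SSy}_\mathcal{S}(\mathcal{N})$; and, since $\mathcal{S}$ is a $\Sigma_1^\mathcal{P}$-elementary rank-cut of $\mathcal{M}$ and a fully elementary rank-cut of $\mathcal{N}$, we get $\mathrm{Th}_{\Sigma_1^\mathcal{P}, \mathcal{S}}(\mathcal{M}) = \mathrm{Th}_{\Sigma_1^\mathcal{P}, \mathcal{S}}(\mathcal{S}) = \mathrm{Th}_{\Sigma_1^\mathcal{P}, \mathcal{S}}(\mathcal{N})$. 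This produces a rank-cut embedding $e : \mathcal{M} \rightarrow \mathcal{N}$ fixing $\mathcal{S}$ pointwise. Identifying $\mathcal{M}$ with the rank-cut $e(\mathcal{M})$ of $\mathcal{N}$, we have $\mathcal{S} <^\textnormal{rank-cut} \mathcal{M} <^\textnormal{rank-cut} \mathcal{N}$ (the first relation holds because $e$ transports the given rank-cut $\mathcal{S}$ of $\mathcal{M}$ while fixing it pointwise).

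Finally, since $\mathcal{S}$ is a proper rank-cut of $\mathcal{M}$, I would pick $\nu \in \mathrm{Ord}^\mathcal{M} \setminus \mathcal{S}$ and let $i$ be the self-embedding of $\mathcal{N}$ supplied by Corollary \ref{end extend model of weakly compact to model with endos} for this $\nu$: then $\mathrm{Fix}(i) = \mathcal{S}$; and $i$ is contractive on all of $\mathcal{N} \setminus \mathcal{S}$ because $i$ preserves ranks (being elementary) and $\mathrm{rank}^\mathcal{N}(x) \notin \mathcal{S}$ whenever $x \notin \mathcal{S}$ (rank-initiality of $\mathcal{S}$), so contractivity on ordinals propagates. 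Because $i$ is bounded by $\nu \in \mathrm{Ord}^\mathcal{M}$ and $\mathcal{M}$ is rank-initial in $\mathcal{N}$, we get $i(\mathcal{N}) \subseteq \mathcal{N}_\nu \subseteq \mathcal{M}$; together with rank-initiality and toplessness of $i$ in $\mathcal{N}$ this gives $i(\mathcal{N}) <^\textnormal{rank-cut} \mathcal{M}$. Restricting $i$ to the rank-initial substructures $\mathcal{M}$ and $\mathcal{S}$, on which $i$ is an isomorphism onto its image fixing $\mathcal{S}$, then yields $\mathcal{S} <^\textnormal{rank-cut} i(\mathcal{M}) <^\textnormal{rank-cut} i(\mathcal{N})$, completing the chain. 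The one step needing genuine care — and the real content — is this dovetailing: the Friedman copy of $\mathcal{M}$ must be installed before the Gaifman self-embedding is invoked, precisely so that the bound $\nu$ can be chosen inside $\mathcal{M}$; this simultaneously forces $i$ to collapse all of $\mathcal{N}$ below $\mathcal{M}$ and to restrict to a genuine topless rank-initial self-embedding of $\mathcal{M}$ itself, after which the five rank-cut relations are routine transport-of-structure verifications.
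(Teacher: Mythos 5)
Your proof is correct and follows essentially the same route as the paper's: obtain $\mathcal{N}$ from Corollary \ref{end extend model of weakly compact to model with endos} applied to $(\mathcal{S},\mathcal{A})$, plant a rank-cut copy of $\mathcal{M}$ into $\mathcal{N}$ via Corollary \ref{Friedman cor} (matching $\Sigma_1^\mathcal{P}$-theories over $\mathcal{S}$ and standard systems), and then invoke the Gaifman self-embedding bounded by a $\nu \in \mathrm{Ord}^\mathcal{M}\setminus\mathcal{S}$. The only presentational difference is that the paper finishes the chain by citing Proposition \ref{comp emb}, while you argue the transported rank-cut relations directly; both are fine.
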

\begin{proof}
	Let us write $\mathrm{SSy}_\mathcal{S}(\mathcal{M})$ as $(\mathcal{S}, \mathcal{A})$. Since
	\[
	(\mathcal{S}, \mathcal{A}) \models \mathrm{GBC} + \textnormal{``$\mathrm{Ord}$ is weakly compact''},
	\]
	we can apply Corollary \ref{end extend model of weakly compact to model with endos} to obtain a model $\mathcal{N}$, such that $\mathcal{S} \preceq^{\rnk} \mathcal{N}$ and for each $\nu \in \mathrm{Ord}^\mathcal{N} \setminus \mathcal{S}$, there is a topless rank-initial self-embedding $i_\nu$ of $\mathcal{N}$, which is contractive on $\mathcal{N} \setminus \mathcal{S}$ and which satisfies $\mathrm{image}(i_\nu) \subseteq \mathcal{N}_\nu$ and $\mathrm{Fix}(i_\nu) = \mathcal{S}$.
	
	Since $\mathcal{S} \preceq \mathcal{M}$ and $\mathcal{S} \preceq \mathcal{N}$, we have $\mathrm{Th}_{\Sigma_1^\mathcal{P}, \mathcal{S}}(\mathcal{M}) = \mathrm{Th}_{\Sigma_1^\mathcal{P}, \mathcal{S}}(\mathcal{N})$. So by Corollary \ref{Friedman cor}, there is a topless rank-initial embedding $j : \mathcal{M} \rightarrow \mathcal{N}$ which fixes $\mathcal{S}$ pointwise. Identify $\mathcal{M}$, pointwise, with the image of this embedding, and pick $\mu \in \mathrm{Ord}^\mathcal{M} \setminus \mathcal{S}$. Let $i = i_\mu$. It now follows from Proposition \ref{comp emb} that $i(\mathcal{M}) <^{\textnormal{rank-cut}} i(\mathcal{N})$, so	
	\[
	\mathcal{S} <^{\textnormal{rank-cut}} i(\mathcal{M}) <^{\textnormal{rank-cut}} i(\mathcal{N})  <^{\textnormal{rank-cut}} \mathcal{M} <^{\textnormal{rank-cut}} \mathcal{N},
	\] 
	as desired.
\end{proof}

\begin{thm}[Bahrami-Enayat-style]\label{characterize strongly topless substructure}
	Let $\mathcal{M} \models \mathrm{KP}^\mathcal{P}  + \Sigma_1^\mathcal{P}\textnormal{-Separation} + \textnormal{Choice}$ be countable and non-standard, and let $\mathcal{S}$ be a proper rank-initial substructure of $\mathcal{M}$. The following are equivalent:
	\begin{enumerate}[{\normalfont (a)}]
		\item\label{characterize strongly topless substructure emb} $\mathcal{S} = \mathrm{Fix}(i)$, for some self-embedding $i : \mathcal{M} \rightarrow \mathcal{M}$. 
		\item[{\normalfont (\ref{characterize strongly topless substructure emb}')}] $\mathcal{S} = \mathrm{Fix}(i)$, for some topless rank-initial self-embedding $i : \mathcal{M} \rightarrow \mathcal{M}$, which is contractive on $\mathcal{M} \setminus \mathcal{S}$.
		\item\label{characterize strongly topless substructure substr} $\mathcal{S}$ is a $\Sigma_1^\mathcal{P}$-elementary strong rank-cut of $\mathcal{M}$.
	\end{enumerate}
\end{thm}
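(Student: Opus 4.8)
The plan is to prove the cycle $(\ref{characterize strongly topless substructure emb}') \Rightarrow (\ref{characterize strongly topless substructure emb}) \Rightarrow (\ref{characterize strongly topless substructure substr}) \Rightarrow (\ref{characterize strongly topless substructure emb}')$, where the first implication is trivial (every topless rank-initial self-embedding is in particular a self-embedding). The substance is in the other two implications, and the hard direction — the ``synthesis'' step that really uses both the Friedman and Gaifman machineries — is $(\ref{characterize strongly topless substructure substr}) \Rightarrow (\ref{characterize strongly topless substructure emb}')$, which will be handled essentially by invoking Lemma \ref{embed in Gaifman model} together with the Kirby--Paris Theorem \ref{Kirby Paris thm}.

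\medskip

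\textbf{$(\ref{characterize strongly topless substructure emb}) \Rightarrow (\ref{characterize strongly topless substructure substr})$.} Suppose $\mathcal{S} = \mathrm{Fix}(i)$ for a self-embedding $i : \mathcal{M} \rightarrow \mathcal{M}$. First I would note that since $\mathcal{S}$ is rank-initial and pointwise-fixed initial elements of an initial self-embedding are automatically fixed (cf.\ the discussion after the definition of $\mathrm{Fix}$), $i$ is initial on $\mathcal{S}$, and using rank-initiality of $\mathcal{S}$ in $\mathcal{M}$ one checks $i$ is rank-initial in the relevant sense; in any case Lemma \ref{rank-initial Fix is Sigma_1} gives directly that $\mathcal{S} \preceq_{\Sigma_1^\mathcal{P}} \mathcal{M}$, provided we know $i$ is a rank-initial self-embedding with $\mathrm{Fix}(i)$ rank-initial — which is exactly our hypothesis. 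So $\Sigma_1^\mathcal{P}$-elementarity is immediate. It remains to show $\mathcal{S}$ is a strong rank-cut. Toplessness: if $\mathcal{S}$ had a least ordinal above it, that ordinal would be fixed by $i$ (being $\Sigma_1^\mathcal{P}$-definable from parameters in $\mathcal{S}$ as ``the least ordinal not in the fixed-point-generating data''), contradiction — more carefully, one uses that $i$ must move \emph{some} element of rank $\alpha$ for cofinally many $\alpha$, since otherwise $i$ would fix everything. The strongness condition is the main local obstacle here: given $f \in \mathcal{M}$ with $\mathcal{M} \models f : \alpha \to \beta$ and $\alpha$ above $\mathrm{Ord}^\mathcal{S}$, I want $\nu \notin \mathcal{S}$ with $f(\xi) \notin \mathcal{S} \Leftrightarrow \mathcal{M} \models f(\xi) > \nu$ for $\xi \in \mathrm{Ord}^\mathcal{S}$. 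The idea is to apply $i$: since $i$ fixes $\mathcal{S}$ pointwise and is rank-initial, $i(f)$ restricted to $\mathrm{Ord}^\mathcal{S}$ agrees with $f$ on values lying in $\mathcal{S}$; then $\nu = \sup\{ \text{ranks of } i(f)(\xi) \text{ that land in } \mathcal{S}\}$-type construction, bounded by toplessness, together with monotonicity arguments, produces the required $\nu$. This is where one has to be careful, but it is morally the same computation as in \cite{BE18}.

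\medskip

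\textbf{$(\ref{characterize strongly topless substructure substr}) \Rightarrow (\ref{characterize strongly topless substructure emb}')$.} Assume $\mathcal{S}$ is a $\Sigma_1^\mathcal{P}$-elementary strong rank-cut of $\mathcal{M}$. Since $\mathcal{M} \models \mathrm{KP}^\mathcal{P} + \mathrm{Choice}$ and $\mathcal{S}$ is a strong rank-cut, I would first arrange that $\omega^\mathcal{M} \in \mathcal{S}$ — this follows because $\mathcal{S} \preceq_{\Sigma_1^\mathcal{P}} \mathcal{M}$ forces $\omega^\mathcal{M} \in \mathcal{S}$ (the existence of $\omega$ is $\Sigma_1^\mathcal{P}$, or even: $\mathcal{S}$ being a substructure closed under the relevant operations contains $\varnothing$ and is closed under successor, and toplessness plus $\Sigma_1^\mathcal{P}$-elementarity pins $\omega$ down). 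Then by Theorem \ref{Kirby Paris thm}, $\mathrm{SSy}_\mathcal{S}(\mathcal{M}) \models \mathrm{GBC} + \textnormal{``}\mathrm{Ord}\textnormal{ is weakly compact''}$. Now Lemma \ref{embed in Gaifman model} applies verbatim: it yields a structure $\mathcal{N} \succ \mathcal{S}$ and a self-embedding $i_0 : \mathcal{N} \to \mathcal{N}$ with $\mathrm{Fix}(i_0) = \mathcal{S}$, contractive on $\mathcal{N}\setminus\mathcal{S}$, and a chain $\mathcal{S} <^{\mathrm{rank\text{-}cut}} i_0(\mathcal{M}) <^{\mathrm{rank\text{-}cut}} i_0(\mathcal{N}) <^{\mathrm{rank\text{-}cut}} \mathcal{M} <^{\mathrm{rank\text{-}cut}} \mathcal{N}$, where I have identified $\mathcal{M}$ with its image under the Friedman embedding into $\mathcal{N}$ as in that lemma. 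The point now is to transfer the self-embedding of $\mathcal{N}$ to one of $\mathcal{M}$. I would use Corollary \ref{Friedman cor} once more: since $\mathcal{S} <^{\mathrm{rank\text{-}cut}} i_0(\mathcal{N}) <^{\mathrm{rank\text{-}cut}} \mathcal{M}$ with both $i_0(\mathcal{N})$ and $\mathcal{M}$ models of $\mathrm{KP}^\mathcal{P} + \Sigma_1^\mathcal{P}$-Separation sharing $\mathcal{S}$ as a rank-cut, and since $\mathrm{SSy}(\mathcal{M}) = \mathrm{SSy}(i_0(\mathcal{N}))$ by rank-initiality and Proposition \ref{emb pres}(\ref{emb pres SSy_S}) while the $\Sigma_1^\mathcal{P}$-theories over $\mathcal{S}$ coincide by $\Sigma_1^\mathcal{P}$-elementarity of $\mathcal{S}$ in both, there is a rank-initial embedding $k : \mathcal{M} \to i_0(\mathcal{N})$ fixing $\mathcal{S}$ pointwise. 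Restricting $i_0$ and composing, $i =_{\mathrm{df}} k \circ (i_0\restriction_\mathcal{M})$ (suitably interpreted via the identifications) is a rank-initial self-embedding of $\mathcal{M}$; it is contractive on $\mathcal{M} \setminus \mathcal{S}$ because $i_0$ is contractive on $\mathcal{N} \setminus \mathcal{S} \supseteq \mathcal{M} \setminus \mathcal{S}$ and $k$ is rank-initial hence non-rank-increasing below $\mathcal{M}$; toplessness follows from Proposition \ref{comp emb} together with toplessness of the relevant cuts; and $\mathrm{Fix}(i) = \mathcal{S}$ since $\mathrm{Fix}(i_0) = \mathcal{S}$ and everything in sight fixes $\mathcal{S}$ pointwise while moving everything above it.

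\medskip

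The main obstacle I anticipate is the bookkeeping in the last paragraph: making the identifications along the chain $\mathcal{S} < i_0(\mathcal{M}) < i_0(\mathcal{N}) < \mathcal{M} < \mathcal{N}$ precise enough that the composite really lands back inside $\mathcal{M}$ and really has fixed-point set exactly $\mathcal{S}$ (as opposed to something slightly larger coming from the Friedman embedding $k$ not being chosen to move enough). The clean way around this is to invoke the version of the Friedman theorem that forces something to move on every rank above the cut — Theorem \ref{Friedman selfembedding} / Corollary \ref{Friedman cor} with the extra clause — applied to the self-embedding, rather than a bare embedding; this guarantees $\mathrm{Fix}(i)$ contains no ordinal above $\mathcal{S}$, and combined with rank-initiality gives $\mathrm{Fix}(i) = \mathcal{S}$ exactly. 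A secondary subtlety is verifying $\omega^\mathcal{M} \in \mathcal{S}$ from $\Sigma_1^\mathcal{P}$-elementarity, but this is routine since $\mathcal{S}$ is a rank-cut and hence closed under the von Neumann naturals, whose supremum $\omega$ is then forced into $\mathcal{S}$ by toplessness together with $\mathcal{S} \preceq_{\Sigma_1^\mathcal{P}} \mathcal{M}$.
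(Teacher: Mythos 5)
Your cycle $(\ref{characterize strongly topless substructure emb}') \Rightarrow (\ref{characterize strongly topless substructure emb}) \Rightarrow (\ref{characterize strongly topless substructure substr}) \Rightarrow (\ref{characterize strongly topless substructure emb}')$ and your choice of ingredients (Lemma \ref{rank-initial Fix is Sigma_1}, Overspill, Lemma \ref{Kirby Paris forward}/Theorem \ref{Kirby Paris thm}, Lemma \ref{embed in Gaifman model}) match the paper's architecture, and the sketch of $(\ref{characterize strongly topless substructure emb}) \Rightarrow (\ref{characterize strongly topless substructure substr})$ is along the right lines — the paper's version applies $i$ to a code $f$ of the function, observes $f(\zeta)\notin\mathcal{S} \Leftrightarrow f(\zeta)\neq i(f)(\zeta)$ for $\zeta \in \mathrm{Ord}^\mathcal{S}$, and runs $\Pi_1^\mathcal{P}$-Overspill on the formula $\phi(\xi) \equiv \forall\zeta<\xi.\bigl(f(\zeta)\neq i(f)(\zeta)\rightarrow f(\zeta)>\xi\bigr)$, which is the precise form of the computation you gesture at.

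There is, however, a genuine gap in your treatment of $(\ref{characterize strongly topless substructure substr}) \Rightarrow (\ref{characterize strongly topless substructure emb}')$. Lemma \ref{embed in Gaifman model} already hands you the chain $\mathcal{S} <^{\textnormal{rank-cut}} i_0(\mathcal{M}) <^{\textnormal{rank-cut}} i_0(\mathcal{N}) <^{\textnormal{rank-cut}} \mathcal{M} <^{\textnormal{rank-cut}} \mathcal{N}$, with $\mathcal{M}$ already identified with its image inside $\mathcal{N}$. In particular $i_0(\mathcal{M}) \subseteq \mathcal{M}$ \emph{as subsets of $\mathcal{N}$}, so $i_0\restriction_\mathcal{M}$ is already a self-embedding of $\mathcal{M}$: it is rank-initial (because $i_0(\mathcal{M})$ is a rank-cut of $\mathcal{M}$), topless (because $i_0(\mathcal{N})\setminus i_0(\mathcal{M})$ is downward cofinal above it), has $\mathrm{Fix}(i_0\restriction_\mathcal{M}) = \mathrm{Fix}(i_0)\cap\mathcal{M} = \mathcal{S}$, and is contractive on $\mathcal{M}\setminus\mathcal{S}$. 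That is the whole proof. Your additional step — choosing a second Friedman embedding $k:\mathcal{M}\to i_0(\mathcal{N})$ and passing to $k\circ(i_0\restriction_\mathcal{M})$ — is not only unnecessary, it introduces two errors. First, your claim that ``$k$ is rank-initial hence non-rank-increasing'' is false: rank-initiality of $k$ gives $\rnk^{\mathcal{N}}(k(x)) = k(\rnk^\mathcal{M}(x))$, which can perfectly well exceed $\rnk^\mathcal{M}(x)$, so the contractivity of $i_0$ is not obviously preserved by postcomposition with $k$. Second, your proposed repair — invoking Theorem \ref{Friedman selfembedding} to force $k$ to move something on every rank above $\mathcal{S}$ — gives only $\mathrm{Fix}^\rnk(k\circ(i_0\restriction_\mathcal{M}))=\mathcal{S}$, not $\mathrm{Fix}(k\circ(i_0\restriction_\mathcal{M}))=\mathcal{S}$: moving \emph{something} at each rank is far weaker than moving \emph{everything} outside $\mathcal{S}$. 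The condition $\mathrm{Fix}(i)=\mathcal{S}$ requires that every element outside $\mathcal{S}$ be moved, and the only mechanism in sight that guarantees this is contractivity on $\mathcal{M}\setminus\mathcal{S}$, which is exactly what $i_0\restriction_\mathcal{M}$ delivers for free from the Gaifman construction, without the extra $k$.
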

\begin{proof}
	(\ref{characterize strongly topless substructure emb}) $\Rightarrow$ (\ref{characterize strongly topless substructure substr}): 
	It follows from Lemma \ref{rank-initial Fix is Sigma_1} that $\mathcal{S} \preceq_{\Sigma_1^\mathcal{P}} \mathcal{M}$. Next we observe that $\mathcal{S}$ is topless: It is assumed to be a proper substructure. If there were a least $\lambda \in \mathrm{Ord}^\mathcal{M} \setminus \mathcal{S}$, then by initiality of $i$ and $\mathcal{S} \subseteq \mathrm{Fix}(i)$, we would have $i(\lambda) = \lambda$, contradicting $\mathcal{S} \supseteq \mathrm{Fix}(i)$ and $\lambda  \not\in \mathcal{S}$. 
	
	Let $\alpha, \beta \in \mathrm{Ord}^\mathcal{M}$, with $\mathrm{Ord}^\mathcal{M} \cap \mathcal{S} \subseteq \alpha_\mathcal{M}$, and let $f \in \mathcal{M}$ code a function from $\alpha$ to  $\beta$ in $\mathcal{M}$. Note that $\mathrm{Ord}^\mathcal{M} \cap \mathcal{S} \subseteq i(\alpha)_\mathcal{M}$ and that $i(f)$ codes a function from $i(\alpha)$ to $i(\beta)$ in $\mathcal{M}$. By $\mathcal{S} = \mathrm{Fix}(i)$, for all $\zeta \in \mathrm{Ord}^\mathcal{M} \cap \mathcal{S}$ we have
	\[
	f(\zeta) \not\in \mathcal{S} \Leftrightarrow f(\zeta) \neq i(f)(\zeta). \tag{$\dagger$}
	\]
	We define a formula, with $f$ and $i(f)$ as parameters:
	\[
	\phi(\xi) \equiv \mathrm{Ord}(\xi) \wedge \forall \zeta < \xi . \big( f(\zeta) \neq i(f)(\zeta) \rightarrow f(\zeta) > \xi \big)
	\]
	Note that $\phi$ is $\Pi_1$ and that $\mathcal{M} \models \phi(\zeta)$, for all $\zeta \in \mathrm{Ord}^\mathcal{M} \cap \mathcal{S}$. So by $\Pi_1^\mathcal{P} \text{-Overspill}$ and toplessness of $\mathcal{S}$, there is $\mu \in \mathrm{Ord}^\mathcal{M} \setminus \mathcal{S}$ such that $\mathcal{M} \models \phi(\mu)$. Combining $\mathcal{M} \models \phi(\mu)$ with ($\dagger$), we have for all $\zeta \in \mathrm{Ord}^\mathcal{M} \cap \mathcal{S}$ that
	\[
	f(\zeta) \not\in \mathcal{S} \Rightarrow f(\zeta) > \mu.
	\]
	On the other hand, by $\mu \in \mathrm{Ord}^\mathcal{M} \setminus \mathcal{S}$, the converse is obvious. Hence, $\mathcal{S}$ is strongly topless.
	
	(\ref{characterize strongly topless substructure substr}) $\Rightarrow$ (\ref{characterize strongly topless substructure emb}'): By Lemma \ref{Kirby Paris forward}, we can apply Lemma \ref{embed in Gaifman model}. The restriction $i\restriction_\mathcal{M}$ of $i : \mathcal{N} \rightarrow \mathcal{N}$ (from Lemma \ref{embed in Gaifman model}) to $\mathcal{M}$, is a topless rank-initial self-embedding of $\mathcal{M}$ with fixed-point set $\mathcal{S}$, which is contractive on $\mathcal{M} \setminus \mathcal{S}$. 
\end{proof}

\begin{lemma}\label{Kirby Paris backward}
	Let $\mathcal{M} \models \mathrm{KP}^\mathcal{P} + \textnormal{Choice}$ be countable and let $\mathcal{S}$ be a rank-cut of $\mathcal{M}$. If $\mathrm{SSy}_\mathcal{S}(\mathcal{M}) \models \mathrm{GBC} + \text{``$\mathrm{Ord}$ is weakly compact''}$, then $\mathcal{S}$ is a strong rank-cut of $\mathcal{M}$.
\end{lemma}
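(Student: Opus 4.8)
The plan is to show that $\mathcal{S}$, which is already a rank-cut of $\mathcal{M}$, satisfies the additional strong-toplessness clause, i.e. that any $\mathcal{M}$-coded function $f$ from $\mathrm{Ord}^\mathcal{M}$ to $\mathrm{Ord}^\mathcal{M}$ restricted to $\mathrm{Ord}^\mathcal{S}$ is ``cut'' by a single ordinal $\mu \in \mathrm{Ord}^\mathcal{M}\setminus\mathcal{S}$: $f(\xi)\notin\mathcal{S} \Leftrightarrow f(\xi) > \mu$ for all $\xi\in\mathrm{Ord}^\mathcal{S}$. Write $(\mathcal{S},\mathcal{A}) = \mathrm{SSy}_\mathcal{S}(\mathcal{M})$, so $(\mathcal{S},\mathcal{A})\models\mathrm{GBC}+\text{``}\mathrm{Ord}\text{ is weakly compact''}$. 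Given $f\in\mathcal{M}$ with $\mathcal{M}\models f:\alpha\to\beta$ and $\alpha_\mathcal{M}\supseteq\mathrm{Ord}^\mathcal{S}$, the restriction $g =_\df f_\mathcal{M}\restriction_{\mathrm{Ord}^\mathcal{S}}$ splits $\mathrm{Ord}^\mathcal{S}$ into $P = \{\xi \mid g(\xi)\in\mathcal{S}\}$ and its complement; by rank-initiality, the values $g(\xi)$ for $\xi\in P$ all lie in $\mathcal{S}$. The key point is that $g\restriction_P$ is a class of $(\mathcal{S},\mathcal{A})$: indeed $f$ codes it, since for $\xi\in\mathrm{Ord}^\mathcal{S}$ we have $\langle\xi,\eta\rangle\in^\mathcal{M} f$ with $\eta\in\mathcal{S}$ iff $\langle\xi,\eta\rangle$ is in the externalized graph, and this graph intersected with $\mathcal{S}$-pairs is coded in $\mathcal{M}$ by $f$ itself (using that $\mathcal{S}$ is closed under ordered pairs by toplessness). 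So the function $F =_\df g\restriction_P \in\mathcal{A}$ is a class-function of $(\mathcal{S},\mathcal{A})$ with domain a subclass of $\mathrm{Ord}$ and range inside $\mathrm{Ord}$.

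Now I would invoke ``$\mathrm{Ord}$ is weakly compact'' — or rather its consequence that $(\mathcal{S},\mathcal{A})$ is a model of $\mathrm{GBC}$ in which $\mathrm{Ord}$ is regular and Mahlo-like — to argue that $F$ cannot map a cofinal subclass of $\mathrm{Ord}$ cofinally into $\mathrm{Ord}$ without being ``eventually everything'': more precisely, I want to derive a contradiction from the assumption that both $P$ and its complement are cofinal in $\mathrm{Ord}^\mathcal{S}$ while $F$ on $P$ is unbounded in $\mathrm{Ord}^\mathcal{S}$. The cleaner route, paralleling the Kirby–Paris argument for $\mathrm{ACA}_0$, is the following. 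Suppose for contradiction that no such $\mu\in\mathrm{Ord}^\mathcal{M}\setminus\mathcal{S}$ works. Since the set $\{f(\xi)\mid\xi\in\mathrm{Ord}^\mathcal{S},\ f(\xi)\in\mathcal{S}\}$ is definable in $(\mathcal{S},\mathcal{A})$ from the class $F$, Class Replacement in $(\mathcal{S},\mathcal{A})$ (which holds since $(\mathcal{S},\mathcal{A})\models\mathrm{GBC}$) would give that this set is bounded in $\mathrm{Ord}^\mathcal{S}$ — call the bound $\gamma\in\mathrm{Ord}^\mathcal{S}$. Then for $\xi\in\mathrm{Ord}^\mathcal{S}$: $f(\xi)\in\mathcal{S}$ iff $\mathcal{M}\models f(\xi)<\gamma^{*}$ for a suitable witness, but more to the point, $f(\xi)\notin\mathcal{S}$ iff $f(\xi)\geq$ some fixed $\mathcal{M}$-ordinal. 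To pin down the single cut ordinal $\mu$, I would apply $\Delta_0^\mathcal{P}$-Overspill (Lemma \ref{overspill}) inside $\mathcal{M}$: the $\Delta_0^\mathcal{P}$ statement ``$\forall\zeta<\xi.(f(\zeta)\leq\gamma^{*} \vee f(\zeta)>\xi)$'' holds for all $\xi\in\mathrm{Ord}^\mathcal{S}$ (since each such $f(\zeta)$ is either in $\mathcal{S}$, hence $\leq\gamma^{*}$ after enlarging $\gamma^*$, or out of $\mathcal{S}$, hence above every $\mathcal{S}$-ordinal), so it overspills to some $\mu\in\mathrm{Ord}^\mathcal{M}\setminus\mathcal{S}$, and this $\mu$ is the required cut point.

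The main obstacle I anticipate is the passage establishing that the range of $f$ on the ``good'' part $P$ is bounded in $\mathrm{Ord}^\mathcal{S}$ — this is exactly where ``$\mathrm{Ord}$ is weakly compact'' (via $\mathrm{GBC}$'s Class Replacement, or more strongly via the Mahlo-ness consequences from Enayat's characterization cited after Proposition \ref{GBC weakly compact global choice}) must be used, and I need to be careful that the relevant set really is coded as a class of $(\mathcal{S},\mathcal{A})$ rather than merely a subset of $\mathcal{S}$ in the metatheory. A secondary subtlety is handling the case distinction cleanly when $P$ itself may fail to be cofinal in $\mathrm{Ord}^\mathcal{S}$ (then the overspill argument directly produces $\mu$ with no use of Replacement) versus when it is cofinal. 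I would organize the write-up as: (1) reduce to the graph-coding observation that $F\in\mathcal{A}$; (2) use $(\mathcal{S},\mathcal{A})\models\mathrm{GBC}$ to bound the $\mathcal{S}$-valued part of $f$; (3) apply $\Delta_0^\mathcal{P}$-Overspill in $\mathcal{M}$ to extract the single cut ordinal $\mu$; (4) verify $f(\xi)\notin\mathcal{S}\Leftrightarrow f(\xi)>\mu$ directly. Combined with Lemma \ref{Kirby Paris forward}, this completes the proof of Theorem \ref{Kirby Paris thm}.
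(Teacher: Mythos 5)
Your proposal takes a genuinely different route from the paper. The paper's proof is geometric: it applies Corollary \ref{end extend model of weakly compact to model with endos} (the Gaifman functor) to obtain $\mathcal{N} \succ \mathcal{S}$ with a topless rank-initial self-embedding whose fixed-point set is exactly $\mathcal{S}$; the forward implication (\ref{characterize strongly topless substructure emb}) $\Rightarrow$ (\ref{characterize strongly topless substructure substr}) of Theorem \ref{characterize strongly topless substructure} then yields that $\mathcal{S}$ is a strong rank-cut of $\mathcal{N}$; Corollary \ref{Friedman cor} provides a rank-initial embedding $j : \mathcal{N} \rightarrow \mathcal{M}$ fixing $\mathcal{S}$ pointwise; and Proposition \ref{comp emb}(\ref{comp emb strongly topless}) transfers strong toplessness of $\mathcal{S}$ from $\mathcal{N}$ to $\mathcal{M}$. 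You instead attempt to verify the strong-toplessness clause directly from $(\mathcal{S},\mathcal{A}) \models \mathrm{GBC} + \text{``$\mathrm{Ord}$ is weakly compact''}$, in the spirit of the original Kirby--Paris argument.

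There is, however, a genuine gap, precisely where you flag the ``main obstacle''. You claim that Class Replacement in $(\mathcal{S},\mathcal{A})$ implies that $\{f(\xi) \mid \xi \in \mathrm{Ord}^\mathcal{S},\ f(\xi)\in\mathcal{S}\}$ is bounded in $\mathrm{Ord}^\mathcal{S}$. This is false. Class Replacement asserts only that the image of a \emph{set} under a class function is a set; here the domain $P = \{\xi \in \mathrm{Ord}^\mathcal{S} \mid f(\xi) \in \mathcal{S}\}$ is in general a \emph{proper} class of $(\mathcal{S},\mathcal{A})$, and then $F \restriction_P$ may be cofinal in $\mathrm{Ord}^\mathcal{S}$. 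For a concrete instance, take $\hat f$ to be the identity on an ordinal $\alpha \in \mathrm{Ord}^\mathcal{M} \setminus \mathcal{S}$: then $P = \mathrm{Ord}^\mathcal{S}$ and $F(P) = \mathrm{Ord}^\mathcal{S}$, so no $\gamma^{*}$ exists. (The conclusion of the lemma is vacuously satisfied for that $f$, but not via your argument.) Since $\gamma^{*}$ need not exist, the $\Delta_0^\mathcal{P}$-Overspill step built around the formula $\forall\zeta<\xi . (f(\zeta)\leq\gamma^{*} \vee f(\zeta)>\xi)$ has no sound basis in the general case.

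Your preliminary observations are fine: $F = f\restriction_P$ is indeed coded in $\mathcal{A}$, by the graph-coding argument you give, and the overspill skeleton is the right shape once a correct $\mu$-producing argument is in place. Your case distinction is also pointed in a reasonable direction, though the dichotomy that actually matters is whether $Q =_\df \mathrm{Ord}^\mathcal{S}\setminus P$ is bounded in $\mathrm{Ord}^\mathcal{S}$ (if so, $Q$ is a set of $\mathcal{M}$ by rank-initiality, the minimum of $f(Q)$ lies in $\mathrm{Ord}^\mathcal{M}\setminus\mathcal{S}$, and toplessness of $\mathcal{S}$ gives $\mu$ immediately, with no need for Replacement or weak compactness). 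The hard case, where both $P$ and $Q$ are cofinal in $\mathrm{Ord}^\mathcal{S}$, is exactly where something stronger than $\mathrm{GBC}$'s Replacement is needed; one expects here an honest use of ``$\mathrm{Ord}$ is weakly compact'' (e.g., a tree/branch argument), and your proposal does not supply it. The paper's geometric approach sidesteps this combinatorial difficulty entirely by reusing the Gaifman and Friedman machinery already developed, at the cost of being less self-contained; a successful direct proof would be a nice complement but would require filling the gap identified above.
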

\begin{proof}
	Write $\mathrm{SSy}_\mathcal{S}(\mathcal{M})$ as $(\mathcal{S}, \mathcal{A})$. Let $\mathcal{S} \preceq \mathcal{N}$ be the elementary extension obtained from Corollary \ref{end extend model of weakly compact to model with endos}, with a topless rank-initial self-embedding $i$ that is contractive on $\mathcal{N} \setminus \mathcal{S}$. It follows from Theorem \ref{characterize strongly topless substructure} that $\mathcal{S}$ is a strong rank-cut of $\mathcal{N}$. 
	
	Since $\mathcal{S}$ is rank-initial in $\mathcal{M}$, we have by Proposition \ref{emb pres}(\ref{emb pres Delta_0^P}) that $\mathrm{Th}_{\Sigma_1^\mathcal{P}, \mathcal{S}}(S) \subseteq \mathrm{Th}_{\Sigma_1^\mathcal{P}, \mathcal{S}}(M)$. So since $\mathcal{S} \preceq \mathcal{N}$, we have $\mathrm{Th}_{\Sigma_1^\mathcal{P}, \mathcal{S}}(N) \subseteq \mathrm{Th}_{\Sigma_1^\mathcal{P}, \mathcal{S}}(M)$, whence by Corollary \ref{Friedman cor}, there is a topless rank-initial embedding $j : \mathcal{N} \rightarrow \mathcal{M}$ which fixes $\mathcal{S}$ pointwise. Now it follows from Proposition \ref{comp emb} that $\mathcal{S}$ is also a strong rank-cut of $\mathcal{M}$.
\end{proof}

\begin{lemma}\label{rec sat elementary strongly topless}
	Let $\mathcal{M}$ be a countable recursively saturated model of $\mathrm{ZFC}$. If $\mathcal{S}$ is a strongly topless rank-initial elementary substructure of $M$, then $S \cong \mathcal{M}$, and a safe satisfaction relation on $\mathcal{S}$ is coded in $\mathrm{SSy}_\mathcal{S}(\mathcal{M})$.
\end{lemma}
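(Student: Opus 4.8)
The plan is to treat $\mathcal{S}$ as a countable recursively saturated model in its own right, use the Kirby--Paris-style Lemma \ref{Kirby Paris forward} to pin down $\mathrm{SSy}_\mathcal{S}(\mathcal{M})$, and then conclude via the isomorphism Theorem \ref{rec sat iso thm}; the safe-satisfaction clause will fall out of the same computation of $\mathrm{SSy}_\mathcal{S}(\mathcal{M})$.

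First I would record what $\mathcal{S}$ inherits from $\mathcal{M}$. Since $\mathcal{S} \preceq \mathcal{M} \models \mathrm{ZFC}$ we have $\mathcal{S} \models \mathrm{ZFC}$, hence $\mathcal{S} \models \mathrm{KP}^\mathcal{P} + \textnormal{Choice}$, and $\mathcal{S}$ is countable. By Theorem \ref{rec sat char}, $\mathcal{M}$ is $\omega$-non-standard; as $\mathcal{S} \preceq \mathcal{M}$ the definable element $\omega$ has $\omega^\mathcal{S} = \omega^\mathcal{M} \in \mathcal{S}$, and rank-initiality makes $\rnk^\mathcal{S}$ and ``being a standard element'' absolute between $\mathcal{S}$ and $\mathcal{M}$ on elements of $\mathcal{S}$; hence $\mathrm{OSP}(\mathcal{S}) = \omega$, so $\mathcal{S}$ is itself $\omega$-non-standard. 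Being strongly topless and rank-initial, $\mathcal{S}$ is a strong rank-cut of $\mathcal{M}$ with $\omega^\mathcal{M} \in \mathcal{S}$, so Lemma \ref{Kirby Paris forward} applies and yields $\mathrm{SSy}_\mathcal{S}(\mathcal{M}) \models \mathrm{GBC} + \textnormal{``$\mathrm{Ord}$ is weakly compact''}$; write $\mathrm{SSy}_\mathcal{S}(\mathcal{M}) = (\mathcal{S}, \mathcal{A})$, where $\mathcal{A} = \mathrm{Cod}_\mathcal{S}(\mathcal{M})$. Exactly as in the treatment of Paris models in Section \ref{ZFC and GBC}, the proof of Theorem~4.5(i) of \cite{Ena01} then gives a safe satisfaction relation $\mathrm{Sat}$ on $\mathcal{S}$ with $\mathrm{Sat} \in \mathcal{A}$; this is a safe satisfaction relation on $\mathcal{S}$ coded in $\mathrm{SSy}_\mathcal{S}(\mathcal{M})$, which is the second assertion of the lemma. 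Since $\mathcal{S}$ is countable, $\omega$-non-standard, and admits a safe satisfaction relation, Theorem \ref{rec sat char} gives that $\mathcal{S}$ is recursively saturated.

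It remains to produce the isomorphism $\mathcal{S} \cong \mathcal{M}$. Applying Lemma \ref{omega-topless existence} inside $\mathcal{M}$ with $\alpha_0 := \omega^\mathcal{M}$, the structure $\mathcal{S}^\ast$ on $\bigcup_{k<\omega}\mathcal{M}_{\alpha_k}$ is an $\omega$-topless rank-initial substructure of $\mathcal{M}$; by the absoluteness of $\rnk$ and of $\omega$ between $\mathcal{S}$ and $\mathcal{M}$, $\mathcal{S}^\ast$ coincides with the corresponding union formed inside $\mathcal{S}$, which by the same lemma is an $\omega$-topless rank-initial substructure of $\mathcal{S}$. Thus $\mathcal{S}^\ast$ is a common rank-initial $\omega$-topless substructure of the countable recursively saturated $\mathrm{ZF}$-models $\mathcal{M}$ and $\mathcal{S}$. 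Since the inclusion $\mathcal{S} \hookrightarrow \mathcal{M}$ is rank-initial and fixes $\mathcal{S}^\ast$ pointwise, Proposition \ref{emb pres}(\ref{emb pres SSy_S}) gives $\mathrm{SSy}_{\mathcal{S}^\ast}(\mathcal{S}) = \mathrm{SSy}_{\mathcal{S}^\ast}(\mathcal{M})$, and $\mathrm{Th}_{\mathcal{S}^\ast}(\mathcal{S}) = \mathrm{Th}_{\mathcal{S}^\ast}(\mathcal{M})$ because $\mathcal{S} \preceq \mathcal{M}$; taking $m_0 = n_0 = \varnothing$, Theorem \ref{rec sat iso thm} then produces an isomorphism between $\mathcal{M}$ and $\mathcal{S}$.

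The main obstacle is not a single deep step but the careful bookkeeping of absoluteness: one must check that rank-initiality together with $\mathcal{S} \preceq \mathcal{M}$ genuinely transfers $\omega$-non-standardness, the rank function, ``standardness'' of elements, and the coded-subset structure between $\mathcal{S}$ and $\mathcal{M}$, so that $\mathcal{S}$ really qualifies as an input to Lemmata \ref{Kirby Paris forward}, \ref{omega-topless existence} and Theorems \ref{rec sat char}, \ref{rec sat iso thm}. The only genuinely external ingredient is the extraction of a safe satisfaction relation from $\mathrm{GBC} + \textnormal{``$\mathrm{Ord}$ is weakly compact''}$ via \cite{Ena01}; everything else is assembly of results already established.
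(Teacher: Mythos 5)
Your proposal is correct, and the architecture matches the paper on the isomorphism part, but you take a genuinely different route to the safe satisfaction relation. The paper does not appeal to \cite{Ena01} at all: it observes that $\mathcal{M}$, being recursively saturated, already carries a safe satisfaction relation $\mathrm{Sat}^\mathcal{M}$ by Theorem~\ref{rec sat char}, and then simply restricts it, setting $\mathrm{Sat}^\mathcal{S} = \mathrm{Sat}^\mathcal{M} \cap \mathcal{S}^2$. This set is coded in $\mathcal{M}$ by $\mathcal{L}^0_\mathrm{Sat}$-Separation applied to any bounding set for $\mathcal{S}$, hence lies in $\mathcal{A}$; correctness for standard formulae follows from $\mathcal{S} \prec \mathcal{M}$; and the $\mathrm{ZF}(\mathcal{L}^0_\mathrm{Sat})$ part follows because $(\mathcal{S},\mathcal{A}) \models \mathrm{GBC}$ (here only the $\mathrm{GBC}$ half of Lemma~\ref{Kirby Paris forward} is used, not the weak compactness). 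Your version instead pipes the full $\mathrm{GBC} + \textnormal{``$\mathrm{Ord}$ is weakly compact''}$ conclusion of Lemma~\ref{Kirby Paris forward} into the Enayat result (Theorem~4.5(i) of \cite{Ena01}) to conjure a safe satisfaction class in $\mathcal{A}$ from scratch, ignoring the one already available on $\mathcal{M}$. Both derivations are valid; the paper's is more self-contained and uses a weaker hypothesis at this step, while yours is shorter to state but imports a black-box result. On the isomorphism half you apply Lemma~\ref{omega-topless existence} with $\alpha_0 = \omega^\mathcal{M}$ to manufacture the common $\omega$-topless cut, whereas the paper takes $\mathrm{WFP}(\mathcal{M})$ itself (i.e.\ $\alpha_0 = 0$); both are legitimate inputs to Theorem~\ref{rec sat iso thm}, so this difference is cosmetic. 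Your remaining bookkeeping (transfer of $\omega$-non-standardness via $\omega^\mathcal{S}=\omega^\mathcal{M}$ and rank-initiality, equality of $\mathrm{SSy}$ and $\mathrm{Th}$ over the common cut) agrees with the paper.
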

\begin{proof}
We start by showing that a safe satisfaction relation on $\mathcal{S}$ is coded in $\mathrm{SSy}_\mathcal{S}(\mathcal{M})$. By the forward direction of Theorem \ref{rec sat char}, $\mathcal{M}$ is $\omega$-non-standard and admits a safe satisfaction relation $\mathrm{Sat}^\mathcal{M}$. Put $\mathrm{Sat}^\mathcal{S} = \mathrm{Sat}^\mathcal{M} \cap \mathcal{S}$. Note that $\mathrm{Sat}^\mathcal{S}$ is coded in $\mathcal{M}$, so the relation $\mathrm{Sat}^\mathcal{S}$ is coded as a class in $\mathrm{SSy}_\mathcal{S}(\mathcal{M})$. Since $\mathcal{S} \prec \mathcal{M}$, we have $\omega^\mathcal{S} = \omega^\mathcal{M}$. So since $\mathcal{S}$ is rank-initial in $\mathcal{M}$ and $\mathcal{M}$ is $\omega$-non-standard, $\mathcal{S}$ is $\omega$-non-standard. 

Since $\mathcal{S}$ is a strongly topless rank-initial elementary substructure of $M$, we have by Lemma \ref{Kirby Paris forward} that $\mathrm{SSy}_\mathcal{S}(\mathcal{M}) \models \mathrm{GBC}$. Therefore we have $(\mathcal{S}, \mathrm{Sat}^\mathcal{S}) \models \mathrm{ZF}(\mathcal{L}^0_\mathrm{Sat})$. To establishes that $\mathrm{Sat}^\mathcal{S}$ is a safe satisfaction relation on $\mathcal{S}$, it remains only to check that $(\mathcal{S}, \mathrm{Sat}^\mathcal{S}) \models \forall \sigma \in \bar\Sigma_n[x] . (\mathrm{Sat}(\sigma, x) \leftrightarrow \mathrm{Sat}_{\Sigma_n}(\sigma, x)$ and $(\mathcal{S}, \mathrm{Sat}^\mathcal{S}) \models \forall \pi \in \bar\Pi_n[x] . \mathrm{Sat}(\pi, x) \leftrightarrow \mathrm{Sat}_{\Pi_n}(\pi, x))$, for each standard $n \in \mathbb{N}$. But this follows from that $\mathrm{Sat}_{\Sigma_n}^\mathcal{S} = \mathrm{Sat}_{\Sigma_n}^\mathcal{M} \cap \mathcal{S}^2$ and $\mathrm{Sat}_{\Pi_n}^\mathcal{S} = \mathrm{Sat}_{\Pi_n}^\mathcal{M} \cap \mathcal{S}^2$, for each standard $n \in \mathbb{N}$, which in turn follows from that $\mathcal{S} \prec \mathcal{M}$. 

By the backward direction of Theorem \ref{rec sat char}, it now follows that $\mathcal{S}$ is recursively saturated. Since recursively saturated models are $\omega$-non-standard, we have by Lemma \ref{omega-topless existence} that $\mathrm{WFP}(\mathcal{M})$ is $\omega$-topless in $\mathcal{M}$ and in $\mathcal{S}$. So by Theorem \ref{rec sat iso thm}, $\mathcal{S} \cong \mathcal{M}$.
\end{proof}

\begin{thm}[Kaye-Kossak-Kotlarski-style]\label{characterize strongly topless substructure of rec sat}
	Let $\mathcal{M} \models \mathrm{ZFC} + V = \mathrm{HOD}$ be countable and recursively saturated, and let $\mathcal{S}$ be a proper rank-initial substructure of $\mathcal{M}$. The following are equivalent:
	\begin{enumerate}[{\normalfont (a)}]
		\item\label{characterize strongly topless substructure of rec sat emb} $\mathcal{S} = \mathrm{Fix}(i)$, for some automorphism $i : \mathcal{M} \rightarrow \mathcal{M}$. 
		\item\label{characterize strongly topless substructure of rec sat substr} $\mathcal{S}$ is a strongly topless elementary substructure of $\mathcal{M}$.
		\item[{\normalfont (\ref{characterize strongly topless substructure of rec sat substr}')}] $\mathcal{S}$ is a strongly topless elementary substructure of $\mathcal{M}$ isomorphic to $\mathcal{M}$.
	\end{enumerate}
\end{thm}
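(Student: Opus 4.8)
The three conditions should be shown equivalent via $(\ref{characterize strongly topless substructure of rec sat substr}') \Rightarrow (\ref{characterize strongly topless substructure of rec sat emb}) \Rightarrow (\ref{characterize strongly topless substructure of rec sat substr}) \Rightarrow (\ref{characterize strongly topless substructure of rec sat substr}')$, where the last implication is exactly Lemma \ref{rec sat elementary strongly topless} (it applies since $\mathrm{ZFC} + V = \mathrm{HOD} \vdash \mathrm{ZFC}$, and note $V=\mathrm{HOD}$ gives definable Skolem functions, which will be needed for the elementarity of the fixed-point set). So the real content is the first two implications.

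For $(\ref{characterize strongly topless substructure of rec sat substr}') \Rightarrow (\ref{characterize strongly topless substructure of rec sat emb})$, the plan is to imitate the proof of Theorem \ref{characterize strongly topless substructure}, but upgrading embeddings to isomorphisms using recursive saturation. Since $\mathcal{S}$ is a strongly topless elementary substructure of $\mathcal{M}$, Lemma \ref{Kirby Paris forward} (applicable because $\mathcal{M} \models \mathrm{ZFC}$ so in particular $\mathrm{KP}^\mathcal{P} + \mathrm{Choice}$, and $\omega^\mathcal{M} = \omega^\mathcal{S} \in \mathcal{S}$ by $\mathcal{S} \prec \mathcal{M}$) yields that $\mathrm{SSy}_\mathcal{S}(\mathcal{M}) \models \mathrm{GBC} + \text{``}\mathrm{Ord}\text{ is weakly compact''}$. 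By Lemma \ref{rec sat elementary strongly topless}, $\mathcal{S}$ is recursively saturated, a safe satisfaction relation on $\mathcal{S}$ is coded in $\mathrm{SSy}_\mathcal{S}(\mathcal{M})$, and $\mathcal{S} \cong \mathcal{M}$. Now apply Lemma \ref{embed in Gaifman model} (its hypotheses hold: $\mathcal{S}$ is a $\Sigma_1^\mathcal{P}$-elementary — indeed fully elementary — rank-cut with the required standard system) to get $\mathcal{N} \succ \mathcal{S}$ with a self-embedding $i : \mathcal{N} \to \mathcal{N}$ such that $\mathrm{Fix}(i) = \mathcal{S}$, contractive on $\mathcal{N} \setminus \mathcal{S}$, and $\mathcal{S} <^{\rnk} i(\mathcal{M}) <^{\rnk} i(\mathcal{N}) <^{\rnk} \mathcal{M} <^{\rnk} \mathcal{N}$. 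The restriction $j = i\restriction_{\mathcal{M}}$ is a rank-initial elementary (by Proposition \ref{elem initial is rank-initial}, since $i$ is elementary and restrictions of elementary maps between models into a rank-cut are elementary) self-embedding of $\mathcal{M}$ with $\mathrm{Fix}(j) = \mathcal{S}$. The remaining — and this is the crux — step is to \emph{surjectivize}: we have $i(\mathcal{M}) <^{\rnk} \mathcal{M}$, i.e. $i(\mathcal{M})$ is a rank-cut of $\mathcal{M}$, and by Lemma \ref{rec sat elementary strongly topless} applied inside $\mathcal{M}$ we get that $i(\mathcal{M}) \cong_{\mathcal{S}} \mathcal{M}$ whenever $i(\mathcal{M})$ is additionally elementary and strongly topless in $\mathcal{M}$ — but $j$ is only topless, not strongly topless. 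The cleaner route is a back-and-forth: build a chain $\mathcal{M} \supseteq \mathcal{M}_1 \supseteq \mathcal{M}_2 \supseteq \cdots$ of rank-cuts all elementary, all isomorphic to $\mathcal{M}$ over $\mathcal{S}$, with isomorphisms $j_k : \mathcal{M} \to \mathcal{M}_k$ agreeing on longer and longer initial segments and conjugating compatibly, so that $i := \bigcap_k \mathcal{M}_k$-style limit or rather the composite/direct-limit isomorphism is an automorphism of $\mathcal{M}$ with fixed-point set $\mathcal{S}$. Concretely, one alternates: given $j_k : \mathcal{M} \xrightarrow{\sim} \mathcal{M}_k$ fixing $\mathcal{S}$, Theorem \ref{rec sat iso thm} (with $\mathcal{S}$ as the common $\omega$-topless rank-initial substructure, using that $\mathrm{WFP}(\mathcal{M})$ is $\omega$-topless in both and that $\mathcal{S}$ contains a suitable such cut) produces the next isomorphism onto the next rank-cut, arranging that every element of $\mathcal{M}$ eventually enters the domain and every element eventually enters the image. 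Then $i = \lim j_k$ is an automorphism of $\mathcal{M}$, and since each $j_k$ fixes exactly $\mathcal{S}$ and moves cofinally much above $\mathcal{S}$ (using the "moves something on every rank above $\mathcal{S}$" clause available from the Friedman-style self-embedding, Theorem \ref{Friedman selfembedding}, or the contractivity from Lemma \ref{embed in Gaifman model}), $\mathrm{Fix}(i) = \mathcal{S}$.

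For $(\ref{characterize strongly topless substructure of rec sat emb}) \Rightarrow (\ref{characterize strongly topless substructure of rec sat substr})$: suppose $\mathcal{S} = \mathrm{Fix}(i)$ for an automorphism $i$ of $\mathcal{M}$. Since $\mathcal{M} \models V = \mathrm{HOD}$ it has definable Skolem functions, so by Lemma \ref{rank-initial Fix is elementary}, $\mathcal{S} \preceq \mathcal{M}$. Toplessness of $\mathcal{S}$: if there were a least $\lambda \in \mathrm{Ord}^\mathcal{M}\setminus\mathcal{S}$, then since $i$ is an automorphism fixing the rank-initial set $\mathcal{S}$ pointwise and $\lambda$ is definable from $\mathcal{S}$-parameters as "the least ordinal not below any fixed point" — more simply, $i$ permutes $\mathrm{Ord}^\mathcal{M}\setminus\mathcal{S}$ order-preservingly and fixes everything below $\lambda$, forcing $i(\lambda) = \lambda$, contradiction. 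Strong toplessness is then obtained exactly as in the proof of Theorem \ref{characterize strongly topless substructure}, $(\ref{characterize strongly topless substructure emb}) \Rightarrow (\ref{characterize strongly topless substructure substr})$: given $f : \alpha \to \beta$ coded in $\mathcal{M}$ with $\mathrm{Ord}^\mathcal{M}\cap\mathcal{S} \subseteq \alpha_\mathcal{M}$, the formula $\phi(\xi) \equiv \mathrm{Ord}(\xi) \wedge \forall\zeta<\xi\,(f(\zeta)\neq i(f)(\zeta) \to f(\zeta)>\xi)$ holds on $\mathrm{Ord}^\mathcal{M}\cap\mathcal{S}$, so by $\Pi_1^\mathcal{P}$-Overspill (available since $\mathrm{ZFC}$ proves Foundation, hence $\Pi_1^\mathcal{P}$-Overspill via Lemma \ref{overspill}) and toplessness there is $\mu \in \mathrm{Ord}^\mathcal{M}\setminus\mathcal{S}$ with $\mathcal{M}\models\phi(\mu)$, and combined with the equivalence $f(\zeta)\notin\mathcal{S} \Leftrightarrow f(\zeta)\neq i(f)(\zeta)$ (valid for $\zeta\in\mathrm{Ord}^\mathcal{M}\cap\mathcal{S}$ because $\mathrm{Fix}(i)=\mathcal{S}$) this $\mu$ witnesses strong toplessness. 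This completes $(\ref{characterize strongly topless substructure of rec sat emb}) \Rightarrow (\ref{characterize strongly topless substructure of rec sat substr})$.

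\textbf{Main obstacle.} The genuinely delicate point is the surjectivization in $(\ref{characterize strongly topless substructure of rec sat substr}') \Rightarrow (\ref{characterize strongly topless substructure of rec sat emb})$: one must build the back-and-forth tower of rank-cuts so that the limit map is genuinely onto $\mathcal{M}$ in both directions while keeping the fixed-point set pinned at $\mathcal{S}$ and not accidentally fixing anything above $\mathcal{S}$. Bookkeeping the alternation (domain-step via $\mathcal{C}'_m$-type density, image-step via $\mathcal{D}'_n$-type density from Theorem \ref{rec sat iso thm}, interleaved with the "move something at rank $\alpha$" step from Lemma \ref{Friedman lemma}(\ref{Friedman lemma downwards})) so that all three tasks are met in the limit is where the care is needed; everything else reduces to the already-established Friedman-, Gaifman-, and Ressayre-style machinery.
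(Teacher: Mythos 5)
Your direction $(\ref{characterize strongly topless substructure of rec sat emb}) \Rightarrow (\ref{characterize strongly topless substructure of rec sat substr})$ and your use of Lemma \ref{rec sat elementary strongly topless} for $(\ref{characterize strongly topless substructure of rec sat substr}) \Rightarrow (\ref{characterize strongly topless substructure of rec sat substr}')$ match the paper. The gap is in the direction you yourself flag as the crux.

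You correctly see that Lemma \ref{embed in Gaifman model} only yields a self-embedding of $\mathcal{M}$ whose image is a proper rank-cut, and that one then needs to ``surjectivize.'' But the tower $\mathcal{M} \supseteq \mathcal{M}_1 \supseteq \mathcal{M}_2 \supseteq \cdots$ of rank-cuts with total isomorphisms $j_k : \mathcal{M} \xrightarrow{\sim} \mathcal{M}_k$, followed by ``$i = \lim j_k$,'' does not define anything: if the images keep shrinking, the image of any putative pointwise limit lies in $\bigcap_k \mathcal{M}_k$, so it cannot be onto $\mathcal{M}$; and if the $j_k$ only agree on larger and larger initial segments, the restriction to those segments gives a map into a shrinking cut, not an automorphism. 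The back-and-forth that \emph{would} work is one on finite partial elementary maps $\mathcal{M} \to \mathcal{M}$ fixing $\mathcal{S}$, with density sets $\mathcal{C}'_m$, $\mathcal{D}'_n$ as in Lemma \ref{rec sat iso lemma} together with a ``move something at every rank above $\mathcal{S}$'' density set --- but the latter is only established in the paper (Lemma \ref{Friedman lemma}(\ref{Friedman lemma downwards})) for the $\Sigma_1^\mathcal{P}$-elementary poset into a bounded target, not for the fully elementary unbounded poset, so you would owe a new density argument.

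The paper avoids all of this by a conjugation trick you do not use: having shown $\mathrm{SSy}_\mathcal{S}(\mathcal{M}) = (\mathcal{S}, \mathcal{A}) \models \mathrm{GBC} + \textnormal{``$\mathrm{Ord}$ is weakly compact''}$, apply Theorem \ref{Gaifman thm} with $\mathbb{L} = \mathbb{Q}$ and a fixed-point-free automorphism of $\mathbb{Q}$ to obtain a \emph{new} countable model $\mathcal{N} \succ \mathcal{S}$ together with an automorphism $j$ of $\mathcal{N}$ with $\mathrm{Fix}(j) = \mathcal{S}$ (via clause (\ref{equal})). By Lemma \ref{Kirby Paris backward}, $\mathcal{S}$ is strongly topless in $\mathcal{N}$; by Lemma \ref{rec sat elementary strongly topless}, $\mathcal{S}$ carries a coded safe satisfaction relation, which transfers to $\mathcal{N}$ by elementarity of the Gaifman extension, making $\mathcal{N}$ recursively saturated by Theorem \ref{rec sat char}. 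Since $\mathrm{SSy}_\mathcal{S}(\mathcal{M}) = \mathrm{SSy}_\mathcal{S}(\mathcal{N})$ and $\mathrm{Th}_\mathcal{S}(\mathcal{M}) = \mathrm{Th}_\mathcal{S}(\mathcal{N})$ (both $\succ \mathcal{S}$), and $\mathcal{S}$ is $\omega$-topless in both, Theorem \ref{rec sat iso thm} gives $k : \mathcal{M} \cong_\mathcal{S} \mathcal{N}$. Then $i = k^{-1} \circ j \circ k$ is an automorphism of $\mathcal{M}$ with $\mathrm{Fix}(i) = k^{-1}(\mathrm{Fix}(j)) = k^{-1}(\mathcal{S}) = \mathcal{S}$. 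This uses only the already-proved ingredients and makes the ``surjectivization'' problem disappear: $j$ is an automorphism by fiat, and $k$ is an isomorphism by the Ressayre-style machinery.
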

\begin{proof}
(\ref{characterize strongly topless substructure of rec sat emb}) $\Rightarrow$ (\ref{characterize strongly topless substructure of rec sat substr}'): Since $\mathcal{M} \models V = \mathrm{HOD}$, it has definable Skolem functions, whence Lemma \ref{rank-initial Fix is elementary} may be applied to the effect that $\mathcal{S} \prec \mathcal{M}$. Strong toplessness of $\mathcal{S}$ follows from the forward direction of Theorem \ref{characterize strongly topless substructure}. By Lemma \ref{rec sat elementary strongly topless}, we also have that $\mathcal{S} \cong \mathcal{M}$.

(\ref{characterize strongly topless substructure of rec sat substr}) $\Rightarrow$ (\ref{characterize strongly topless substructure of rec sat emb}): Let $(\mathcal{S}, \mathcal{A}) = \mathrm{SSy}_\mathcal{S}(\mathcal{M})$. Since $\mathcal{S} \prec \mathcal{M}$, we have $\omega^\mathcal{M} \in \mathcal{S}$. Now, by Lemma \ref{Kirby Paris forward}, $(\mathcal{S}, \mathcal{A}) \models \mathrm{GBC} + \text{``$\mathrm{Ord}$ is weakly compact''}$. Thus, we may apply Theorem \ref{Gaifman thm} (say with $\mathbb{L} = \mathbb{Q}$) to obtain a countable model $\mathcal{S} \prec \mathcal{N}$ with an automorphism $j : \mathcal{N} \rightarrow \mathcal{N}$ such that $\mathrm{Fix}(j) = \mathcal{S}$. By Lemma \ref{Kirby Paris backward}, $\mathcal{S}$ is strongly topless in $\mathcal{N}$.

Moreover, we have by Lemma \ref{rec sat elementary strongly topless} that $\mathcal{S}$ is recursively saturated with a safe satisfaction relation $\mathrm{Sat}^\mathcal{S}$ coded in $\mathcal{A}$. By part (\ref{elem}) of Theorem \ref{Gaifman thm}, $\mathrm{Sat}^\mathcal{S}$ corresponds to a safe satisfaction class $\mathrm{Sat}^\mathcal{N}$ on $\mathcal{N}$. So by Theorem \ref{rec sat char}, $\mathcal{N}$ is recursively saturated. Since $\mathcal{S}$ is strongly topless in both $\mathcal{M}$ and $\mathcal{N}$, it now follows from Theorem \ref{rec sat iso thm} that there is an isomorphism $k \in \llbracket \mathcal{M} \cong_\mathcal{S} \mathcal{N} \rrbracket$. The desired automorphism of $\mathcal{M}$ is now obtained as $i = k^{-1} \circ j \circ k$.
\end{proof}

\begin{thm}\label{strongly topless self-embedding iff GBC weakly compact}
	Suppose that $\mathcal{M} \models \mathrm{KP}^\mathcal{P} + \Sigma_1^\mathcal{P}\textnormal{-Separation} + \textnormal{Choice}$ is countable and non-standard. The following are equivalent:
	\begin{enumerate}[{\normalfont (a)}]
		\item\label{strongly topless self-embedding iff GBC weakly compact strongly topless} There is a strongly topless rank-initial self-embedding $i$ of $\mathcal{M}$.
		\item\label{strongly topless self-embedding iff GBC weakly compact ZFC} $\mathcal{M}$ expands to a model $(\mathcal{M}, \mathcal{A})$ of $\mathrm{GBC} + \text{``$\mathrm{Ord}$ is weakly compact''}$.
	\end{enumerate}
\end{thm}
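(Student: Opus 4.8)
The two implications will be handled by separate geometric constructions, both resting on the Kirby--Paris-style characterization (Theorem~\ref{Kirby Paris thm}, i.e. Lemmata~\ref{Kirby Paris forward} and \ref{Kirby Paris backward}), the refined Friedman theorem (Corollary~\ref{Friedman cor}), the Gaifman construction (Corollary~\ref{end extend model of weakly compact to model with endos}), and the composition laws for topless embeddings (Proposition~\ref{comp emb}).

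For (\ref{strongly topless self-embedding iff GBC weakly compact strongly topless}) $\Rightarrow$ (\ref{strongly topless self-embedding iff GBC weakly compact ZFC}): given a strongly topless rank-initial self-embedding $i$ of $\mathcal{M}$, set $\mathcal{S} = \mathcal{M}\restriction_{\mathrm{image}(i)}$. Since $i$ is rank-initial, strongly topless, and bounded (hence proper), $\mathcal{S}$ is a proper strong rank-cut of $\mathcal{M}$, and $i$ restricts to an isomorphism $\mathcal{M} \cong \mathcal{S}$. Because ``$x=\omega$'' is $\Delta_0$ and initial embeddings are $\Delta_0$-elementary (Proposition~\ref{emb pres}), $i$ fixes $\omega^\mathcal{M}$, so $\omega^\mathcal{M} \in \mathcal{S}$. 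Theorem~\ref{Kirby Paris thm} then yields $\mathrm{SSy}_\mathcal{S}(\mathcal{M}) \models \mathrm{GBC} + \text{``$\mathrm{Ord}$ is weakly compact''}$. Transporting the classes of $\mathrm{SSy}_\mathcal{S}(\mathcal{M})$ back along the isomorphism $i : \mathcal{M} \to \mathcal{S}$ produces a family $\mathcal{A}$ of subsets of $\mathcal{M}$ with $(\mathcal{M},\mathcal{A}) \cong \mathrm{SSy}_\mathcal{S}(\mathcal{M})$, so $(\mathcal{M},\mathcal{A}) \models \mathrm{GBC} + \text{``$\mathrm{Ord}$ is weakly compact''}$, as required.

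For (\ref{strongly topless self-embedding iff GBC weakly compact ZFC}) $\Rightarrow$ (\ref{strongly topless self-embedding iff GBC weakly compact strongly topless}): fix $(\mathcal{M},\mathcal{A}) \models \mathrm{GBC} + \text{``$\mathrm{Ord}$ is weakly compact''}$; then $\mathcal{M} \models \mathrm{ZFC}$ (a standard consequence of expanding to $\mathrm{GBC}$), so Corollary~\ref{end extend model of weakly compact to model with endos} applies and gives a countable $\mathcal{N}$ with $\mathcal{M} \prec^{\textnormal{rank-cut}} \mathcal{N}$ and $\mathrm{SSy}_\mathcal{M}(\mathcal{N}) = (\mathcal{M},\mathcal{A})$. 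Since $\mathcal{M} \prec \mathcal{N}$, also $\mathcal{N} \models \mathrm{ZFC}$, hence $\mathcal{N} \models \mathrm{KP}^\mathcal{P} + \Sigma_1^\mathcal{P}\textnormal{-Separation} + \textnormal{Choice}$, and $\mathcal{N}$ is non-standard. Applying Lemma~\ref{Kirby Paris backward} to the rank-cut $\mathcal{M}$ of $\mathcal{N}$ (using $\mathrm{SSy}_\mathcal{M}(\mathcal{N}) \models \mathrm{GBC} + \text{``$\mathrm{Ord}$ is weakly compact''}$), the inclusion $\iota : \mathcal{M} \hookrightarrow \mathcal{N}$ is a \emph{strong} rank-cut. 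Now take $\mathcal{S} = \mathrm{WFP}(\mathcal{M}) = \mathrm{WFP}(\mathcal{N})$ (these coincide as $\mathcal{M}$ is rank-initial in $\mathcal{N}$), which by Proposition~\ref{WFP rank-initial topless} is a common rank-cut of $\mathcal{M}$ and $\mathcal{N}$, is bounded in both, and is pointwise fixed by $\iota$; Proposition~\ref{emb pres}(\ref{emb pres SSy_S}) gives $\mathrm{SSy}(\mathcal{M}) = \mathrm{SSy}(\mathcal{N})$, and $\mathcal{M} \prec \mathcal{N}$ gives $\mathrm{Th}_{\Sigma_1^\mathcal{P},\mathcal{S}}(\mathcal{N}) = \mathrm{Th}_{\Sigma_1^\mathcal{P},\mathcal{S}}(\mathcal{M})$. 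By Corollary~\ref{Friedman cor}, with $\mathcal{N}$ as domain and $\mathcal{M}$ as codomain, there is a rank-cut embedding $j : \mathcal{N} \to \mathcal{M}$ fixing $\mathcal{S}$ pointwise. Finally $j \circ \iota : \mathcal{M} \to \mathcal{M}$ is rank-initial (composition of rank-initial embeddings, cf. Corollary~\ref{rank-init equivalences in KPP}) and, by Proposition~\ref{comp emb}(\ref{comp emb strongly topless}), strongly topless, so $j\circ\iota$ is the desired self-embedding.

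The argument is short, but it hides one genuinely load-bearing point: the target self-embedding must be \emph{strongly} topless, not merely topless, and an arbitrary rank-initial $j$ with the correct standard system does not by itself force $j\circ\iota$ to be strongly topless. What makes it work is that ``$\mathrm{Ord}$ is weakly compact'' (via Lemma~\ref{Kirby Paris backward}) upgrades $\iota$ from a rank-cut to a \emph{strong} rank-cut, and then Proposition~\ref{comp emb}(\ref{comp emb strongly topless}) propagates strong toplessness across the composition with the merely rank-initial $j$. A secondary, purely bookkeeping, care point is that Corollary~\ref{Friedman cor} and Lemma~\ref{Kirby Paris backward} are not symmetric in the two models, so one must keep straight which of $\mathcal{M}$, $\mathcal{N}$ plays the role of domain and of codomain when invoking them.
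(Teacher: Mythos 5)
Your proof is correct and its overall architecture (Gaifman extension, refined Friedman embedding back into $\mathcal{M}$, composition via Proposition~\ref{comp emb}) matches the paper's. The one meaningful divergence is in how you certify that $\mathcal{M}$ is a \emph{strong} rank-cut of the Gaifman extension $\mathcal{N}$: you route this through Lemma~\ref{Kirby Paris backward}, reading strong toplessness directly off $\mathrm{SSy}_\mathcal{M}(\mathcal{N}) = (\mathcal{M},\mathcal{A}) \models \mathrm{GBC}+\text{``$\mathrm{Ord}$ is weakly compact''}$, whereas the paper instead uses the self-embedding of $\mathcal{N}$ with fixed-point set $\mathcal{M}$ supplied by Corollary~\ref{end extend model of weakly compact to model with endos} and applies Theorem~\ref{characterize strongly topless substructure}~(a)$\Rightarrow$(c). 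Since Lemma~\ref{Kirby Paris backward} is itself proved by exactly that argument, the two routes coincide modulo unfolding one level; yours keeps the bookkeeping slightly lighter because you never need to carry the self-embedding of $\mathcal{N}$ through the rest of the proof. One point in favour of your write-up: in the forward direction Lemma~\ref{Kirby Paris forward} requires $\omega^\mathcal{M}\in\mathcal{S}$, which the paper does not explicitly check for $\mathcal{S}=i(\mathcal{M})$; your observation that ``$x=\omega$'' is $\Delta_0$ and that initial embeddings are $\Delta_0$-elementary supplies this hypothesis cleanly. Your closing diagnosis of the load-bearing step -- weak compactness upgrades $\iota$ to strongly topless, and Proposition~\ref{comp emb}(\ref{comp emb strongly topless}) propagates this through the merely rank-initial $j$ -- is exactly right.
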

\begin{proof}
	(\ref{strongly topless self-embedding iff GBC weakly compact strongly topless}) $\Rightarrow$ (\ref{strongly topless self-embedding iff GBC weakly compact ZFC}): If $i$ is strongly topless and rank-initial, then by the Lemma \ref{Kirby Paris forward}, we have $\mathrm{SSy}_{i(\mathcal{M})}(\mathcal{M}) \models \mathrm{GBC} + \textnormal{``$\mathrm{Ord}$ is weakly compact''}$. So since $\mathcal{M} \cong i(\mathcal{M})$, we have that $\mathcal{M}$ expands to a model of $\mathrm{GBC} + $``$\mathrm{Ord}$ is weakly compact''.

	(\ref{strongly topless self-embedding iff GBC weakly compact ZFC}) $\Rightarrow$ (\ref{strongly topless self-embedding iff GBC weakly compact strongly topless}): Expand $\mathcal{M}$ to a countable model $(\mathcal{M}, \mathcal{A})$ of $\mathrm{GBC} + $ ``$\mathrm{Ord}$ is weakly compact''. Let $\mathcal{N} \succ^\textnormal{rank-cut} \mathcal{M}$ be a model obtained from Theorem \ref{Gaifman thm} by putting $\mathbb{L}$ to be a countable linear order without a least element, e.g. $\mathbb{Q}$. By Theorem \ref{characterize strongly topless substructure}, $\mathcal{M}$ is strongly topless in $\mathcal{N}$. Note that $\mathrm{Th}(\mathcal{N}) = \mathrm{Th}(\mathcal{M})$ and $\mathrm{SSy}(\mathcal{N}) = \mathrm{SSy}(\mathcal{M})$. So by Theorem \ref{Friedman thm}, there is a rank-initial embedding $i : \mathcal{N} \rightarrow \mathcal{M}$. By Proposition \ref{comp emb}, it now follows that $i(\mathcal{M})$ is strongly topless in $\mathcal{M}$. 
\end{proof}

\end{document}